\theoremstyle{plain}
\newtheorem{thm}{\indent\bf Theorem}[section]
\newtheorem{lem}[thm]{\indent\bf Lemma}
\newtheorem{prop}[thm]{\indent\bf Proposition}
\newtheorem{cor}[thm]{\indent\bf Corollary}
\theoremstyle{definition}
\newtheorem{rem}{\indent\it Remark}[section]
\newtheorem{exa}{\indent\it Example}[section]
\numberwithin{equation}{section}
\numberwithin{figure}{section}
\def \z {\mathfrak{z}}
\def \re {\mathrm{Re\,}}
\def \im {\mathrm{Im\,}}
\def \diag {\mathrm{diag\,}}
\begin{document}
\title{Elliptic asymptotics for 
the complete third Painlev\'e transcendents} 
\author{Shun Shimomura} 
\date{}
\markboth{Shun Shimomura}{Third Pailev\'e transcendents}

\begin{abstract}
For a general solution of the third Painlev\'e equation of complete type 
we show the Boutroux ansatz near the point at infinity. 
It admits an asymptotic
representation in terms of the Jacobi sn-function in cheese-like
strips along generic directions. The expression is derived by using
isomonodromy deformation of a linear system governed by the 
third Painlev\'e equation of this type. In our calculation of the WKB analysis,
the treated Stokes curve ranges on both upper and lower sheets of the two
sheeted Riemann surface.
\vskip0.2cm
\par
2010 {\it Mathematics Subject Classification.} {34M55, 34M56, 34M40, 34M60,
33E05.}
\par
{\it Key words and phrases.} {third Painlev\'{e} equation; 
Boutroux ansatz; Boutroux equations; isomonodromy deformation; WKB analysis;
sn-function; theta-function.} 
\end{abstract}
\maketitle
\allowdisplaybreaks
\section{Introduction}\label{sc1}
As in the study of the spaces of initial values
for Painlev\'e equations by Sakai \cite{Sa}, the third
Painlev\'e equations are classified into three types $P_{\mathrm{III}}(D_6)$,
$P_{\mathrm{III}}(D_7)$ and $P_{\mathrm{III}}(D_8)$. 
For $P_{\mathrm{III}}(D_7)$ and $P_{\mathrm{III}}(D_8)$ 
Ohyama et al.~\cite{Oh} examined basic properties including $\tau$-functions,
irreducibility, the spaces of initial values. 
Equation $P_{\mathrm{III}}(D_7)$ is the {\it degenerate} third Painlev\'e
equation written in the form 
\begin{equation*}
u_{\tau\tau}=\frac{u_{\tau}^2}{u} -\frac{u_{\tau}}{\tau} +\frac 1{\tau}
(-8\epsilon u^2 +2ab) +\frac{b^2}{u}
\end{equation*}
with $\epsilon=\pm 1,$ $a\in\mathbb{C},$ $b\in \mathbb{R}\setminus\{0\}$.
For this equation, using isomonodromy deformation Kitaev and Vartanian 
\cite{KV1}, \cite{KV2} obtained asymptotic solutions  
as $\tau \to \pm\infty$, $\pm i \infty$ and $\tau \to \pm 0$, $\pm i 0$ 
with connection formulas among them. Furthermore, a special
meromorphic solution is studied by \cite{Kitaev-4}, \cite{KV3}, and
trans-series solutions with related monodromy data are discussed 
by \cite{Vartanian}. 
\par
The third Painlev\'e equation $P_{\mathrm{III}}(D_6)$ of the form
\begin{equation}\label{1.1}
\frac{d^2y}{dx^2}=\frac 1y\Bigl(\frac{dy}{dx}\Bigr)^{\! 2}
 -\frac 1x \frac{dy}{dx}
+ \frac 4x (\theta_0 y^2 +1 -\theta_{\infty}) +4y^3 -\frac 4y
\end{equation}
with $\theta_0, $ $\theta_1 \in \mathbb{C}$ is equivalent to the degenerate
fifth Painlev\'e equation \cite{Gromak1}, \cite{Gromak2}, and is called the 
{\it complete} third Painlev\'e equation $P_{\mathrm{III}}$ by 
Kitaev \cite{K-Sb}, in which the asymptotics of solutions on the positive
real axis are studied. Basic properties of $P_{\mathrm{III}}(D_6)$, say 
irreducibility, $\tau$-functions, special solutions are
studied in \cite{Murata}, \cite{Umemura}, \cite{Okamoto}. 
Equation $P_{\mathrm{III}}$ of the Sine-Gordon type is obtained from 
\eqref{1.1} with $\theta_0=1-\theta_{\infty} =0$ by $y=e^{iu/2}, 4ix=\xi,$ 
and its real-valued solutions with connection formulas are found in \cite
[Chaps.~8, 10]{IN}, \cite[Chaps.~14, 15]{FIKN}.  
\par
The linear approximation of Painlev\'e equations enables us to know only 
restricted solutions, say, exponentially decaying ones, or ones along special 
directions.
To capture the global picture of a general solution as a meromorphic
function it is necessary to consider nonlinear approximation such as by an
elliptic function, which is expected to be available also in generic 
directions. Indeed, for a general solution of $P_{\mathrm{I}}$,  
Boutroux \cite{Boutroux}, \cite{Boutroux-2} first studied
asymptotic behaviours by comparing with the Weierstrass $\wp$-function, 
the modulus of which is determined 
by transcendental equations being now called the Boutroux equations. 
Rigorous treatments of these asymptotics for
$P_{\mathrm{I}}$ and $P_{\mathrm{II}}$ are made by multiscale expansions
\cite{Joshi} or by isomonodromy
techniques \cite{Kapaev-1}, \cite{Kitaev-2}, \cite{Kitaev-3}, 
\cite{Novokshenov-1}, \cite{Novokshenov-2}. Thus for Painlev\'e transcendents 
the asymptotic expression in terms of an elliptic function is called 
the Boutroux ansatz. 
Elliptic asymptotics for $P_{\mathrm{III}}$ of the Sine-Gordon type are
expressed by the Jacobi $\mathrm{sn}$-function
\cite{Novokshenov-3}, \cite{Novokshenov-4}, \cite[Chap.~16]{FIKN}, and those 
for $P_{\mathrm{III}}(D_7)$ by the Weierstrass $\wp$-function \cite{Shun}.  
\par
In this paper we show the Boutroux ansatz for the complete 
$P_{\mathrm{III}}$ \eqref{1.1} with arbitrary parameters 
$\theta_0,$ $\theta_{\infty}\in \mathbb{C},$ that is, 
present an asymptotic representation of a general 
solution in terms of the Jacobi sn-function along 
generic directions near the point at infinity. 
The main results are described in Section \ref{sc2}.
The complete third Painlev\'e equation \eqref{1.1} governs isomonodromy 
deformation of the linear system
\begin{align}\label{1.2}
\frac{dU}{d\lambda}& = \mathcal{A}(x,\lambda) U,   \phantom{----------}
\\
\notag
\mathcal{A}  (x,\lambda) =& \frac{ix}2 \sigma_3 
+ \begin{pmatrix}  -\frac 12 \theta_{\infty}  & -y\z v \\ 
\frac 12  \theta_0 x(\z v)^{-1} -v^{-1}(\frac 12 \theta_{\infty} (2-
x{\z}^{-1} ) +y(\z -x))  & \frac 12\theta_{\infty}    
\end{pmatrix} {\lambda^{-1}}
\\
\notag
& -i \begin{pmatrix} \z- \frac 12 x & - \z v \\ v^{-1}(\z-x) &  -\z +\frac 12 x 
\end{pmatrix} 
{\lambda^{-2}}
\end{align}
\cite[pp.~195--198]{FIKN}, \cite{JM}, \cite{K-Sb},
that is, the monodromy data of system \eqref{1.2} remain invariant under a 
small change of $x$ if and only if $(y,\z,v)$ fulfils the system of equations
\begin{align*}
\notag
&x \frac{dy}{dx}= 4 y^2\z -2x y^2 +(2\theta_{\infty}-1)y +2x,
\\
\notag
&x \frac{d\z}{dx}= -4 y\z^2 +(4x y +1-2\theta_{\infty})\z +(\theta_0+\theta
_{\infty})x,
\\
\notag
& x\frac d{dx}\log v = -(\theta_0+\theta_{\infty})\frac x{\z} -2xy +\theta
_{\infty},
\end{align*}
which is equivalent to \eqref{1.1}.
As shown in Section \ref{sc3} system \eqref{1.2} is a result of transformation
of system \eqref{3.1}. 
The isomonodromy deformation of system \eqref{3.1} is governed by equation
\eqref{1.1}, and solutions of \eqref{1.1} correspond to the monodromy data 
on the monodromy manifold for \eqref{3.1}, which is 
defined by Stokes matrices $S^{\infty}_j$, $S^0_j$ and 
connection matrices $G=(g_{ij})$, $\hat{G}=(\hat{g}_{ij})\in SL_2(\mathbb{C})$ 
for solutions around $\mu=0$ and $\mu=\infty.$
System \eqref{1.2} admits the same monodromy 
manifold as of \eqref{3.1}, which is described by the same matrices $S^{\infty}
_j,$ $S^0_j$ and $G, \hat{G}$ for suitably chosen matrix solutions 
(cf. Proposition \ref{prop3.7}). 
Thus we may treat isomonodromy system \eqref{3.1} instead of \eqref{1.2}.  
Applying WKB analysis under condition \eqref{5.1}, we solve 
the direct monodromy problem for linear system
\eqref{3.1} in Section \ref{sc5}, and obtain key relations 
(Corollary \ref{cor5.2}) containing the monodromy data and certain 
integrals. In its procedure we calculate analytic continuations along a Stokes
curve ranging on both upper and lower sheets of the two-sheeted Riemann surface. 
Basic necessary materials for this calculation are summarised in 
Section \ref{sc4}. Asymptotic properties of these integrals are examined
in Section \ref{sc6} by the use of the $\vartheta$-function, and 
in Section \ref{sc7}, from these 
formulas, asymptotic forms in main theorems are found by solving the inverse
monodromy problem for the prescribed monodromy data. These forms are given as
a necessary condition to be a required solution. 
The justification as a solution of \eqref{1.1} is made along the line
of Kitaev \cite{Kitaev-1}, \cite{Kitaev-3}. The final section is devoted to
the Boutroux equations, which determine a modulus contained in the elliptic
representation of solutions.
\par
Throughout this paper we use the following symbols:
\par
(1) $\sigma_1,$ $\sigma_2,$ $\sigma_3$ are the Pauli matrices
$$
\sigma_1=\begin{pmatrix} 0 & 1 \\ 1 & 0 \end{pmatrix}, \quad
\sigma_2=\begin{pmatrix} 0 & -i \\ i & 0 \end{pmatrix}, \quad
\sigma_3=\begin{pmatrix} 1 & 0 \\ 0 & -1 \end{pmatrix}; 
$$
\par
(2) for complex-valued functions $f$ and $g$, we write $f \ll g$ or $g  \gg f$
if $f = O(|g|)$, and write $f\asymp g$ if $g \ll f \ll g.$
\section{Main results}\label{sc2}
To state our main results we give some explanations on necessary facts.
\subsection{Monodromy data}\label{ssc2.1}
Isomonodromy system \eqref{1.2} admits the matrix solutions 
$$
U^{\infty}_k(\lambda) =(I+O(\lambda^{-1})) \exp(\tfrac 12 ix\lambda \sigma_3)
\lambda^{-\frac 12 \theta_{\infty}\sigma_3}
$$
as $\lambda \to \infty$ through the sector  
$|\arg \lambda +\arg x - k\pi | <\pi,$ and
$$
U^0_k(\lambda)= \Delta_0 (I +O(\lambda))
\exp(-\tfrac 12 ix\lambda^{-1} \sigma_3) \lambda^{\frac 12 \theta_0 \sigma_3}
$$
as $\lambda \to 0$ through the sector $|\arg \lambda -\arg x- k\pi| <\pi$,
where $k \in \mathbb{Z},$ and
$$
\Delta_0= \begin{pmatrix} v^{1/2} f_1  &  v^{1/2} \z f_2 \\
                          v^{-1/2} f_1 &  v^{-1/2} (\z- x)f_2 \end{pmatrix},
$$
$f_1$, $f_2$ being such that $ x f_1f_2 \equiv -1.$
Let 
$(G,\hat{G}, S^{\infty}_0, S^{\infty}_1, S^0_0, S^0_1)$ be invariant
monodromy data defined by
$U^{\infty}_{k+1}(\lambda) = U^{\infty}_k(\lambda)S^{\infty}_k,$  
$U^{0}_{k+1}(\lambda) = U^{0}_k(\lambda)S^{0}_k $ with $k=0,1$,
and by $U^{\infty}_0(\lambda)=U^0_0(\lambda) G$,
$U^{\infty}_1(\lambda)=U^0_1(\lambda) \hat{G}$ with $G=(g_{ij}),$ 
$\hat{G}=(\hat{g}_{ij}) \in SL_2(\mathbb{C}).$ These Stokes matrices are 
written as 
$$
S^{\infty}_0 =\begin{pmatrix} 1 & s^{\infty}_0 \\ 0 & 1 \end{pmatrix}, \quad
S^{\infty}_1 =\begin{pmatrix} 1 & 0 \\ s^{\infty}_1 & 1 \end{pmatrix}, \quad
S^{0}_0 =\begin{pmatrix} 1 & s^0_0 \\ 0 & 1 \end{pmatrix}, \quad 
S^{0}_1 =\begin{pmatrix} 1 & 0 \\ s^0_1  & 1 \end{pmatrix}. 
$$
Then the monodromy manifold $\mathcal{M} \subset \mathbb{C}^{12}$ is given by 
\begin{equation*}
 S^0_0 \hat{G} =G S^{\infty}_0, \quad G^{-1}S^0_0S^0_1 e^{-\pi i\theta_0
\sigma_3} G =S^{\infty}_0S^{\infty}_1 e^{\pi i\theta_{\infty} \sigma_3}
\end{equation*}
(see Section \ref{ssc3.1} and \cite{K-Sb}).
A generic point on the monodromy manifold $\mathcal{M}$ is determined
by $(G,\hat{G})$ (cf. Propositions \ref{3.4} and \ref{3.5}). 
As described in Section \ref{ssc3.1}, 
a change of the matrix solution basis induces an action on the monodromy
data on $\mathcal{M}$, and each solution of \eqref{1.1} corresponds to an orbit,
or equivalence class, yielded by dividing $\mathcal{M}$ by this action. 
Then an orbit passing through $(G, \hat{G})$
parametrises a general solution, which will be simply called
a {\it solution labelled by $(G,\hat{G}).$}
\subsection{Elliptic curve and Boutroux equations}\label{ssc2.2}
As shown in Lemma \ref{lem8.0}, as long as $A\in \mathbb{C}\setminus \{c<-2\}$,
the polynomial $\lambda^4-A\lambda^2+1$
has roots $\pm \lambda_1= \pm\lambda_1(A),$ $\pm\lambda_2=\pm\lambda_2(A)$ 
such that $\lambda_1\lambda_2 =1,$ $\lambda_1,$ 
$\lambda_2 \in \{\re \lambda \ge 0\}$ and
$-\lambda_1,$ $-\lambda_2 \in \{\re \lambda \le 0\}$.
Let $\Pi_+$ and $\Pi_-$ be the copies of $P^1(\mathbb{C}) \setminus ([-\lambda_2,
-\lambda_1] \cup [\lambda_1,\lambda_2])$ and set $\Pi_A = \Pi_+ \cup \Pi_-$ 
glued  along the cuts
$[-\lambda_2, -\lambda_1]$ and $[\lambda_1,\lambda_2]$.  
The Riemann surface $\Pi_A$ is the elliptic curve given by 
$$
w(A,\lambda)^2 =\lambda^4 -A\lambda^2+1,
$$ 
where the branch of 
\begin{align*}
w(A,\lambda)&=\sqrt{\lambda^4 -A\lambda^2 +1}:
= \sqrt{(1+\lambda^{-1}_{1}\lambda)(1-\lambda^{-1}_{1}\lambda)
(1+\lambda_{2}^{-1}\lambda)(1-\lambda_{2}^{-1}\lambda)  }
\\
& =\sqrt {1+\lambda_1^{-1}\lambda}\, \sqrt {1-\lambda_1^{-1}\lambda}\, 
\sqrt{ 1+\lambda_2^{-1}\lambda}\, \sqrt {1-\lambda_2^{-1}\lambda} 
\end{align*}
is chosen in such a way that $ \sqrt{1\pm \lambda_j^{-1}\lambda} \to 1$ as 
$\lambda \to 0$ on the upper plane $\Pi_+.$ Then $\lambda^{-2} w(A,\lambda)
\to -1$ as $\lambda \to\infty$, and $w(A,\lambda) \to 1$ as $\lambda \to 0$
on the upper plane $\Pi_+.$ The elliptic curve does not degenerate as long as
$A\not= \pm 2,$ and $\Pi_A$ may be defined continuously.
\par
As will be shown in Section \ref{sc8}, for each $\phi \in \mathbb{R}$, there 
exists $A_{\phi} \in \mathbb{C}\setminus \{c<-2\}$ 
with $\Pi_{A_{\phi}}$ such that, for 
every cycle $\mathbf{c}$ on $\Pi_{A_{\phi}}$ 
$$
\im e^{i\phi} \int_{\mathbf{c}}\frac{w(A_{\phi},\lambda)}{\lambda^2} d\lambda=0,
$$
and that $A_{\phi}$ has the properties (Proposition \ref{prop8.15}):
\par
(1) for every $\phi$, $A_{\phi}$ is uniquely determined;
\par
(2) $A_{\phi}$ is continuous in $\phi \in \mathbb{R},$ and is smooth in
$\phi \in \mathbb{R}\setminus \{m\pi/2 \,|\, m\in \mathbb{Z} \}$;
\par
(3) $A_{\phi \pm \pi/2}= - A_{\phi},$ $A_{\phi+\pi }
=A_{\phi},$ $A_{-\phi}=\overline{A_{\phi}};$
\par
(4) $\Pi_{A_{\phi}}$ degenerates if and only if $\phi=m\pi/2$ with $m\in
\mathbb{Z},$ and $A_0= 2,$ $A_{\pm \pi/2}=-2.$
\par
By Proposition \ref{prop8.16}, the roots 
$\pm \lambda_1=\pm \lambda_1(A_{\phi})$, 
$\pm \lambda_2=\pm \lambda_2(A_{\phi})$ of $w(A_{\phi},\lambda)^2$
may be numbered in such a way that, for $\phi$ close to $0$, 
\par
$\re \lambda_1(A_{\phi})< \re \lambda_2(A_{\phi})$, \,\,\,
$\im\lambda_1(A_{\phi})<0<\im \lambda_2(A_{\phi})$ \,\,\, if $\phi <0,$ and
\par
$\re \lambda_1(A_{\phi})< \re \lambda_2(A_{\phi})$, \,\,\,
$\im\lambda_1(A_{\phi})>0>\im \lambda_2(A_{\phi})$ \,\,\, if $\phi >0$ 
\par\noindent
(cf.~Figure \ref{cycles1}), and that the numbering is retained 
for $0<|\phi|<\pi/2$.
{\small
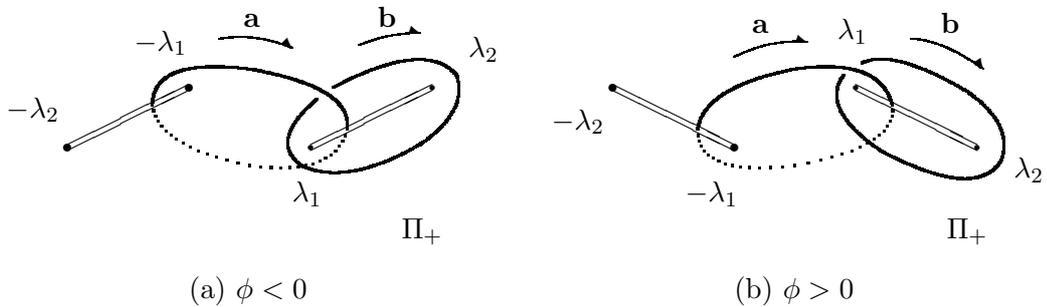
\begin{figure}[htb]
\begin{center}
\unitlength=0.8mm
\begin{picture}(80,50)(-10,-5)
 \put(0,25){\makebox{$-\lambda_2$}}
  \put(21,36){\makebox{$-\lambda_1$}}
 \put(76,35){\makebox{$\lambda_2$}}
   \put(47,11){\makebox{$\lambda_1$}}
\thinlines
\put(10,19.5){\line(2,1){20}}
\put(10,20.5){\line(2,1){20}}
\put(50,19.5){\line(2,1){20}}
\put(50,20.5){\line(2,1){20}}
 \qbezier(35,38) (41,39) (47,36.5)
   \qbezier(58,37) (63,39) (68,39)
\put(47,36.5){\vector(1,-1){0}}
  \put(68,39){\vector(3,-1){0}}
\put(39,40){\makebox{$\mathbf{a}$}}
  \put(61,40){\makebox{$\mathbf{b}$}}
 \put(65,5){\makebox{$\Pi_{+}$}}
\thicklines
\put(10,20){\circle*{1.5}}
\put(30,30){\circle*{1.5}}
\put(50,20){\circle*{1}}
\put(70,30){\circle*{1}}

\qbezier[15](24,27) (25,20) (40,17)
\qbezier[15](40,17) (54,15) (56,23)
\qbezier(40,33.0) (57,30) (56,23)
\qbezier(24,27) (24,35) (40,33.0)

  \qbezier(48.5,17) (56,13.5) (68,20.5)
  \qbezier(68,20.5) (76,26) (74,32)
  \qbezier(48.5,17) (43,21.0) (51,28.0)
  \qbezier (53.8,30.0)(69,38)(74,32)

 \put(30,-5){\makebox{(a) $\phi <0$}}

\end{picture}\quad\quad
\begin{picture}(80,50)(-10,-5)
\put(0,23){\makebox{$-\lambda_2$}}
  \put(22,11){\makebox{$-\lambda_1$}}
 \put(76,15){\makebox{$\lambda_2$}}
   \put(47,38){\makebox{$\lambda_1$}}
\thinlines
\put(10,30.5){\line(2,-1){20}}
\put(10,29.5){\line(2,-1){20}}
\put(50,30.5){\line(2,-1){20}}
\put(50,29.5){\line(2,-1){20}}
\qbezier(30,35) (36,38) (42,37.5)
  \qbezier(59,38) (64,38) (71,33)
\put(42,37.5){\vector(3,-1){0}}
  \put(71,33){\vector(2,-3){0}}
\put(33,39){\makebox{$\mathbf{a}$}}
  \put(64,39){\makebox{$\mathbf{b}$}}
\put(65,5){\makebox{$\Pi_{+}$}}
\thicklines
\put(10,30){\circle*{1.5}}
\put(30,20){\circle*{1.5}}
\put(50,30){\circle*{1}}
\put(70,20){\circle*{1}}

\qbezier(24,23) (25,30) (40,33)
\qbezier(40,33) (54,35) (56,27)
\qbezier[15](40,17) (57,20) (56,27)
\qbezier[15](24,23) (24,15) (40,17)

  \qbezier(51,34) (56,36.5) (68,29.5)
  \qbezier(68,29.5) (76,24) (74,18)
  \qbezier(48,32) (44,26.5) (54,19.5)
  \qbezier (54,19.5)(70,11)(74,18)

 \put(30,-5){\makebox{(b) $\phi >0$}}

\end{picture}
\phantom{---}
\end{center}
\caption{Cycles $\mathbf{a},$ $\mathbf{b}$ for small $\phi$} 
\label{cycles1}
\end{figure}
}

For small $|\phi|$ we define basic cycles $\mathbf{a}$ and $\mathbf{b}$ 
on $\Pi_{A_{\phi}}=\Pi_+\cup \Pi_-$ drawn as in Figure \ref{cycles1};
and, for $0<|\phi|<\pi/2$, cycles $\mathbf{a}$ and $\mathbf{b}$ are
modified continuously on $\Pi_{A_{\phi}}.$
The Boutroux equations are given by
\begin{equation}\label{2.1}
\im e^{i\phi} \int_{\mathbf{a}} \frac{w(A_{\phi},\lambda)}{\lambda^2} 
d\lambda =0, \quad
\im e^{i\phi} \int_{\mathbf{b}} \frac{w(A_{\phi},\lambda)}{\lambda^2} 
d\lambda =0
\end{equation}
admitting a unique solution $A_{\phi}$.
For $|\phi|<\pi/2$ the periods of $\Pi_{A_{\phi}}$ along $\mathbf{a}$ and
$\mathbf{b}$ are 
$$
\Omega^{\phi}_{\mathbf{a}}=\Omega_{\mathbf{a}}
=\int_{\mathbf{a}} \frac{d\lambda}{w(A_{\phi},\lambda)}, \quad
 \Omega^{\phi}_{\mathbf{b}}=\Omega_{\mathbf{b}}
=\int_{\mathbf{b}} \frac{d\lambda}{w(A_{\phi},\lambda)}, \quad
$$
which satisfy $\im \Omega_{\mathbf{b}}/\Omega_{\mathbf{a}} >0.$
\subsection{Main theorems}\label{ssc2.3}
Let $y(x)=y(G,\hat{G},x)$ be a solution of \eqref{1.1} labelled by the monodromy
data $(G,\hat{G}) \in SL_2(\mathbb{C})^2$ with $G=(g_{ij})$, 
$\hat{G}=(\hat{g}_{ij})$. The roots
$\lambda_1=\lambda_1(A_{\phi}),$ $\lambda_2=\lambda_2(A_{\phi})$ of $w(A_{\phi},
\lambda)^2$ for $0<|\phi|<\pi/2$ are given by
$$
\lambda_1=\textstyle\sqrt{\smash[b]{A_{\phi}/2 -\sqrt{\smash[b]{A_{\phi}^2/4 -1}}} },\quad
\lambda_2=\textstyle\sqrt{\smash[b]{ A_{\phi}/2 
+\sqrt{\smash[b]{ A_{\phi}^2/4 -1}} }},
$$
where the branches are chosen in such a way that $\re \textstyle\sqrt{
\smash[b]{A_{\phi}^2/4-1}} \ge 0$ as $A_{\phi} \to 2$ $(\phi \to 0)$, and 
that $\lambda_{1,2}(A_{\phi}) \to 1$ as $A_{\phi}\to 2$ (cf. Proposition 
\ref{prop8.16} and Figure \ref{trajectory}).
Then we have the following, in which $\mathrm{sn}(u; k)$ denotes the $\mathrm
{sn}$-function solving $(z_u)^2=(1-z^2)(1-k^2z^2).$ 
\begin{thm}\label{thm2.1}
Suppose that $0<\phi<\pi/2$ and that $g_{11}g_{12}g_{22}\hat{g}_{11}\hat{g}_{21}
\not=0.$ Then 
$$
y(x)^{-1}= i\lambda_1 \mathrm{sn}\bigl(2i\lambda_2(x-x_0^+) + O(x^{-\delta}); 
\lambda_1/\lambda_2 \bigr)
$$
as $2x=te^{i\phi} \to \infty$ through the cheese-like strip
$$
S(\phi,t_{\infty}, \kappa_0,\delta_0) =\{2x=te^{i\phi}\,|\, \re t>t_{\infty},\,\,
|\im t|<\kappa_0\} \setminus \bigcup_{\sigma\in \mathcal{P}(x^+_0)} 
\{ |te^{i\phi} -\sigma|<\delta_0 \}
$$
with
$$
\mathcal{P}(x^+_0)
=\{\sigma \,|\, \mathrm{sn}(i\lambda_2(\sigma-2x_0^+); \lambda_1/
\lambda_2)=\infty\}
= 2x_0^+ - i (\tfrac 12 \Omega_{\mathbf{b}} + \tfrac 12
 \Omega_{\mathbf{a}}\mathbb{Z} + \Omega_{\mathbf{b}} 
\mathbb{Z} ).
$$
Here $\delta$ is some positive number, $\kappa_0$ a given positive number,
$\delta_0$ a given small positive number, $t_{\infty}=t_{\infty}(\kappa_0,
\delta_0)$ a sufficiently large number depending on $(\kappa_0,\delta_0);$
and 
\begin{align*}
 2ix^+_0 &= 2i x^+_0(\Omega_{\mathbf{a}},\Omega_{\mathbf{b}}, G,\hat{G}) 
\\
 \equiv &\frac 1{2\pi i}\Bigl(\Omega_{\mathbf{a}} \log(g_{11}
g_{22}) +\Omega_{\mathbf{b}} \log\frac{g_{12}\hat{g}_{21}}{g_{22}\hat{g}_{11}}
\Bigr)
-\frac {\Omega_{\mathbf{a}}} 4(\theta_0-\theta_{\infty}+2)
-\frac {\Omega_{\mathbf{b}}}2
\mod \Omega_{\mathbf{a}}\mathbb{Z}+\Omega_{\mathbf{b}}\mathbb{Z}.
\end{align*}
\end{thm}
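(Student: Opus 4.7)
The plan is to exploit the isomonodromy correspondence between solutions $y(x)$ of \eqref{1.1} and the monodromy data $(G,\hat{G})$ for the linear system \eqref{1.2} (equivalently \eqref{3.1}), and then to solve the direct monodromy problem asymptotically by WKB analysis along the Boutroux scaling. Fix $\phi\in(0,\pi/2)$ and pass to rescaled variables in which the leading symbol of $\mathcal{A}(x,\lambda)$ has characteristic polynomial proportional to $x^2(\lambda^4-A\lambda^2+1)/\lambda^2$, with $A=A(x)$ a slowly varying coefficient built from $y,\z$. The elliptic curve $\Pi_A$ and the differential $w(A,\lambda)\lambda^{-2}\,d\lambda$ of Section \ref{ssc2.2} are exactly the geometric WKB data. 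Choosing $A=A_{\phi}$, the unique solution of \eqref{2.1}, is what makes the exponents $\tfrac{ix}{2}\!\int\! w(A_{\phi},\lambda)\lambda^{-2}d\lambda$ purely oscillatory along $\arg x=\phi$, which is precisely the mechanism of the Boutroux ansatz.

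The next step is the direct monodromy computation announced in Section \ref{sc5}. I would diagonalise $\mathcal{A}$ outside small neighbourhoods of the four turning points $\pm\lambda_1,\pm\lambda_2$, use the WKB tools collected in Section \ref{sc4}, and track formal solutions along the Stokes graph of $\Pi_{A_{\phi}}$, matching them to the canonical solutions $U^{\infty}_k$, $U^0_k$. The output (Corollary \ref{cor5.2}) is a system of transcendental identities expressing relevant products of entries of $G$ and $\hat{G}$ as exponentials of contour integrals of $w(A_{\phi},\lambda)\lambda^{-2}d\lambda$ along $\mathbf{a}$ and $\mathbf{b}$, modulated by lower-order factors arising from the formal prefactors $\lambda^{\pm\theta_0\sigma_3/2}$, $\lambda^{-\theta_{\infty}\sigma_3/2}$ at $\lambda=0,\infty$; these subleading pieces will be what produces the correction $-\tfrac{\Omega_{\mathbf{a}}}{4}(\theta_0-\theta_{\infty}+1)-\tfrac{\Omega_{\mathbf{b}}}{2}$ inside $2ix_0^+$.

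The third step is to invert those key relations via the $\vartheta$-function calculus of Section \ref{sc6}. Taking logarithms, the two combinations $\log(g_{11}g_{22})$ and $\log(g_{12}\hat{g}_{21}/(g_{22}\hat{g}_{11}))$ become linear expressions in $\Omega_{\mathbf{a}}$ and $\Omega_{\mathbf{b}}$ with unknown coefficient $x_0^+$; solving for $x_0^+$ yields precisely the formula in the statement, modulo $\Omega_{\mathbf{a}}\Z+\Omega_{\mathbf{b}}\Z$. Jacobi inversion then converts the Abel image of $y(x)$ on $\Pi_{A_{\phi}}$ into the sn-function with modulus $k=\lambda_1/\lambda_2$, giving $y(x)^{-1}=i\lambda_1\mathrm{sn}(2i\lambda_2(x-x_0^+)+O(x^{-\delta});\lambda_1/\lambda_2)$. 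The hypothesis $g_{11}g_{12}g_{22}\hat{g}_{11}\hat{g}_{21}\neq 0$ is exactly what keeps these logarithms finite and $x_0^+$ away from the pole lattice $\mathcal{P}(x_0^+)$, i.e.\ what selects a generic point of $\mathcal{M}$.

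Finally, one has to justify that this formal elliptic expression really is the asymptotic of an honest transcendent. Following Kitaev \cite{Kitaev-1,Kitaev-3}, the idea is to take a base point $x^*\in S(\phi,t_{\infty},\kappa_0,\delta_0)$, feed the leading elliptic ansatz as an initial approximation into the inverse monodromy problem, and solve it by a contraction mapping in a neighbourhood of the background, simultaneously verifying that the constructed $y$ has monodromy $(G,\hat{G})$ and remainder $O(x^{-\delta})$. The cheese-like strip excises precisely the discs around $\mathcal{P}(x_0^+)$, where the linearisation near the leading elliptic background would otherwise fail to be invertible. I expect the main obstacle to be the WKB matching of Sections \ref{sc5}--\ref{sc6}: organising the Stokes multipliers across four turning points on a Riemann surface, checking that the four logarithms of monodromy entries are consistently expressible through only the two period integrals (this consistency being exactly enforced by the monodromy manifold relations of Section \ref{ssc2.1}), and controlling the $\theta_0,\theta_{\infty}$-dependent corrections responsible for the explicit shift inside $x_0^+$.
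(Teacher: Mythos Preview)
Your proposal is correct and follows essentially the same route as the paper: WKB solution of the direct monodromy problem along the Stokes graph of $\Pi_{A_\phi}$ (Section~\ref{sc5}, culminating in Corollary~\ref{cor5.2}), rewriting the resulting integrals through $\vartheta$-functions and the periods $\Omega_{\mathbf a},\Omega_{\mathbf b}$ (Section~\ref{sc6}), inverting via the Abel map to obtain the $\mathrm{sn}$-expression (Section~\ref{ssc7.1}), and justification by Kitaev's scheme. The only refinement worth noting is that the paper isolates the Abel image by forming the specific combination $\log(g_{11}g_{22})+\tau\log\bigl(g_{12}\hat g_{21}/(g_{22}\hat g_{11})\bigr)$ with $\tau=\Omega_{\mathbf b}/\Omega_{\mathbf a}$ and invoking the Legendre-type relation of Lemma~\ref{lem6.7}, and the shift $-\tfrac{\Omega_{\mathbf a}}{4}(\theta_0-\theta_\infty+1)-\tfrac{\Omega_{\mathbf b}}{2}$ comes partly from the $O(t^{-1})$ expansion of $\mu$ (Lemma~\ref{lem6.1}) and partly from the half-period in $F(\lambda^-_+,\lambda^+_+)$, not solely from the formal monodromy prefactors.
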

\begin{thm}\label{thm2.2}
Suppose that $-\pi/2<\phi <0$ and that $g_{11}g_{21}g_{22}\hat{g}_{12}\hat{g}_{22}\not=0.$ 
Then $y(x)$ admits an asymptotic representation of the same
form as in Theorem $\ref{thm2.1}$ with the phase shift given by  
\begin{align*}
 2ix^-_0 &= 2i x^-_0(\Omega_{\mathbf{a}},\Omega_{\mathbf{b}}, G,\hat{G}) 
\\
 \equiv &\frac {1}{2\pi i}\Bigl(\Omega_{\mathbf{a}} \log(g_{11}g_{22})
 +\Omega_{\mathbf{b}} \log\frac{g_{11}\hat{g}_{22}}{g_{21}\hat{g}_{12}}\Bigr)
-\frac{\Omega_{\mathbf{a}}}4 (\theta_0-\theta_{\infty}+2) -\frac{\Omega_{\mathbf
{b}}}2   
\mod \Omega_{\mathbf{a}}\mathbb{Z}+\Omega_{\mathbf{b}}\mathbb{Z}
\end{align*}
in $S(\phi, t_{\infty}, \kappa_0, \delta_0)$ with $\mathcal{P}(x^-_0).$
\end{thm}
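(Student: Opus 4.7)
The plan is to repeat, with sign-symmetric modifications, the entire pipeline that established Theorem~\ref{thm2.1}: WKB analysis of the auxiliary linear system of Section~\ref{sc3}, evaluation of the resulting connection identities in terms of $\vartheta$-function integrals (Section~\ref{sc6}), extraction of the elliptic asymptotic form (Section~\ref{sc7}), and Kitaev-type justification. Because property~(3) of Section~\ref{ssc2.2} gives $A_{-\phi}=\overline{A_\phi}$, the elliptic curve $\Pi_{A_\phi}$ for $-\pi/2<\phi<0$ is the complex conjugate of the one for $0<\phi<\pi/2$; the branch points arrange as in Figure~\ref{cycles1}(a), and the canonical cycles $\mathbf{a},\mathbf{b}$ are drawn in the opposite half-plane, as prescribed just before \eqref{2.1}. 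Apart from this geometric flip, the analytic machinery is insensitive to the sign of~$\phi$.

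Concretely, I would revisit the direct monodromy analysis leading to Corollary~\ref{cor5.2}. The point is that, when $\phi<0$, the cheese-like strip tilted at angle $\phi$ meets the Stokes rays for $\lambda\to\infty$ and $\lambda\to 0$ in the opposite cyclic order compared with the case $\phi>0$. Consequently the canonical solutions $U^\infty_k$, $U^0_k$ entering the WKB relations must be taken with the opposite index parity; equivalently, the Stokes multipliers and the connection matrices $G,\hat{G}$ pair up with the WKB data in a permuted fashion. This substitution automatically brings into the picture exactly the entries $g_{11},g_{21},g_{22},\hat{g}_{12},\hat{g}_{22}$ whose non-vanishing is assumed in the hypothesis, in place of $g_{11},g_{12},g_{22},\hat{g}_{11},\hat{g}_{21}$.

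The $\vartheta$-function manipulations of Section~\ref{sc6} and the inversion step in Section~\ref{sc7} are then formally identical to the ones used in the proof of Theorem~\ref{thm2.1}, and yield a representation of $y(x)^{-1}$ through $\mathrm{sn}$ with the same modulus $\lambda_1/\lambda_2$ and the same period lattice generated by $\Omega_{\mathbf{a}},\Omega_{\mathbf{b}}$. Substituting the permuted monodromy ratios produces
\[
2ix^-_0\equiv\frac{1}{2\pi i}\Bigl(\Omega_{\mathbf{a}}\log(g_{11}g_{22})+\Omega_{\mathbf{b}}\log\frac{g_{11}\hat{g}_{22}}{g_{21}\hat{g}_{12}}\Bigr)-\frac{\Omega_{\mathbf{a}}}{4}(\theta_0-\theta_{\infty}+1)-\frac{\Omega_{\mathbf{b}}}{2}
\]
modulo $\Omega_{\mathbf{a}}\mathbb{Z}+\Omega_{\mathbf{b}}\mathbb{Z}$. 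The Kitaev-style justification as a genuine solution of \eqref{1.1} transfers without change to the strip $S(\phi,t_\infty,\kappa_0,\delta_0)$ once $x_0^+$ is replaced by $x_0^-$.

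The main obstacle I anticipate is the sign-bookkeeping: one must carefully track, as $\phi$ crosses~$0$, how the Stokes graph of the linear system flips, how the branches of $\lambda_1(A_\phi),\lambda_2(A_\phi)$ are continued around the degenerate value $\phi=0$, and which pairing of canonical solutions generates which entries of $G$ and $\hat{G}$ in the final logarithmic formula. A cleaner alternative is to exploit the complex-conjugation symmetry $x\mapsto\bar{x}$ together with the induced action on the monodromy data: if this involution can be shown to interchange the relevant pairs of entries of $(G,\hat{G})$ in the required manner, then Theorem~\ref{thm2.2} would follow mechanically from Theorem~\ref{thm2.1}. Verifying this symmetry explicitly, however, demands essentially the same index-chasing as the direct repetition of the WKB argument sketched above.
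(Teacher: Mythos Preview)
Your proposal is correct and matches the paper's approach: the paper has already stated the $-\pi/2<\phi<0$ versions of all the key intermediate results (Corollary~\ref{cor5.2}, Proposition~\ref{prop6.2}(ii), Proposition~\ref{prop6.5}) alongside the $\phi>0$ ones, so in Section~\ref{ssc7.1} the proof of Theorem~\ref{thm2.2} is literally ``for $-\pi/2<\phi<0$, in a similar way we derive'' followed by the formula for $x_0^-$. One small inaccuracy in your description: the matrix solutions $Y^{\infty,*}_k$, $Y^0_k$ with $k=0,1$ are the same for both signs of $\phi$; what changes is the Stokes curve configuration (the curve $\mathbf{c}_1$ on the lower shore of the cut is replaced by $\mathbf{c}_1^-$ on the upper shore, cf.\ Figures~\ref{curve+} and~\ref{curve-}), which swaps the local Stokes matrices at the turning points and thereby produces the permuted combination of entries of $G,\hat{G}$.
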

\begin{rem}\label{rem2.1}
By Corollary \ref{cor3.3} and Proposition \ref{prop3.5},
\begin{align*}
&\frac{g_{22}\hat{g}_{11}}{g_{12}\hat{g}_{21}}=\frac{g_{22}}{g_{12}}
\Bigl(\frac{g_{11}}{g_{21}}-s^0_0\Bigr) =\frac{g_{11}g_{22}}{1-g_{11}g_{22}}
\Bigl(1-\frac{g_{21}}{g_{11}}s^0_0 \Bigr)
=\frac{g_{11}g_{22}}{1-g_{11}g_{22}}\cdot
\frac{\hat{g}_{11}}{g_{11}},
\\
&\frac{g_{21}\hat{g}_{12}}{g_{11}\hat{g}_{22}}=\frac{g_{12}g_{21}^2}
{g_{11}(e^{\pi i(\theta_0-\theta_{\infty})}+g_{12}g_{21})} 
\Bigl(\frac{g_{11}}{g_{21}}-s^0_0\Bigr) =\frac{1-g_{11}g_{22}}
{e^{\pi i(\theta_0-\theta_{\infty})}+1-g_{11}g_{22}} \cdot
\frac{\hat{g}_{11}}{g_{11}},
\end{align*} 
that is, the constants consisting of $g_{ij}$ and $\hat{g}_{ij}$ in $x^{\pm}_0$ 
are represented
in terms of $g_{11}g_{22}$ and $\hat{g}_{11}/g_{11}$, which are invariants
under an action on monodromy data as in Proposition  \ref{prop3.51}.
\end{rem}
For $\phi$ such that $|\phi-m\pi|<\pi/2$ $(m\in \mathbb{Z}),$ set $\Omega^{\phi}
_{\mathbf{a}, \mathbf{b}} =e^{m\pi i} \Omega^{\phi-m\pi}_{\mathbf{a},\mathbf{b}}
$ with $\Omega^{\tilde{\phi}}_{\mathbf{a},\mathbf{b}} =\Omega_{\mathbf{a},
\mathbf{b}}$ for $|\tilde{\phi}|<\pi/2,$ and $\lambda_{1,2}=\lambda_{1,2}
(A_{\phi-m\pi}).$ Then we have the following.
\begin{thm}\label{thm2.3}
Suppose that $0<\phi-m\pi<\pi/2$ $($respectively, $-\pi/2 <\phi-m\pi <0)$
with $m\in \mathbb{Z}\setminus \{0\}.$ 
Then $y(x)$ admits the expression
$$
y(x)^{-1}= i\lambda_1 \mathrm{sn}\bigl(2i\lambda_2(x-x_0^{(m)})
 + O(x^{-\delta}); \lambda_1/\lambda_2 \bigr)
$$
as $2x=te^{i\phi} \to \infty$ through $S(\phi,t_{\infty},
\kappa_0,\delta_0)$ with $\mathcal{P}(x_0^{(m)})$, 
if $g^{(m)}_{11} g^{(m)}_{12} g^{(m)}_{22}\hat{g}^{(m)}
_{11}\hat{g}^{(m)}_{21}\not=0$ $($respectively, $g^{(m)}_{11} g^{(m)}_{21} 
g^{(m)}_{22}\hat{g}^{(m)}_{12} \hat{g}^{(m)}_{22} \not=0)$. Here, for
$0<\pm (\phi-m\pi)<\pi/2$, $m\in \mathbb{Z}\setminus \{0\},$
$$
x^{(m)}_0 =x^{\pm}_0(\Omega^{\phi}_{\mathbf{a}}, \Omega^{\phi}_{\mathbf{b}},
G_m,\hat{G}_m)
$$
with
\begin{align*}
G_m =(g^{(m)}_{ij})=& 
 e^{\tfrac 12 \pi im\theta_0\sigma_3} G(S^{\infty}_0S^{\infty}_1 e^{\pi i
\theta_{\infty}\sigma_3})^m e^{-\tfrac 12 \pi im\theta_{\infty}\sigma_3},   
\\
\hat{G}_m =(\hat{g}^{(m)}_{ij})=& 
e^{\tfrac 12\pi i m\theta_0 \sigma_3} \hat{G}(S^{\infty}_0)^{-1}
(S^{\infty}_0S^{\infty}_1e^{\pi i\theta_{\infty}\sigma_3})^m 
S^{\infty}_0 e^{-\tfrac 12\pi i m\theta_{\infty} \sigma_3}.
\end{align*}
\end{thm}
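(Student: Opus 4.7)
The plan is to reduce Theorem \ref{thm2.3} to Theorems \ref{thm2.1}--\ref{thm2.2} by rotating the $x$-plane by $e^{-im\pi}$, exploiting the compatibility of the isomonodromy structure with this rotation and the $y\leftrightarrow -y$ symmetry of \eqref{1.1}. Set $\tilde\phi=\phi-m\pi\in(-\pi/2,0)\cup(0,\pi/2)$, $\tilde x=e^{-im\pi}x$ and $\tilde\lambda=e^{im\pi}\lambda$; these preserve $x\lambda$ and $x\lambda^{-1}$, hence the leading exponentials of \eqref{1.2} and the Boutroux integrand $w(A,\lambda)/\lambda^2$. The direction $\arg\tilde x=\tilde\phi$ then lies in $(-\pi/2,\pi/2)$ and is covered by the base theorems, and the rotated system is isomonodromic to a solution $\tilde y$ of \eqref{1.1} whose monodromy data I will show to be $(G_m,\hat G_m)$.

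First I would identify the canonical solutions of the rotated system. The rotated symbol $\tilde\lambda^{\pm\frac12\theta_\ast\sigma_3}$ differs from $\lambda^{\pm\frac12\theta_\ast\sigma_3}$ by $e^{\pm\frac12 im\pi\theta_\ast\sigma_3}$, and the rotated Stokes sectors correspond to the old sectors modulated by a formal-monodromy twist. The new connection matrices $\tilde G,\hat{\tilde G}$ are therefore expressible through $G,\hat G,S^\infty_k,S^0_k$ together with products of the diagonal factors $e^{\pm\frac12 im\pi\theta_\ast\sigma_3}$.

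Second, I would simplify these expressions via the Stokes and monodromy relations. Iterating $U^\infty_{k+1}=U^\infty_k S^\infty_k$, $U^0_{k+1}=U^0_k S^0_k$ together with the formal-monodromy identities $U^\infty_{k+2}(\lambda)=U^\infty_k(\lambda e^{-2\pi i})e^{-\pi i\theta_\infty\sigma_3}$ and $U^0_{k+2}(\lambda)=U^0_k(\lambda e^{-2\pi i})e^{\pi i\theta_0\sigma_3}$ (direct consequences of the asymptotic formulas at the two irregular singularities), the shifted solutions $U^\infty_m,U^0_m$ reorganise as $U^\infty_0$ and $U^0_0$ multiplied by powers of the full monodromies $M_\infty=S^\infty_0S^\infty_1e^{\pi i\theta_\infty\sigma_3}$ and $M_0=S^0_0S^0_1e^{-\pi i\theta_0\sigma_3}$. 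The conjugation $M_0=GM_\infty G^{-1}$ from the monodromy manifold (Subsection \ref{ssc2.1}) then collapses the $\lambda=0$ contributions onto the $\lambda=\infty$ side, producing the compact closed forms $\tilde G=G_m$ and $\hat{\tilde G}=\hat G_m$ stated in the theorem; the asymmetric bracketing $(S^\infty_0)^{-1}(\cdots)S^\infty_0$ in $\hat G_m$ arises because $\hat G$ is defined through the $k=1$ canonical solutions while the shift-by-$m$ indexing naturally starts from $k=0$.

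Finally, Theorem \ref{thm2.1} (if $\tilde\phi>0$) or \ref{thm2.2} (if $\tilde\phi<0$) applied to $\tilde y$ along $\arg\tilde x=\tilde\phi$ gives $\tilde y(\tilde x)^{-1}=i\lambda_1\mathrm{sn}(2i\lambda_2(\tilde x-\tilde x_0)+O(\tilde x^{-\delta});\lambda_1/\lambda_2)$ with $\tilde x_0=x_0^\pm(\Omega^{\tilde\phi}_{\mathbf{a}},\Omega^{\tilde\phi}_{\mathbf{b}},G_m,\hat G_m)$ and $\lambda_{1,2}=\lambda_{1,2}(A_{\tilde\phi})$. Substituting $\tilde x=e^{-im\pi}x$, using the linearity of $x_0^\pm$ in the periods so that $x_0^{(m)}:=e^{im\pi}\tilde x_0=x_0^\pm(\Omega^\phi_{\mathbf{a}},\Omega^\phi_{\mathbf{b}},G_m,\hat G_m)$ with $\Omega^\phi_{\mathbf{a},\mathbf{b}}=e^{im\pi}\Omega^{\tilde\phi}_{\mathbf{a},\mathbf{b}}$, and absorbing any overall sign (for odd $m$) through the oddness $\mathrm{sn}(-u;k)=-\mathrm{sn}(u;k)$ combined with the $y\to-y$ symmetry of \eqref{1.1}, one recovers the asymptotic formula of Theorem \ref{thm2.3} on $S(\phi,t_\infty,\kappa_0,\delta_0)$. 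The main obstacle is the bookkeeping at the second step: iterating Stokes and formal-monodromy matrices and invoking the monodromy-manifold relations so that the compact closed forms of $G_m,\hat G_m$ emerge exactly as written, and maintaining consistent branch choices for $\lambda_1(A_{\tilde\phi}),\lambda_2(A_{\tilde\phi})$ across all shifted sectors via Proposition \ref{prop8.16}. No new analytic input beyond Theorems \ref{thm2.1}--\ref{thm2.2} is required.
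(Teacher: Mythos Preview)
Your approach is essentially the same as the paper's: rotate by $e^{-im\pi}$ in $(x,\lambda)$ to land in $|\tilde\phi|<\pi/2$, identify the connection matrices of the rotated system by matching canonical solutions (which shifts the $\infty$-sector index by $2m$ and produces the Stokes-matrix product collapsed via Proposition~\ref{prop3.6}), and then transport Theorems~\ref{thm2.1}--\ref{thm2.2} back using the oddness of $\mathrm{sn}$ together with $\Omega^{\phi}_{\mathbf a,\mathbf b}=e^{im\pi}\Omega^{\tilde\phi}_{\mathbf a,\mathbf b}$. One simplification the paper exploits that you can use to shorten your second step: because the sector condition at $\lambda=0$ involves $\arg\lambda-\arg x$, it is \emph{invariant} under the rotation, so $\tilde U^0_j=U^0_j\,e^{-\frac12 im\pi\theta_0\sigma_3}$ directly and no $S^0_k$ or $M_0$ ever enter---only the $\infty$-side shifts, giving $G_m$ and $\hat G_m$ immediately without invoking $M_0=GM_\infty G^{-1}$.
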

\section{Isomonodromy deformation and monodromy data}\label{sc3}
\subsection{Monodromy data}\label{ssc3.1}
We examine the monodromy data for system \eqref{1.2} (see also \cite[Section 1]
{K-Sb}). 
System \eqref{1.2} admits the matrix solutions
$$
U^{\infty}_k(\lambda)=(I+O(\lambda^{-1})) \exp(\tfrac 12 ix\lambda \sigma_3)
\lambda^{-\tfrac 12 \theta_{\infty}\sigma_3}
$$
as $\lambda \to \infty$ through the sector $|\arg \lambda +\arg x - k\pi |
<\pi,$ and
$$
U^{0}_k(\lambda)=\Delta_0 (I+O(\lambda)) \exp(-\tfrac 12 ix\lambda^{-1} \sigma_3)
\lambda^{\tfrac 12 \theta_{0}\sigma_3}
$$
as $\lambda \to 0$ through the sector $|\arg \lambda -\arg x - k\pi |<\pi,$ 
where
$$
\Delta_0=\begin{pmatrix} v^{1/2} f_1  &  v^{1/2}\z f_2 \\
                        v^{-1/2} f_1  & v^{-1/2}(\z-x) f_2 \end{pmatrix},
$$
$f_1,$ $f_2$ being such that $xf_1f_2\equiv -1.$ Let $S^{\infty}_k$, $S^0_k$ be
Stokes matrices such that
$$
U^{\infty}_{k+1}(\lambda)=U^{\infty}_k(\lambda)S^{\infty}_k, \quad
U^{0}_{k+1}(\lambda)=U^{0}_k(\lambda)S^{0}_k. 
$$
Then
$$
S^{\infty}_0=\begin{pmatrix}  1 & s_0^{\infty} \\ 0 & 1   \end{pmatrix},
\quad S^{\infty}_1=\begin{pmatrix}  1 & 0 \\ s_1^{\infty} & 1   \end{pmatrix},
\quad S^{0}_0=\begin{pmatrix}  1 & s_0^{0} \\ 0 & 1   \end{pmatrix},
\quad S^{0}_1=\begin{pmatrix}  1 & 0 \\ s_1^{0} & 1   \end{pmatrix}.
$$
Let $G=(g_{ij}),$ $\hat{G}=(\hat{g}_{ij}) \in SL_2(\mathbb{C})$ be 
connection matrices defined by
$$
U^{\infty}_0(\lambda)=U^0_0(\lambda)G, \quad
U^{\infty}_1(\lambda)=U^0_1(\lambda)\hat{G}. 
$$
By the uniqueness of the asymptotic expression in each sector, for every
$m\in \mathbb{Z},$
\begin{align*}
& U^{\infty}_k(e^{-2\pi im} \lambda) e^{-\pi im\theta_{\infty}\sigma_3}
= U^{\infty}_{k+2m}(\lambda),
\\
& U^{0}_k(e^{-2\pi im} \lambda) e^{\pi im\theta_{0}\sigma_3}
= U^{0}_{k+2m}(\lambda).
\end{align*} 
Then we have the following.
\begin{prop}\label{prop3.1}
For $k\in \mathbb{Z},$ $S^{\infty}_{k+2}=e^{\pi i\theta_{\infty} \sigma_3}
S^{\infty}_k e^{-\pi i\theta_{\infty} \sigma_3},$
 $S^{0}_{k+2}=e^{-\pi i\theta_{0} \sigma_3}
S^{0}_k e^{\pi i\theta_{0} \sigma_3}.$
\end{prop}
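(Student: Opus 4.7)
The plan is to derive both identities directly from the two monodromy/rotation relations
$$ U^{\infty}_k(e^{-2\pi im}\lambda)\, e^{-\pi im\theta_{\infty}\sigma_3} = U^{\infty}_{k+2m}(\lambda), \qquad U^{0}_k(e^{-2\pi im}\lambda)\, e^{\pi im\theta_{0}\sigma_3} = U^{0}_{k+2m}(\lambda), $$
displayed just above the statement, combined with the Stokes-matrix definitions $U^{\infty}_{k+1}=U^{\infty}_k S^{\infty}_k$ and $U^{0}_{k+1}=U^{0}_k S^{0}_k$. The point is that the Stokes matrices connect neighbouring sectors in a $\lambda$-independent way, while shifting $k$ by $2$ is the same as rotating $\lambda$ by $e^{-2\pi i}$ and conjugating by the formal monodromy at infinity (resp. at $0$); comparing these two viewpoints will produce the conjugation claimed.

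Concretely, for the first identity I would take $m=1$ in the rotation relation, giving $U^{\infty}_{k+2}(\lambda) = U^{\infty}_k(e^{-2\pi i}\lambda)\,e^{-\pi i\theta_{\infty}\sigma_3}$ and analogously $U^{\infty}_{k+3}(\lambda) = U^{\infty}_{k+1}(e^{-2\pi i}\lambda)\,e^{-\pi i\theta_{\infty}\sigma_3}$. Substituting $U^{\infty}_{k+1}(e^{-2\pi i}\lambda) = U^{\infty}_k(e^{-2\pi i}\lambda)\,S^{\infty}_k$ into the second expression yields
$$ U^{\infty}_{k+3}(\lambda) \;=\; U^{\infty}_k(e^{-2\pi i}\lambda)\,S^{\infty}_k\,e^{-\pi i\theta_{\infty}\sigma_3} \;=\; U^{\infty}_{k+2}(\lambda)\, e^{\pi i\theta_{\infty}\sigma_3} S^{\infty}_k e^{-\pi i\theta_{\infty}\sigma_3}. $$
On the other hand, by definition $U^{\infty}_{k+3}(\lambda)=U^{\infty}_{k+2}(\lambda)\,S^{\infty}_{k+2}$. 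Since $U^{\infty}_{k+2}(\lambda)$ is invertible, equating the two right-hand sides gives $S^{\infty}_{k+2} = e^{\pi i\theta_{\infty}\sigma_3}\,S^{\infty}_k\,e^{-\pi i\theta_{\infty}\sigma_3}$, as required.

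The second identity follows by the same argument applied to $U^{0}_k$: using $U^{0}_{k+2}(\lambda)=U^{0}_k(e^{-2\pi i}\lambda)\,e^{\pi i\theta_{0}\sigma_3}$ and $U^{0}_{k+3}(\lambda)=U^{0}_{k+1}(e^{-2\pi i}\lambda)\,e^{\pi i\theta_{0}\sigma_3}$, inserting $U^{0}_{k+1}=U^{0}_k S^{0}_k$, and comparing with $U^{0}_{k+3}=U^{0}_{k+2}S^{0}_{k}$, one obtains $S^{0}_{k+2}=e^{-\pi i\theta_{0}\sigma_3}\,S^{0}_k\,e^{\pi i\theta_{0}\sigma_3}$ (the opposite sign in the exponent reflects that near $\lambda=0$ the formal exponent is $\tfrac12\theta_0\sigma_3$ rather than $-\tfrac12\theta_\infty\sigma_3$).

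There is essentially no obstacle beyond bookkeeping: once one trusts the two rotation relations (which themselves follow from the uniqueness of the canonical asymptotics in each Stokes sector), the proposition is a one-line algebraic manipulation. The only point that requires slight care is to verify the sign in the exponent for the $\lambda\to 0$ case, which is dictated by the formal monodromy $\lambda^{\frac12\theta_0\sigma_3}$ of $U^{0}_k$ as opposed to $\lambda^{-\frac12\theta_\infty\sigma_3}$ of $U^{\infty}_k$.
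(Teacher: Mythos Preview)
Your argument is correct and is exactly the computation the paper has in mind: the paper simply records the rotation relations and then states Proposition~\ref{prop3.1} with ``Then we have'', leaving the short algebra to the reader. One small typo: in your treatment of the $\lambda\to 0$ case you wrote $U^{0}_{k+3}=U^{0}_{k+2}S^{0}_{k}$ where you meant $U^{0}_{k+3}=U^{0}_{k+2}S^{0}_{k+2}$; the conclusion you draw is nonetheless the right one.
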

\begin{prop}\label{prop3.2}
$S^0_0 S^0_1 e^{-\pi i \theta_0\sigma_3} G e^{-\pi i\theta_{\infty}\sigma_3} 
= GS^{\infty}_{0}S^{\infty}_1,$  $GS^{\infty}_0=S^0_0 \hat{G}.$ 
\end{prop}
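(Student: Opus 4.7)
The plan is to derive both identities directly from the defining relations $U^{\infty}_{k+1} = U^{\infty}_k S^{\infty}_k$, $U^{0}_{k+1} = U^{0}_k S^{0}_k$, $U^{\infty}_0 = U^0_0 G$, $U^{\infty}_1 = U^0_1 \hat G$, together with the rotation identities $U^{\infty}_{k+2m}(\lambda) = U^{\infty}_k(e^{-2\pi i m}\lambda)\,e^{-\pi i m\theta_{\infty}\sigma_3}$ and $U^{0}_{k+2m}(\lambda) = U^{0}_k(e^{-2\pi i m}\lambda)\,e^{\pi i m\theta_{0}\sigma_3}$ displayed just above Proposition \ref{prop3.1}. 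The strategy is standard: express one canonical solution in two different ways, then cancel the common invertible factor $U^0_0$.

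The second identity is almost immediate. I would compute $U^{\infty}_1$ via the sector at infinity to get $U^{\infty}_1 = U^{\infty}_0 S^{\infty}_0 = U^0_0 G S^{\infty}_0$, and via the sector at the origin to get $U^{\infty}_1 = U^0_1 \hat G = U^0_0 S^0_0 \hat G$. Cancelling $U^0_0$ yields $G S^{\infty}_0 = S^0_0 \hat G$.

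For the first identity I would trace $U^{\infty}_2$ in two ways. Using the Stokes relations at infinity, $U^{\infty}_2 = U^{\infty}_0 S^{\infty}_0 S^{\infty}_1 = U^0_0 G S^{\infty}_0 S^{\infty}_1$. On the other hand, the rotation identity at infinity gives $U^{\infty}_2(\lambda) = U^{\infty}_0(e^{-2\pi i}\lambda)\,e^{-\pi i\theta_{\infty}\sigma_3}$, which via $U^{\infty}_0 = U^0_0 G$ (an identity of holomorphic functions wherever both sides are defined) becomes $U^0_0(e^{-2\pi i}\lambda)\,G\,e^{-\pi i\theta_{\infty}\sigma_3}$. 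Applying the rotation identity at the origin together with $U^0_2 = U^0_0 S^0_0 S^0_1$ gives $U^0_0(e^{-2\pi i}\lambda) = U^0_2(\lambda)\,e^{-\pi i\theta_0\sigma_3} = U^0_0(\lambda)\, S^0_0 S^0_1\, e^{-\pi i\theta_0\sigma_3}$. Equating the two expressions for $U^{\infty}_2$ and cancelling $U^0_0$ yields the first identity.

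I do not expect any genuine obstacle: the proof is purely algebraic bookkeeping, with no convergence or sectorial matching issues beyond what was already used to define the monodromy data. The only points requiring care are the direction of rotation ($\lambda \mapsto e^{-2\pi i}\lambda$, chosen so that the sector index advances by $+2$) and the placement of the diagonal factors $e^{\pm \pi i\theta\sigma_3}$ on the right of $U^{\bullet}_k$, as dictated by the exponent $\lambda^{\pm \frac{1}{2}\theta\sigma_3}$ in the canonical asymptotic normalization.
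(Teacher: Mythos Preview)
Your proposal is correct and is precisely the argument the paper has in mind: Proposition~\ref{prop3.2} is stated immediately after the rotation identities with the words ``Then we have,'' and your two-way computation of $U^{\infty}_1$ and $U^{\infty}_2$ via the defining relations is exactly the intended derivation. There is nothing to add.
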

\begin{cor}\label{cor3.3}
The entries of $S^{\infty}_k,$ $S^0_k$, $G$ and $\hat{G}$ fulfil
\begin{align*}
& s^0_0s^0_1 e^{-\pi i\theta_0} + 2\cos \pi \theta_0
= s^{\infty}_0s^{\infty}_1 e^{\pi i\theta_{\infty}} + 2\cos \pi \theta_{\infty},
\\
& g_{11}=\hat{g}_{11}+s^0_0\hat{g}_{21}, \quad
 g_{12}+s^{\infty}_0{g}_{11} = \hat{g}_{12}+s^0_0\hat{g}_{22}, \quad
 g_{21}= \hat{g}_{21}, \quad
 g_{22}+s^{\infty}_0{g}_{21} = \hat{g}_{22}. 
\end{align*}
\end{cor}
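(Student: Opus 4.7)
The plan is to derive Corollary \ref{cor3.3} as a purely algebraic consequence of the two matrix identities in Proposition \ref{prop3.2}; no additional analytic input is required. The first displayed scalar identity will come from a trace computation on the first relation of Proposition \ref{prop3.2}, and the four entry-wise identities will come from matching coefficients on both sides of the second relation $GS^{\infty}_0=S^0_0 \hat{G}$.

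For the trace identity, I would rewrite $S^0_0 S^0_1 e^{-\pi i \theta_0\sigma_3}\, G\, e^{-\pi i\theta_{\infty}\sigma_3} = G S^{\infty}_0 S^{\infty}_1$ in the conjugation form
\begin{equation*}
S^0_0 S^0_1 e^{-\pi i \theta_0\sigma_3} = G\bigl( S^{\infty}_0 S^{\infty}_1 e^{\pi i\theta_{\infty}\sigma_3}\bigr) G^{-1},
\end{equation*}
so that the two sides are similar and in particular have equal trace. A direct block computation with the unipotent Stokes matrices gives
\begin{equation*}
S^0_0 S^0_1 = \begin{pmatrix} 1+s^0_0 s^0_1 & s^0_0 \\ s^0_1 & 1 \end{pmatrix}, \qquad S^{\infty}_0 S^{\infty}_1 = \begin{pmatrix} 1+s^{\infty}_0 s^{\infty}_1 & s^{\infty}_0 \\ s^{\infty}_1 & 1 \end{pmatrix},
\end{equation*}
and multiplying each by the corresponding diagonal exponential, the traces read $s^0_0 s^0_1 e^{-\pi i\theta_0} + 2\cos\pi\theta_0$ and $s^{\infty}_0 s^{\infty}_1 e^{\pi i\theta_{\infty}} + 2\cos\pi\theta_{\infty}$, which yields the first asserted identity.

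For the four entry identities, I would expand both sides of $GS^{\infty}_0 = S^0_0 \hat{G}$. The left-hand side equals
\begin{equation*}
\begin{pmatrix} g_{11} & g_{11}s^{\infty}_0 + g_{12} \\ g_{21} & g_{21}s^{\infty}_0 + g_{22} \end{pmatrix},
\end{equation*}
while the right-hand side equals
\begin{equation*}
\begin{pmatrix} \hat{g}_{11} + s^0_0 \hat{g}_{21} & \hat{g}_{12} + s^0_0 \hat{g}_{22} \\ \hat{g}_{21} & \hat{g}_{22} \end{pmatrix}.
\end{equation*}
The $(2,1)$ entries immediately give $g_{21}=\hat{g}_{21}$, and the $(2,2)$ entries give $g_{22}=\hat{g}_{22}-s^{\infty}_0 \hat{g}_{21}$. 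Substituting $\hat{g}_{21}=g_{21}$ into the $(1,1)$ entry produces $\hat{g}_{11}=g_{11}-s^0_0 g_{21}$, and a similar substitution in the $(1,2)$ entry, together with $\hat{g}_{22}=g_{22}+s^{\infty}_0 g_{21}$, yields $g_{12}-s^0_0 g_{22}=-s^{\infty}_0\hat{g}_{11}+\hat{g}_{12}$.

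Every step is a routine matrix multiplication, so I do not foresee any genuine obstacle; the only point requiring a moment of care is ensuring that, in the $(1,2)$ entry comparison, one substitutes the already-established identities for $\hat{g}_{21}$ and $\hat{g}_{22}$ before reading off the formula, so that the claimed symmetric form in terms of $\hat{g}_{11}$ rather than $g_{11}$ actually appears.
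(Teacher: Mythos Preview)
Your proposal is correct and follows precisely the approach implied by the paper, which states Corollary \ref{cor3.3} without proof as an immediate consequence of Proposition \ref{prop3.2}. The trace argument for the first identity and the entrywise comparison of $GS^{\infty}_0=S^0_0\hat{G}$ for the remaining four are exactly the intended routine verifications.
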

Comparison of the entries of 
$G^{-1} S^0_0 S^0_1 e^{-\pi i \theta_0\sigma_3} G 
= S^{\infty}_{0}S^{\infty}_1e^{\pi i\theta_{\infty}\sigma_3} $ leads us to the
following.
\begin{prop}\label{prop3.4}
The entries $s^{\infty}_0,$ $s^{\infty}_1$, $s^0_0$, $s^0_1$ and $g_{ij}$
fulfil
\begin{align*}
& e^{-\pi i\theta_{\infty}} \Bigl(s^{\infty}_0+\frac{g_{22}}{g_{21}}\Bigr)=
\frac{g_{12}}{g_{21}} e^{-\pi i\theta_{0}} \Bigl(s^0_1+ \frac
{g_{22}}{g_{12}} e^{2\pi i\theta_{0}} \Bigr), 
\\
& e^{\pi i\theta_{\infty}} \Bigl(s^{\infty}_1-\frac
{g_{11}}{g_{12}} e^{-2\pi i\theta_{\infty}} \Bigr)
= \frac{g_{21}}{g_{12}} e^{\pi i\theta_0} 
\Bigl(s^{0}_0-\frac{g_{11}}{g_{21}}\Bigr),
\\
& \Bigl(s^{\infty}_0+\frac{g_{22}}{g_{21}}\Bigr)\Bigl(s^{\infty}_1-\frac
{g_{11}}{g_{12}} e^{-2\pi i\theta_{\infty}} \Bigr)
= \Bigl(s^{0}_0-\frac{g_{11}}{g_{21}}\Bigr)\Bigl(s^{0}_1+\frac
{g_{22}}{g_{12}} e^{2\pi i\theta_{0}} \Bigr) = -1-\frac{e^{\pi i(\theta_0
-\theta_{\infty})}}{g_{12} g_{21}}.
\end{align*}
\end{prop}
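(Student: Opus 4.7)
The plan is to prove the three identities by direct entrywise comparison, as suggested by the preamble to the proposition. Set $a = e^{\pi i\theta_0}$, $b = e^{\pi i\theta_\infty}$ and write out
\[
M := S^0_0 S^0_1 e^{-\pi i\theta_0\sigma_3} = \begin{pmatrix} (1+s^0_0 s^0_1) a^{-1} & s^0_0 a \\ s^0_1 a^{-1} & a \end{pmatrix}, \quad
N := S^\infty_0 S^\infty_1 e^{\pi i\theta_\infty\sigma_3} = \begin{pmatrix} (1+s^\infty_0 s^\infty_1) b & s^\infty_0 b^{-1} \\ s^\infty_1 b & b^{-1} \end{pmatrix}.
\]
Proposition \ref{prop3.2} is equivalent to the single matrix identity $MG = GN$, which contains four scalar equations obtained by matching entries; the trace of this identity has already been recorded as the first relation of Corollary \ref{cor3.3}.

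First I would read off the $(2,2)$ equation, namely $a^{-1} s^0_1 g_{12} + a g_{22} = b^{-1}(g_{21} s^\infty_0 + g_{22})$, solve it for $g_{21} s^\infty_0 + g_{22}$ and divide by $b g_{21}$; the resulting expression is precisely the first identity of the proposition. Next, from the $(1,1)$ equation
\[
(1+s^0_0 s^0_1) a^{-1} g_{11} + s^0_0 a g_{21} = g_{11}(1+s^\infty_0 s^\infty_1) b + g_{12} s^\infty_1 b,
\]
I would substitute $(1+s^0_0 s^0_1) a^{-1} = (1+s^\infty_0 s^\infty_1) b + b^{-1} - a$ from the trace identity; the $s^\infty_0 s^\infty_1$ term then cancels, $s^\infty_1$ is isolated linearly, and a rearrangement yields the second identity.

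The first equality of the third identity follows automatically: multiplying the two sides of the first and second identities, the prefactors $(g_{12}/g_{21}) b a^{-1}$ and $(g_{21}/g_{12}) a b^{-1}$ cancel, leaving the common value equal to $(s^0_0 - g_{11}/g_{21})(s^0_1 + a^2 g_{22}/g_{12})$. For the explicit formula on the right I would turn to the remaining $(2,1)$ equation, eliminate $s^\infty_0 s^\infty_1$ once more through the trace identity, clear denominators, and invoke $\det G = 1$ in the form $g_{11} g_{22} = 1 + g_{12} g_{21}$; after expanding and regrouping, the quadratic terms in $s^0_0$, $s^0_1$, $g_{ij}$ reproduce the product $(s^0_0 - g_{11}/g_{21})(s^0_1 + a^2 g_{22}/g_{12})$ and the remainder consolidates to exactly $-1 - ab^{-1}/(g_{12} g_{21}) = -1 - e^{\pi i(\theta_0 - \theta_\infty)}/(g_{12} g_{21})$.

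The whole argument is essentially book-keeping in linear algebra and contains no conceptual obstacle; the only delicate point is the correct sequencing of substitutions—Corollary \ref{cor3.3} to absorb $s^\infty_0 s^\infty_1$ and $\det G = 1$ to absorb $g_{11}g_{22}$—so that the $(2,1)$ entry collapses cleanly to the constant displayed on the right of the third identity.
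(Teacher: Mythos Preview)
Your approach is exactly what the paper does: the paper's entire proof is the single sentence ``Comparison of the entries of $G^{-1} S^0_0 S^0_1 e^{-\pi i \theta_0\sigma_3} G = S^{\infty}_{0}S^{\infty}_1e^{\pi i\theta_{\infty}\sigma_3}$ leads us to the following,'' and you supply the missing bookkeeping by writing this as $MG=GN$ and reading off individual entries. A couple of minor slips to tidy up: in the first identity you divide the $(2,2)$ equation by $g_{21}$, not by $bg_{21}$; and for the explicit constant in the third identity the cleanest route is to combine the $(1,2)$ and $(2,2)$ entries (subtract $g_{11}/g_{21}$ times the latter from the former) so that $s^\infty_0$ drops out directly---your $(2,1)$ route still carries an $s^\infty_1$ after the trace substitution and needs identity~2 fed back in, which works but is longer.
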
  
\begin{prop}\label{prop3.5}
The entries $g_{ij}$ and $\hat{g}_{ij}$ fulfil
$$
\frac{\hat{g}_{11}}{\hat{g}_{21}} =\frac{g_{11}}{g_{21}}-s^0_0, \quad
\frac{\hat{g}_{12}}{\hat{g}_{22}} =\frac{g_{12}g_{21}} {e^{\pi i(\theta_0
-\theta_{\infty})} +g_{12}g_{21}}\Bigl(
\frac{g_{11}}{g_{21}}-s^0_0\Bigr). 
$$
\end{prop}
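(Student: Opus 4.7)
The plan is to exploit the relation $GS^{\infty}_0 = S^0_0 \hat{G}$ from Proposition \ref{prop3.2}, rewritten as $\hat{G} = (S^0_0)^{-1} G S^{\infty}_0$, together with the identities of Proposition \ref{prop3.4} and the determinant condition $\hat{G}\in SL_2(\C)$. A direct matrix multiplication (equivalently, inspection of the formulas already recorded in Corollary \ref{cor3.3}) gives
$$
\hat{g}_{21} = g_{21}, \qquad \hat{g}_{11} = g_{11} - s^0_0\, g_{21}, \qquad \hat{g}_{22} = g_{22} + s^{\infty}_0\, g_{21},
$$
and dividing the second by the first immediately produces the first claimed identity $\hat{g}_{11}/\hat{g}_{21} = g_{11}/g_{21} - s^0_0$.

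For the second identity, the key step is to rewrite $\hat{g}_{22}$ so that $s^{\infty}_0$ is replaced by an expression involving $s^0_0$ and the entries of $G$. To this end, I would substitute the second relation of Proposition \ref{prop3.4},
$$
s^{\infty}_1 - \frac{g_{11}}{g_{12}}\,e^{2\pi i\theta_{\infty}} = \frac{g_{21}}{g_{12}}\,e^{\pi i(\theta_0-\theta_{\infty})}\Bigl(s^0_0-\frac{g_{11}}{g_{21}}\Bigr),
$$
into the third relation $(s^{\infty}_0+g_{22}/g_{21})(s^{\infty}_1 - (g_{11}/g_{12})\,e^{2\pi i \theta_{\infty}}) = -1 - e^{\pi i(\theta_0-\theta_{\infty})}/(g_{12}g_{21})$, and solve for $s^{\infty}_0 + g_{22}/g_{21}$. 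After cancellation this produces
$$
\hat{g}_{22} = g_{21}\Bigl(s^{\infty}_0+\frac{g_{22}}{g_{21}}\Bigr) = \frac{e^{\pi i(\theta_{\infty}-\theta_0)}\bigl(g_{12}g_{21}+e^{\pi i(\theta_0-\theta_{\infty})}\bigr)}{g_{21}\bigl(g_{11}/g_{21}-s^0_0\bigr)}.
$$

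Finally, the condition $\det\hat{G}=1$ lets me write $\hat{g}_{12}/\hat{g}_{22} = \hat{g}_{11}/\hat{g}_{21} - 1/(\hat{g}_{21}\hat{g}_{22})$. Substituting the first identity and the expression for $\hat{g}_{22}$ just obtained, the term $1/(\hat{g}_{21}\hat{g}_{22})$ becomes $(g_{11}/g_{21}-s^0_0)\,e^{\pi i(\theta_0-\theta_{\infty})}/(g_{12}g_{21}+e^{\pi i(\theta_0-\theta_{\infty})})$, and the two contributions combine to
$$
\frac{g_{12}g_{21}}{e^{\pi i(\theta_0-\theta_{\infty})}+g_{12}g_{21}}\Bigl(\frac{g_{11}}{g_{21}}-s^0_0\Bigr),
$$
which is the second identity. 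The only genuine obstacle is the algebraic bookkeeping in the middle step: carefully tracking the exponentials $e^{\pm\pi i(\theta_0-\theta_{\infty})}$ and the determinantal factor $g_{12}g_{21}$ is where sign or factor errors could creep in, but no conceptual difficulty is involved.
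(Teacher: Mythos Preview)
Your proof is correct and follows essentially the same route as the paper's. The only minor variations are that you invoke the second and third relations of Proposition~\ref{prop3.4} (eliminating $s^{\infty}_1$) to compute $\hat g_{22}=g_{22}+s^{\infty}_0 g_{21}$, whereas the paper uses the first and third (eliminating $s^{0}_1$); and you close with $\det\hat G=1$ while the paper uses $\det G=1$ in the equivalent step --- but the structure and all substantive ideas coincide.
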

\begin{proof}
By Corollary \ref{cor3.3}, 
$$
\frac{\hat{g}_{12}}{\hat{g}_{22}}=\frac{g_{12}+s^{\infty}_0g_{11}}
{g_{22}+s^{\infty}_0g_{21}}-s^0_0 = \frac{g_{11}}{g_{21}} -\frac{1/g_{21}}
{g_{22}+s^{\infty}_0 g_{21} }-s^0_0,
$$
and by the first and third relations in Proposition \ref{prop3.4},
$$
e^{-\pi i\theta_{\infty}}(g_{22}+s^{\infty}_0 g_{21}) =g_{12}e^{-\pi i\theta_0}
\Bigl(s^0_1 +\frac{g_{22}}{g_{12}}e^{2\pi i\theta_0}\Bigr)
 =g_{12}e^{-\pi i\theta_0}
\Bigl(s^0_0 -\frac{g_{11}}{g_{21}} \Bigr)^{-1}\Bigl(-1-\frac{e^{\pi i(\theta_0
-\theta_{\infty})} }{g_{12}g_{21}} \Bigr).
$$
From these equations the second relation of the proposition follows. 
\end{proof}
The monodromy manifold $\mathcal{M}$ is given by the equations 
$S^0_0 S^0_1 e^{-\pi i \theta_0\sigma_3} G e^{-\pi i\theta_{\infty}\sigma_3} 
= GS^{\infty}_{0}S^{\infty}_1$ and $GS^{\infty}_0=S^0_0 \hat{G}$ in 
$\mathbb{C}^{12}$. These equations contain six independent relations and
$g_{11}g_{22}-g_{12}g_{21}=\hat{g}_{11}\hat{g}_{22}-\hat{g}_{12}\hat{g}_{21}
=1$. Hence $\mathrm{dim}_{\mathbb{C}} \mathcal{M}=4$, and a generic point
on $\mathcal{M}$ is denoted by, say $(G,s^0_0)$, or $(G,\hat{G})$ by 
Propositions \ref{prop3.4} and \ref{prop3.5}.
\par
Instead of $(U^{\infty}_j(\lambda),\,U^0_j(\lambda))$, $U^0_j(\lambda)=
 U^0_j(\lambda; v,f_1,f_2)$, we may take 
$$
(\hat{U}^{\infty}_j(\lambda),\, U^0_j(\lambda; cv, f_1,f_2)), \,\,\,
\text{or} \,\,\,
(U^{\infty}_j(\lambda),\, U^0_j(\lambda; v, \tilde{c}f_1,\tilde{c}^{-1}f_2)) 
\,\,\,
\text{for any $ c, \tilde{c} \in \mathbb{C}\setminus \{0\}$}, 
$$
where 
$\hat{U}^{\infty}_j(\lambda):=c^{\sigma_3/2} U^{\infty}_j(\lambda)
 c^{-\sigma_3/2} =(I+O(\lambda^{-1}))U^{\infty}_j(\lambda)$, 
$U^0_j(\lambda; cv, f_1,f_2)=c^{\sigma_3/2} U^0_j(\lambda), $ and
$ U^0_j(\lambda; v, \tilde{c}f_1,\tilde{c}^{-1}f_2)=U^0_j(\lambda)\tilde{c}
^{\sigma_3}$. 
Then the connection formula $U^{\infty}_0(\lambda)
=U^0_0(\lambda)G$ becomes $\hat{U}^{\infty}_0(\lambda)
=U^0_0(\lambda; cv, f_1, f_2)G c^{-\sigma_3/2}$, or ${U}^{\infty}_0(\lambda)
=U^0_0(\lambda; v, \tilde{c}f_1, \tilde{c}^{-1}f_2)\tilde{c}^{-\sigma_3} G$,
respectively, and the same relation holds for $\hat{G}$. This fact induces 
the action $[*]:$ $(G,\hat{G}) \mapsto
 (\tilde{c}^{-\sigma_3} G c^{-\sigma_3/2},
 \tilde{c}^{-\sigma_3} \hat{G} c^{-\sigma_3/2})$, and
$(S^{\infty}_j, S^0_j) \mapsto (c^{\sigma_3/2}S^{\infty}_j c^{-\sigma_3/2}, 
\tilde{c}^{-\sigma_3} S^0_j \tilde{c}^{\sigma_3}) $ with any $c, \tilde{c}
\in \mathbb{C}.$ As shown in \cite[Section 3.4]{P-S}, 
dividing $\mathcal{M}$ by this action $[*]$ yields orbits
constituting an affine cubic surface $V(\mathcal{M}) \subset \mathbb{C}^3$ 
parametrised 
by $(\theta_0,\theta_{\infty})$, and each solution of \eqref{1.1} corresponds 
to a point on $V(\mathcal{M})$.  
It is easy to see the following.
\begin{prop}\label{prop3.51}
The quantities
$$
g_{11}g_{22}, \quad 
\frac{\hat{g}_{11}}{g_{11}}=1-s^0_0\frac{g_{21}}{g_{11}}, \quad 
\frac{\hat{g}_{22}}{g_{22}}=1+s^{\infty}_0\frac{g_{21}}{g_{22}}, \quad 
\frac{\hat{g}_{12}}{g_{12}}=1-s^{0}_0\frac{\hat{g}_{22}}{g_{12}} 
+s^{\infty}_0\frac{{g}_{11}}{g_{12}}, \quad 
\frac{\hat{g}_{21}}{g_{21}}=1 
$$
are invariants under the action $[*].$
\end{prop}
Note that $e^{\pi i\theta_{\infty}\sigma_3} S_k^{\infty}S^{\infty}_{k+1}
e^{-\pi i\theta_{\infty}\sigma_3}=S^{\infty}_{k+2}S^{\infty}_{k+3}.$
By induction on $m,$
$S^{\infty}_{2m-2}S^{\infty}_{2m-1}=e^{(m-1)\pi i\theta_{\infty}\sigma_3}
S^{\infty}_0S^{\infty}_1 e^{-(m-1)\pi i\theta_{\infty}\sigma_3}.$
Thus we have the following.
\begin{prop}\label{prop3.6}
Let $m \in \mathbb{Z}\setminus\{0\}$. Then
\begin{align*}
&S^{\infty}_0S^{\infty}_1 \cdots S^{\infty}_{2m-2}S^{\infty}_{2m-1}
=(S^{\infty}_0S^{\infty}_1 e^{\pi i\theta_{\infty}\sigma_3})^m e^{-m\pi i
\theta_{\infty}\sigma_3} \quad \text{for $m\ge 1,$ and}
\\
&S^{\infty}_{2m}S^{\infty}_{2m+1} \cdots S^{\infty}_{-2}S^{\infty}_{-1}
=e^{m\pi i\theta_{\infty}\sigma_3}(S^{\infty}_0S^{\infty}_1 
e^{\pi i\theta_{\infty}\sigma_3})^{-m} \quad \text{for $m \le -1.$}  
\end{align*}
\end{prop}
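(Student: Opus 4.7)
The plan is to combine Proposition \ref{prop3.1} with a telescoping product. First I would introduce the shorthand $T:=S^{\infty}_0 S^{\infty}_1$ and $E:=e^{\pi i\theta_{\infty}\sigma_3}$, so that Proposition \ref{prop3.1} reads $S^{\infty}_{k+2}=E\,S^{\infty}_k\,E^{-1}$. A routine induction on $|j|$ then yields $S^{\infty}_{2j}=E^jS^{\infty}_0E^{-j}$ and $S^{\infty}_{2j+1}=E^jS^{\infty}_1E^{-j}$ for every $j\in\mathbb{Z}$, and the inner factors cancel to give the identity noted in the paragraph preceding the proposition,
\[
S^{\infty}_{2j}S^{\infty}_{2j+1}=E^j\,T\,E^{-j}.
\]

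For the first assertion I would then form, for $m\ge 1$,
\[
P_m:=S^{\infty}_0S^{\infty}_1\cdots S^{\infty}_{2m-2}S^{\infty}_{2m-1}=\prod_{j=0}^{m-1}E^j T E^{-j},
\]
and prove $P_m=(TE)^m E^{-m}$ by induction on $m$. The base case $m=1$ reduces to $T=TE\cdot E^{-1}$. For the inductive step,
\[
P_{m+1}=P_m\cdot E^m T E^{-m}=(TE)^m E^{-m}\cdot E^m T E^{-m}=(TE)^m(TE)\cdot E^{-(m+1)}=(TE)^{m+1}E^{-(m+1)},
\]
which is exactly the desired formula with $m$ replaced by $m+1$.

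For the second assertion ($m\le -1$) I would treat the product $Q_m:=\prod_{j=m}^{-1}E^j T E^{-j}$ by a symmetric downward induction, reducing $m$ by one at each step. The base case $m=-1$ reads $Q_{-1}=E^{-1}TE=E^{-1}(TE)^{1}$. The inductive step is
\[
Q_{m-1}=E^{m-1}TE^{-(m-1)}\cdot Q_m=E^{m-1}TE^{-(m-1)}\cdot E^m(TE)^{-m}=E^{m-1}(TE)\cdot(TE)^{-m}=E^{m-1}(TE)^{-(m-1)},
\]
as required.

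The entire argument is bookkeeping; the only point that requires any care is tracking the powers of $E$ so that after the cancellation $E^{-j}\cdot E^j=I$ inside the product, the surviving $E$ on the right regroups with $(TE)^m$ to contribute one additional factor of $TE$. I foresee no substantive obstacle.
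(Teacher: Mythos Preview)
Your proposal is correct and follows essentially the same approach as the paper: the paper also derives $S^{\infty}_{2j}S^{\infty}_{2j+1}=E^{j}TE^{-j}$ from Proposition~\ref{prop3.1} and then telescopes by induction. You have simply spelled out the inductive bookkeeping in more detail than the paper does.
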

{\bf Comparison with the monodromy data of Kitaev.}
Kitaev \cite{K-Sb} considered the isomonodromy deformation of the linear
system
\begin{equation}\label{3.K1}
\frac{d\Phi}{d\tilde{\lambda}}
= \tau \biggl( -i\sigma_3 -\frac{ai}{2\tau\tilde{\lambda}}\sigma_3
-\frac 1{\tilde{\lambda}} \begin{pmatrix} 0 & C \\ D & 0 \end{pmatrix}
+\frac{i}{2\tilde{\lambda}^2} \begin{pmatrix} 
\sqrt{\theta^2-AB} & A \\ B &-\sqrt{\theta^2 -AB} \end{pmatrix} \biggr) \Phi
\end{equation} 
with $\theta^2= 1$ to treat the complete $P_{\mathrm{III}}$. 
For $\tau >0 $ system \eqref{3.K1} has the matrix solutions
\begin{align*}
& X_k(\tilde{\lambda})=(I+O(\tilde{\lambda}^{-1}))
\exp (-i\tau\tilde{\lambda} \sigma_3+ T_{\infty}
\log (\tilde{\lambda}/{\tau}) ) \quad \text{as $\tilde{\lambda} \to \infty$},
\\
& Y_k(\tilde{\lambda})=(\Phi_0 +O(\tilde{\lambda})) 
\exp (-\tfrac 12 {i\tau}{\tilde{\lambda}^{-1}}
 \sigma_3+T_0\log (\tilde{\lambda}/{\tau})) \quad \text{as $\tilde{\lambda}
 \to 0$}
\end{align*}
through the sector $|\arg \tilde{\lambda} +\arg \tau-(k-1)\pi|<\pi$ for $k=1,2$,
where $T_{\infty}=-\tfrac 12 ai \sigma_3,$ $T_0 =-\tfrac 12 ci \sigma_3$,
and $\det \Phi_0 \not=0.$
Then the monodromy data $\mathrm{G}_1,$ $\mathrm{G}_2$, $\mathrm{S}_j$
$(1\le j \le 4)$ are given by
\begin{align*}
&X_2=X_1 \mathrm{S}_1, \quad X_1(\tilde{\lambda} e^{-2\pi i})
e^{2\pi iT_{\infty}}=
X_2 \mathrm{S}_2, \quad  Y_2=Y_1 \mathrm{S}_3, \quad Y_1(\tilde{\lambda}
 e^{-2\pi i})
e^{2\pi iT_0}=Y_2 \mathrm{S}_4,
\\
& Y_2=X_2\mathrm{G}_2, \quad  Y_1=X_1\mathrm{G}_1.
\end{align*}
Let us set $x=\sqrt{2} i\tau,$ $\lambda=\sqrt{2} i \tilde{\lambda},$ 
$y=\sqrt{2} i w,$
$\mathfrak{z}=\sqrt{2} i z$ in our system \eqref{1.2}. Then we have
\begin{align}\label{3.K2}
\frac{dV}{d\tilde{\lambda}}=\biggl(&
 -i\tau \sigma_3 +\frac 1{\tilde{\lambda}}
 \begin{pmatrix}
-\frac 12\theta_{\infty}  &  2wzv  \\
\frac 12 \theta_0\tau(zv)^{-1} -v^{-1}(\tfrac 12 \theta_{\infty}(2-\tau z^{-1})
-2w(z-\tau) ) & \tfrac 12 \theta_{\infty} \end{pmatrix} 
\\
\notag
& - \frac i {\tilde{\lambda}^{2}}  
\begin{pmatrix}   z-\tfrac 12 \tau & -zv  \\  v^{-1}(z-\tau) & -z+\tfrac 12
\tau  \end{pmatrix}
\biggr)V,
\end{align}
which admits the matrix solutions
$$
 V^{\infty}_k(\tilde{\lambda})=(I+O(\tilde{\lambda}^{-1}))
\exp (-i\tau\tilde{\lambda} \sigma_3) \tilde{\lambda}^{-\frac 12
\theta_{\infty} \sigma_3}
$$
as $\tilde{\lambda} \to \infty$ through the sector 
$|\arg (\tilde{\lambda} \tau)-(k-1)\pi|<\pi,$ and
$$
 V^0_k(\tilde{\lambda})=\Delta_0(I +O(\tilde{\lambda})) 
\exp (-\tfrac 12 {i\tau}{\tilde{\lambda}^{-1}}\sigma_3)
\tilde{\lambda}^{\frac 12 \theta_0\sigma_3}
$$
as $\tilde{\lambda} \to 0$ through the sector 
$|\arg(\tilde{\lambda}/\tau)-k\pi |<\pi$ for $k\in \mathbb{Z}.$ Note that
$V^{\infty}_k(\tilde{\lambda})=U^{\infty}_k(\lambda)(\sqrt{2}i)
^{\frac 12 \theta_{\infty}\sigma_3},$
$V^{0}_{k-1}(\tilde{\lambda})=U^{0}_{k-1}(\lambda)(\sqrt{2}i)
^{-\frac 12 \theta_{0}\sigma_3}$.
If $V^{\infty}_k(\tilde{\lambda})$ and $V^0_{k-1}(\tilde{\lambda})$ 
solving \eqref{3.K2} are identified with, respectively, 
$X_k(\tilde{\lambda})\tau^{T_{\infty}}$ and $Y_k(\tilde{\lambda})\tau^{T_0}$
solving \eqref{3.K1} with suitable $a,$ $c$, then the monodromy data 
$(S^{\infty}_j, S^0_j, G,\hat{G})$ and $(\mathrm{S}_j,
\mathrm{G}_1, \mathrm{G}_2)$ fulfil
\begin{align*} 
&(\sqrt{2}i)^{-\frac 12 \theta_{\infty}\sigma_3} S^{\infty}_1
(\sqrt{2}i)^{\frac 12 \theta_{\infty}\sigma_3} = \tau^{-T_{\infty}} 
\mathrm{S}_1 \tau^{T_{\infty}}, \quad
(\sqrt{2}i)^{-\frac 12 \theta_{\infty}\sigma_3} S^{\infty}_2
(\sqrt{2}i)^{\frac 12 \theta_{\infty}\sigma_3} = \tau^{-T_{\infty}}
\mathrm{S}_2 \tau^{T_{\infty}}, \quad
\\
&(\sqrt{2}i)^{\frac 12 \theta_{0}\sigma_3} S^{0}_0
(\sqrt{2}i)^{-\frac 12 \theta_{0}\sigma_3} = \tau^{-T_0} \mathrm{S}_3
\tau^{T_0}, \quad
(\sqrt{2}i)^{\frac 12 \theta_{0}\sigma_3} S^{0}_1
(\sqrt{2}i)^{-\frac 12 \theta_{0}\sigma_3} = \tau^{-T_0} \mathrm{S}_4
\tau^{T_0}, 
\\
&(\sqrt{2}i)^{\frac 12\theta_0\sigma_3}GS_0^{\infty}
(\sqrt{2}i)^{\frac 12 \theta_{\infty}\sigma_3}=
\tau^{-T_0}\mathrm{G}_1^{-1} \tau^{T_{\infty}}, 
\\
&(\sqrt{2}i)^{\frac 12\theta_0\sigma_3}\hat{G}S_1^{\infty}
 (\sqrt{2}i)^{\frac 12 \theta_{\infty}\sigma_3}
=\tau^{-T_0}\mathrm{G}_2^{-1} \tau^{T_{\infty}}. 
\end{align*}
\subsection{Isomonodromy deformation}\label{ssc3.2}
By $U= v^{\sigma_3/2} Y,$ $ 2x=\xi,$
system \eqref{1.2} is taken into
\begin{equation*}
\frac{dY}{d\lambda}= B(\xi, \lambda)Y  \phantom{-----------}
\end{equation*}
with
\begin{align*}
B(\xi, \lambda)=&\frac{i\xi}4 \sigma_3 
+ \begin{pmatrix} -\tfrac 12 \theta_{\infty}  &  -y \z \\
   \tfrac 14 \theta_0 \xi \z^{-1} -\tfrac 12 \theta_{\infty} (2-\tfrac 12\xi
\z^{-1})-y(\z-\tfrac 12\xi)  & \tfrac 12 \theta_{\infty}   
\end{pmatrix} \lambda^{-1}
\\
& - i \begin{pmatrix} \z- \tfrac 14 \xi & -\z  \\  \z- \tfrac 12 \xi & 
-\z +\tfrac 14 \xi   \end{pmatrix} \lambda^{-2}.
\end{align*}
Set $\xi =t e^{i\phi}$ and
write the right-hand side in the form $B(\xi, \lambda)=
(t/4)\mathcal{B}(t,\lambda).$ In what follows instead of \eqref{1.2}
we are concerned with the linear system
\begin{align}\label{3.1}
\frac{dY}{d\lambda} & = \frac {t}4 \mathcal{B}(t,\lambda)Y, \quad
\mathcal{B}(t,\lambda) =b_1 \sigma_1 +b_2 \sigma_2 +b_3 \sigma_3,
\\
\notag
& b_1=(\Gamma(t,\z,y) -{2y\z}t^{-1})\lambda^{-1}
+i e^{i\phi} \lambda^{-2},
\\
\notag
& b_2=-i(\Gamma(t,\z,y) +{2y\z}t^{-1} )\lambda^{-1}
- (4\z t^{-1}  -e^{i\phi} )\lambda^{-2},
\\
\notag
& b_3= ie^{i\phi}-2\theta_{\infty}t^{-1} \lambda^{-1} - {i}
(4\z t^{-1} -e^{i\phi})\lambda^{-2}
\end{align}
with
\begin{align}
\notag
& \Gamma(t,\z,y)=
- y(2\z t^{-1} -e^{i\phi}) +\frac 12 {e^{i\phi}\theta_0}\z^{-1}
 -\frac 12 {\theta_{\infty}}\z^{-1}(4\z t^{-1} -e^{i\phi}),
\\
\label{3.2}
& 4\z= ty^*y^{-2} +e^{i\phi}t(1-y^{-2})  -(2\theta_{\infty} -1)y^{-1},
\end{align}
where $y$ and $y^*$ are arbitrary complex parameters. 
\par
System \eqref{3.1} admits the matrix solutions
$$
Y^{\infty}_k(\lambda)=v^{-\sigma_3/2} U^{\infty}_k(\lambda) 
= v^{-\sigma_3/2} (I+O(\lambda^{-1})) \exp(\tfrac 14 ie^{i\phi}t\lambda \sigma_3)
\lambda^{-\tfrac 12\theta_{\infty}\sigma_3}
$$
as $\lambda \to \infty$ through the sector $|\arg \lambda+\phi-k\pi|<\pi,$
and
\begin{equation}\label{3.3}
Y^{0}_k(\lambda)=v^{-\sigma_3/2} U^{0}_k(\lambda) 
= \Delta_0^*(I+O(\lambda)) \exp(-\tfrac 14 ie^{i\phi}t\lambda^{-1} \sigma_3)
\lambda^{\tfrac 12\theta_{0}\sigma_3}
\end{equation}
as $\lambda \to 0$ through the sector $|\arg \lambda -\phi -k\pi|<\pi,$
where
$$
\Delta_0^* =\begin{pmatrix} f_1 & f_2 \z \\  f_1 & f_2(\z-e^{i\phi}t/2)
\end{pmatrix},
\quad e^{i\phi}t f_1f_2\equiv -2.
$$
Then 
$$
Y^{\infty}_0(\lambda)=Y^0_0(\lambda)G, \quad
Y^{\infty}_1(\lambda)=Y^0_1(\lambda)\hat{G}, \quad
Y^{\infty}_{k+1}(\lambda)=Y^{\infty}_k(\lambda)S^{\infty}_k, \quad
Y^{0}_{k+1}(\lambda)=Y^{0}_k(\lambda)S^{0}_k. 
$$
\begin{prop}\label{prop3.7}
For $(Y^{\infty}_k(\lambda), Y^0_k(\lambda))$, system \eqref{3.1} has the 
same monodromy data $(G,\hat{G}, S^{\infty}_0, S^{\infty}_1, S^0_0, S^0_1)$
and the same monodromy manifold as of \eqref{1.2}.
\end{prop}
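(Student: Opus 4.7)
The plan is to exploit the fact that the passage from \eqref{1.2} to \eqref{3.1}, namely the gauge $U=v^{\sigma_3/2}Y$ together with the substitution $2x=\xi=te^{i\phi}$, is \emph{independent of the spectral variable} $\lambda$. Since $v^{\pm\sigma_3/2}$ is a constant matrix in $\lambda$, every relation among the canonical solutions of \eqref{1.2} that is indexed only by $\lambda$-sectors persists verbatim for the canonical solutions of \eqref{3.1}, with identical Stokes and connection matrices. The whole proof is therefore a bookkeeping verification rather than a substantive argument.

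First I would verify, as already sketched in the text preceding \eqref{3.1}, that $Y=v^{-\sigma_3/2}U$ satisfies \eqref{3.1} by computing
$$
\frac{dY}{d\lambda}=v^{-\sigma_3/2}\mathcal{A}(x,\lambda)v^{\sigma_3/2}Y = B(\xi,\lambda)Y = \frac{t}{4}\mathcal{B}(t,\lambda)Y
$$
after setting $2x=te^{i\phi}$. Next I would check that the resulting $Y^{\infty}_k=v^{-\sigma_3/2}U^{\infty}_k$ and $Y^{0}_k=v^{-\sigma_3/2}U^{0}_k$ already carry the normalizations demanded for \eqref{3.1}. At infinity, multiplication by the constant matrix $v^{-\sigma_3/2}$ leaves the exponential and power-of-$\lambda$ factors untouched, so the expansion of $Y^{\infty}_k$ is obtained directly from the expansion of $U^{\infty}_k$ using $\tfrac12 ix\lambda=\tfrac14 ie^{i\phi}t\lambda$. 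At the origin, a direct matrix computation gives
$$
v^{-\sigma_3/2}\Delta_0 = \begin{pmatrix} f_1 & \z f_2 \\ f_1 & (\z - e^{i\phi}t/2)f_2\end{pmatrix} = \Delta_0^{*},
$$
with the normalization $xf_1f_2\equiv -1$ rewritten as $e^{i\phi}tf_1f_2\equiv -2$, and the phase factor $\exp(-\tfrac12 ix\lambda^{-1}\sigma_3)$ becomes $\exp(-\tfrac14 ie^{i\phi}t\lambda^{-1}\sigma_3)$, which is exactly \eqref{3.3}. Moreover the sectorial conditions $|\arg\lambda+\arg x-k\pi|<\pi$ and $|\arg\lambda-\arg x-k\pi|<\pi$ translate directly to $|\arg\lambda+\phi-k\pi|<\pi$ and $|\arg\lambda-\phi-k\pi|<\pi$ under $\arg x=\phi$.

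Finally, since $v^{-\sigma_3/2}$ is constant in $\lambda$, left-multiplying the defining identities $U^{\infty}_{k+1}=U^{\infty}_kS^{\infty}_k$, $U^{0}_{k+1}=U^{0}_kS^{0}_k$, $U^{\infty}_0=U^{0}_0G$, and $U^{\infty}_1=U^{0}_1\hat{G}$ yields the same identities with $Y$ in place of $U$ and the very same matrices $S^{\infty}_k,S^{0}_k,G,\hat{G}$. The monodromy manifold is defined by the algebraic relations from Proposition \ref{prop3.2}, which involve only these matrices and $\theta_0,\theta_\infty$, so it is unchanged. The main step requiring care is the identification $v^{-\sigma_3/2}\Delta_0=\Delta_0^{*}$ with the adjusted normalization of $f_1,f_2$; nothing else in the proof poses a genuine obstacle, precisely because the gauge factor does not depend on $\lambda$.
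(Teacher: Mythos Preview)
Your proposal is correct and follows exactly the reasoning implicit in the paper: the proposition is stated there without an explicit proof because it is immediate from the definitions $Y^{\infty}_k=v^{-\sigma_3/2}U^{\infty}_k$, $Y^{0}_k=v^{-\sigma_3/2}U^{0}_k$ and the $\lambda$-independence of the gauge factor $v^{-\sigma_3/2}$. Your writeup simply spells out the bookkeeping (in particular the identity $v^{-\sigma_3/2}\Delta_0=\Delta_0^{*}$) that the paper leaves to the reader.
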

Then isomonodromy deformation of \eqref{3.1} may also be described as follows.
\begin{prop}\label{prop3.8}
The monodromy data of \eqref{3.1} for $(Y^{\infty}_k(\lambda),Y^0_k(\lambda))$
remain invariant under a small change of $te^{i\phi}$ if and only if $y^*=(d/dt)y$
holds in \eqref{3.2} and
$$
t \frac{d\z}{dt} =- 4y\z^2 + (2e^{i\phi}t y+ 1-2\theta_{\infty})\z +\frac 12
(\theta_0+\theta_{\infty})e^{i\phi}t,
$$
which is equivalent to \eqref{1.1}.
\end{prop}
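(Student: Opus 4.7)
The plan is to deduce the proposition from the known isomonodromy description of \eqref{1.2} via Proposition \ref{prop3.7}. The transformation $U = v^{\sigma_3/2}Y$ combined with the change of variable $2x = \xi = te^{i\phi}$ identifies solutions of \eqref{3.1} with solutions of \eqref{1.2}, and Proposition \ref{prop3.7} asserts that under this identification the Stokes matrices $S^{\infty}_k, S^0_k$ and the connection matrices $G, \hat{G}$ agree. Consequently, invariance of the monodromy data of \eqref{3.1} under a small change of $te^{i\phi}$ is equivalent to invariance of the monodromy data of \eqref{1.2} under the corresponding change of $x$, which in turn (as recalled in Section \ref{sc1}) is equivalent to $(y, \z, v)$ satisfying the triangular system of three deformation equations in $x$ given there.

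Next I would transcribe these equations in the variable $t$. With $\phi$ fixed one has $x\,d/dx = t\,d/dt$ and $2x = te^{i\phi}$, so the first deformation equation
$$x\frac{dy}{dx} = 4y^2\z - 2xy^2 + (2\theta_\infty - 1)y + 2x$$
transforms to $t\,dy/dt = 4y^2\z - te^{i\phi}y^2 + (2\theta_\infty - 1)y + te^{i\phi}$, and solving for $4\z$ reproduces exactly the algebraic relation \eqref{3.2} read with $y^* = dy/dt$. The second deformation equation becomes the quadratic ODE for $\z$ stated in the proposition. The third equation is logarithmic in $v$ and merely determines $v$ from $(y, \z)$; since the coefficient matrix $\mathcal{B}(t,\lambda)$ in \eqref{3.1} does not involve $v$, it imposes no constraint on the isomonodromy problem for \eqref{3.1} and can be discarded here. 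The equivalence of the resulting pair of $t$-equations with \eqref{1.1} is then inherited from the known equivalence of the first two $x$-equations with \eqref{1.1}, since the transcription above is a bijection.

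The principal point requiring care is the reverse direction: that invariance of the discrete monodromy data of \eqref{3.1} actually forces the claimed ODEs, not merely that the claimed ODEs produce invariance. One must use the $\lambda$-asymptotic forms of $Y^{\infty}_k$ and $Y^0_k$ to read off $y$ and $\z$ as functions of $t$, then deduce from $t$-independence of the Stokes and connection matrices that these functions satisfy first-order ODEs which, after matching coefficients with the forward direction, must coincide with those transcribed above. Because the correspondence with \eqref{1.2} is bijective modulo the gauge parameter $v$, this matching is automatic from the known result for \eqref{1.2}; nonetheless this is where a direct proof of Proposition \ref{prop3.8} from scratch would require the most genuine work.
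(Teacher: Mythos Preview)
Your approach is correct and matches the paper's treatment: the paper states Proposition \ref{prop3.8} immediately after Proposition \ref{prop3.7} without an explicit proof, presenting it as the direct transcription of the isomonodromy condition for \eqref{1.2} through the change of variables $U=v^{\sigma_3/2}Y$, $2x=te^{i\phi}$. Your transcription of the $x$-equations to $t$-equations and the observation that the $v$-equation drops out (since $\mathcal{B}(t,\lambda)$ is independent of $v$ and the chosen normalisations $Y^{\infty}_k, Y^0_k$ yield the same $S^{\infty}_k, S^0_k, G, \hat{G}$ as for $U^{\infty}_k, U^0_k$) are exactly the content the paper leaves implicit.
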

Let $Y^{\infty, *}_k(\lambda)$ be the matrix solution of \eqref{3.1} such that
\begin{equation}\label{3.4}
Y^{\infty, *}_k(\lambda)=(I+O(\lambda^{-1})) \exp(\tfrac 14 ie^{i\phi}t\lambda\sigma_3)
\lambda^{-\tfrac 12\theta_{\infty}\sigma_3}
\end{equation}
as $\lambda\to\infty$ in the sector $|\arg\lambda +\phi -k\pi |<\pi,$ and
set
\begin{equation}\label{3.5}
Y^{\infty, *}_0(\lambda)=Y^0_0(\lambda)G^*, \quad
Y^{\infty, *}_1(\lambda)=Y^0_1(\lambda)\hat{G}^*.
\end{equation}
Then $Y^{\infty, *}_k(\lambda)=Y^{\infty}_k(\lambda)v^{\sigma_3/2},$ and
$Y^{\infty, *}_0(\lambda)=Y^0_0(\lambda)Gv^{\sigma_3/2},$
$Y^{\infty, *}_1(\lambda)=Y^0_1(\lambda)\hat{G}v^{\sigma_3/2}.$
\begin{prop}\label{prop3.9}
For $(Y^{\infty, *}_k(\lambda), Y^0_k(\lambda))$, system \eqref{3.1} has the
monodromy data
\begin{align*}
&(G^*,\hat{G}^*, S^{\infty, *}_0, S^{\infty, *}_1, S^{0, *}_0, S^{0, *}_1)
\\
=&(Gv^{\sigma_3/2},\hat{G}v^{\sigma_3/2}, 
v^{-\sigma_3/2}S^{\infty}_0v^{\sigma_3/2}, 
v^{-\sigma_3/2}S^{\infty}_1v^{\sigma_3/2}, S^{0}_0, S^{0}_1).
\end{align*}
\end{prop}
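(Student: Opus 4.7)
The plan is a direct matrix-algebra verification from the definitions, using the intertwining relation $Y^{\infty,*}_k(\lambda) = Y^{\infty}_k(\lambda)\, v^{-\sigma_3/2}$ established in the paragraph preceding the proposition. There is no substantive analytic step; the entire content is bookkeeping of how a right-multiplier propagates through the defining relations of the monodromy data.

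First I would read off $G^*$ and $\hat{G}^*$ directly. Combining the intertwining with the known $Y^{\infty}_0 = Y^0_0 G$ and $Y^{\infty}_1 = Y^0_1 \hat{G}$ gives
$$Y^{\infty,*}_0 = Y^0_0 \bigl(G v^{-\sigma_3/2}\bigr), \qquad Y^{\infty,*}_1 = Y^0_1 \bigl(\hat{G} v^{-\sigma_3/2}\bigr),$$
so comparison with \eqref{3.5} yields $G^* = G v^{-\sigma_3/2}$ and $\hat{G}^* = \hat{G} v^{-\sigma_3/2}$.

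Next, for the Stokes matrices at $\lambda = \infty$, the defining relation $Y^{\infty}_{k+1} = Y^{\infty}_k S^{\infty}_k$ together with the intertwining gives
$$Y^{\infty,*}_{k+1} = Y^{\infty}_{k+1} v^{-\sigma_3/2} = Y^{\infty}_k S^{\infty}_k v^{-\sigma_3/2} = Y^{\infty,*}_k \bigl(v^{\sigma_3/2} S^{\infty}_k v^{-\sigma_3/2}\bigr),$$
and comparison with $Y^{\infty,*}_{k+1} = Y^{\infty,*}_k S^{\infty,*}_k$ forces $S^{\infty,*}_k = v^{\sigma_3/2} S^{\infty}_k v^{-\sigma_3/2}$. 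Since the solutions $Y^0_k$ near $\lambda = 0$ are untouched, their Stokes matrices are unaffected: $S^{0,*}_k = S^0_k$. This exhausts the six entries of the monodromy data and completes the verification.

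The only point that one might regard as non-trivial is the intertwining relation itself, which is an instance of the standard uniqueness of canonical asymptotic solutions — those with prescribed exponential factor $\exp(\tfrac14 i e^{i\phi} t \lambda \sigma_3)$ and monodromy exponent $\lambda^{-\tfrac12\theta_{\infty}\sigma_3}$ — in each Stokes sector at a rank-one irregular singularity. Once that is in hand, only the elementary substitutions above remain.
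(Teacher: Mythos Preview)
Your proof is correct and matches the paper's approach: the paper establishes the intertwining $Y^{\infty,*}_k(\lambda)=Y^{\infty}_k(\lambda)v^{-\sigma_3/2}$ in the sentence immediately preceding the proposition and then states the result without further argument, leaving exactly the bookkeeping substitutions you carry out.
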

\begin{cor}\label{cor3.10}
Let $G^*=(g^*_{ij})$ and
$\hat{G}^*=(\hat{g}^*_{ij}).$ Then
\begin{align*}
& g^*_{11}g^*_{22}=g_{11}g_{22}, \quad
{g^*_{11}}/{g^*_{21}}={g_{11}}/{g_{21}}, \quad
{g^*_{12}}/{g^*_{22}}={g_{12}}/{g_{22}}, 
\\
& \hat{g}^*_{11}\hat{g}^*_{22}=\hat{g}_{11}\hat{g}_{22}, \quad
{\hat{g}^*_{11}}/{\hat{g}^*_{21}}={\hat{g}_{11}}/{\hat{g}_{21}}, \quad
{\hat{g}^*_{12}}/{\hat{g}^*_{22}}={\hat{g}_{12}}/{\hat{g}_{22}}. 
\end{align*}
\end{cor}
\section{WKB analysis}\label{sc4}
\subsection{Turning points and Stokes graphs}\label{ssc4.1}
Let us examine the characteristic roots $\pm \mu= \pm \mu(t, \lambda)$
of $\mathcal{B}(t,\lambda)$, the turning points, i.e. the roots of $\mu,$
and the Stokes graph, which are used in calculating monodromy data for
system \eqref{3.1}. The characteristic roots are given by
\begin{align}\label{4.1}
\mu^2=& b_1^2 +b_2^2 + b_3^2
\\
\notag
=& -e^{2i\phi}-4ie^{i\phi}\theta_{\infty} t^{-1} \lambda^{-1} 
+ e^{2i\phi} a_{\phi}\lambda^{-2} + 4ie^{i\phi}\theta_0 t^{-1}
\lambda^{-3} - e^{2i\phi}\lambda^{-4} 
\end{align}
with
\begin{equation}\label{4.2}
a_{\phi}=a_{\phi}(t) = e^{-2i\phi} ({y^*}y^{-1} +t^{-1} )^2
-y^2 - y^{-2} - 4e^{-i\phi}t^{-1} 
(\theta_0 y+\theta_{\infty} y^{-1} ).
\end{equation}
\par
First suppose that $\phi=0.$ The Boutroux equations \eqref{2.1} admit a unique 
solution $A_{0}=2$. Equation \eqref{1.1} has a two-parameter family of solutions
such that $y(x)=i +O(x^{-1/2})$ as $x \to +\infty$ along the positive real
axis. Taking these facts into account and viewing the supposition \eqref{5.1},
we set $a_0(t)= 2 +o(1)$, and then
$\mu(\infty,\lambda)^2|_{\phi=0} = -\lambda^{-4} (\lambda^2- 1)^2= -\lambda^{-4}
w(A_0,\lambda)^2.$
This means that, around $\phi=0$, $\mu(t,\lambda)$ admits four turning points
$\lambda_1,$ $\lambda_{-1},$ $\lambda_2,$ $\lambda_{-2}$ 
such that, for $\phi=0$, $\lambda_1$ and $\lambda_2$ (respectively, 
$\lambda_{-1}$ and $\lambda_{-2}$) coalesce at $1$ (respectively, at $-1$)
as $t \to \infty$. 
Then $\re \lambda_1 \le \re \lambda_2$ for $\phi$ close to $0$, and that 
$\lambda_{-1} \to -\lambda_1,$ $\lambda_{-2} \to -\lambda_2$ as $t \to\infty.$
Here we use the same letters $\lambda_{1},$ $\lambda_2$  
to denote the turning points and the limit turning points as $t\to\infty$. 
By Proposition \ref{prop8.15} 
the Boutroux equations admit unique solutions $A_{\pm \pi/2}=-2$, 
and $\mu(\infty,\lambda)^2=-e^{2i\phi}
(1-a_{\phi}\lambda^{-2}+\lambda^{-4})$ does not
degenerate for $0<|\phi|<\pi/2.$
\par
Suppose that $0<|\phi|<\pi/2.$
The algebraic function $\mu(t,\lambda)$ is considered on the two-sheeted
Riemann surface $\mathcal{R}_{\phi}$ glued along the cuts $[\lambda_{-2},
\lambda_{-1}]$ and $[\lambda_1,\lambda_2].$ Let
\begin{align*}
\mu(t,\lambda) &= ie^{i\phi} \lambda^{-2}
\sqrt{1- a_{\phi}\lambda^2 +\lambda^4 -4i e^{-i\phi}\theta_0 \lambda t^{-1}
+4i e^{-i\phi}\theta_{\infty} \lambda^3 t^{-1} }
\\
 &= ie^{i\phi} \lambda^{-2}
\sqrt{(1-\lambda_1^{-1}\lambda)(1-\lambda_2^{-1}\lambda)
(1-\lambda_{-1}^{-1}\lambda)(1-\lambda_{-2}^{-1}\lambda)}  
 \\
&= ie^{i\phi} \lambda^{-2}
\sqrt{1-\lambda_1^{-1}\lambda} \, \sqrt{1-\lambda_2^{-1}\lambda}\,
\sqrt{1-\lambda_{-1}^{-1}\lambda} \, \sqrt{1-\lambda_{-2}^{-1}\lambda},  
\end{align*}
in which the branches of the square roots are fixed in such a way that
$\sqrt{1-\lambda_{\pm j}^{-1}\lambda} \to 1$ as $\lambda \to 0$ on 
the upper sheet
of $\mathcal{R}_{\phi}.$ Then $\lambda^2 \mu(t,\lambda) \to ie^{i\phi}$ as
$\lambda \to 0,$ and $\mu(t,\lambda)\to -ie^{i\phi}$ as $\lambda \to \infty$
on the upper sheet of $\mathcal{R}_{\phi}.$
{\small
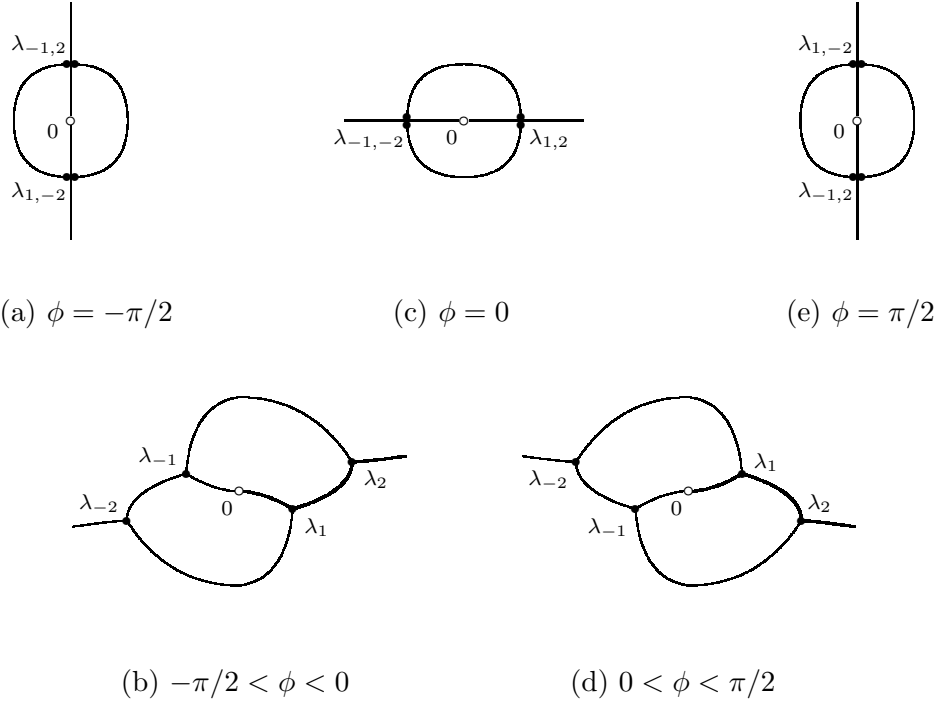
\begin{figure}[htb]
\begin{center}
\unitlength=0.78mm
\begin{picture}(40,50)(-20,-30)
\put(0,0){\circle{1.5}}
\put(0.7,9.6){\circle*{1.5}}
\put(-0.7,9.6){\circle*{1.5}}
\put(0.7,-9.6){\circle*{1.5}}
\put(-0.7,-9.6){\circle*{1.5}}

{\tiny
\put(-10,12.0){\makebox{$\lambda_{-1,2}$}}
\put(-10,-13){\makebox{$\lambda_{1,-2}$}}
\put(-4,-3){\makebox{$0$}}
}

\qbezier (0,-9.6) (0,-14) (0,-20)
\qbezier (0,9.6) (0,14) (0,20)

\qbezier (0,-1) (0,-8) (0,-15)
\qbezier (0,1) (0,8) (0,15)

\qbezier (9.6,0) (9.6,9.6)  (0,9.6)
\qbezier (-9.6,0) (-9.6,9.6)  (0,9.6)

\qbezier (9.6,0) (9.6,-9.6)  (0,-9.6)
\qbezier (-9.6,0) (-9.6,-9.6)  (0,-9.6)

\put(-12,-34){\makebox{(a) $\phi=-\pi/2$}}
\end{picture}
\qquad\qquad\quad
\begin{picture}(40,50)(-20,-30)
\put(0,0){\circle{1.5}}
\put(9.6,0.7){\circle*{1.5}}
\put(9.6,-0.7){\circle*{1.5}}
\put(-9.6,0.7){\circle*{1.5}}
\put(-9.6,-0.7){\circle*{1.5}}

{\tiny
\put(11.0,-4){\makebox{$\lambda_{1,2}$}}
\put(-22,-4){\makebox{$\lambda_{-1,-2}$}}
\put(-3,-4){\makebox{$0$}}
}

\qbezier (-9.6,0) (-14,0) (-20, 0)
\qbezier (9.6,0) (14,0) (20, 0)

\qbezier (-1,0) (-8,0) (-15, 0)
\qbezier (1,0) (8,0) (15, 0)

\qbezier (0,9.6) (9.6,9.6)  (9.6,0)
\qbezier (0,-9.6) (9.6,-9.6)  (9.6,0)

\qbezier (0,9.6) (-9.6,9.6)  (-9.6,0)
\qbezier (0,-9.6) (-9.6,-9.6)  (-9.6,0)

\put(-12,-34){\makebox{(c) $\phi=0$}}
\end{picture}
\qquad\qquad\quad
\begin{picture}(40,50)(-20,-30)

\put(0,0){\circle{1.5}}
\put(0.7,9.6){\circle*{1.5}}
\put(-0.7,9.6){\circle*{1.5}}
\put(0.7,-9.6){\circle*{1.5}}
\put(-0.7,-9.6){\circle*{1.5}}

{\tiny
\put(-10,12.0){\makebox{$\lambda_{1,-2}$}}
\put(-10,-13){\makebox{$\lambda_{-1,2}$}}
\put(-4,-3){\makebox{$0$}}
}

\qbezier (0,-9.6) (0,-14) (0,-20)
\qbezier (0,9.6) (0,14) (0,20)

\qbezier (0,-1) (0,-8) (0,-15)
\qbezier (0,1) (0,8) (0,15)

\qbezier (9.6,0) (9.6,9.6)  (0,9.6)
\qbezier (-9.6,0) (-9.6,9.6)  (0,9.6)

\qbezier (9.6,0) (9.6,-9.6)  (0,-9.6)
\qbezier (-9.6,0) (-9.6,-9.6)  (0,-9.6)

\put(-12,-34){\makebox{(e) $\phi=\pi/2$}}
\end{picture}
\vskip0.2cm

\begin{picture}(70,60)(-35,-30)
\put(0,0){\circle{1.5}}
\put(-9,3){\circle*{1.5}}
\put(9,-3){\circle*{1.5}}
\put(-19,-5){\circle*{1.5}}
\put(19,5){\circle*{1.5}}

\qbezier (-9,3) (-5,0.5) (-1, 0)
\qbezier (-19,-5) (-22,-5.1) (-28,-6)

\qbezier  (9,-3) (8, -15) (0,-16)
\qbezier  (0,-16) (-12, -16) (-19,-5)

\qbezier  (-9,3) (-8, 15) (0,16)
\qbezier  (0,16) (12, 16) (19,5)

\qbezier (-9,3) (-19,0) (-19,-5)

\thicklines
\qbezier (9,-3) (5,-0.5) (0.8, 0)
\qbezier (9,-3) (19,0) (19,5)
\qbezier (19,5) (22,5.1) (28,6)

{\tiny
\put(11,-7){\makebox{$\lambda_1$}}
\put(21,1){\makebox{$\lambda_2$}}
\put(-3,-4){\makebox{$0$}}
\put(-17,5){\makebox{$\lambda_{-1}$}}
\put(-27,-3){\makebox{$\lambda_{-2}$}}
}
\put(-20,-34){\makebox{(b) $-\pi/2 < \phi < 0$}}

\end{picture}
\,\,\,
\begin{picture}(70,60)(-35,-30)
\put(0,0){\circle{1.5}}
\put(-9,-3){\circle*{1.5}}
\put(9,3){\circle*{1.5}}
\put(-19,5){\circle*{1.5}}
\put(19,-5){\circle*{1.5}}

\qbezier (-9,-3) (-5,-0.5) (-1, 0)
\qbezier (-19,5) (-22,5.1) (-28,6)

\qbezier  (-9,-3) (-8, -15) (0,-16)
\qbezier  (0,-16) (12, -16) (19,-5)

\qbezier  (9,3) (8, 15) (0,16)
\qbezier  (0,16) (-12, 16) (-19,5)

\qbezier (-9,-3) (-19,0) (-19,5)

\thicklines
\qbezier (9,3) (5,0.5) (0.8, 0)
\qbezier (9,3) (19,0) (19,-5)
\qbezier (19,-5) (22,-5.1) (28,-6)

{\tiny
\put(11,4){\makebox{$\lambda_1$}}
\put(20,-3){\makebox{$\lambda_2$}}
\put(-3,-4){\makebox{$0$}}
\put(-17,-7){\makebox{$\lambda_{-1}$}}
\put(-27,1){\makebox{$\lambda_{-2}$}}
}
\put(-20,-34){\makebox{(d) $0 < \phi < \pi/2$}}
\end{picture}

\end{center}
\caption{Limit Stokes graphs for $|\phi|\le \pi/2$}
\label{stokes}
\end{figure}
}
\par
The Stokes graph consists of the Stokes curves and the vertices: each Stokes
curve is defined by $\re \int^{\lambda}_{\lambda_*} \mu(\lambda) d\lambda=0 $
with a turning point $\lambda_*$, and the vertices are turning points 
or singular points $\lambda=0, \infty.$ 
The Stokes graph lies on $\mathcal{R}_{\phi}$.
The limit Stokes graph with $t=\infty$ for the isomonodromy system 
\eqref{3.1} is considered
to reflect the Boutroux equations \eqref{2.1}.
When $\phi$ increases or decreases, the limit turning points for
$\lambda_1$ and $\lambda_2$ move according to the solution 
$A_{\phi}$ of the Boutroux equations \eqref{2.1}. By Proposition \ref{prop8.16}, 
for $\phi$ close to $0$, the double turning point at $\lambda_1=\lambda_2=1$ 
is resolved into two simple turning points $\lambda_1$ and $\lambda_2$  
such that $\im \lambda_1 > 0 >\im \lambda_2,$ $\re \lambda_1 < 1 < \re 
\lambda_2$ if $\phi>0,$ and such that $\im \lambda_1 <0 <\im \lambda_2,$
$\re \lambda_1 < 1 < \re \lambda_2$ if $\phi<0$. For $0<|\phi|<\pi/2$ 
coalescence of turning points does not occur, and
then the topological figure of the limit Stokes graph remains invariant. 
For $-\pi/2 <\phi< 0$ and $0<\phi<\pi/2$, the limit Stokes graphs 
are as in Figures \ref{stokes} (b) and (d), in which each limit turning point
with $t=\infty$ is also denoted by $\lambda_{\iota}$ or $\lambda_{-\iota}
=-\lambda_{\iota}.$ 
In our calculation, for $0<|\phi|<\pi/2,$ we use the Stokes curves from $0^+$ to
$\infty^-$ passing through $\lambda_1$ and $\lambda_2$ and passing through
$-\lambda_1$ to $-\lambda_2,$ where $0^+$ and $\infty^-$ denote $0$ and
$\infty$ on the upper and lower sheets, respectively.  
This ranges on both upper and lower sheets of $\mathcal{R}_{\phi}.$
For a technical reason,
the cut $[\lambda_1,\lambda_2]$ on the upper sheet is made in such a way 
that the Stokes curve 
$(\lambda_1,\lambda_2)^{\sim}$ is located along the lower shore 
(respectively, the upper shore) of the cut if $0<\phi<\pi/2$ 
(respectively, $-\pi/2<\phi <0$), and the cut $[\lambda_{-2}, \lambda_{-1}]$
in such a way that 
the Stokes curve 
$(-\lambda_1,-\lambda_2)^{\sim}$ is located along the upper shore 
(respectively, the lower shore) of the cut if $0<\phi<\pi/2$ 
(respectively, $-\pi/2<\phi <0$)  
(cf. Figures \ref{curve+}, \ref{curve+*}, \ref{curve-}).
\par
An unbounded
domain $D \subset \mathcal{R}_{\phi}$ is called a canonical domain if, for each
$\lambda \in D$, there exist contours $C_{\pm}(\lambda) \subset D$ terminating
in $\lambda$ such that
$$
\re \int^{\lambda}_{\lambda^-_0} \mu(\lambda) d\lambda \to -\infty \quad
\biggl(\text{respectively,} \,\,\,
\re \int^{\lambda}_{\lambda^+_0} \mu(\lambda) d\lambda \to +\infty \,\, \biggr)
$$
as $\lambda^-_0 \to \infty$ along $C_-(\lambda)$ (respectively, as $\lambda^+_0
\to \infty$ along $C_+(\lambda)$) (see \cite{F}, \cite[p. 242]{FIKN}).
The interior of a canonical domain contains exactly one Stokes curve, and its
boundary consists of Stokes curves.
\subsection{WKB solution}\label{ssc4.2}
The following WKB solution will be used in our calculus.
\begin{prop}\label{prop4.1}
In the canonical domain $(\subset \mathcal{R}_{\phi})$ 
whose interior contains a Stokes curve issueing from
the turning point $\lambda_{\pm 1}$ or $\lambda_{\pm 2}$, 
system \eqref{3.1} with 
$\mathcal{B}(\lambda,t)=b_1\sigma_1+b_2\sigma_2+b_3\sigma_3$ admits 
an asymptotic solution expressed as
$$
\Psi_{\mathrm{WKB}}(\lambda) = T(I+O(t^{-\delta})) \exp\Bigl(
\int^{\lambda}_{\tilde{\lambda}_*} \Lambda(\tau) d\tau \Bigr), \quad
T = \begin{pmatrix} 1 & \frac{b_3-\mu}{b_1+ ib_2} \\
  \frac{\mu- b_3}{b_1- ib_2}  &  1    \end{pmatrix}
$$
outside suitable neighbourhoods of zeros of $b_1 \pm i b_2$ as long as
$|\lambda -\lambda_{\iota}| \gg t^{-2/3 +(2/3)\delta}$ $(\iota=\pm 1, \pm 2)$. 
Here
$\delta$ is an arbitrary number such that $0<\delta <1$, $\tilde{\lambda}_*$ 
is a base point near $\lambda_{\pm 1}$ or $\lambda_{\pm 2}$, and  
$$
\Lambda(\lambda)=\frac t4 \mu(t,\lambda) \sigma_3 -\diag T^{-1}T_{\lambda}.
$$
\end{prop}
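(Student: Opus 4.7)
The plan is the standard diagonalising WKB construction for $2\times 2$ systems, applied to \eqref{3.1}. First I diagonalise the coefficient matrix by the gauge transformation $Y = T\tilde Y$, where the columns of $T$ are eigenvectors of $b_1\sigma_1+b_2\sigma_2+b_3\sigma_3$ associated with the eigenvalues $\pm\mu$. A direct calculation using $\mu^2 = b_1^2+b_2^2+b_3^2$ shows that $T$ has the form stated, with $\det T \asymp 1$ outside small discs around the zeros of $b_1\pm ib_2$. Under this substitution \eqref{3.1} takes the form
\begin{equation*}
\tilde Y_\lambda = \bigl(\tfrac{t}{4}\mu\,\sigma_3 - T^{-1}T_\lambda\bigr)\tilde Y.
\end{equation*}
Splitting $T^{-1}T_\lambda$ into diagonal and off-diagonal parts and absorbing the former into $\Lambda(\lambda) = \tfrac{t}{4}\mu\sigma_3 - \diag T^{-1}T_\lambda$, the further substitution $\tilde Y = \exp\bigl(\int^\lambda_{\tilde\lambda_*}\Lambda\bigr)\, W$ reduces the problem to an equation $W_\lambda = R(t,\lambda)\,W$ in which $R$ is off-diagonal and whose entries contain the rapidly varying factors $\exp\bigl(\pm\tfrac{t}{2}\int\mu\,d\tau\bigr)$.

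Next I would construct $W$ by Picard iteration starting from $W=I$. For the first column I integrate along the contour $C_-(\lambda)\subset D$ supplied by the canonical-domain hypothesis, so that $\re\int^\lambda_{\lambda_0^-}\mu\,d\tau\to -\infty$ keeps the relevant exponential bounded; symmetrically for the second column along $C_+(\lambda)$. The off-diagonal entries of $R$ are of order $O(\mu^{-1}\mu_\lambda)$ plus contributions from $\partial_\lambda(b_1\pm ib_2)/(b_1\pm ib_2)$, so they are $O(1)$ on any region bounded away from turning points and from zeros of $b_1\pm ib_2$. The $O(t^{-\delta})$ gain comes from integration by parts against the oscillating exponential: each step produces a factor $(t\mu)^{-1}$, and iterating until the total gain reaches $t^{-\delta}$ yields the stated uniform remainder.

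The main obstacle is uniformity up to the turning points. Near $\lambda_\iota$ one has $\mu\asymp (\lambda-\lambda_\iota)^{1/2}$, hence $|t\mu|\gg t^{\delta/3}$ precisely when $|\lambda-\lambda_\iota|\gg t^{-2/3+(2/3)\delta}$, which is the hypothesis in the statement; this ensures the integration-by-parts gain survives the iteration and that the tail of the contour entering the turning-point neighbourhood contributes at most $O(t^{-\delta})$ to the error. The excluded neighbourhoods of zeros of $b_1\pm ib_2$ are an artifact of the diagonalising frame $T$, which becomes singular there; since those points are only apparent singularities of the original system \eqref{3.1}, they can be bypassed by a local change of frame without affecting subsequent monodromy computations. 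With these points handled, the remaining estimates are the classical ones, and I would cite the Fedoryuk framework (\cite{F}, \cite[pp.~242--260]{FIKN}) to complete the argument.
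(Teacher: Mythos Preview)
Your approach is correct and is precisely the standard diagonalise--then--iterate WKB construction that the paper invokes by citing \cite[Theorem~7.2]{FIKN} and \cite[Proposition~3.8]{Shimom}; the paper's own proof is a one-line reduction to those references. The only system-specific input the paper records explicitly, and which you should add, is the behaviour of $\mu$ at the two irregular singular points: $\mu=\mp i e^{i\phi}\lambda+O(1)$ near $\lambda=\infty^{\pm}$ and $\mu=\pm i e^{i\phi}\lambda^{-2}+O(\lambda^{-1})$ near $\lambda=0^{\pm}$. This is what guarantees that the contours $C_\pm(\lambda)$ in the canonical domain actually reach $0$ or $\infty$ with $\re\int\mu\,d\tau\to\mp\infty$ and that the Picard integrals converge there; without it the iteration step is formal.

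One small slip: from $\mu\asymp(\lambda-\lambda_\iota)^{1/2}$ and $|\lambda-\lambda_\iota|\gg t^{-2/3+(2/3)\delta}$ you get $|t\mu|\gg t^{2/3+\delta/3}$, not $t^{\delta/3}$. This only strengthens your estimate (a single integration by parts already beats $t^{-\delta}$ since $2/3+\delta/3>\delta$ for $0<\delta<1$), so the argument survives, but the stated exponent is off.
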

\begin{rem}\label{rem4.1}
In the proposition above
\begin{align*}
\diag T^{-1}T_{\lambda} &= \frac 1{2\mu(\mu+b_3)} (i(b_1b_2' -b_1'b_2)\sigma_3
+ (b_3 \mu' - b_3'\mu) I)
\\
&= \frac 14 \Bigl(1-\frac{b_3}{\mu} \Bigr) \frac{\partial}{\partial\lambda}
\log\frac{b_1+i b_2}{b_1 - ib_2} \sigma_3 +\frac 12 \frac{\partial}{\partial
\lambda} \log \frac{\mu}{\mu+ b_3} I,
\end{align*}
where $b_1'=(\partial/\partial\lambda)b_1$.
\end{rem}
For the Schr\"odinger equation $(-\hbar^2(d/d\lambda)^2 +p(\lambda))y=0$
with Planck's constant $\hbar$, the WKB solution has the form
$(1+O(\hbar))p(\lambda)^{-1/4} \exp(\pm \hbar^{-1} \int^{\lambda}p(\lambda)^{1/2}
d\lambda)$. On the other hand, by $Y=T\tilde{Y}$ system \eqref{3.1} is taken to
$\tilde{Y}_{\lambda}=(\tfrac 14 t \mu(t,\lambda)\sigma_3 -T^{-1}T_{\lambda})
\tilde{Y}$
formally admitting a matrix solution with the leading term
$\exp(\tfrac 14 t \int^{\lambda}\mu(t,\lambda)d\lambda \sigma_3)$ as $t\to
+\infty$ $(\arg t\to 0).$ Hence our solution in Proposition \ref{prop4.1}
is the WKB solution with the perturbation variable $4t^{-1}$ 
in place of Planck's constant $\hbar.$ We make a further transform of the form
$\tilde{Y}=(I+T_1)(I+X_1)Z$ with $T_1$ such that $[\tfrac 14 t\mu\sigma_3, T_1]
=T^{-1}T_{\lambda}-\diag T^{-1}T_{\lambda}$ as in the proof of \cite
[Proposition 3.8]{Shimom}. Suitable choice of $X_1$ reduces system \eqref{3.1}
to $Z_{\lambda}=(\tfrac 14 t\mu(t,\lambda)\sigma_3-\diag T^{-1}T_{\lambda})Z$, 
from which our WKB solution follows. Then we additionally use the fact: 
$\mu =\mp i e^{i\phi} +O(\lambda^{-2})$ near $\lambda=\infty^{\pm},$ 
and $= \pm i e^{i\phi}\lambda^{-2}+O(1)$ near $\lambda=0^{\pm}$
on $\mathcal{R}_{\phi}$, where $\infty^{+}$, $0^+$ (respectively, $\infty^-$,
$0^-$) denote $\infty,$ $0$ on the upper sheet 
(respectively, lower sheet) of $\mathcal{R}
_{\phi}$ (for details see the proof of \cite[Proposition 3.8]{Shimom} or
\cite[Theorem 7.2]{FIKN}, see also \cite{F}).
\subsection{Local solution around a turning point}\label{ssc4.3}
Near turning points the WKB solution above fails in expressing its asymptotic 
behaviour. In the neighbourhood of $\lambda_{\iota}$ system \eqref{3.1} 
is reduced to
\begin{equation}\label{4.3}
\frac{dW}{d\zeta}=\begin{pmatrix} 0 & 1 \\ \zeta & 0 \end{pmatrix} W,
\end{equation}
which has the solutions ${}^T(\mathrm{Ai}(\zeta), 
\mathrm{Ai}_{\zeta}(\zeta))$, ${}^T(\mathrm{Bi}(\zeta), 
\mathrm{Bi}_{\zeta}(\zeta))$ with the Airy function $\mathrm{Ai}(\zeta)$ and 
$\mathrm{Bi}(\zeta) = e^{-\pi i/6}\mathrm{Ai}(e^{-2\pi i/3}\zeta)$
\cite{AS}, \cite{HTF}.
Then we have the following solution near each simple turning point
\cite[Theorem 7.3]{FIKN}, \cite[Proposition 3.9]{Shimom}.
\begin{prop}\label{prop4.2}
For each simple turning point $\lambda_{\iota}$ $(\iota=\pm 1,\pm 2)$ write
$c_k=b_k(\lambda_{\iota}),$ $c'_k=(b_k)_{\lambda}(\lambda_{\iota})$ $(k=1,2,3)$,
and suppose that $c_k,$ $c_k'$ are bounded and $c_1 \pm i c_2\not=0.$ Let
$\hat{t}=2(2\kappa_c)^{-1/3}(c_1-ic_2)(t/4)^{1/3}$ with
$\kappa_c=c_1c_1' +c_2c'_2 +c_3c'_3$. Then system
\eqref{3.1} admits a matrix solution of the form 
\begin{equation*}
 \Phi_{\iota}(\lambda) =T_{\iota} (I+O(t^{-\delta'})) \begin{pmatrix}
1 & 0 \\ 0 & \hat{t}^{-1}    \end{pmatrix} W(\zeta) ,
\quad
 T_{\iota}= \begin{pmatrix}   1 &  -\frac{c_3}{c_1+ ic_2}  \\
- \frac{c_3}{c_1 -ic_2}  &  1    \end{pmatrix},
\end{equation*}
in which
$\lambda-\lambda_{\iota} =(2\kappa_c)^{-1/3}(t/4)^{-2/3} (\zeta+\zeta_0)$ with
$|\zeta_0| \ll t^{-1/3},$ 
as long as $|\zeta| \ll t^{(2/3-\delta')/3},$ that is, $|\lambda-\lambda_{\iota}
| \ll t^{-2/3+(2/3-\delta')/3}.$
Here $\delta'$ is an arbitrary number such that $0<\delta'< 2/3$, and
$W(\zeta)$ solves system \eqref{4.3}, which admits canonical solutions 
$W_{\nu}(\zeta)$ $(\nu \in \mathbb{Z})$ such that
$$
W_{\nu}(\zeta)=\zeta^{-(1/4)\sigma_3}(\sigma_3+\sigma_1)(I+O(\zeta^{-3/2}))
\exp((2/3)\zeta^{3/2}\sigma_3)
$$
as $\zeta \to \infty$ through the sector $|\arg \zeta-(2\nu-1)\pi/3|<2\pi/3,$
and that $W_{\nu+1}(\zeta)=W_{\nu}(\zeta)S_{\nu}$ with
$$
S_1 =\begin{pmatrix} 1 & -i \\ 0 & 1 \end{pmatrix}, \quad 
S_2 =\begin{pmatrix} 1 & 0 \\ -i & 1 \end{pmatrix}, \quad 
S_{\nu+1}=\sigma_1 S_{\nu} \sigma_1.
$$
\end{prop}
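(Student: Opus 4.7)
The plan is to reduce system \eqref{3.1} locally near the simple turning point $\lambda_{\iota}$ to the Airy-type system \eqref{4.3}, for which the canonical solutions $W_{\nu}(\zeta)$ with prescribed sectorial asymptotics and Stokes matrices $S_{\nu}$ are classical. The reduction is carried out in two steps: a constant gauge transformation by $T_{\iota}$, followed by a rescaling of $\lambda$ and a diagonal rescaling of the dependent variable.

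First I would apply the gauge $Y = T_{\iota} Z$ to \eqref{3.1}. At $\lambda = \lambda_{\iota}$ the matrix $\mathcal{B}(t,\lambda_{\iota}) = c_1\sigma_1 + c_2\sigma_2 + c_3\sigma_3$ is nilpotent, since $c_1^2 + c_2^2 + c_3^2 = \mu(t,\lambda_{\iota})^2 = 0$. A direct computation using this relation gives $\det T_{\iota} = 2$ and shows that $T_{\iota}^{-1}\mathcal{B}(t,\lambda_{\iota})T_{\iota}$ is upper triangular nilpotent, with upper-right entry proportional to $c_1 - ic_2$, which explains the assumption $c_1 \pm i c_2 \neq 0$. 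Expanding $\mathcal{B}(t,\lambda)$ in powers of $\lambda - \lambda_{\iota}$ and conjugating by $T_{\iota}$, the linear term contributes to the lower-left position through the combination $\kappa_c = c_1 c_1' + c_2 c_2' + c_3 c_3' = \tfrac 12 (d\mu^2/d\lambda)(\lambda_{\iota})$, which is nonzero precisely because $\lambda_{\iota}$ is a simple turning point.

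Next I would introduce the local variable $\zeta$ via $\lambda - \lambda_{\iota} = (2\kappa_c)^{-1/3}(t/4)^{-2/3}(\zeta + \zeta_0)$ together with the diagonal rescaling $Z = \mathrm{diag}(1, \hat{t}^{-1}) W$. The scaling factor $\hat{t} = 2(2\kappa_c)^{-1/3}(c_1 - ic_2)(t/4)^{1/3}$ is chosen so as to balance the powers of $t$ coming from the gauge and from $d\lambda/d\zeta$, so that to leading order in $t^{-1}$ the equation for $W$ becomes precisely \eqref{4.3}. The quadratic and higher Taylor terms of $\mathcal{B}$ around $\lambda_{\iota}$, together with the shift by $\zeta_0$, contribute perturbations of size $O(t^{-\delta'})$ uniformly on the range $|\zeta| \ll t^{(2/3 - \delta')/3}$, which corresponds to $|\lambda - \lambda_{\iota}| \ll t^{-2/3 + (2/3 - \delta')/3}$.

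Given this reduction, the existence of a genuine matrix solution $\Phi_{\iota}(\lambda)$ of \eqref{3.1} with the claimed asymptotic form follows from a standard perturbative argument: the correction to the Airy prototype is constructed via a convergent Volterra integral equation against the canonical Airy solutions $W_{\nu}(\zeta)$, whose existence, sectorial asymptotics, and Stokes matrices $S_{\nu}$ are classical consequences of the behaviour of $\mathrm{Ai}$ and $\mathrm{Bi}$. This is the argument carried out in \cite[Theorem~7.2]{FIKN} and \cite[Proposition 3.9]{Shimom}. The main technical obstacle is controlling the Volterra iteration uniformly out to $|\zeta|$ of order $t^{(2/3-\delta')/3}$, where the dominant Airy exponential grows rapidly; one must split the estimate according to the sign of $\re \zeta^{3/2}$ and use the sub-dominance of one column of $W_{\nu}$ to close the iteration, and it is precisely this requirement that forces the restriction $\delta' < 2/3$.
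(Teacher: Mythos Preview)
Your proposal is correct and follows the standard turning-point reduction that the paper itself simply defers to by citation: the paper offers no proof of its own for this proposition but refers to \cite[Theorem~7.3]{FIKN} and \cite[Proposition~3.9]{Shimom}, and your sketch is precisely the outline of that argument. One small slip: you cite \cite[Theorem~7.2]{FIKN}, which is the WKB result used for Proposition~\ref{prop4.1}; the local Airy reduction near a simple turning point is Theorem~7.3 in \cite{FIKN}.
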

\begin{rem}\label{rem4.2}
Putting $\lambda-\lambda_{\iota}=(2\kappa_c)^{-1/3}(e^{2\pi i/3})^{2j}
(t/4)^{-2/3}(\zeta+\zeta_0),$ $j\in \{0,\pm 1\},$ we have an expression of
$\Phi_{\iota}(\lambda)$ with $\hat{t}=2(2\kappa_c)^{-1/3}(e^{2\pi i/3})^{2j}
(c_1-ic_2)(t/4)^{1/3}.$
\end{rem}
\section{Calculation of the connection matrices}\label{sc5}
To get necessary information on the connection matrices $G=(g_{ij})$ and 
$\hat{G}=(\hat{g}_{ij})$ (cf. Sections \ref{ssc2.1} and \ref{ssc3.1})  
we calculate $G^*=(g^*_{ij})$ and $\hat{G}^*=(\hat{g}^*_{ij})$ such that
$$
  Y^{\infty, *}_0(\lambda)=Y^0_0(\lambda)G^*, \quad
  Y^{\infty, *}_1(\lambda)=Y^0_1(\lambda)\hat{G}^* 
$$
(cf. \eqref{3.5}, Proposition \ref{prop3.9}) as a solution of the direct
monodromy problem by applying WKB analysis to system \eqref{3.1}. 
Suppose that $a_{\phi}(t)$ is given by \eqref{4.2} with a pair of
arbitrary functions $(y,y^*)=(y(t),y^*(t))$ not necessarily solving 
\eqref{1.1} with $2x=\xi=te^{i\phi}$, and that 
\begin{equation}\label{5.1}
a_{\phi}(t)=A_{\phi}+ \frac{B_{\phi}(t)}{t}, \quad B_{\phi}(t) \ll 1
\end{equation}
for $t \in S_{\phi}(t'_{\infty},\kappa_1,\delta_1)$ with given  
$\kappa_1>0$, given small $\delta_1>0$ and sufficiently large $t'_{\infty}>0$. 
Here $A_{\phi}$ is a solution of the Boutroux equations \eqref{2.1}, and
$$
S_{\phi}(t'_{\infty}, \kappa_1, \delta_1)=\{t\,|\,\, \re t > t'_{\infty},
\,\, |\im t| < \kappa_1, \,\, |y(t)|+ |y^*(t)|+|y(t)|^{-1} 
< \delta_1^{-1} \}.
$$
\par
Let $0<\phi<\pi/2.$ We calculate the analytic continuation of the 
matrix solution $Y^{\infty,*}_0(\lambda)$ near $\lambda=\infty$ 
along the Stokes curve consisting of 
$$
\mathbf{c}_{\infty}=(\infty^-, \lambda_2)^{\sim}, \quad
\mathbf{c}_{1}=(\lambda_2,\lambda_1)^{\sim}, \quad
\mathbf{c}_{0}=(\lambda_1,0^+)^{\sim}
$$
starting from $\infty^-$ and terminating in $0^+$ as in Figure \ref{curve+} to
get the connection matrix $G^*$, where $\infty^-$ and $0^+$ denote $\infty$
and $0$ on the lower and upper
sheets of $\mathcal{R}_{\phi}$, respectively. Recall
that the Stokes curve is considered on the two-sheeted Riemann surface
$\mathcal{R}_{\phi}$ of $\mu(t,\lambda)$, and that the curve $\mathbf{c}_1$ 
is located along the lower shore of the cut 
$[\lambda_1,\lambda_2]$. The curve $\mathbf{c}_{\infty}$ is on the lower 
sheet of $\mathcal{R}_{\phi}$, and $\mathbf{c}_0$ and $\mathbf{c}_1$ are 
on the upper sheet of $\mathcal{R}_{\phi}$.
Under supposition \eqref{5.1} these curves $\mathbf{c}_0,$ $\mathbf{c}_1,$
$\mathbf{c}_{\infty}$ lie within the distance $\ll t^{-1}$ from the limit
Stokes graph in Figure \ref{stokes}.
\par
In the WKB solution, write $\Lambda(\lambda)$ in the component-wise form
$ \Lambda(\lambda) =\Lambda_3(\lambda) + \Lambda_I(\lambda)$ with
$$
\Lambda_3(\lambda) =\frac t4 \mu(t,\lambda) \sigma_3 -\diag T^{-1}T_{\lambda}
|_{\sigma_3} \sigma_3,
\quad \Lambda_I(\lambda)= -\diag T^{-1}T_{\lambda}|_{I} I,
$$
in which $\diag T^{-1}T_{\lambda}|_{\sigma_3}\sigma_3  \in \mathbb{C}\sigma_3$,
$\diag T^{-1}T_{\lambda}|_{I}I \in \mathbb{C} I$. 
Denote by $\mu_-(t,\lambda)$ the branch of $\mu(t,\lambda)$ on the lower
sheet of $\mathcal{R}_{\phi}$ and by $\Lambda_-(\lambda),$ $\Lambda_{3-}
(\lambda),$ $\Lambda_{I-}(\lambda)$ and $T_-$ those related to 
$\mu_-(t,\lambda).$
In Propositions \ref{prop4.1} and
\ref{prop4.2}, if $\delta=\delta'=2/9-\varepsilon$ with any $\varepsilon $
such that $0<\varepsilon<2/9$,
then both propositions are applicable in the annulus
$$
\mathcal{A}^{\iota}_{\varepsilon}: \quad t^{-2/3+(2/3)(2/9-\varepsilon)}
\ll |\lambda -\lambda_{\iota}| \ll t^{-2/3 +(2/3)(2/9+\varepsilon/2)}
$$
$(\iota=\pm 1,\pm 2)$. In what follows we set $\delta=2/9 -\varepsilon,$ and
write $c_k=b_k(\lambda_1)$, $d_k=b_k(\lambda_2)$ $(k=1,2,3)$.
{\small
\begin{figure}[htb]
\begin{center}
\unitlength=0.75mm
\begin{picture}(100,50)(0,-25)
\put(-5,1){\makebox{$0^+$}}
\put(18,15){\makebox{$\lambda_1$}}
\put(53,-16){\makebox{$\lambda_2$}}
\put(88,-15){\makebox{$\infty^-$}}
\put(12,0){\makebox{$\mathbf{c}_0$}}
\put(38,4){\makebox{$\mathbf{c}_1$}}
\put(71,-9){\makebox{$\mathbf{c}_{\infty}$}}

\thinlines
\qbezier(32,14) (48,10.5) (51,-3)
\qbezier(25.5,15.7) (52,14) (52.7,-9.5)

  \qbezier(58.5,-9) (58.5,-5) (54,-4)
\put(54,-4){\vector(-1,0){0}}

  \qbezier(30.5,12) (28.5,7) (23,9)
\put(23,9){\vector(-3,2){0}}

\qbezier(40,-22) (46,-18) (52,-10)
\qbezier(40,-22.06) (46,-18.06) (52,-10.06)

\thicklines
\put(0,0){\circle{1.5}}
\put(25,15){\circle*{1.5}}
\put(52,-10){\circle*{1.5}}
\qbezier(0.8,0) (16,3) (25,15)
\qbezier(0.8,0.08) (16,3.08) (25,15.08)
\qbezier(25,15) (48,11) (52,-10)
\qbezier(24.92,15) (47.92,11) (51.92,-10)

\qbezier[12](25,15) (24,20) (20,27)
\qbezier[20](85,-15) (65,-11) (52,-10)
\qbezier[20](85.2,-15.3) (65.2,-11.3) (52.2,-10.3)
\qbezier[20](85.0,-15.3) (65.0,-11.3) (52.0,-10.3)
\qbezier[20](85.2,-15) (65.2,-11) (52.2,-10)

\end{picture}
\end{center}
\caption{Curves $\mathbf{c}_0,$ $\mathbf{c}_1,$ $\mathbf{c}_{\infty}$ 
for $0<\phi <\pi/2$}
\label{curve+}
\end{figure}
}

{\bf (1)} Let $\Psi_{\infty}(\lambda)$ along $\mathbf{c}_{\infty}
=(\infty^-, \lambda_2)^{\sim}$ be a WKB solution by Proposition \ref{prop4.1}, and
let ${Y}^{\infty, *}_0(\lambda)$ be given by \eqref{3.4}.
Set
${Y}^{\infty, *}_0(\lambda)=\Psi_{\infty}(\lambda) \Gamma_{\infty}.$ Note that
the branch of $\mu(t,\lambda)$ along $\mathbf{c}_{\infty}$ is
$\mu_-(t,\lambda)= i e^{i\phi} (1 +2i e^{-i\phi}\theta_{\infty}
 t^{-1}\lambda^{-1} +O(\lambda^{-2}))$, and
$\mu_- -b_3 \ll \lambda^{-1}$ as $\lambda \to \infty^-.$ Then
\begin{align*}
\Gamma_{\infty} =& 
 \Psi_{\infty}(\lambda)^{-1} {Y}^{\infty,*}_0(\lambda) 
\\
=& \exp\Bigl(-\int^{\lambda}_{\tilde{\lambda}_2} \Lambda_-(\tau) d\tau\Bigr)
T^{-1}_- (I+O(t^{-\delta}+|\lambda|^{-1})) 
\\
&\phantom{---}\times \exp \Bigl(\frac 14 ( i e^{i\phi}
t\lambda - 2\theta_{\infty} \log \lambda)\sigma_3 \Bigr)
\\
=& C_3(\tilde{\lambda}_2) c_I(\tilde{\lambda}_2)(I+O(t^{-\delta}))
\\
&\phantom{---}\times \exp\biggl( -\lim_{\substack {\lambda\to\infty^- \\ 
\lambda\in \mathbf{c}_{\infty}} }  
\Bigl(\int^{\lambda}_{\lambda_2} \Lambda_{3-}(\tau)
d\tau -\frac 14 (ie^{i\phi}t\lambda -2\theta_{\infty}\log\lambda)\sigma_3 \Bigr)
\biggr),
\end{align*}
in which $C_3(\tilde{\lambda}_2)=\exp (\int^{\tilde{\lambda}_2}_{\lambda_2}
\Lambda_{3-}(\tau)d\tau )$, $c_I(\tilde{\lambda}_2) =\exp (-\int^{\infty^-}
_{\tilde{\lambda}_2} \Lambda_{I-}(\tau) d\tau ),$ and $\tilde{\lambda}_2 \in
\mathbf{c}_{\infty},$ $ \tilde{\lambda}_2-\lambda_2 \asymp t^{-1}.$
\par
{\bf (2)} For $\Psi_{\infty}(\lambda)$ and for $\Phi^+_2(\lambda)$ given by
Proposition \ref{prop4.2} in the annulus $\mathcal{A}^2_{\varepsilon}$
around $\lambda_2$, set $\Psi_{\infty}(\lambda)=\Phi^+_2(\lambda)\Gamma_{2+}$
along $\mathbf{c}_{\infty}.$
Suppose that the curve $(2\kappa_d)^{1/3}(\lambda-\tilde{\lambda}_2)=(t/4)^{-2/3}
(\zeta+O(t^{-1/3})),$ $\kappa_d=d_1d_1'+d_2d_2'+d_3d_3'$ with 
$\lambda \in \mathbf{c}_{\infty}$ enters the sector
$|\arg \zeta -\pi/3 |<2\pi/3$ (the other cases are similarly treated by
Remark \ref{rem4.2}). Write $K^{-1}=2(2\kappa_d)^{-1/3}(d_1-i d_2).$ Then, by
Propositions \ref{prop4.1} and \ref{prop4.2},
\begin{align*}
\Gamma_{2+} =& \Phi^+_2(\lambda)^{-1} \Psi_{\infty}(\lambda)
\\
=& W(\zeta)^{-1} \begin{pmatrix} 1 & 0 \\  0 & (t/4)^{-1/3}K \end{pmatrix}
^{\!\!-1}
(I+O(t^{-\delta}))  \begin{pmatrix}  1 &  -\frac{d_3}{d_1+i d_2} \\
-\frac{d_3}{d_1-i d_2}  & 1  \end{pmatrix}^{\!\!-1}
\\
&\phantom{----} \times \begin{pmatrix}  1  & \frac{b_3 - \mu_-}{b_1 +i b_2} \\
\frac{\mu_- -b_3}{b_1- i b_2}  &  1  \end{pmatrix}  (I+O(t^{-\delta}))
\exp \Bigl(\int^{\lambda}_{\tilde{\lambda}_2} \Lambda_-(\tau)d\tau \Bigr)
\\
=& W(\zeta)^{-1} \begin{pmatrix}  1  &  \frac{d_3}{d_1+i d_2} \\
\frac{(t/4)^{1/3}\mu_-} {2K(d_1-i d_2)} & \frac{(t/4)^{1/3}\mu_-}{2K d_3}
\end{pmatrix}  (I+O(t^{-\delta})) \exp\Bigl(\int^{\lambda}_{\tilde{\lambda}_2}
\Lambda_-(\tau) d\tau \Bigr)
\end{align*}
for $\lambda \in \mathcal{A}^2_{\varepsilon} \cap \mathbf{c}_{\infty},$ where
$(\mu_- -b_3)/(b_1 \pm ib_2)=(\mu_- -d_3)/(d_1 \pm i d_2)+O(\eta)$, $\eta=
\lambda-\tilde{\lambda}_2$. Since
$
\mu_- =-(2\kappa_d)^{1/2}\eta^{1/2}(1+O(\eta)) 
 =-2K(d_1-i d_2)(t/4)^{-1/3}\zeta^{1/2}(1+O(\eta)),
$
we have
\begin{align*}
\Gamma_{2+}= & \exp\Bigl(-\frac 23 \zeta^{3/2}\sigma_3\Bigr)
(\sigma_3+\sigma_1)^{-1}
\zeta^{(1/4)\sigma_3}\begin{pmatrix} 1 & 0 \\ 0 & \zeta^{1/2} \end{pmatrix}
\\
& \phantom{---}\times
\begin{pmatrix} 1 & -\frac{d_1-i d_2}{d_3} \\ -1 & -\frac{d_1-i d_2}{d_3}
\end{pmatrix}(I+O(t^{-\delta}))\exp\Bigl(\int^{\lambda}_{\tilde{\lambda}_2}
\Lambda_-(\tau)d\tau \Bigr)
\\
=& \exp\Bigl(\int^{\lambda}_{\tilde{\lambda}_2} \sigma_1 \Lambda_-(\tau) 
\sigma_1 d\tau -\frac 23 \zeta^{3/2}\sigma_3\Bigr)\zeta^{1/4}(I+O(t^{-\delta}))
\begin{pmatrix}  0 & -\frac{d_1-i d_2}{d_3} \\ 1 & 0 \end{pmatrix}.
\end{align*}
By $\sigma_1 \Lambda_{3-}(\lambda) \sigma_1 =((2\kappa_d)^{1/2}(t/4)\eta^{1/2}(1+O(\eta))
+O(\eta^{-1/2}))\sigma_3$ and $\Lambda_{I-}(\lambda)=(-{\eta}^{-1}/4+
O(\eta^{-1/2}) )I$ (cf. Remark \ref{rem4.1}) for $\eta=\lambda
-\tilde{\lambda}_2,$ $\lambda\in \mathcal{A}^2_{\varepsilon} \cap \mathbf{c}
_{\infty},$ it follows that
$$
\Gamma_{2+} = (\tilde{\zeta}_2)^{1/4}(I+O(t^{-\delta})) C_3(\tilde{\lambda}_2)
^{-1} \begin{pmatrix}  0 & -\frac{d_1-i d_2}{d_3} \\  1 &  0 \end{pmatrix}
$$
with suitably chosen $\tilde{\zeta}_2 \asymp \tilde{\lambda}_2 -\lambda_2.$
\par
{\bf (3)} Let $\Phi^-_2(\lambda)$ be the solution by Proposition \ref{prop4.2} 
near $\mathbf{c}_1=(\lambda_2,\lambda_1)^{\sim}$, and 
set $\Phi^+_2(\lambda)=\Phi^-_2(\lambda)\Gamma_{2*},$ where 
$\Phi^+_2(\lambda)$ is the analytic continuation
along an arc in $\mathcal{A}^2_{\varepsilon}$ in the anticlockwise direction.
Then by Proposition \ref{prop4.2},
$$
\Gamma_{2*} = \Phi^-_2(\lambda)^{-1}\Phi^+_2(\lambda)=S^{-1}_1=\begin{pmatrix}
1 & i \\ 0 & 1 \end{pmatrix}.
$$
\par
{\bf (4)} For $\Phi^-_2(\lambda)$ and the WKB solution $\Psi^-_1(\lambda)$
along $\mathbf{c}_1$, set $\Phi^-_2(\lambda)=\Psi^-_1(\lambda)\Gamma_{2-}.$
Note that $\mathbf{c}_1$ is on the upper sheet of $\mathcal{R}_{\Phi}.$
Then, supposing the curve $(2\kappa_d)^{1/3}(\lambda-\tilde{\lambda}_2')
=(t/4)^{-2/3}(\zeta+O(t^{-1/3}))$ with $\lambda \in \mathbf{c}_1$ is in the
sector $|\arg\zeta -\pi|<2\pi/3,$ we have, for $\tilde{\lambda}'_2 \in 
\mathbf{c}_1,$ $|\tilde{\lambda}'_2 -\lambda_2| \asymp t^{-1},$
\begin{align*}
\Gamma_{2-} =& \Psi_1^-(\lambda)^{-1} \Phi^-_2(\lambda)
\\
=& \exp \Bigl( -\int^{\lambda}_{\tilde{\lambda}'_2} \Lambda(\tau)d\tau \Bigr)
(I+O(t^{-\delta})) \begin{pmatrix}  1  & \frac{b_3-\mu}{b_1+ ib_2}  \\
\frac{\mu-b_3}{b_1-i b_2} & 1  \end{pmatrix}^{\!\! -1}
\\
&\phantom{--}\times  \begin{pmatrix}  1 &  -\frac{d_3}{d_1+ id_2} \\
-\frac{d_3}{d_1- i d_2} &  1 \end{pmatrix} (I+O(t^{-\delta})) 
\begin{pmatrix} 1 & 0 \\ 0 & (t/4)^{-1/3}\tilde{K} \end{pmatrix} W(\zeta)
\\
=& \exp\Bigl(\frac 23\zeta^{3/2} \sigma_3 -\int^{\lambda}_{\tilde{\lambda}'_2}
\Lambda(\tau) d\tau \Bigr) \zeta^{-1/4}(I+O(t^{-\delta})) \begin{pmatrix}
1 & 0 \\ 0 & -\frac{d_3}{d_1-i d_2} \end{pmatrix},
\end{align*}
where $\tilde{K}^{-1}=2(2\kappa_d)^{-1/3} (d_1- i d_2).$
This yields 
$$
\Gamma_{2-}=(\tilde{\zeta}'_2)^{-1/4}(I+O(t^{-\delta})) C'_3(\tilde{\lambda}'_2)
\begin{pmatrix}  
1 &  0  \\  0 & -\frac{d_3}{d_1- i d_2} \end{pmatrix}
$$
with $C'_3(\tilde{\lambda}'_2)=\exp(\int^{\tilde{\lambda}'_2}_{\lambda_2}
\Lambda_3(\tau)d\tau )$ for some $\tilde{\zeta}'_2 \asymp \tilde{\lambda}'_2
-\lambda_2.$
\par
{\bf (5)} For $\Psi^-_1(\lambda)$ and the WKB solution $\Psi^+_1(\lambda)$ along
$\mathbf{c}_1$ near $\lambda_1$, set $\Psi^-_1(\lambda)=\Psi^+_1(\lambda)
\Gamma_{12}.$ Then, for $\tilde{\lambda}_1 \in \mathbf{c}_1,$ $\tilde{\lambda}
_1 -\lambda_1 \asymp t^{-1}$,
\begin{equation*}
\Gamma_{12} = \Psi^+_1(\lambda)^{-1} \Psi^-_1(\lambda)
\\
= C'_3(\tilde{\lambda}'_2)^{-1} C''_3(\tilde{\lambda}_1) c_I(\tilde{\lambda}'
_2, \tilde{\lambda}_1) \exp\Bigl( -\int^{\lambda_2}_{\lambda_1} \Lambda_3(\tau)
d\tau \Bigr),
\end{equation*}
where $C''_3(\tilde{\lambda}_1)=\exp (\int^{\tilde{\lambda}_1}_{\lambda_1}
\Lambda_3(\tau)d\tau),$ $c_I(\tilde{\lambda}'_2,\tilde{\lambda}_1)=\exp
(-\int^{\tilde{\lambda}'_2}_{\tilde{\lambda}_1} \Lambda_I(\tau) d\tau).$
\par
{\bf (6)} For $\Psi^+_1(\lambda)$ and for $\Phi^+_1(\lambda)$ given 
by Proposition
\ref{prop4.2} in the annulus $\mathcal{A}^1_{\varepsilon}$ around $\lambda_1$,
set $\Psi^+_1(\lambda)=\Phi^+_1(\lambda)\Gamma_{1+}.$ Then, by the same
argument as in (2) above with $\mu$ in place of $\mu_-$, we have 
$$
\Gamma_{1+} =\Phi^+_1(\lambda)^{-1}\Psi^+_1(\lambda) =(\tilde{\zeta}_1)^{1/4}
(I+O(t^{-\delta})) C''_3(\tilde{\lambda}_1)^{-1} \begin{pmatrix}
 1 & 0  \\ 0 & -\frac{c_1- i c_2}{c_3} \end{pmatrix}
$$
for some $\tilde{\zeta}_1 \asymp \tilde{\lambda}_1 -\lambda_1.$ 
\par
{\bf (7)} Let $\Phi^-_1(\lambda)$ be the solution by Proposition \ref{prop4.2} 
near $\mathbf{c}_0=(\lambda_1,0^+)^{\sim}$, and set $\Phi^+_1(\lambda)
 =\Phi^-_1(\lambda)\Gamma_{1\,*},$ where $\Phi^+_1(\lambda)$ is the analytic
continuation along 
an arc in $\mathcal{A}^1_{\varepsilon}$ in the clockwise direction. 
Then by Proposition \ref{prop4.2},
$$
\Gamma_{1\,*}= \Phi^-_1(\lambda)^{-1} \Phi^+_1(\lambda) = S_0= \begin{pmatrix}
1 & 0 \\  -i & 1  \end{pmatrix}.
$$
\par
{\bf (8)} For $\Phi^-_1(\lambda)$ and the WKB solution $\Psi_0(\lambda)$ along
$\mathbf{c}_0,$ set $\Phi^-_1(\lambda)=\Psi_0(\lambda)\Gamma_{1-}.$ By the
same argument as in (4), we have
$$
\Gamma_{1-}=\Psi_0(\lambda)^{-1}\Phi^-_1(\lambda)=(\tilde{\zeta}'_1)^{-1/4}
(I+O(t^{-\delta})) \hat{C}_3(\tilde{\lambda}'_1) \begin{pmatrix}
1 & 0 \\ 0 & -\frac{c_3}{c_1-i c_2} \end{pmatrix}
$$
with $\hat{C}_3(\tilde{\lambda}'_1) =\exp(\int^{\tilde{\lambda}'_1}_{\lambda_1}
\Lambda_3(\tau) d\tau )$ for some $\tilde{\zeta}'_1 \asymp \tilde{\lambda}'_1
-\lambda_1.$
\par
{\bf (9)} For $\Psi_0(\lambda)$ and ${Y}^0_0(\lambda)$ given by \eqref{3.3}, set
$\Psi_0(\lambda)={Y}^0_0(\lambda)\Gamma_0.$ Note that $\mu(t,\lambda)=ie^{i\phi}
\lambda^{-2}+O(1)$ as $\lambda\to 0^+$. 
Then
\begin{align*}
\Gamma_0=& {Y}^0_0(\lambda)^{-1}\Psi_0(\lambda)
\\
=& \exp\Bigl(\frac 14( i e^{i\phi} t\lambda^{-1}
 -2\theta_0\log\lambda)\sigma_3 \Bigr) (\Delta_0^*)^{-1}
T_0 (I+O(t^{-\delta}+|\lambda|))
\exp\Bigl(\int^{\lambda}_{\tilde{\lambda}'_1}\Lambda(\tau) d\tau \Bigr)
\end{align*}
with $T_0=T|_{\lambda=0}.$
Choosing $f_1=c^*_0,$ $f_2=c^*_0(\z-e^{i\phi}t/2)^{-1}$ with $c^*_0=
\sqrt{2} i e^{-i\phi/2} t^{-1/2}(\z -e^{i\phi}t/2)^{1/2},$ we have
$$
\Delta_0^*= c^*_0 \begin{pmatrix} 1 & \z(\z-e^{i\phi}t/2)^{-1} \\ 1 & 1
\end{pmatrix}.
$$
The $(1,2)$- and $(2,1)$-entries of $T-I$ satisfy
\begin{align*}
& \frac{b_3-\mu}{b_1+ib_2} \sim 
 \frac{-4i\z t^{-1}\lambda^{-2}}{-2it^{-1}(2\z -e^{i\phi}t)\lambda^{-2}}
\to \frac{\z}{\z-e^{i\phi} t/2},
\\
& \frac{\mu-b_3}{b_1-ib_2} \sim
\frac{ie^{i\phi}\lambda^{-2} 
+it^{-1}(4\z -e^{i\phi}t)\lambda^{-2}}{4i \z t^{-1}\lambda^{-2}}\to 1
\end{align*}
as $\lambda \to 0^+.$ This implies $(c_0^*)^{-1}\Delta_0^*=T_0.$ Hence we have
\begin{align*}
\Gamma_0 =  & \hat{C}_3(\tilde{\lambda}'_1)^{-1} \hat{c}_I(\tilde{\lambda}'_1)
(c_0^*)^{-1}(I +O(t^{-\delta}))
\\
& \times \exp\biggl(\lim_{\substack{\lambda \to 0^+ \\
\lambda \in \mathbf{c}_0 }} \Bigl(\int^{\lambda}_{\lambda_1} \Lambda_3(\tau)
d\tau +\frac 14(i e^{i\phi}t\lambda^{-1} -2\theta_0\log\lambda) 
\sigma_3 \Bigr) \biggr)
\end{align*}
with $\hat{c}_I(\tilde{\lambda}'_1) =  \exp(\int^0_{\tilde{\lambda}
'_1}\Lambda_I(\tau)d\tau).$
\par
Collecting the matrices above, we have the connection matrix
\begin{align*}
G^*=& 
 {Y}^0_0(\lambda)^{-1} {Y}^{\infty,*}_0(\lambda)
\\
=& \Gamma_0 \Gamma_{1-}\Gamma_{1\,*} \Gamma_{1+} \Gamma_{12} \Gamma_{2-}
\Gamma_{2*} \Gamma_{2+}\Gamma_{\infty}
\\
=&\epsilon_+ i(I+O(t^{-\delta}) ) \exp(J_0\sigma_3)
\begin{pmatrix} 1 & 0 \\ 0 & -c_0^{-1} \end{pmatrix}
\begin{pmatrix} 1 & 0 \\ -i & 1   \end{pmatrix}
\begin{pmatrix} 1 & 0 \\ 0 & -c_0 \end{pmatrix}
\\
&\phantom{--} \times 
 \exp(-J_1\sigma_3)
\begin{pmatrix} 1 & 0 \\ 0 & -d_0^{-1} \end{pmatrix}
\begin{pmatrix} 1 & i \\ 0 & 1   \end{pmatrix}
\begin{pmatrix} 0 & -{d_0} \\  1 & 0  \end{pmatrix}
\exp(-J_{\infty} \sigma_3)
\\
=& \epsilon_+  (I+O(t^{-\delta}))
\\
&\times
\begin{pmatrix}  -e^{J_0 -J_1-J_{\infty}}  & -id_0  e^{J_0-J_1+J_{\infty}} \\
-i (c^{-1}_0 e^{-J_1}+d^{-1}_0 e^{J_1})e^{-J_0-J_{\infty}} &
 c^{-1}_0d_0 e^{-J_0-J_1+J_{\infty}}   \end{pmatrix}
\end{align*}
if $0<\phi<\pi/2$, where $\epsilon_+^2=1,$ 
$ c_0=(c_1-i c_2)/c_3,$ $ d_0=(d_1-i d_2)/d_3$, and
\begin{align}\label{5.2}
& J_0\sigma_3 = \lim_{\substack{\lambda\to 0^+ \\ \lambda\in \mathbf{c}_0}}
\Bigl(\int^{\lambda}_{\lambda_1} \Lambda_3(\tau) d\tau +\frac 14
(i e^{i\phi} t\lambda^{-1} -2\theta_0 \log \lambda ) \sigma_3\Bigr), 
\\
\label{5.3}
& J_1\sigma_3 =\int^{\lambda_2}
_{\lambda_1} \Lambda_3(\tau)d\tau \,\,\,\, \text{(along $\mathbf{c}_1$),}
\\
\label{5.4}
& J_{\infty}\sigma_3 = \lim_{\substack{\lambda\to \infty^- \\ 
\lambda\in \mathbf{c}_{\infty}}}
\Bigl(\int^{\lambda}_{\lambda_2} \Lambda_{3-}(\tau) d\tau -\frac{1}4
(ie^{i\phi}t\lambda -2\theta_{\infty}\log \lambda)\sigma_3 \Bigr).
\end{align}
Note that the curve $\mathbf{c}_1$ joining $\lambda_1$ to $\lambda_2$
is located along the lower shore of the cut on the upper sheet of 
$\mathcal{R}_{\phi}.$
\par
The connection matrix $\hat{G}^*$ is obtained by calculating the matrix
solution $Y^{\infty,*}_1(\lambda)$ along the Stokes curve consisting of 
$\hat{\mathbf{c}}_{\infty}=(\infty^-, -\lambda_2)^{\sim},$
$\hat{\mathbf{c}}_{1}=(-\lambda_2,-\lambda_1)^{\sim},$
$\hat{\mathbf{c}}_{0}=(-\lambda_1,0^+)^{\sim},$ the union of which joins 
$\infty^-$ to $0^+$ as in Figure \ref{curve+*}.
The curve $\hat{\mathbf{c}}_{\infty}$ lies on the lower sheet of $\mathcal{R}
_{\phi}$, and $\hat{\mathbf{c}}_1$ and $\hat{\mathbf{c}}_0$ on the upper sheet
of $\mathcal{R}_{\phi}.$
{\small
\begin{figure}[htb]
\begin{center}
\unitlength=0.75mm
\begin{picture}(100,50)(-100,-25)
\put(5,-1){\makebox{$0^+$}}
\put(-20,-15){\makebox{$-\lambda_1$}}
\put(-59,15){\makebox{$-\lambda_2$}}
\put(-92,15){\makebox{$\infty^-$}}
\put(-13,0){\makebox{$\hat{\mathbf{c}}_0$}}
\put(-38,-6){\makebox{$\hat{\mathbf{c}}_1$}}
\put(-71,7){\makebox{$\hat{\mathbf{c}}_{\infty}$}}

\thinlines
\qbezier(-32,-14) (-48,-10.5) (-51,3)
\qbezier(-25.5,-15.7) (-52,-14) (-52.7,9.5)

  \qbezier(-58.5,9) (-58.5,5) (-54,4)
\put(-54,4){\vector(1,0){0}}

  \qbezier(-30.5,-12) (-28.5,-7) (-23,-9)
\put(-23,-9){\vector(3,-2){0}}

\qbezier(-40,22) (-46,18) (-52,10)
\qbezier(-40,22.06) (-46,18.06) (-52,10.06)

\thicklines
\put(0,0){\circle{1.5}}
\put(-25,-15){\circle*{1.5}}
\put(-52,10){\circle*{1.5}}
\qbezier(-0.8,0) (-16,-3) (-25,-15)
\qbezier(-0.8,-0.08) (-16,-3.08) (-25,-15.08)
\qbezier(-25,-15) (-48,-11) (-52,10)
\qbezier(-24.92,-15) (-47.92,-11) (-51.92,10)

\qbezier[12](-25,-15) (-24,-20) (-20,-27)
\qbezier[20](-85,15) (-65,11) (-52,10)
\qbezier[20](-85.2,15.3) (-65.2,11.3) (-52.2,10.3)
\qbezier[20](-85.0,15.3) (-65.0,11.3) (-52.0,10.3)
\qbezier[20](-85.2,15) (-65.2,11) (-52.2,10)

\end{picture}
\end{center}
\caption{Curves $\hat{\mathbf{c}}_0$, $\hat{\mathbf{c}}_1$,
$\hat{\mathbf{c}}_{\infty}$ for $0<\phi <\pi/2$}
\label{curve+*}
\end{figure}
}
Then we have
\begin{align*}
 \hat{G}^*&= Y^0_1(\lambda)^{-1}Y^{\infty,*}_1(\lambda)
\\
&=\hat{\epsilon}_+ i(I+O(t^{-\delta})) \exp(\hat{J}_0\sigma_3)
\begin{pmatrix} 1 & 0 \\ 0 & -\hat{c}_0^{-1} \end{pmatrix}
\begin{pmatrix} 1 & -i \\ 0 & 1 \end{pmatrix}
\begin{pmatrix} 1 & 0 \\ 0 & -\hat{c}_0 \end{pmatrix}
\\
&\phantom{---}\times
\exp(-\hat{J}_1\sigma_3)
\begin{pmatrix} 1 & 0 \\ 0 & -\hat{d}_0^{-1} \end{pmatrix}
\begin{pmatrix} 1 & 0 \\ i & 1 \end{pmatrix}
\begin{pmatrix} 0 & -\hat{d}_0 \\ 1 & 0 \end{pmatrix}
\exp(-\hat{J}_{\infty} \sigma_3)
\\
&=\hat{\epsilon}_+ (I+O(t^{-\delta}))
\\
&\phantom{---}\times
\begin{pmatrix} \hat{c}_0\hat{d}_0^{-1} e^{\hat{J}_0+\hat{J}_1-\hat{J}_{\infty}}
& -i(\hat{d}_0 e^{-\hat{J}_1} +\hat{c}_0 e^{\hat{J}_1}) e^{\hat{J}_0+
\hat{J}_{\infty}}  \\
-i\hat{d}_0^{-1} e^{-\hat{J}_0+\hat{J}_1-\hat{J}_{\infty}} &
- e^{-\hat{J}_0+\hat{J}_1+\hat{J}_{\infty}} 
\end{pmatrix}.
\end{align*}
Here $\hat{\epsilon}_+^2=1$, $\hat{c}_0=(\hat{c}_1-i\hat{c}_2)/\hat{c}_3$, 
$\hat{d}_0=(\hat{d}_1-i\hat{d}_2)/\hat{d}_3$ with $\hat{c}_j=b_j(\lambda_{-1}),$
$\hat{d}_j=b_j(\lambda_{-2})$ $(j=1,2,3)$, and
$\hat{J}_0=J_0|_{\mathbf{c}_0 \mapsto \hat{\mathbf{c}}_0},$
$\hat{J}_1=J_1|_{\mathbf{c}_1 \mapsto \hat{\mathbf{c}}_1},$
$\hat{J}_{\infty}=J_{\infty}|_{\mathbf{c}_{\infty} \mapsto 
\hat{\mathbf{c}}_{\infty}}.$
\par
In the case $-\pi/2 <\phi <0$, calculating the analytic continuation along 
the Stokes curves consisting of $\mathbf{c}_{\infty},$ $\mathbf{c}^-_1,$
$\mathbf{c}_0$, and of $\hat{\mathbf{c}}_{\infty},$ $\hat{\mathbf{c}}^-_1,$
$\hat{\mathbf{c}}_0$ as in Figure \ref{curve-}, we have, in a similar manner,
the connection matrices $G^*$ and $\hat{G}^*$: 
\begin{align*}
{G^*}=&\epsilon_- i(I+O(t^{-\delta}) ) \exp(J_0\sigma_3)
\begin{pmatrix} 1 & 0 \\ 0 & -c^{-1}_0 \end{pmatrix}
\begin{pmatrix} 1 & i \\ 0 & 1   \end{pmatrix}
\begin{pmatrix} 1 & 0 \\ 0 & -c_0 \end{pmatrix}
\\
&\phantom{--} \times 
 \exp(-{J}^-_1\sigma_3)
\begin{pmatrix} 1 & 0 \\ 0 & -d_0^{-1} \end{pmatrix}
\begin{pmatrix} 1 & 0 \\ -i & 1   \end{pmatrix}
\begin{pmatrix} 0 & -d_0 \\  1 & 0  \end{pmatrix}
\exp(-J_{\infty} \sigma_3)
\\
=& \epsilon_-  (I+O(t^{-\delta}))
\\
&\times
\begin{pmatrix} -c_0d_0^{-1} e^{J_0 +{J}^-_1-J_{\infty}}  &
-i(c_0 e^{{J}^-_1}+d_0 e^{-{J}^-_1}) e^{J_0+J_{\infty}} \\
-i d_0^{-1} e^{-J_0+{J}^-_1-J_{\infty}}  &
 e^{-J_0+{J}^-_1+J_{\infty}}   \end{pmatrix},
\end{align*}
and
\begin{align*}
\hat{G^*}=&\hat{\epsilon}_- i(I+O(t^{-\delta}) ) \exp(\hat{J}_0\sigma_3)
\begin{pmatrix} 1 & 0 \\ 0 & -\hat{c}^{-1}_0 \end{pmatrix}
\begin{pmatrix} 1 & 0 \\ i & 1   \end{pmatrix}
\begin{pmatrix} 1 & 0 \\ 0 & -\hat{c}_0 \end{pmatrix}
\\
&\phantom{--} \times 
 \exp(-\hat{J}^-_1\sigma_3)
\begin{pmatrix} 1 & 0 \\ 0 & -\hat{d}_0^{-1} \end{pmatrix}
\begin{pmatrix} 1 & -i \\ 0 & 1   \end{pmatrix}
\begin{pmatrix} 0 & -\hat{d}_0 \\  1 & 0  \end{pmatrix}
\exp(-\hat{J}_{\infty} \sigma_3)
\\
=& \hat{\epsilon}_-  (I+O(t^{-\delta}))
\\
&\times
\begin{pmatrix}    e^{\hat{J}_0 -\hat{J}^-_1-\hat{J}_{\infty}}  &
-i \hat{d}_0 e^{\hat{J}_0-\hat{J}^-_1+\hat{J}_{\infty}} \\ 
-i(\hat{c}_0^{-1} e^{-\hat{J}^-_1}+\hat{d}_0^{-1} e^{\hat{J}^-_1}) 
e^{-\hat{J}_0-\hat{J}_{\infty}} &
 -\hat{c}_0^{-1}\hat{d}_0  e^{-\hat{J}_0-\hat{J}^-_1+\hat{J}_{\infty}} 
  \end{pmatrix}.
\end{align*}
Here $\epsilon_-^2=\hat{\epsilon}_-^2=1,$ and
\begin{equation}\label{5.5}
 J_1^-\sigma_3 =\int^{\lambda_2}
_{\lambda_1} \Lambda_3(\tau)d\tau \,\,\,\, \text{(along $\mathbf{c}^-_1$),}
\end{equation}
$\hat{J}^-_1 =J^-_1|_{\mathbf{c}_1^- \mapsto \hat{\mathbf{c}}^-_1}$,
in which ${\mathbf{c}}^-_1$ is a curve joining $\lambda_1$ to $\lambda_2$
located along the upper shore of the cut on the upper sheet of $\mathcal{R}
_{\phi}$.
{\small
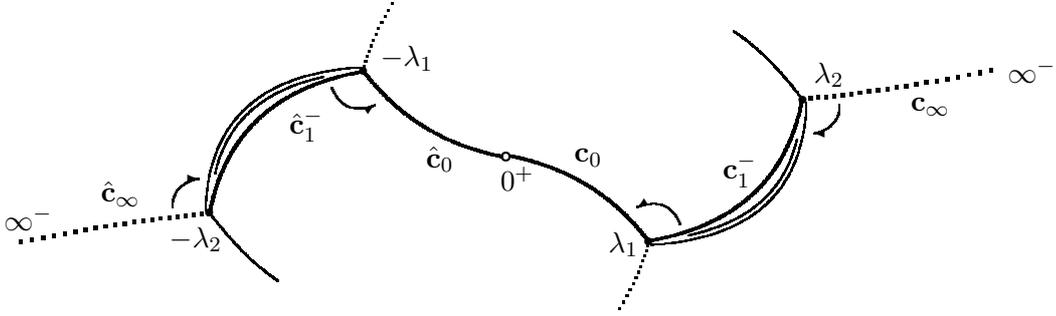
\begin{figure}[htb]
\begin{center}
\unitlength=0.75mm
\begin{picture}(100,50)(-50,-25)
\put(-1,-6){\makebox{$0^+$}}
\put(18,-17){\makebox{$\lambda_1$}}
\put(54,13){\makebox{$\lambda_2$}}
\put(88,13){\makebox{$\infty^-$}}
\put(12,-1){\makebox{$\mathbf{c}_0$}}
\put(38,-4){\makebox{${\mathbf{c}}^-_1$}}
\put(71,8){\makebox{$\mathbf{c}_{\infty}$}}

\put(-22,16){\makebox{$-\lambda_1$}}
\put(-59,-16){\makebox{$-\lambda_2$}}
\put(-88,-13){\makebox{$\infty^-$}}
\put(-14,-2){\makebox{$\hat{\mathbf{c}}_0$}}
\put(-38,4){\makebox{$\hat{\mathbf{c}}^-_1$}}
\put(-71,-8){\makebox{$\hat{\mathbf{c}}_{\infty}$}}

\thinlines

\qbezier(32,-14) (48,-10.5) (51,3)
\qbezier(25.5,-15.7) (52,-14) (52.7,9.5)

  \qbezier(58.5,9) (58.5,5) (54,4)
\put(54,4){\vector(-1,0){0}}

  \qbezier(30.5,-12) (28.5,-7) (23,-9)
\put(23,-9){\vector(-3,-2){0}}

\qbezier(40,22) (46,18) (52,10)
\qbezier(40,22.06) (46,18.06) (52,10.06)

\qbezier(-32,14) (-48,10.5) (-51,-3)
\qbezier(-25.5,15.7) (-52,14) (-52.7,-9.5)

  \qbezier(-58.5,-9) (-58.5,-5) (-54,-4)
\put(-54,-4){\vector(1,0){0}}

  \qbezier(-30.5,12) (-28.5,7) (-23,9)
\put(-23,9){\vector(3,2){0}}

\qbezier(-40,-22) (-46,-18) (-52,-10)
\qbezier(-40,-22.06) (-46,-18.06) (-52,-10.06)

\thicklines
\put(0,0){\circle{1.5}}
\put(25,-15){\circle*{1.5}}
\put(52,10){\circle*{1.5}}
\qbezier(0.8,0) (16,-3) (25,-15)
\qbezier(0.8,0.08) (16,-3.08) (25,-15.08)
\qbezier(25,-15) (48,-11) (52,10)
\qbezier(24.92,-15) (47.92,-11) (51.92,10)

\qbezier[12](25,-15) (24,-20) (20,-27)
\qbezier[20](85,15) (65,11) (52,10)
\qbezier[20](85.2,15.3) (65.2,11.3) (52.2,10.3)
\qbezier[20](85.0,15.3) (65,11.3) (52,10.3)
\qbezier[20](85.2,15) (65.2,11) (52.2,10)

\put(-25,15){\circle*{1.5}}
\put(-52,-10){\circle*{1.5}}
\qbezier(-0.8,0) (-16,3) (-25,15)
\qbezier(-0.8,-0.08) (-16,3.08) (-25,15.08)
\qbezier(-25,15) (-48,11) (-52,-10)
\qbezier(-24.92,15) (-47.92,11) (-51.92,-10)

\qbezier[12](-25,15) (-24,20) (-20,27)
\qbezier[20](-85,-15) (-65,-11) (-52,-10)
\qbezier[20](-85.2,-15.3) (-65.2,-11.3) (-52.2,-10.3)
\qbezier[20](-85.0,-15.3) (-65,-11.3) (-52,-10.3)
\qbezier[20](-85.2,-15) (-65.2,-11) (-52.2,-10)

\end{picture}
\end{center}
\caption{Stokes curve for $-\pi/2<\phi <0$}
\label{curve-}
\end{figure}
}
Thus we have the following.
\begin{prop}\label{prop5.1}
Let $c_0=(c_1-i c_2)/c_3,$ $d_0=(d_1-i d_2)/d_3$, 
$\hat{c}_0=(\hat{c}_1-i \hat{c}_2)/\hat{c}_3,$ 
$\hat{d}_0=(\hat{d}_1-i \hat{d}_2)/\hat{d}_3$ 
with $c_k=b_k(\lambda_1),$ $d_k=b_k(\lambda_2)$ $\hat{c}_k=b_k(\lambda_{-1}),$ 
$\hat{d}_k=b_k(\lambda_{-2})$ 
for $k=1,2,3.$ If $0<\phi <\pi/2$ and $g^*_{11}g^*_{12}g^*_{22}\hat{g}^*_{11}
\hat{g}^*_{21}\hat{g}^*_{22}\not=0,$ then
\begin{align*}
&g^*_{11}g^*_{22}= -c^{-1}_0d_0 (1+O(t^{-\delta}))e^{-2J_1}, \quad
g^*_{12}/g^*_{22} = -ic_0(1+O(t^{-\delta})) e^{2J_0},
\\
&\hat{g}^*_{11}\hat{g}^*_{22}= -\hat{c}_0\hat{d}_0^{-1}(1+O(t^{-\delta}))
 e^{2\hat{J}_1}, \quad
\hat{g}^*_{11}/\hat{g}^*_{21} = i\hat{c}_0 (1+O(t^{-\delta}))e^{2\hat{J}_0}.
\end{align*}
If $-\pi/2 <\phi <0$ and $g^*_{11}g^*_{21}g^*_{22}\hat{g}^*_{11}
\hat{g}^*_{12}\hat{g}^*_{22}\not=0,$ then
\begin{align*}
&g^*_{11}g^*_{22}= -c_0d_0^{-1} (1+O(t^{-\delta}))e^{2J^-_1}, \quad
g^*_{11}/g^*_{21} = -ic_0 (1+O(t^{-\delta}))e^{2J_0},
\\ 
&\hat{g}^*_{11}\hat{g}^*_{22}= -\hat{c}_0^{-1}\hat{d}_0
 (1+O(t^{-\delta}))e^{-2\hat{J}^-_1}, \quad
\hat{g}^*_{12}/\hat{g}^*_{22} = i\hat{c}_0 (1+O(t^{-\delta}))e^{2\hat{J}_0}.
\end{align*}
Here $J_0,$ $J_1$, ${J}^-_1$ are
integrals given by \eqref{5.2}, \eqref{5.3}, \eqref{5.5},
and $\hat{J}_0=J_0|_{\mathbf{c}_0 \mapsto \hat{\mathbf{c}}_0}$,  
$\hat{J}_1=J_1|_{\mathbf{c}_1 \mapsto \hat{\mathbf{c}}_1}$,  
$\hat{J}_1^-=J_1^-|_{\mathbf{c}_1^- \mapsto \hat{\mathbf{c}}_1^-}$.  
\end{prop}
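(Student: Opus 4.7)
The plan is to read off the statement of Proposition \ref{prop5.1} directly from the explicit matrix products for $G^*$ and $\hat{G}^*$ that were established through the nine-step WKB calculation in Section \ref{sc5}. Since those matrices have already been expressed componentwise in terms of $c_0,d_0,\hat c_0,\hat d_0$ and the exponentials of $J_0,J_1,J_1^-,J_\infty,\hat J_0,\hat J_1,\hat J_1^-,\hat J_\infty$, the proof amounts to multiplying two entries or taking a ratio and noting which exponentials cancel.

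First I would treat the case $0<\phi<\pi/2$. Starting from the displayed formula for $G^*$, the product of its $(1,1)$ and $(2,2)$ entries is
\[
(-e^{J_0-J_1-J_\infty})\,(c_0^{-1}d_0\, e^{-J_0-J_1+J_\infty})
 =-c_0^{-1}d_0\,e^{-2J_1},
\]
so that $g_{11}^*g_{22}^*=-c_0^{-1}d_0(1+O(t^{-\delta}))e^{-2J_1}$; similarly the ratio of the $(1,2)$- and $(2,2)$-entries yields $g_{12}^*/g_{22}^*=-ic_0(1+O(t^{-\delta}))e^{2J_0}$. The same elementary manipulation applied to $\hat G^*$ produces the two statements for $\hat g_{11}^*\hat g_{22}^*$ and $\hat g_{11}^*/\hat g_{21}^*$ in which $\hat J_\infty$ cancels. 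The hypothesis $g_{11}^*g_{12}^*g_{22}^*\hat g_{11}^*\hat g_{21}^*\hat g_{22}^*\neq 0$ is precisely what is needed so that the leading terms in the matrix do not vanish and therefore the factor $I+O(t^{-\delta})$ from the WKB approximation can legitimately be absorbed into a multiplicative error $1+O(t^{-\delta})$ on each selected scalar quantity.

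For $-\pi/2<\phi<0$, I would invoke the analogous explicit formulas for $G^*$ and $\hat G^*$ established just before the proposition, which differ from the previous case only in that the orientation of the arcs around $\lambda_1,\lambda_2,-\lambda_1,-\lambda_2$ is reversed, the Stokes curve $\mathbf{c}_1$ is replaced by $\mathbf{c}_1^-$ (so $J_1$ is replaced by $J_1^-$), and certain Stokes matrices $S_\nu$ change sign in their off-diagonal entries. Reading off $g_{11}^*g_{22}^*$, $g_{11}^*/g_{21}^*$, $\hat g_{11}^*\hat g_{22}^*$, $\hat g_{12}^*/\hat g_{22}^*$ from the explicit matrices yields the second group of identities. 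Note that in this case the distinguished ratio is $g_{11}^*/g_{21}^*$ rather than $g_{12}^*/g_{22}^*$, because in the $-\pi/2<\phi<0$ matrix the $(2,1)$-entry carries the factor $e^{-J_0}$ whereas the $(2,2)$-entry does not involve $J_0$, so the $J_0$-dependence survives only in the ratio of the first column.

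The genuine substance of the proposition lies in the WKB calculation that precedes it; the argument itself is purely algebraic bookkeeping. The only point requiring care is the one already mentioned: ensuring that each quantity $g_{ij}^*$ or $\hat g_{ij}^*$ appearing in the statement is genuinely of the same order as its leading WKB approximant, so that the matrix-level $I+O(t^{-\delta})$ correction translates to the scalar-level factor $1+O(t^{-\delta})$ shown. This is guaranteed by the respective nonvanishing hypotheses on the products of entries, and it is the only step I expect to require any care beyond direct computation.
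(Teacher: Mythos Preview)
Your proposal is correct and matches the paper's approach exactly: the paper does not give a separate proof of Proposition \ref{prop5.1} but simply writes ``Thus we have the following'' after the explicit matrix formulas for $G^*$ and $\hat G^*$, so the proposition is indeed just an algebraic read-off of products and ratios of entries in which $J_\infty$ (respectively $\hat J_\infty$) cancels. Your remark about the nonvanishing hypothesis being needed to convert the matrix-level $I+O(t^{-\delta})$ into a scalar $1+O(t^{-\delta})$ is a point the paper leaves implicit.
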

From the proposition above with entries of $G^*$ and $\hat{G}^*$ combined
with Corollary \ref{cor3.10}, we derive the following key relations.
\begin{cor}\label{cor5.2}
If $0<\phi<\pi/2$ and ${g}_{11}{g}_{12}{g}_{22}\hat{g}_{11}\hat{g}_{21}\not=0,$ 
then
$$
{g}_{11}{g}_{22} = -c^{-1}_{0} d_0 (1+O(t^{-\delta})) e^{-2J_1}, \quad
\frac{{g}_{12}\hat{g}_{21}}{{g}_{22}\hat{g}_{11}} 
=- c_{0}\hat{c}_0^{-1} (1+O(t^{-\delta})) e^{2J_0 - 2\hat{J}_0}. 
$$
If $-\pi/2<\phi<0$ and ${g}_{11}{g}_{21}{g}_{22}\hat{g}_{12}\hat{g}_{22}\not=0,$
then
$$
{g}_{11}{g}_{22} = -c_{0} d_0^{-1} (1+O(t^{-\delta}))
 e^{2J_1^-}, \quad
\frac{{g}_{11}\hat{g}_{22}}{{g}_{21}\hat{g}_{12}} 
= -c_{0}\hat{c}_0^{-1} (1+O(t^{-\delta})) e^{2J_0 -2\hat{J}_0}. 
$$
\end{cor}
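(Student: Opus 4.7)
The plan is to derive Corollary \ref{cor5.2} as an essentially algebraic consequence of two ingredients already in hand: Proposition \ref{prop5.1}, which records the asymptotic evaluations of certain products and ratios of entries of the starred connection matrices $G^{*},\hat{G}^{*}$; and Corollary \ref{cor3.10}, which tells us that the gauge $v^{-\sigma_3/2}$ intertwining \eqref{1.2} with \eqref{3.1} preserves exactly those symmetric combinations, namely $g^{*}_{11}g^{*}_{22}=g_{11}g_{22}$, $g^{*}_{11}/g^{*}_{21}=g_{11}/g_{21}$, $g^{*}_{12}/g^{*}_{22}=g_{12}/g_{22}$, and their hatted analogues. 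Thus each quantity appearing in the target formulas lies in the joint kernel of the two identifications, and the task is just to match them up.

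First I would handle the monomial relation. For $0<\phi<\pi/2$, the identity $g_{11}g_{22}=g^{*}_{11}g^{*}_{22}$ from Corollary \ref{cor3.10} combined with the first displayed formula in Proposition \ref{prop5.1} yields immediately
\[
g_{11}g_{22}=-c_0^{-1}d_0(1+O(t^{-\delta}))e^{-2J_1},
\]
and the analogous substitution for $-\pi/2<\phi<0$ gives $g_{11}g_{22}=-c_0 d_0^{-1}(1+O(t^{-\delta}))e^{2J_1^{-}}$.

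Next I would address the composite ratios. For $0<\phi<\pi/2$ I would write
\[
\frac{g_{12}\hat{g}_{21}}{g_{22}\hat{g}_{11}}=\frac{g_{12}}{g_{22}}\cdot\Bigl(\frac{\hat{g}_{11}}{\hat{g}_{21}}\Bigr)^{-1}=\frac{g^{*}_{12}}{g^{*}_{22}}\cdot\Bigl(\frac{\hat{g}^{*}_{11}}{\hat{g}^{*}_{21}}\Bigr)^{-1},
\]
using Corollary \ref{cor3.10}, and then insert the two evaluations $g^{*}_{12}/g^{*}_{22}=-ic_0(1+O(t^{-\delta}))e^{2J_0}$ and $\hat{g}^{*}_{11}/\hat{g}^{*}_{21}=i\hat{c}_0(1+O(t^{-\delta}))e^{2\hat{J}_0}$ from Proposition \ref{prop5.1}. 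Since $(-i)\cdot(1/i)=(-i)(-i)=-1$, the imaginary prefactors combine to give exactly the claimed $-c_0\hat{c}_0^{-1}$ coefficient. The $-\pi/2<\phi<0$ case is entirely parallel: here I would instead decompose
\[
\frac{g_{11}\hat{g}_{22}}{g_{21}\hat{g}_{12}}=\frac{g_{11}}{g_{21}}\cdot\Bigl(\frac{\hat{g}_{12}}{\hat{g}_{22}}\Bigr)^{-1}
\]
and use the corresponding ratio formulas from Proposition \ref{prop5.1}, which are indexed differently because the relevant Stokes curves now lie along the opposite shore of each cut.

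I do not expect a substantive obstacle at this stage; the corollary is essentially a matter of tracking symbols, so the only points requiring care are bookkeeping ones. In particular I would verify that the nonvanishing hypotheses imposed on $g_{ij}$ and $\hat{g}_{ij}$ propagate (via Corollary \ref{cor3.10}) to the nonvanishing hypotheses needed in Proposition \ref{prop5.1} for $g^{*}_{ij}$ and $\hat{g}^{*}_{ij}$, so that the ratios considered are all well-defined and the $(1+O(t^{-\delta}))$ error estimates in Proposition \ref{prop5.1} can be multiplied without degradation. Once this is confirmed the four displayed identities of Corollary \ref{cor5.2} follow at once.
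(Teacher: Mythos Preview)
Your proposal is correct and follows exactly the approach indicated in the paper, which states that the corollary is obtained ``from the proposition above with entries of $G^*$ and $\hat{G}^*$ combined with Corollary \ref{cor3.10}.'' You have simply spelled out this combination explicitly, including the arithmetic with the $\pm i$ prefactors and the observation that the nonvanishing hypotheses transfer from $g_{ij},\hat g_{ij}$ to $g^*_{ij},\hat g^*_{ij}$ via $G^*=Gv^{-\sigma_3/2}$.
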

\section{Asymptotic properties of monodromy data}\label{sc6}
\subsection{Expressions of integrals}\label{ssc6.1}
By \eqref{5.1}, for sufficiently large $t$, consider the elliptic curve 
$\Pi_{a_{\phi}}=\Pi_+\cup \Pi_-$ for $w(a_{\phi},\lambda)=\sqrt{1-a_{\phi}
\lambda^2+\lambda^4}$, the branch of which is consistent with that of
$w(A_{\phi},\lambda)$ in Section \ref{sc2}. On $\Pi_{a_{\phi}}$ we
may define cycles $\mathbf{a}$ and $\mathbf{b}$ which are identified with
those on $\Pi_{A_{\phi}}$ as given as in Figure \ref{cycles1}. Then
the cycles $\mathbf{a}$ and $\mathbf{b}$ on $\Pi_{a_{\phi}}$ are 
independent of $t$, and are also regarded to be on $\mathcal{R}_{\phi}.$
On $\mathcal{R}_{\phi}$ around these cycles, 
\begin{equation*}
\mu(t,\lambda)=ie^{i\phi}\frac{w(a_{\phi},\lambda)}{\lambda^2} +\frac{2\theta_0
t^{-1}}{\lambda w(a_{\phi},\lambda)} -\frac{2\theta_{\infty}\lambda t^{-1}}
{w(a_{\phi},\lambda)} +O((|\lambda^4 w^{-3}|+1)t^{-2}).
\end{equation*}
\par
For integrals appearing in Corollary \ref{cor5.2} we have
\begin{equation*}
 J_1\sigma_3 =\int^{\lambda_2}_{\lambda_1,\, (\mathbf{c}_1)} \Lambda_3(\lambda)
d\lambda = -\frac 12 \int_{\mathbf{b}} \Lambda_3(\lambda)d\lambda,
\quad
 J_1^- \sigma_3 = \int^{\lambda_2}_{\lambda_1,\, (\mathbf{c}^-_1)}
\Lambda_3(\lambda)d\lambda = \frac 12 \int_{\mathbf{b}} \Lambda_3(\lambda)d\lambda
\end{equation*}
with $\Lambda_3(\lambda)=(t/4)\mu(t,\lambda)\sigma_3 -\diag T^{-1}T_{\lambda}|
_{\sigma_3}\sigma_3.$ 
Note that
\begin{align*}
& \int \frac{w}{\lambda^2} d\lambda = -\frac w{\lambda} + 2\int \frac{\lambda^2}
w d\lambda -a_{\phi} \int \frac{d\lambda}w
=\frac{w}{\lambda} -a_{\phi} \int\frac{d\lambda}w +2\int\frac{d\lambda}
{\lambda^2 w},
\\
&  \int \frac{ d\lambda}{\lambda w}= -\frac 12 \log(\lambda^{-2} -a_{\phi}/2
+\lambda^{-2} w ), 
\quad
  \int \frac{\lambda d\lambda}{ w}= \frac 12 \log(\lambda^{2} -a_{\phi}/2
+ w ). 
\end{align*}
\begin{lem}\label{lem6.1} We have
\begin{equation*}
\int_{\mathbf{b}} \mu(t,\lambda)d\lambda =ie^{i\phi} \int_{\mathbf{b}}
\frac{w(a_{\phi},\lambda)}{\lambda^2} d\lambda + 2\pi i(\theta_0-\theta_{\infty}
)t^{-1} +O(t^{-2}).
\end{equation*}
\end{lem}
Furthermore we have
\begin{align*}
&\lim_{\substack{\lambda \to 0^+ \\ \lambda\in \mathbf{c}_0}}
\biggl(\frac t4 \int^{\lambda}_{\lambda_1} \mu(t,\lambda)d\lambda
+ \frac{ie^{i\phi}t}{4\lambda} -\frac{\theta_0}2\log\lambda \biggr)
\\
=&\lim_{\substack{\lambda \to 0^+ \\ \lambda\in \mathbf{c}_0}}
\biggl(\frac {ie^{i\phi}t}4 \biggl( \int^{\lambda}_{\lambda_1}
 \frac{w}{\lambda^2}d\lambda +\frac 1{\lambda} \biggr) 
+\frac{\theta_0}2 \biggl(\int^{\lambda}_{\lambda_1} \frac{d\lambda}{\lambda w}
-\log \lambda\biggr) -\int^{\lambda}_{\lambda_1} \frac{\theta_{\infty}\lambda}
{2w} d\lambda \biggr) +O(t^{-1}), 
\end{align*}
in which
$$
  \int^{\lambda}_{\lambda_1}\frac{w}{\lambda^2}d\lambda +\frac 1{\lambda}  
 = 2\int^{\lambda}_{\lambda_1} \frac{\lambda^2}w d\lambda
 -a_{\phi}\int^{\lambda}_{\lambda_1} \frac{d\lambda}w  +O(\lambda), \quad
\int^{\lambda}_{\lambda_1}\frac{d\lambda}{\lambda w}-\log\lambda= C_{\lambda_1,
a_{\phi}} +O(\lambda^2)
$$
with $C_{\lambda_1,a_{\phi}}=\tfrac 12 \log(\lambda_1^{-2}-\tfrac 12 a_{\phi})
-\tfrac 12 \log 2$ as $\lambda\to 0^+,$
and a similar formula is obtained as $\lambda \to 0^+$ along 
$\hat{\mathbf{c}}_0.$
Hence, in $J_0-\hat{J}_0,$
\begin{align}\label{6.0}
&  \lim_{\substack{\lambda \to 0^+ \\ \lambda\in \mathbf{c}_0}}
\biggl(\frac t4 \int^{\lambda}_{\lambda_1} \mu(t,\lambda)d\lambda + \frac
{ie^{i\phi}t}{4\lambda} -\frac{\theta_0}2 \log \lambda \biggr)
\\
\notag
&\phantom{----}
 - \lim_{\substack{\lambda \to 0^+ \\ \lambda\in \hat{\mathbf{c}}_0}} 
\biggl(\frac t4 \int^{\lambda}_{-\lambda_1} \mu(t,\lambda)d\lambda + \frac
{ie^{i\phi}t}{4\lambda} -\frac{\theta_0}2 \log \lambda \biggr)
\\
\notag
= &\frac{ie^{i\phi}t}4 \biggl(2\int^{-\lambda_{1}}_{\lambda_1} \frac{\lambda^2}
w d\lambda -a_{\phi}\int^{-\lambda_{1}}_{\lambda_1} \frac{d\lambda}{w}\biggr)
 -\int^{-\lambda_{1}}_{\lambda_1}\frac{\theta_{\infty}\lambda}{2w}d\lambda 
+O(t^{-1})
\\
\notag
= & -\frac{ie^{i\phi} t}8 \int_{\mathbf{a}} \frac{w}{\lambda^2}d\lambda
+O(t^{-1}).
\end{align}
By Remark \ref{rem4.1}
\begin{equation}\label{6.1}
\diag T^{-1}  T_{\lambda}|_{\sigma_3} = \frac 14 \Bigl(1-
\frac{b_3}{\mu} \Bigr) \frac{\partial}{\partial \lambda} \log \frac{b_1+ib_2}
{b_1-ib_2}.
\end{equation}
To calculate our integrals, it is necessary to know 
$\diag T^{-1}T_{\lambda}|_{\sigma_3}$ in addition to Lemma \ref{lem6.1} and
\eqref{6.0}. 
By \eqref{3.1} and \eqref{3.2},
\begin{align*}
&b_1-ib_2 = -(\lambda -iy^{-1}) y\lambda^{-2} (y^* y^{-2} +e^{i\phi} -e^{i\phi}
y^{-2}) +O(t^{-1}), 
\\
&b_1+ib_2 = -(\lambda +iy^{-1}) y\lambda^{-2} (y^* y^{-2} -e^{i\phi} -e^{i\phi}
y^{-2}) +O(t^{-1}), 
\\
& b_3=i e^{i\phi} -i(y^* -e^{i\phi})y^{-2}\lambda^{-2} +O(t^{-1}),
\\
& \mu =ie^{i\phi}\lambda^{-2} w +O(t^{-1}).
\end{align*}
Set 
\begin{equation}\label{6.2}
\lambda_+=iy^{-1} +O(t^{-1}), \quad \lambda_-= -iy^{-1}+O(t^{-1})
\end{equation}
such that $(b_1-ib_2)(\lambda_+)=(b_1+ib_2)(\lambda_-)=0$.
Note that
$$
\frac{b_3}{\mu} = - i e^{-i\phi}\lambda^2 {b_3} \Bigl(\frac 1{w(\lambda)} +
O(t^{-1}) \Bigr),
$$
where $\lambda^2 b_3 \sim ie^{i\phi}\lambda^2$ satisfies $\lambda_{\pm}^2b_3
(\lambda_{\pm}) =
-\lambda_{\pm}^2\mu(\lambda_{\pm}) = -i e^{i\phi}w(\lambda_{\pm}) +O(t^{-1})$ 
as $\lambda_{\pm} \to \infty^+$ on the upper sheet,
since
$\mu(\lambda_{\pm})^2 =(b_1-ib_2)(b_1+i b_2)(\lambda_{\pm})
 +b_3(\lambda_{\pm})^2 =b_3(\lambda_{\pm})^2$. 
These facts combined with \eqref{6.1} yield
\begin{align*}
\diag &T^{-1}T_{\lambda}|_{\sigma_3} 
= \frac 14 \Bigl(1+ ie^{-i\phi}\lambda^2 b_3 \Bigl(\frac 1{w(\lambda)} +
O(t^{-1}) \Bigr)\Bigr) \Bigl(\frac 1{\lambda-\lambda_-} 
-\frac 1{\lambda-\lambda_+} \Bigr)
\\
=& -\frac 14 \Bigl( \frac 1{\lambda-\lambda_+} 
-\frac 1{\lambda-\lambda_-} \Bigl) - \frac 14 \Bigl(\lambda_-
-\lambda_+ +\frac{w(\lambda_+)}{\lambda-\lambda_+}
-\frac{w(\lambda_-)}{\lambda-\lambda_-}\Bigr) \frac 1{w(\lambda)}  +O(t^{-1}).
\end{align*}
Since, for $W_0(\lambda)=(\lambda-\lambda_+)^{-1}-(\lambda-\lambda_-)^{-1}$,
$$
\int^{\lambda_1}_{\lambda_{-1}} W_0(\lambda)d\lambda
 = 2\log (c_0 \hat{c}_0^{-1}), \quad 
\int^{\lambda_2}_{\lambda_1,\, (\mathbf{c}_1),\,\,\, (\mathbf{c}_1^-)} 
\!\!\!\!\!\!\!\!\!\!\!\!
W_0(\lambda)d\lambda = 2\log (c_0^{-1} {d}_0), \,\, -2\log (c_0^{-1} {d}_0),  
$$
we have, by the definitions of $J_1,$ $J_1^-$ and $J_0-\hat{J}_0$ with
\eqref{6.0},
\begin{align*}
& J_0-\hat{J}_0 = -\frac 12 \int_{\mathbf{a}}\Bigl(\frac {ie^{i\phi}t w}
 {4\lambda^2} 
+\frac 14 W(\lambda) \Bigr) d\lambda  -\frac 12 \log(c_0\hat{c}^{-1}_0)
+O(t^{-1}),
\\
& J_1 = -\frac 12 \int_{\mathbf{b}}\Bigl(\frac t4 \mu(t,\lambda)
+\frac 14 W(\lambda) \Bigr) d\lambda -\frac 12 \log(c_0{d}^{-1}_0)+O(t^{-1}),
\\
& J_1^- = \frac 12 \int_{\mathbf{b}}\Bigl(\frac t4 \mu(t,\lambda)
+\frac 14 W(\lambda) \Bigr) d\lambda -\frac 12 \log(c_0{d}^{-1}_0)+O(t^{-1})
\end{align*}
with
$$
W(\lambda)=\Bigl(\frac{w(\lambda_+)}{\lambda-\lambda_+} -
\frac{w(\lambda_-)}{\lambda-\lambda_-} - \lambda_+ +\lambda_- \Bigr)\frac
1{w(\lambda)}.
$$
Then Corollary \ref{cor5.2} and Lemma \ref{lem6.1} lead to the following.
\begin{prop}\label{prop6.2}
$(\mathrm{i})$ Suppose that $0<\phi<\pi/2.$ Then
\begin{align*}
\log(g_{11}g_{22})&=\frac{ie^{i\phi}t}4 \int_{\mathbf{b}}
\frac{w(a_{\phi},\lambda)}{\lambda^2} d\lambda +\frac 14 \int_{\mathbf{b}}
W(\lambda) d\lambda+\frac{\pi i}2(\theta_0-\theta_{\infty}) 
+\pi i +O(t^{-\delta}),
\\
\log\frac{g_{12}\hat{g}_{21}} {g_{22} \hat{g}_{11}}
&= - \frac{ie^{\phi i}t}4 \int_{\mathbf{a}} \frac{w(a_{\phi},\lambda)}{\lambda^2}
d\lambda -\frac 14 \int_{\mathbf{a}}
W(\lambda)  d\lambda +\pi i +O(t^{-\delta}).
\end{align*}
\par
$(\mathrm{ii})$ Suppose that $-\pi/2 <\phi <0.$ Then
\begin{align*}
\log(g_{11}g_{22})
&=\frac{ie^{i\phi}t}4 \int_{\mathbf{b}}
\frac{w(a_{\phi},\lambda)}{\lambda^2} d\lambda +\frac 14 \int_{\mathbf{b}}
W(\lambda) d\lambda+\frac{\pi i}2(\theta_0-\theta_{\infty}) 
+\pi i +O(t^{-\delta}),
\\
\log\frac{g_{11}\hat{g}_{22}} {g_{21} \hat{g}_{12}}
&= - \frac{ie^{\phi i}t}4 \int_{\mathbf{a}} \frac{w(a_{\phi},\lambda)}{\lambda^2}
d\lambda -\frac 14 \int_{\mathbf{a}}
W(\lambda)  d\lambda +\pi i +O(t^{-\delta}).
\end{align*}
\end{prop}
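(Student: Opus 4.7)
The plan is to regard Proposition \ref{prop6.2} as a bookkeeping consequence of three ingredients already assembled above it: Corollary \ref{cor5.2}, the three displayed identities expressing $J_1$, $J_1^-$ and $J_0-\hat J_0$ as cycle integrals of $\tfrac{t}{4}\mu+\tfrac14 W$ plus logarithmic remainders, and Lemma \ref{lem6.1}. First I would take logarithms in Corollary \ref{cor5.2}. For $0<\phi<\pi/2$ this yields
$$
\log(g_{11}g_{22}) = \log(-c_0^{-1}d_0) - 2 J_1 + O(t^{-\delta}),
\qquad
\log\frac{g_{12}\hat g_{21}}{g_{22}\hat g_{11}} = \log(-c_0\hat c_0^{-1}) + 2(J_0-\hat J_0) + O(t^{-\delta}),
$$
and substituting the formulas for $J_1$ and $J_0-\hat J_0$ the log-terms $\log(c_0d_0^{-1})$ and $\log(c_0\hat c_0^{-1})$ cancel exactly, leaving only $\pi i$ (from $\log(-1)$), the contribution $-\tfrac{\pi i}{2}\theta_0$ (from $J_0-\hat J_0$), and the period integrals of $\tfrac{t}{4}\mu(t,\lambda)+\tfrac{1}{4}W(\lambda)$.

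Next I would invoke Lemma \ref{lem6.1} to convert each cycle integral of $\mu(t,\lambda)$ into the corresponding cycle integral of $ie^{i\phi}w(a_\phi,\lambda)/\lambda^2$. The correction terms are $-4\pi i\theta_0 t^{-1}$ along $\mathbf a$ and $2\pi i(\theta_0-\theta_\infty)t^{-1}$ along $\mathbf b$; after multiplication by $t/4$ they become the constants $-\pi i\theta_0$ and $\tfrac{\pi i}{2}(\theta_0-\theta_\infty)$, with a residual $O(t^{-1})$ error absorbed into $O(t^{-\delta})$. In the formula for $\log(g_{11}g_{22})$ only the $\mathbf b$-contribution appears, and the constant $\pi i+\tfrac{\pi i}{2}(\theta_0-\theta_\infty)$ emerges; in the formula for $\log(g_{12}\hat g_{21}/g_{22}\hat g_{11})$ the $-\pi i\theta_0$ produced by Lemma \ref{lem6.1} along $\mathbf a$ cancels the $+\pi i\theta_0$ coming from the $J_0-\hat J_0$ identity (after the sign flip induced by the $-\tfrac12\int_{\mathbf a}$), and only the $\pi i$ survives. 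This produces precisely the two asymptotic formulas in (i).

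For part (ii) the method is identical; the only change is that Corollary \ref{cor5.2} now involves $e^{2J_1^-}$ (rather than $e^{-2J_1}$) and a coefficient $-c_0 d_0^{-1}$ in the expression for $g_{11}g_{22}$. Since the identity for $J_1^-$ differs from that for $J_1$ only by a global sign in front of the $\mathbf b$-integral, and since the logarithmic prefactor flips accordingly, the two effects combine to reproduce the same final expression for $\log(g_{11}g_{22})$ as in (i). The ratio $\log(g_{11}\hat g_{22}/g_{21}\hat g_{12})$ is again controlled by $2(J_0-\hat J_0)$ with prefactor $-c_0\hat c_0^{-1}$, so it has the same form as the $\mathbf a$-cycle formula above.

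The main obstacle is essentially bookkeeping: one has to track the branch of each logarithm, verify that the $O(\eta^{-1/2})$ remainders in the expansion of $\Lambda_3$ and $\Lambda_I$ near the turning points do not contribute to the period integrals of $W(\lambda)$, and check that all log-terms $\log c_0$, $\log d_0$, $\log \hat c_0$ cancel cleanly rather than leaving unaccounted $\pi i$-shifts. Fortunately, the formulas are stated only up to an additive $O(t^{-\delta})$ error, so any branch ambiguity of size $2\pi i\mathbb Z$ is harmless, and the cancellation of the log-prefactors is forced by the very way $J_1$, $J_1^-$ and $J_0-\hat J_0$ were derived, via the integrals $\int^{\lambda_2}_{\lambda_1}(\tfrac{1}{\lambda-\lambda_+}-\tfrac{1}{\lambda-\lambda_-})\,d\lambda = 2\log(c_0^{-1}d_0)$ and its analogue on $\mathbf a$.
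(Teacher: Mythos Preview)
Your proposal is correct and follows essentially the same route as the paper, which simply states that ``Corollary \ref{cor5.2} and Lemma \ref{lem6.1} lead to the following'' after having derived the displayed formulas for $J_0-\hat J_0$, $J_1$, $J_1^-$; you have in fact spelled out more of the bookkeeping (the cancellation of $\log(c_0 d_0^{-1})$, $\log(c_0\hat c_0^{-1})$ and of the $\pi i\theta_0$ terms) than the paper does explicitly. The concern about the $O(\eta^{-1/2})$ remainders is not really part of proving Proposition \ref{prop6.2} itself, since those are already absorbed in the derivation of the $J$-formulas preceding it.
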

\begin{rem}\label{rem6.1}
In the proposition above
$$
\int_{\mathbf{a},\, \mathbf{b}} \frac{w(a_{\phi},\lambda)}{\lambda^2}d\lambda
= - a_{\phi} \int_{\mathbf{a}, \, \mathbf{b}}
\frac{d\lambda}{w(a_{\phi},\lambda)} +2\int_{\mathbf{a},\, \mathbf{b}}
\frac{d\lambda}{\lambda^2w(a_{\phi},\lambda)}.
$$
\end{rem}
\subsection{Expressions by the $\vartheta$-function}\label{ssc6.2}
For $w(\lambda)^2=w(a_{\phi},\lambda)^2=\lambda^4-a_{\phi}\lambda^2+1,$ 
the differential equation
$(d\lambda/du)^2=w(a_{\phi},\lambda)^2$ defines the doubly periodic 
function
$$
\lambda=\mathrm{sn}_*(u)=\lambda_1\mathrm{sn}(\lambda_2u; \lambda_1/\lambda_2), 
$$
where $\mathrm{sn}(u;k)$ is the Jacobi $\mathrm{sn}$-function, and 
$\lambda_1$ and $\lambda_2$ satisfy 
$\lambda^4-a_{\phi}\lambda^2+1=(1-\lambda_1^{-2}\lambda^2)
(1-\lambda_2^{-2}\lambda^2)$ and $0<\re \lambda_1 \le \re\lambda_2$ if $a_{\phi}$
is close to $2$. 
The periods of the elliptic curve $\Pi_{a_{\phi}}$ for $w(a_{\phi},\lambda)$
along the cycles $\mathbf{a}$ and $\mathbf{b}$ are
$$
\omega_{\mathbf{a}} =\int_{\mathbf{a}} \frac{d\lambda}{w(a_{\phi},\lambda)},\quad
\omega_{\mathbf{b}} =\int_{\mathbf{b}} \frac{d\lambda}{w(a_{\phi},\lambda)},\quad
\tau =\frac{\omega_{\mathbf{b}}}{\omega_{\mathbf{a}}}, \quad
\im \tau >0
$$
(cf. Section \ref{sc2}). The $\vartheta$-function $\vartheta(z,\tau)
=\vartheta(z)$ is defined by
$$
\vartheta(z,\tau) =\sum_{n=-\infty}^{\infty} e^{\pi i\tau n^2 +2\pi izn}
$$
and we set
$$
\nu=\frac{1+\tau}2
$$
(cf. \cite{HC}, \cite{WW}). For $\lambda,$ $\tilde{\lambda} \in \Pi_{a_{\phi}}
=\Pi_+ \cup \Pi_-,$ let
$$
F(\tilde{\lambda}, \lambda)=\frac 1{\omega_{\mathbf{a}}}\int_{\tilde{\lambda}}
^{\lambda}\frac {d\lambda}{w(\lambda)}. 
$$
For any $\lambda_0 \in \Pi_{a_{\phi}}$ denote the projections of $\lambda_0$ 
on the respective sheets by 
$$
\lambda_0^+=(\lambda_0, w(\lambda_0)) = (\lambda_0, w(\lambda_0^+)), \quad 
\lambda_0^-=(\lambda_0, -w(\lambda_0)) =(\lambda_0,-w(\lambda_0^+)) .
$$ 
If $\lambda_0 \in \Pi_+$ (respectively, $\lambda_0 \in \Pi_-$), 
then $\lambda_0^{\pm} \in \Pi_{\pm}$ (respectively,
$\lambda_0^{\pm} \in \Pi_{\mp}$). 
Do not confuse this usage of the superscripts $\pm$ with that in $\infty
^{\pm}$ $($respectively, $0^{\pm})$ denoting $\infty$ $($respectively, $0)$
on $\Pi_{\pm}.$
\begin{prop}\label{prop6.3}
For any $\lambda_0 \in \Pi_{a_{\phi}}$
\begin{align*}
\frac{d\lambda}{(\lambda-\lambda_0)w(\lambda)}=& \frac 1{w(\lambda^+_0)}
 d\log \frac{\vartheta(F(\lambda^+_0, \lambda)
+\nu, \tau)}{\vartheta(F(\lambda^-_0,\lambda)+\nu,\tau)}
 -g_0(\lambda_0) \frac{d\lambda}{w(\lambda)},
\\
g_0(\lambda_0)= & \frac{w'(\lambda^+_0)}{2w(\lambda^+_0)}
 -\frac 1{\omega_{\mathbf{a}}}
\frac 1{w(\lambda^+_0)} \Bigl(\pi i+\frac{\vartheta'}{\vartheta} 
(F(\lambda^-_0, \lambda^+_0)
+\nu, \tau) \Bigr).
\end{align*}
\end{prop}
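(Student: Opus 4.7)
My plan is to identify the first term on the right-hand side as the normalized differential of the third kind on $\Pi_{a_{\phi}}$ with simple poles of residues $\pm 1/w(\lambda^+_0)$ at $\lambda^{\pm}_0$, so that its difference from $d\lambda/((\lambda-\lambda_0)w(\lambda))$ is a holomorphic differential on $\Pi_{a_{\phi}}$. Since the space of holomorphic differentials on this elliptic curve is one-dimensional and spanned by $d\lambda/w(\lambda)$, that difference must equal $-g_0(\lambda_0)\,d\lambda/w(\lambda)$ for a unique constant $g_0(\lambda_0)$, which I would then pin down by equating regular parts of the two sides near $\lambda^+_0$.

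The first step is to verify that $d\log[\vartheta(F(\lambda^+_0,\lambda)+\nu)/\vartheta(F(\lambda^-_0,\lambda)+\nu)]$ is globally single-valued on $\Pi_{a_{\phi}}$. Along either basic cycle, $F(\lambda^+_0,\lambda)$ and $F(\lambda^-_0,\lambda)$ shift by the same element of $\mathbb{Z}+\tau\mathbb{Z}$ (they differ by the constant $F(\lambda^+_0,\lambda^-_0)$), so the quasi-periodicity factors $e^{-\pi i\tau-2\pi i(F+\nu)}$ picked up by numerator and denominator cancel in the logarithmic derivative. Since $\nu=(1+\tau)/2$ is the unique zero of $\vartheta$ in a fundamental parallelogram and $F(\lambda^+_0,\cdot)$ is an isomorphism from $\Pi_{a_{\phi}}$ onto its Jacobian, $\vartheta(F(\lambda^+_0,\lambda)+\nu)$ vanishes simply at $\lambda=\lambda^+_0$ and nowhere else in the fundamental domain. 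Combined with $F(\lambda^+_0,\lambda)=(\lambda-\lambda_0)/(\omega_{\mathbf{a}}w(\lambda^+_0))+O((\lambda-\lambda_0)^2)$, this gives the residue $1/w(\lambda^+_0)$ at $\lambda^+_0$ on the upper sheet, matching that of $d\lambda/((\lambda-\lambda_0)w(\lambda))$; the cancellation at $\lambda^-_0$ on the lower sheet follows from $w(\lambda^-_0)=-w(\lambda^+_0)$. Hence the difference is holomorphic.

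To compute $g_0(\lambda_0)$, I would match the next Taylor coefficient at $\lambda^+_0$. Using
$$
F(\lambda^+_0,\lambda)=\frac{\lambda-\lambda_0}{\omega_{\mathbf{a}}w(\lambda^+_0)}-\frac{w'(\lambda^+_0)(\lambda-\lambda_0)^2}{2\omega_{\mathbf{a}}w(\lambda^+_0)^3}+O((\lambda-\lambda_0)^3)
$$
and $\vartheta(\nu+F)=\vartheta'(\nu)F+\tfrac{1}{2}\vartheta''(\nu)F^2+\cdots$, the constant-in-$(\lambda-\lambda_0)$ part of $w(\lambda^+_0)^{-1}d\log\vartheta(F(\lambda^+_0,\lambda)+\nu)$ equals $\bigl(-w'(\lambda^+_0)/(2w(\lambda^+_0)^2)+\vartheta''(\nu)/(2\vartheta'(\nu)\omega_{\mathbf{a}}w(\lambda^+_0)^2)\bigr)d\lambda$, while $-w(\lambda^+_0)^{-1}d\log\vartheta(F(\lambda^-_0,\lambda)+\nu)$ contributes $-(\vartheta'/\vartheta)(F(\lambda^-_0,\lambda^+_0)+\nu)\cdot d\lambda/(\omega_{\mathbf{a}}w(\lambda^+_0)^2)$ at $\lambda=\lambda^+_0$. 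Comparing with the constant part $-(w'(\lambda^+_0)/w(\lambda^+_0)^2)d\lambda$ of the left-hand side and using the classical identity $\vartheta''(\nu)/\vartheta'(\nu)=-2\pi i$ yields the stated expression for $g_0(\lambda_0)$. That identity itself follows from $\vartheta(\nu+w)=-e^{-2\pi i w}\vartheta(\nu-w)$, obtained by combining evenness of $\vartheta$, the quasi-periodicity $\vartheta(z+\tau)=e^{-\pi i\tau-2\pi i z}\vartheta(z)$, and $2\nu=1+\tau$; comparing the $w^2$ coefficients then gives $-2\pi i$.

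The main obstacle is the careful bookkeeping of signs and sheet conventions in the Laurent expansions, together with the clean derivation of the theta identity $\vartheta''(\nu)/\vartheta'(\nu)=-2\pi i$; once these are in hand, the rest is a direct computation.
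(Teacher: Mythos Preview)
Your argument is correct and gives a genuinely different proof from the paper's. The paper uniformises via $\lambda=\mathrm{sn}_*(u)$ and writes $du/(\mathrm{sn}_*(u)-\mathrm{sn}_*(u_0))$ as a combination of Weierstrass $\zeta$-terms, then passes to $\sigma$ and finally to $\vartheta$ through the standard relation $\sigma(u)\propto e^{(\eta_{\mathbf{a}}/\omega_{\mathbf{a}})u^2}\vartheta(u/\omega_{\mathbf{a}}+\nu)$; the constant $g_0(\lambda_0)$ then drops out of $\zeta(u_0^+-u_0^-)-\tfrac12 w'(\lambda_0^+)$ after the $\sigma\to\vartheta$ conversion. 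You instead recognise the $\vartheta$-ratio directly as a normalised third-kind differential, reduce the difference to a holomorphic multiple of $d\lambda/w$, and determine $g_0$ by matching the regular part at $\lambda_0^+$, invoking the identity $\vartheta''(\nu)/\vartheta'(\nu)=-2\pi i$. Your route is more intrinsic (no Weierstrass intermediaries) and makes the structure as a Riemann surface computation transparent; the paper's route is shorter once the classical $\zeta$-decomposition and $\sigma$--$\vartheta$ formulas are taken as known. One small slip: in your displayed Taylor expansion of $F(\lambda_0^+,\lambda)$ the second-order coefficient should have $w(\lambda_0^+)^2$ rather than $w(\lambda_0^+)^3$ in the denominator, but your subsequent constant-term computation is correct and unaffected.
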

\begin{proof}
For $\lambda_0= \mathrm{sn}_*(u_0) \in \Pi_{a_{\phi}}$ let $u^{\pm}_0$ be such 
that $\lambda^{\pm}_0=\mathrm{sn}_*(u^{\pm}_0).$ Then
\begin{align*}
\frac{d\lambda}{(\lambda-\lambda_0)w(\lambda)}
 =&\frac{du}{\mathrm{sn}_*(u)-\mathrm{sn}_*(u_0)} 
\\
=& \frac 1{w(\lambda^+_0)} \Bigl(\zeta(u-u^+_0)-\zeta(u-u^-_0)
 +\zeta(u^+_0 -u^-_0)-\frac 12 w'(\lambda^+_0) \Bigr) du
\\
=& \frac 1{w(\lambda^+_0)} d\log \frac{\sigma(u-u^+_0)}{\sigma(u-u^-_0)}
+ \frac 1{w(\lambda^+_0)} \Bigl(\zeta(u^+_0-u^-_0) -\frac 12 w'(\lambda^+_0) \Bigr)du.
\end{align*}
From
\begin{align*}
d\log \frac{\sigma(u-u^+_0)}{\sigma(u-u^-_0)} &= -\frac{2\eta_{\mathbf{a}}}
{\omega_{\mathbf{a}}}(u^+_0 -u^-_0) du + d\log \frac{\vartheta(F(\lambda^+_0,
 \lambda)+\nu,\tau)}{\vartheta(F(\lambda^-_0,\lambda)+\nu,\tau)},
\\
\zeta(u^+_0 -u^-_0) &= \frac{\sigma'}{\sigma}(u^+_0-u^-_0)=\frac{2\eta_{\mathbf
{a}}}{\omega_{\mathbf{a}}} (u^+_0-u^-_0) +\frac{\pi i}{\omega_{\mathbf{a}}}
+\frac 1{\omega_{\mathbf{a}}}\frac{\vartheta'}{\vartheta}
(F(\lambda^-_0, \lambda^+_0)+\nu,\tau)
\end{align*}
with $F(\lambda^{\pm}_0, \lambda) =\omega_{\mathbf{a}}^{-1}
\int^\lambda_{\lambda^{\pm}_0} d\lambda/w(\lambda),$
the desired formula follows.
\end{proof}
Observe that
\begin{align*}
\log \vartheta(F(\lambda^{\pm}_0, \lambda) +\nu,\tau)|_{\mathbf{a}}&=0,
\\
\log \frac{\vartheta(F(\lambda^+_0,\lambda)+\nu,\tau)}
{\vartheta(F(\lambda^-_0,\lambda)+\nu,\tau)}
\Bigr|_{\mathbf{b}} &= 
\log \frac{\vartheta(F(\lambda^+_0, \lambda_{\mathbf{b}})+\tau+\nu,\tau)
\vartheta(F(\lambda^-_0,
\lambda_{\mathbf{b}})+\nu, \tau)   }
{\vartheta(F(\lambda^-_0, \lambda_{\mathbf{b}})+\tau+\nu,\tau)
\vartheta(F(\lambda^+_0, \lambda_{\mathbf{b}})+\nu, \tau)   }
\\
&= \log \exp(-\pi i (2(F(\lambda^+_0, \lambda_{\mathbf{b}})+\nu) +\tau))
\\
& \phantom{---} + \log
\exp(\pi i (2(F(\lambda^-_0, \lambda_{\mathbf{b}})+\nu) +\tau)) 
\\
&= 2\pi i F(\lambda^-_0, \lambda^+_0)
\end{align*}
for $\lambda_{\mathbf{b}} \in \mathbf{b} \cap (\Pi_+)^{\mathrm{cl}} \cap (\Pi_-)
^{\mathrm{cl}}$, where $(\Pi_{+})^{\mathrm{cl}}$ denotes the closure of
$\Pi_{+}$, since $\vartheta(z\pm \tau,\tau)=
 e^{-\pi i (\tau\pm 2z)} \vartheta (z,\tau).$ 
Then
\begin{align*}
\int_{\mathbf{a}} \frac{d\lambda}{(\lambda-\lambda_0)w(\lambda)}
 &=-g_0(\lambda_0)\omega_{\mathbf{a}},
\\
\int_{\mathbf{b}} \frac{d\lambda}{(\lambda-\lambda_0)w(\lambda)}
 &=\frac{2\pi i}{w(\lambda^+_0)}F(\lambda^-_0,\lambda^+_0)
 +\tau \int_{\mathbf{a}} \frac{d\lambda}{(\lambda-\lambda_0)w(\lambda)}.
\end{align*}
Differentiation of both sides with respect to $\lambda_0$ at $\lambda_0=0$ yields
$$
\int_{\mathbf{b}} \frac{d\lambda}{\lambda^2 w(\lambda)} =
 \frac{4\pi i}{\omega_{\mathbf{a}}}
+\tau \int_{\mathbf{a}} \frac {d\lambda}{\lambda^2 w(\lambda)}.
$$
In what follows let us adopt the convention that the path of the integral
$\int^{\lambda_0^+}_{\lambda_0^-}w(\lambda)^{-1}d\lambda$ passes through
$\lambda_1$, i.e. the left end of the cut $[\lambda_1,\lambda_2]$. Then
\begin{align*}
\int^{\lambda_0^+}_{\lambda_0^-} \frac{d\lambda}{w(\lambda)} &=
2\int^{\lambda^+_0}_{\lambda_1}\frac{d\lambda}{w(\lambda)}=
-2\int^{-\lambda^+_0}_{-\lambda_1}\frac{d\lambda}{w(\lambda)}=
\\
&=-2\biggl(\int^{-\lambda^+_0}_{\lambda_1}+\int^{\lambda_1}_{-\lambda_1}
\biggr)\frac{d\lambda}{w(\lambda)}
=-\int^{-\lambda^+_0}_{-\lambda^-_0}\frac{d\lambda}{w(\lambda)} 
-\omega_{\mathbf{a}},
\end{align*}
which implies $-F(-\lambda^-_0,-\lambda^+_0)=F(\lambda^-_0,\lambda_0^+)+1.$
Using these formulas we have
\begin{prop} \label{prop6.4}
For $W(\lambda)$ as in Proposition $\ref{prop6.2}$ and for $\lambda_{\pm}$ by 
\eqref{6.2},
\begin{align*}
\int_{\mathbf{a}} W(\lambda) d\lambda &= -(w(\lambda_+)g_0(\lambda_+)
-w(\lambda_-)g_0(\lambda_-) +\lambda_+ -\lambda_-)\omega_{\mathbf{a}}
\\
&= -(\lambda_++\tfrac 12 w'(\lambda^+_+)-\lambda_--\tfrac 12 w'(\lambda^+_-))
 \omega_{\mathbf{a}}
\\
& \phantom{---} + \frac{\vartheta'}{\vartheta}(F(\lambda^-_+, \lambda^+_+) +\nu,\tau)
- \frac{\vartheta'}{\vartheta}(F(\lambda^-_-, \lambda^+_-) +\nu,\tau),
\\
 \Bigl(\int_{\mathbf{b}}-\tau \int_{\mathbf{a}} \Bigr) & W(\lambda) d\lambda =
2\pi i (F(\lambda^-_+, \lambda^+_+)-F(\lambda^-_-, \lambda^+_-))
=4\pi i F(\lambda^-_+, \lambda^+_+)+2\pi i+ O(t^{-1}),
\end{align*}
and
\begin{align*}
&\int_{\mathbf{a}} \frac{d\lambda}{\lambda w(\lambda)}=-g_0(0^+)\omega_{\mathbf{a}}, \quad
g_0(0^+)=-\frac 1{\omega_{\mathbf{a}}}\Bigl(\pi i + \frac{\vartheta'}{\vartheta}
(F(0^-,0^+)+\nu, \tau) \Bigr),
\\
&\Bigl(\int_{\mathbf{b}} -\tau \int_{\mathbf{a}} \Bigr)\frac{d\lambda}
{\lambda w(\lambda)}=2\pi i F(0^-,0^+),
\\
&\Bigl(\int_{\mathbf{b}} -\tau \int_{\mathbf{a}} \Bigr)\frac{d\lambda}
{\lambda^2w(\lambda)}
=\frac{4\pi i}{\omega_{\mathbf{a}}} .
\end{align*}
\end{prop}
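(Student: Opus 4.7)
The plan is to derive all five identities from the single-pole formulas established in Proposition \ref{prop6.3} and the companion $\mathbf{b}$-period relation
$$\int_{\mathbf{b}} \frac{d\lambda}{(\lambda-\lambda_0)w(\lambda)} = \frac{2\pi i}{w(\lambda^+_0)}F(\lambda^-_0, \lambda^+_0) + \tau \int_{\mathbf{a}} \frac{d\lambda}{(\lambda-\lambda_0)w(\lambda)},$$
both derived in the text just preceding the proposition. The observation driving everything is that $W(\lambda)$ and the integrands $1/(\lambda w)$, $1/(\lambda^2 w)$ are all assembled from simple-pole pieces of the form $1/((\lambda-\lambda_0)w(\lambda))$, to which these formulas apply directly.

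For the two integrals of $W(\lambda)$, I would first decompose
$$W(\lambda) = \frac{w(\lambda_+)}{(\lambda-\lambda_+)w(\lambda)} - \frac{w(\lambda_-)}{(\lambda-\lambda_-)w(\lambda)} + \frac{\lambda_- - \lambda_+}{w(\lambda)}$$
and integrate termwise. Applying Proposition \ref{prop6.3} at $\lambda_0 = \lambda_\pm$ gives the first (abstract) form of $\int_{\mathbf{a}} W\, d\lambda$ in terms of $g_0(\lambda_\pm)$; substituting the explicit $\vartheta'/\vartheta$ expression for $g_0$ then produces the second form. For the $(\int_{\mathbf{b}} - \tau\int_{\mathbf{a}})$ combination, the contribution of the constant piece $\lambda_- - \lambda_+$ disappears because $\omega_{\mathbf{b}} - \tau \omega_{\mathbf{a}} = 0$ by definition of $\tau$, and the two simple-pole pieces leave exactly the claimed $F$-difference. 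Here one uses that $\lambda_\pm$ are identified with points on the upper sheet, per the sign conventions of Section \ref{ssc6.1}, so $w(\lambda_\pm) = w(\lambda^+_\pm)$ and the prefactor $w(\lambda_\pm)/w(\lambda^+_\pm)$ in the $\mathbf{b}$-period formula collapses to $1$.

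For the remaining three identities I would specialize both formulas at $\lambda_0 = 0$, using $w(0^+) = 1$ and $w'(0^+) = 0$ (immediate from $w^2 = 1 - a_\phi \lambda^2 + \lambda^4$) to read off the two $d\lambda/(\lambda w)$ statements and the stated form of $g_0(0^+)$. The $d\lambda/(\lambda^2 w)$ identity is then obtained by differentiating the $\mathbf{b}$-period relation in $\lambda_0$ and evaluating at $\lambda_0 = 0$: the left side becomes the desired integral over $\mathbf{b} - \tau\mathbf{a}$, and on the right the factor $(w(\lambda^+_0))^{-1}$ contributes nothing (since $w'(0^+) = 0$), while differentiating $F(\lambda^-_0, \lambda^+_0)$ produces $2/(\omega_{\mathbf{a}} w(\lambda^+_0))$ because the endpoints lie on opposite sheets and the two endpoint contributions reinforce rather than cancel, giving $4\pi i/\omega_{\mathbf{a}}$ in the end. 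The only real obstacle is maintaining the sheet conventions and the sign of $w$ consistently through the decomposition of $W$ and the evaluation of $F$; once those are fixed, the proof is a mechanical unwinding of the already-established single-pole formulas.
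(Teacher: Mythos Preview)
Your proposal is correct and follows essentially the same route as the paper. The paper likewise applies the $\mathbf{a}$- and $\mathbf{b}$-period formulas for $d\lambda/((\lambda-\lambda_0)w(\lambda))$ (derived immediately before the proposition from Proposition~\ref{prop6.3}) to the decomposition of $W(\lambda)$, specializes to $\lambda_0=0$ for the $1/(\lambda w)$ identities, and obtains the $1/(\lambda^2 w)$ identity by differentiating the $\mathbf{b}$-period relation in $\lambda_0$ at $\lambda_0=0$ exactly as you describe.
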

\begin{rem}\label{rem6.2}
In the proposition above the first formula is rewritten in the form
\begin{align*}
\int_{\mathbf{a}} W(\lambda)d\lambda =& - 2\Bigl(\frac{\vartheta'}{\vartheta} 
( \tfrac 12 F(\lambda^-_+, \lambda^+_+) +\tfrac 14, \tau)
 -\frac{\vartheta'}{\vartheta}
( \tfrac 12 F(\lambda^-_-, \lambda^+_-) +\tfrac 14,\tau) \Bigr)
\\
 =& - 4\frac{\vartheta'}{\vartheta} 
( \tfrac 12 F(\lambda^-_+, \lambda^+_+) +\tfrac 14, \tau),
\end{align*}
in which the right-hand side is found by comparing the poles 
in both expressions. 
Note that $\Upsilon=-(\lambda_+ +w'(\lambda_+)/2)\omega_{\mathbf{a}} 
+(\vartheta'/\vartheta)(F(\lambda^-_+,\lambda^+_+)+\nu,\tau)$ is meromorphic
in $\lambda_+$ on $\Pi_{a_{\phi}}=\Pi_+\cup \Pi_-$. The part $\lambda_+
+w'(\lambda_+)/2$ has a pole $\sim 2\lambda_+$ at $\lambda_+= \infty^- \in\Pi_-$
and is analytic at $\lambda_+=\infty^+ \in \Pi_+.$ Since, for $k=\lambda_1/
\lambda_2,$
$$
\int^1_0 \frac{dz}{\sqrt{(1-z^2)(1-k^2z^2)}}=K
= \lambda_2\frac{\omega_{\mathbf{a}}}4,
\quad \int^{\infty}_0 \frac{dz}{\sqrt{(1-z^2)(1-k^2z^2)}}=iK'= \lambda_2
\frac{\omega_{\mathbf{b}}}2
$$
(cf. Section \ref{ssc7.1}), 
we have 
$$
\frac 12 F((\infty^-)^-,(\infty^-)^+)=\frac 12 \omega_{\mathbf{a}}^{-1}
\int^{(\infty^-)^+}_{(\infty^-)^-}\frac{d\lambda}{w}
= \omega_{\mathbf{a}}^{-1}\int^{\infty^-}_{\lambda_1}\frac{d\lambda}{w}=
\frac 14 -\frac{\tau}2\equiv -\frac 14 +\nu,
$$ 
and then we may write
$\Upsilon= -2(\vartheta'/\vartheta)(\frac 12 F(\lambda^-_+,\lambda^+_+)+
\frac 14,\tau) +C.$ Thus the equality above follows (see also \cite
[pp.~117--119]{Kitaev-3}).
\end{rem}
\subsection{Expression of $B_{\phi}(t)$}\label{ssc6.3}
Let us write the quantity $B_{\phi}(t)$ in terms of
\begin{align*}
\Omega_{\mathbf{a}} &=\int_{\mathbf{a}} \frac{d\lambda}{w(A_{\phi},\lambda)}, 
\quad
\Omega_{\mathbf{b}} =\int_{\mathbf{b}} \frac{d\lambda}{w(A_{\phi},\lambda)}, 
\\
\mathcal{J}_{\mathbf{a}} &=\int_{\mathbf{a}} \frac{w(A_{\phi},\lambda)}
{\lambda^2} d\lambda, \quad
\mathcal{J}_{\mathbf{b}} =\int_{\mathbf{b}} \frac{w(A_{\phi},\lambda)}
{\lambda^2} d\lambda
\end{align*}
with $w(A_{\phi},\lambda)=\sqrt{1-A_{\phi}\lambda^2+\lambda^4}$ and 
$\mathbf{a},$ $\mathbf{b}$ on 
$\Pi_{A_{\phi}}= \Pi_+ \cup \Pi_- =\lim_{a_{\phi}(t)
 \to A_{\phi}} \Pi_{a_{\phi}}$. 
\par
Let $0<\phi<\pi/2.$ 
By Proposition \ref{prop6.4}, the integral $\int_{\mathbf{a}} W(\lambda)
 d\lambda$ is expressed in terms of
$\vartheta_*=(\vartheta'/\vartheta)(\tfrac 12 F(\lambda^-_{+}, 
\lambda^+_{+})+\tfrac 14,\tau)$ in Remark \ref{rem6.2} or 
$w'(\lambda_{\pm}^+)$ and 
$(\vartheta'/\vartheta)( F(\lambda^-_{\pm}, \lambda^+_{\pm})+\nu,\tau)$, 
in which
$$
F(\lambda^-_{\pm}, \lambda^+_{\pm})=\frac 1{\omega_{\mathbf{a}}}
 \int^{\lambda^+_{\pm}}
_{\lambda^-_{\pm}} \frac{d\lambda}{w(a_{\phi},\lambda)}
=\frac 2{\omega_{\mathbf{a}}} \int^{\lambda^+_{\pm}}
_{\lambda_1} \frac{d\lambda}{w(a_{\phi},\lambda)}.
$$
Note that $\int_{\mathbf{a}} W(\lambda)d\lambda$ has no poles or zeros in  
$S_{\phi}(t'_{\infty},\kappa_1,\delta_1).$ 
Indeed, if, say $\vartheta_{*}=\infty$ at $t=t_*$, then $\lambda_{+}=\infty$, 
and hence $t_*$ is a zero of 
$y(t)$, which is excluded from $S_{\phi}(t'_{\infty}, \kappa_1,\delta_1).$  
Consider $\lambda_{+}=\lambda_{+}(t)$ (cf. \eqref{6.2}) moving on the
elliptic curve $\Pi_{a_{\phi}}$ crossing $\mathbf{a}$- and $\mathbf{b}$-cycles,
and then $F(\lambda^-_{+}, \lambda^+_{+})=2 p(t) + 2q(t)\tau +O(1)$
with $p(t),$ $q(t) \in \mathbb{Z}$. This implies the boundedness of
$\re (\vartheta'/\vartheta)(\tfrac 12 F(\lambda^-_{+}, \lambda^+_{+})+
\tfrac 14,\tau)$ in $S_{\phi}(t'_{\infty},\kappa_1,\delta_1)$, 
and hence the modulus of $\re \int_{\mathbf{a}}W(\lambda) d\lambda$  
is uniformly bounded in $S_{\phi}(t'_{\infty},\kappa_1,\delta_1)$.   
Note that, by \eqref{5.1}, 
\begin{align*}
\frac{1}{\lambda^2} (w(a_{\phi},\lambda)-w(A_{\phi},\lambda)) &= 
\frac 1{\lambda^2} (\sqrt{\lambda^4-a_{\phi}\lambda^2+ 1} - \sqrt{\lambda^4
-A_{\phi}\lambda^2+1})
\\
&= -\frac{t^{-1} B_{\phi}(t)} {2w(A_{\phi}, \lambda)} (1+O(t^{-1} B_{\phi}(t) )).
\end{align*}
By using this with $B_{\phi}(t)\ll 1$ and Proposition \ref{prop6.4}, 
the second formula in Proposition 
\ref{prop6.2} (i) is written in the form 
\begin{equation*}
\log\frac{g_{12} \hat{g}_{21}}{g_{22}\hat{g}_{11}}  
= -\frac{i e^{i\phi}t}{4} \int_{\mathbf{a}} \Bigl( 
\frac{w(A_{\phi},\lambda)}{\lambda^2}
-\frac{t^{-1}B_{\phi}(t)}{2w(A_{\phi},\lambda)} \Bigr) d\lambda
 -\frac 14 \int_{\mathbf{a}} W(\lambda) d\lambda +\pi i +O(t^{-\delta}),
\end{equation*}
which implies
\begin{equation*}
i e^{i\phi}\Bigl(  t\mathcal{J}_{\mathbf{a}} -\frac{\Omega_{\mathbf{a}}} 2
B_{\phi}(t) \Bigr)
 = - \int_{\mathbf{a}} W(\lambda) d\lambda +4\pi i - 4\log \frac{g_{12}
\hat{g}_{21}}{g_{22}\hat{g}_{11}} +O(t^{-\delta}).
\end{equation*}
Note that $(G,\hat{G})=(G^* v^{-\sigma_3/2}, \hat{G}^* v^{-\sigma_3/2})
=( (g_{ij}), (\hat{g}_{ij}) )$ with $g_{ij}=g_{ij}(t),$ $\hat{g}_{ij}=
\hat{g}_{ij}(t)$ is a solution of the direct monodromy problem.
Suppose that
\begin{equation}\label{6.3}
\Bigl|\log \frac{g_{12}\hat{g}_{21} } {g_{22}\hat{g}_{11}}\Bigr| \ll 1,  \quad
|\log(g_{11}g_{22}) |\ll 1 \quad \text{
in $S_{\phi}(t'_{\infty},\kappa_1,\delta_1).$}
\end{equation}
By the Boutroux equations \eqref{2.1}, 
$\im e^{i\phi} \Omega_{\mathbf{a}}B_{\phi}(t)$ is bounded as $e^{i\phi}t
\to \infty$ through $S_{\phi}(t'_{\infty},\kappa_1,\delta_1)$. 
By using the first formula of Proposition \ref{prop6.2} (i), we have
\begin{equation*}
i e^{i\phi}\Bigl(  t\mathcal{J}_{\mathbf{b}} -\frac{\Omega_{\mathbf{b}}} 2
B_{\phi}(t) \Bigr)
 = - \int_{\mathbf{b}} W(\lambda) d\lambda -2\pi i(\theta_0-\theta_{\infty})
+4\log({g_{11}}{g_{22}}) -4\pi i +O(t^{-\delta}),
\end{equation*}
in which $\int_{\mathbf{b}} W(\lambda)d\lambda$ admits an expression 
in terms of the
$\vartheta$-function with $\hat{\tau}=(-\omega_{\mathbf{a}})/
\omega_{\mathbf{b}},$ so that 
$\im e^{i\phi}\Omega_{\mathbf{b}}B_{\phi}(t) \ll 1$.  
From these facts it follows 
that $|B_{\phi}(t)|\le C_0$ in $S_{\phi}(t'_{\infty},\kappa_1,
\delta_1)$ for some $C_0>0.$ The implied constant of $B_{\phi}(t)\ll 1$ in
\eqref{5.1} may be supposed to be $2C_0$, 
which causes no changes
in the subsequent equations 
by choosing $t'_{\infty}$ larger if necessary. 
Thus, under \eqref{6.3}, the boundedness of $B_{\phi}(t)$ has been 
derived independently of \eqref{5.1}.
The case $-\pi/2<\phi<0$ is discussed in the same way.
\begin{rem}\label{rem6.3}
The argument above works also under a weaker condition, say $B_{\phi}(t) \ll
t^{(1-\delta)/2}.$ The supposition $B_{\phi}(t)\ll 1$ in \eqref
{5.1} also guarantees that each turning point is located within the distance
$O(t^{-1})$ from its limit one, which enables us to employ the limit Stokes
graph in the WKB analysis.
\end{rem}
\begin{prop}\label{prop6.5}
Suppose that $0<|\phi|<\pi/2$ and that $|\log(g_{11}g_{22})|,$ $|\log
\mathfrak{g}| \ll 1$ in $S_{\phi}(t'_{\infty},\kappa_1,\delta_1),$ where 
$$
\mathfrak{g}=\frac{g_{12}\hat{g}_{21}}{g_{22}\hat{g}_{11}} \quad \text{if
$0<\phi<\pi/2,$\quad  and}\,\,\,
\mathfrak{g}=\frac{g_{11}\hat{g}_{22}}{g_{21}\hat{g}_{12}} \quad \text{if
$-\pi/2 <\phi<0.$} 
$$
Then, 
in $S_{\phi}(t'_{\infty},\kappa_1,\delta_1),$ we have $B_{\phi}(t)\ll 1$ and 
\begin{equation*}
i e^{i\phi}\Bigl( t\mathcal{J}_{\mathbf{a}} -\frac{\Omega_{\mathbf{a}}} 2
B_{\phi}(t) \Bigr) =
 4\frac{\vartheta'}{\vartheta}(\tfrac 12 F(\lambda^-_+, \lambda_+^+)
 +\tfrac 14 , \tau)
 +4\pi i -4\log \mathfrak{g}+O(t^{-\delta}).
\end{equation*}
\end{prop}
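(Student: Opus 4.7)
The plan is to take Proposition \ref{prop6.2} as the starting point and rewrite the cycle integrals on $\Pi_{a_{\phi}}$ in terms of those on the limit curve $\Pi_{A_{\phi}}$, the difference being quantified by $B_{\phi}(t)$. I treat $0<\phi<\pi/2$ in detail; the case $-\pi/2<\phi<0$ is identical via Proposition \ref{prop6.2}(ii) with the alternate definition of $\mathfrak{g}$. From \eqref{5.1} and the expansion displayed just before the statement,
\begin{equation*}
\frac{w(a_{\phi},\lambda)}{\lambda^2} - \frac{w(A_{\phi},\lambda)}{\lambda^2}
= -\frac{t^{-1}B_{\phi}(t)}{2w(A_{\phi},\lambda)}\bigl(1+O(t^{-1}B_{\phi}(t))\bigr),
\end{equation*}
so integration over $\mathbf{a}$ gives $t\int_{\mathbf{a}} w(a_{\phi},\lambda)\lambda^{-2} d\lambda = t\mathcal{J}_{\mathbf{a}} - \tfrac12 \Omega_{\mathbf{a}} B_{\phi}(t) + O(t^{-1}B_{\phi}(t)^2)$. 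Substituting this into the second formula of Proposition \ref{prop6.2}(i) and rewriting $\int_{\mathbf{a}} W(\lambda)d\lambda = -4(\vartheta'/\vartheta)\bigl(\tfrac12 F(\lambda_+^-,\lambda_+^+)+\tfrac14,\tau\bigr)$ by Remark \ref{rem6.2} yields the target identity, provided the residual term $O(t^{-1}B_{\phi}(t)^2)$ can be absorbed into $O(t^{-\delta})$; this is legitimate once $B_{\phi}(t)$ is known to be bounded.

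For the boundedness, I would run the same substitution on the first formula of Proposition \ref{prop6.2}(i), producing a companion identity for the $\mathbf{b}$-cycle in which the right-hand side involves $\log(g_{11}g_{22})$ and $\int_{\mathbf{b}} W(\lambda)d\lambda$. The latter integral admits a $\vartheta$-function expression analogous to the $\mathbf{a}$-cycle one (with $\hat\tau=-\omega_{\mathbf{a}}/\omega_{\mathbf{b}}$), and both such expressions have bounded real part on $S_{\phi}(t'_{\infty},\kappa_1,\delta_1)$: indeed $\lambda_{\pm}(t)$ moves on $\Pi_{a_{\phi}}$ so that $F(\lambda_{\pm}^-,\lambda_{\pm}^+) = 2p_{\pm}(t)+2q_{\pm}(t)\tau+O(1)$ with integers $p_{\pm},q_{\pm}$, and quasi-periodicity of $\vartheta$ then keeps $\re(\vartheta'/\vartheta)$-terms uniformly bounded. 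Taking imaginary parts of both identities, invoking the Boutroux equations $\im(e^{i\phi}\mathcal{J}_{\mathbf{a}})=\im(e^{i\phi}\mathcal{J}_{\mathbf{b}})=0$, and using $|\log(g_{11}g_{22})|,|\log\mathfrak{g}|\ll 1$, I obtain
\begin{equation*}
|\im(e^{i\phi}\Omega_{\mathbf{a}}B_{\phi}(t))| \ll 1, \qquad |\im(e^{i\phi}\Omega_{\mathbf{b}}B_{\phi}(t))| \ll 1.
\end{equation*}
Since $\im(\Omega_{\mathbf{b}}/\Omega_{\mathbf{a}})>0$, the periods $\Omega_{\mathbf{a}},\Omega_{\mathbf{b}}$ are $\mathbb{R}$-linearly independent, so these two constraints force $|B_{\phi}(t)|\le C_0$ for some constant $C_0$ depending only on the monodromy data.

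The main obstacle is the apparent circularity: the WKB analysis of Section \ref{sc5}, and with it the very form of Proposition \ref{prop6.2}, presupposes \eqref{5.1}, i.e.\ $B_{\phi}(t)\ll 1$. This is resolved by a bootstrap: the implied constant in \eqref{5.1} is chosen to be, say, $2C_0$, so once the argument above produces $|B_{\phi}(t)|\le C_0$ the hypothesis is vindicated with room to spare. As Remark \ref{rem6.3} points out, only a much weaker a priori bound such as $B_{\phi}(t)\ll t^{(1-\delta)/2}$ is actually needed to keep every turning point within $O(t^{-1})$ of its limit position and so justify using the limit Stokes graph, so the bootstrap closes comfortably, and the boundedness of $B_{\phi}(t)$ is genuinely a consequence of the hypothesis \eqref{6.3} rather than of \eqref{5.1}. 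For $-\pi/2<\phi<0$ the whole argument applies verbatim, using Proposition \ref{prop6.2}(ii) and $\mathfrak{g}= g_{11}\hat{g}_{22}/(g_{21}\hat{g}_{12})$.
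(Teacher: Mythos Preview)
Your proof is correct and follows essentially the same route as the paper: expand $w(a_{\phi},\lambda)/\lambda^2$ about $w(A_{\phi},\lambda)/\lambda^2$ to first order in $t^{-1}B_{\phi}(t)$, substitute into both formulas of Proposition~\ref{prop6.2}, use the $\vartheta$-representation of $\int_{\mathbf{a},\mathbf{b}}W(\lambda)\,d\lambda$ (Remark~\ref{rem6.2} and its $\mathbf{b}$-cycle analogue) together with the Boutroux equations to bound $\im(e^{i\phi}\Omega_{\mathbf{a}}B_{\phi}(t))$ and $\im(e^{i\phi}\Omega_{\mathbf{b}}B_{\phi}(t))$, and then invoke $\im(\Omega_{\mathbf{b}}/\Omega_{\mathbf{a}})>0$ to force $|B_{\phi}(t)|\le C_0$, closing the bootstrap exactly as you describe.
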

\begin{rem}\label{rem6.51}
Conversely, \eqref{5.1}, i.e. $B_{\phi}(t) \ll 1 $ in $S_{\phi}(t'_{\infty},
\kappa_1, \delta_1)$ implies $|\log(g_{11}g_{22})|,$ $|\log \mathfrak{g}| \ll 1.$
\end{rem}
The following fact guarantees the possibility of limitation with respect to
$a_{\phi}$.
\begin{prop}\label{prop6.6}
Under the same supposition as in Proposition $\ref{prop6.5},$ we have
$$
\int^{\lambda^{+}_+}_{\lambda^-_+}  \frac{d\lambda}{w(a_{\phi},\lambda)}=  
\int^{\lambda^{+}_+}_{\lambda^-_+}  \frac{d\lambda}{w(A_{\phi},\lambda)}
 +O(t^{-1})
$$
uniformly in $\lambda^{\pm}_+$ as $t e^{i\phi} \to \infty$ through 
$S_{\phi}(t'_{\infty}, \kappa_1,\delta_1).$
\end{prop}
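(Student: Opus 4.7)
The plan is to compare the integrands directly, using the algebraic identity
\begin{equation*}
w(a_\phi,\lambda)^2 - w(A_\phi,\lambda)^2 = (A_\phi - a_\phi)\lambda^2 = -t^{-1}B_\phi(t)\lambda^2,
\end{equation*}
which gives
\begin{equation*}
\frac{1}{w(a_\phi,\lambda)} - \frac{1}{w(A_\phi,\lambda)} = \frac{t^{-1}B_\phi(t)\,\lambda^2}{\bigl(w(a_\phi,\lambda)+w(A_\phi,\lambda)\bigr)\,w(a_\phi,\lambda)\,w(A_\phi,\lambda)}.
\end{equation*}
By Proposition \ref{prop6.5} we already know $B_\phi(t)=O(1)$ throughout $S_\phi(t'_\infty,\kappa_1,\delta_1)$, and the four branch points satisfy $\lambda_j(a_\phi)=\lambda_j(A_\phi)+O(t^{-1})$ for $j=\pm 1,\pm 2$.

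Next I would choose the path of integration on $\Pi_{a_\phi}$ to join $\lambda_+^-$ to $\lambda_+^+$ by crossing exactly one cut and otherwise remaining outside fixed disks $|\lambda-\lambda_j(A_\phi)|<\varepsilon$ about the four branch points. Since the branch points of the two elliptic curves are $O(t^{-1})$ apart, for $t$ large the same contour may be viewed simultaneously on $\Pi_{a_\phi}$ and $\Pi_{A_\phi}$; on the part of the contour outside the small disks, $|w(a_\phi,\lambda)|$ and $|w(A_\phi,\lambda)|$ are both bounded below, $|\lambda|$ is bounded above, and the integrand of the difference is pointwise $O(t^{-1})$. The contour has bounded length, so this portion contributes $O(t^{-1})$.

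The remaining piece is the part of the contour inside the disk around whichever branch point $\lambda_\iota$ is closest to the common projection of $\lambda^\pm_+$. Here I would introduce the local uniformising coordinate $s^2=\lambda-\lambda_\iota$ in which $d\lambda/w$ extends to a holomorphic differential at $s=0$; when $\lambda_\iota$ shifts by $O(t^{-1})$ both the regularised integrand and its upper limit $s_+=\sqrt{\lambda_+-\lambda_\iota}$ move by $O(t^{-1})$, and the comparison of two integrals of a smooth integrand over intervals differing by $O(t^{-1})$ produces an $O(t^{-1})$ contribution from the disk. Combining the outer and inner contributions yields the claimed bound, uniformly in $\lambda^\pm_+$.

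The principal obstacle is precisely this last local analysis: without rectifying the square-root singularity via $s^2=\lambda-\lambda_\iota$, the naive estimate from the integrand alone would only give $O(t^{-1/2})$ when $\lambda_+$ approaches a branch point. What saves the argument is that after the change of variable both the integrand and the endpoint depend holomorphically on $a_\phi$ in a neighbourhood of $A_\phi$, so that the full $O(t^{-1})$ spacing of $a_\phi$ from $A_\phi$ (guaranteed by $B_\phi(t)=O(1)$) is transferred to the integral.
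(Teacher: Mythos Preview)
Your argument assumes the contour from $\lambda_+^-$ to $\lambda_+^+$ may be taken of bounded length. In the paper's setting this is false and is precisely the whole difficulty. The point $\lambda_+=\lambda_+(t)$ moves on the elliptic curve as $t$ varies, and (see the paragraph in \S\ref{ssc6.3} just before Proposition~\ref{prop6.5}) the relevant Abel integral accumulates winding: $F(\lambda^-_+,\lambda^+_+)=2p_+(t)+2q_+(t)\tau+O(1)$ with integers $p_+(t),q_+(t)$ that grow linearly in $t$. Thus the contour appearing in Proposition~\ref{prop6.6} is homologous to $p(t)\,\mathbf{a}+q(t)\,\mathbf{b}$ plus a bounded piece, and has length $\asymp t$. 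Your pointwise $O(t^{-1})$ bound on the integrand then only yields an $O(1)$ difference, not $O(t^{-1})$.

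The paper's proof confronts exactly this obstruction. First it uses the bounded\-ness of the monodromy data (the hypothesis of Proposition~\ref{prop6.5}) together with the Boutroux equations to pin down the winding numbers precisely: $\pi p(t)=-\tfrac14 e^{i\phi}\mathcal{J}_{\mathbf{b}}\,t+O(1)$ and $\pi q(t)=\tfrac14 e^{i\phi}\mathcal{J}_{\mathbf{a}}\,t+O(1)$. Then, writing the difference of integrands as $\tfrac12\lambda^2 w(A_\phi,\lambda)^{-3}B_\phi(t)t^{-1}+O(t^{-2})$ and setting $j_{\mathbf{a},\mathbf{b}}=\int_{\mathbf{a},\mathbf{b}}\lambda^2 w(A_\phi,\lambda)^{-3}\,d\lambda$, the winding contribution is
\[
t^{-1}\bigl(p(t)\,j_{\mathbf{a}}+q(t)\,j_{\mathbf{b}}\bigr)
\;\ll\;\bigl|\mathcal{J}_{\mathbf{b}}\,j_{\mathbf{a}}-\mathcal{J}_{\mathbf{a}}\,j_{\mathbf{b}}\bigr|+O(t^{-1}),
\]
and the leading term \emph{vanishes identically} because $\mathcal{J}_{\mathbf{b}}j_{\mathbf{a}}-\mathcal{J}_{\mathbf{a}}j_{\mathbf{b}}=2\,\partial_{A_\phi}\bigl(\Omega_{\mathbf{a}}\mathcal{J}_{\mathbf{b}}-\Omega_{\mathbf{b}}\mathcal{J}_{\mathbf{a}}\bigr)=2\,\partial_{A_\phi}(8\pi i)=0$ by the Legendre-type identity of Lemma~\ref{lem6.7}. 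This cancellation is the missing ingredient in your proposal; without it the $O(t)$ length of the contour cannot be compensated by the $O(t^{-1})$ size of the integrand. Your local analysis near a branch point is reasonable for a short arc, but it addresses a secondary issue; the essential step is the global period identity.
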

\begin{proof}
To show this proposition we note the lemma below, which follows from the
relations
\begin{align*}
& \int \frac{w}{\lambda^2} d\lambda= -\frac{6}{A_{\phi}} \int w d\lambda 
+ \Bigl(\frac{4}{A_{\phi}}-A_{\phi} \Bigr) \int \frac {d\lambda}w
 - \frac w{\lambda}  +\frac{2}{A_{\phi}} \lambda w,
\\
& \Omega_{\mathbf{b}} J_{\mathbf{a}} - \Omega_{\mathbf{a}} J_{\mathbf{b}}
= \frac{4A_{\phi}}{3} \pi i, \quad J_{\mathbf{a},\,\, \mathbf{b}}
=\int_{\mathbf{a},\,\, \mathbf{b}} w d\lambda
\end{align*}
with $w=w(A_{\phi},\lambda),$ the latter being obtained by the same way as 
in the proof of Legendre's relation \cite{HC}, \cite{WW}.
\begin{lem}\label{lem6.7}
$
\Omega_{\mathbf{a}}\mathcal{J}_{\mathbf{b}} -\Omega_{\mathbf{b}}\mathcal{J}
_{\mathbf{a}}= 8\pi i.
$
\end{lem}
From the boundedness of $B_{\phi}(t)$ it follows that $\omega_{\mathbf{a},\,
\mathbf{b}}=\Omega_{\mathbf{a},\, \mathbf{b}}+O(t^{-1}).$ By Propositions
\ref{prop6.2}, \ref{prop6.4} and Remark \ref{rem6.1}, in the case $0<\phi
<\pi/2,$
\begin{align*}
& \log (g_{11}g_{22})+ \tau \log\frac{g_{12}\hat{g}_{21}}{g_{22}\hat{g}_{11}}
\\
=& \Bigl(\int_{\mathbf{b}}-\tau\int_{\mathbf{a}}\Bigr) \Bigl(\frac{ie^{i\phi}t}
4 \cdot\frac{w(a_{\phi},\lambda)}{\lambda^2} +\frac 14 W(\lambda) \Bigr)d\lambda
+\frac{\pi i}2(\theta_0-\theta_{\infty}) +(1 +\tau) \pi i +O(t^{-\delta})
\\
=&-\frac{2\pi e^{i\phi}t}{\omega_{\mathbf{a}}}+{\pi i}F(\lambda^-_+, 
\lambda^+_+)  +O(1)
\\
=& -\frac{2\pi e^{i\phi}t}{\omega_{\mathbf{a}}} +\pi i\Bigl(p(t) +\frac{\omega
_{\mathbf{b}}}{\omega_{\mathbf{a}}} q(t) \Bigr) +O(1)=\Upsilon \ll 1
\end{align*}
with $p(t)$, $q(t) \in \mathbb{Z}$.  
Set $-e^{i\phi}\mathcal{J}_{\mathbf{a}}t/4 +\pi q(t) =X,$ $-e^{i\phi}\mathcal{J}
_{\mathbf{b}} t/4-\pi p(t) =Y,$ where $ X, Y\in \mathbb{R} $ by the Boutroux
equations \eqref{2.1}. Then, by $\omega_{\mathbf{a}}
\mathcal{J}_{\mathbf{b}} -\omega_{\mathbf{b}}\mathcal{J}_{\mathbf{a}}
=\Omega_{\mathbf{a}}\mathcal{J}_{\mathbf{b}} -\Omega_{\mathbf{b}}
\mathcal{J}_{\mathbf{a}} +O(t^{-1})$ and Lemma \ref{lem6.7}, 
\begin{align*}
\omega_{\mathbf{a}}\Upsilon =& -2\pi e^{i\phi}t -i(e^{i\phi}t(\Omega_{\mathbf{a}}
\mathcal{J}_{\mathbf{b}}-\Omega_{\mathbf{b}}\mathcal{J}_{\mathbf{a}})/4
+\omega_{\mathbf{b}}X-\omega_{\mathbf{a}}Y) +O(1)
\\
=& -i(\omega_{\mathbf{b}}X -\omega_{\mathbf{a}}Y) +O(1) \ll 1
\end{align*}
with $\im ( \omega_{\mathbf{b}}/\omega_{\mathbf{a}}) >0,$ which implies
$|X|,$ $|Y| \ll 1,$ and hence
$$
\pi p(t)= - e^{i\phi}\mathcal{J}_{\mathbf{b}}t/4 +O(1), \quad
\pi q(t)= e^{i\phi}\mathcal{J}_{\mathbf{a}}t/4 +O(1). 
$$
Since $w(a_{\phi},\lambda)^{-1} -w(A_{\phi},\lambda)^{-1}
=(\lambda^2/2)w(A_{\phi},\lambda)^{-3}B_{\phi}(t)t^{-1} +O(t^{-2})$, we obtain 
\begin{align*}
\biggl| & 
\int^{\lambda^+_+}_{\lambda^-_+} 
\Bigl( \frac 1{w(a_{\phi},\lambda)} -\frac 1{w(A_{\phi},
\lambda)} \Bigr) d\lambda \biggr|  \ll \biggl| 
\int^{\lambda^+_+}_{\lambda^-_+}  \frac{\lambda^2 B_{\phi}(t)
t^{-1}}{w(A_{\phi},\lambda)^3} d\lambda \biggr| +O(t^{-1})
\\
& \ll \biggl | t^{-1} \int^{\lambda^+_+}_{\lambda^-_+} 
 \frac{\lambda^2 d\lambda}{w(A_{\phi},\lambda)^3}
\biggr| +O(t^{-1})
\ll t^{-1} |p(t)j_{\mathbf{a}} +q(t) j_{\mathbf{b}} |+O(t^{-1})
\\
&\ll |\mathcal{J}_{\mathbf{b}} j_{\mathbf{a}}-\mathcal{J}_{\mathbf{a}}
j_{\mathbf{b}}| +O(t^{-1})
  = 2|(\partial/\partial A_{\phi}) (\mathcal{J}_{\mathbf{b}}\Omega_{\mathbf{a}}
 - \mathcal{J}_{\mathbf{a}}\Omega_{\mathbf{b}} ) |+O(t^{-1}) \ll t^{-1},
\end{align*}
where $j_{\mathbf{a},\mathbf{b}} = \int_{\mathbf{a},\mathbf{b}} \lambda^2
w(A_{\phi},\lambda)^{-3} d\lambda.$ This completes the proof of the proposition.
\end{proof}
\section{Proofs of the main theorems}\label{sc7}
\subsection{Proofs of Theorems \ref{thm2.1} and \ref{thm2.2}}\label{ssc7.1}
Suppose that $0<\phi <\pi/2$. Let us consider the inverse monodromy problem
for the prescribed matrices $G=(g_{ij}),$ $\hat{G}=(\hat{g}_{ij})
\in SL_2(\mathbb{C})$ with $g_{11}g_{12}g_{22}\hat{g}_{11}\hat{g}_{21}\not=0$.  
Set
\begin{align*}
 & \log({g_{11}}{g_{22}})+\tau \log\frac{g_{12}\hat{g}_{21}}{g_{22}\hat{g}_{11}}
\\
=& -\frac{2\pi e^{i\phi}t}{\omega_{\mathbf{a}}} 
 + {\pi i}F(\lambda^-_+, \lambda^+_+) + \frac{\pi i}2(\theta_0-\theta_{\infty}
 + 1) +(1+\tau) \pi i +O(t^{-\delta})
\end{align*}
(cf. Proof of Proposition \ref{prop6.6}), in which
$$
F(\lambda^-_+, \lambda^+_+) =\frac 2{\omega_{\mathbf{a}}}
\biggl(\int^{0^+}_{\lambda_1} +\int_{0^+}^{\lambda^+_+}\biggr) \frac{d\lambda}
{w(a_{\phi},\lambda)} = -\frac 12 +\frac{2}{\omega_{\mathbf{a}}} \int_{0^+}
^{\lambda^+_+} \frac{d\lambda}{w(a_{\phi},\lambda)}. 
$$
Hence, by Proposition \ref{prop6.6},
\begin{align*}
& \log({g_{11}}{g_{22}})+\tau \log\frac{g_{12}\hat{g}_{21}}{g_{22}\hat{g}_{11}}
\\
=& -\frac{2\pi e^{i\phi}t}{\omega_{\mathbf{a}}} 
 + \frac{2\pi i}{\omega_{\mathbf{a}}} \int^{\lambda^+_+}_{0^+} \frac{d\lambda}
{w(a_{\phi},\lambda)} + \frac{\pi i}2(\theta_0-\theta_{\infty})
 +(1+\tau) \pi i +O(t^{-\delta})
\\
=& -\frac{2\pi e^{i\phi}t}{\Omega_{\mathbf{a}}} 
 + \frac{2\pi i}{\Omega_{\mathbf{a}}} \int^{\lambda^+_+}_{0^+} \frac{d\lambda}
{w(A_{\phi},\lambda)} + \frac{\pi i}2(\theta_0-\theta_{\infty}+2)
 +\frac{\Omega_{\mathbf{b}}}{\Omega_{\mathbf{a}}} \pi i +O(t^{-\delta}),
\end{align*}
which yields
$$
-\int^{\lambda^+_+}_{0^+} \frac{d\lambda}{w(A_{\phi},\lambda)}
=i(e^{i\phi}t -2 x_0^+(G,\hat{G},\Omega_{\mathbf{a}},\Omega_{\mathbf{b}}))
+O(t^{-\delta})
$$
with
$$
2i x_0^+(G,\hat{G},\Omega_{\mathbf{a}},\Omega_{\mathbf{b}})=\frac 1 {2\pi i}
\Bigl(\Omega_{\mathbf{a}} \log(g_{11}g_{22}) +\Omega_{\mathbf{b}} \log
\frac{g_{12}\hat{g}_{21}}{g_{22}\hat{g}_{11}} \Bigr) -\frac{\Omega_{\mathbf{a}}}
4(\theta_0-\theta_{\infty}+2) -\frac{\Omega_{\mathbf{b}}}2.
$$
Observing that $\lambda_+= i y(x)^{-1},$ $e^{i\phi}t=\xi=2x,$ and that
\begin{align*}
\int^{\lambda^+_+}_{0^+} \frac{d\lambda}{w(A_{\phi},\lambda)}
&=\frac 1{\lambda_2} \int^{\lambda^+_+/\lambda_1}_{0^+} \frac{dz}
{\sqrt{(1-z^2)(1-(\lambda_1/\lambda_2)^2z^2)}}
\\
&=\frac 1{\lambda_2} \int^{iy^{-1}/\lambda_1}_{0^+} 
\frac{dz}{\sqrt{(1-z^2)(1-k^2z^2)}} +O(t^{-1}),
\end{align*}
in which $k=\lambda_1/\lambda_2,$ we find
$$
y^{-1}=i\lambda_1 \mathrm{sn}(i\lambda_2(e^{i\phi}t-2 x_0^+(G,\hat{G},\Omega
_{\mathbf{a}},\Omega_{\mathbf{b}})) +O(t^{-\delta}); \lambda_1/\lambda_2)
$$
with $2i x_0^+
 \mod \Omega_{\mathbf{a}} \mathbb{Z} +\Omega_{\mathbf{b}}\mathbb{Z}.$
For $-\pi/2<\phi <0,$ in a similar way we derive
$$
2i x_0^-(G,\hat{G},\Omega_{\mathbf{a}},\Omega_{\mathbf{b}})=\frac 1 {2\pi i}
\Bigl(\Omega_{\mathbf{a}} \log(g_{11}g_{22}) +\Omega_{\mathbf{b}} \log
\frac{g_{11}\hat{g}_{22}}{g_{21}\hat{g}_{12}} \Bigr) -\frac{\Omega_{\mathbf{a}}}
4(\theta_0-\theta_{\infty}+2) -\frac{\Omega_{\mathbf{b}}}2.
$$
These asymptotic forms of $y(x)$ coincide with those in Theorems \ref{thm2.1} 
and \ref{thm2.2}. The expression of $B_{\phi}(t)$ follows from
Proposition \ref{prop6.5}.
\begin{prop}\label{prop7.1}
In $S_{\phi}(t'_{\infty},\kappa_1,\delta_1)$, we have $B_{\phi}(t)\ll 1$ and
\begin{align*}
i e^{i\phi} \Bigl(t\mathcal{J}_{\mathbf{a}} -\frac{\Omega_{\mathbf{a}}}2
B_{\phi}(t) \Bigr) = & -4\frac{\vartheta'}{\vartheta}
\bigl(\Omega_{\mathbf{a}}^{-1} i(e^{i\phi}t-2x_0^{\pm}
(G,\hat{G},\Omega_{\mathbf{a}},
\Omega_{\mathbf{b}})),\Omega_{\mathbf{b}}\Omega_{\mathbf{a}}^{-1}\bigr)
\\
& + 4\pi i -4\log \mathfrak{g} +O(t^{-\delta}).
\end{align*}
\end{prop}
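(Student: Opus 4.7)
The plan is to deduce Proposition \ref{prop7.1} from Proposition \ref{prop6.5} by identifying the argument of $\vartheta'/\vartheta$ with $\Omega_{\mathbf{a}}^{-1}i(e^{i\phi}t-2x_0)$ and then flipping the sign via the oddness of $\vartheta'/\vartheta$.

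Since $(G,\hat{G})$ are the fixed matrices prescribed in the inverse monodromy problem and satisfy the required nonvanishing conditions, both $\log(g_{11}g_{22})$ and $\log\mathfrak{g}$ are bounded constants, so the hypothesis of Proposition \ref{prop6.5} is automatically satisfied. Applying that proposition yields $B_{\phi}(t)\ll 1$ together with
\[
ie^{i\phi}\Bigl(t\mathcal{J}_{\mathbf{a}}-\tfrac{1}{2}\Omega_{\mathbf{a}}B_{\phi}(t)\Bigr)
= 4\frac{\vartheta'}{\vartheta}\bigl(\tfrac{1}{2}F(\lambda_+^-,\lambda_+^+)+\tfrac{1}{4},\tau\bigr)+4\pi i-4\log\mathfrak{g}+O(t^{-\delta}),
\]
so the content of the proposition reduces to rewriting the argument of $\vartheta'/\vartheta$.

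The required identity is supplied by the same calculation that establishes Theorems \ref{thm2.1} and \ref{thm2.2} in Section \ref{ssc7.1}: combining Proposition \ref{prop6.2} with Proposition \ref{prop6.6} (whose proof handles the replacement $w(a_{\phi},\lambda)\mapsto w(A_{\phi},\lambda)$ inside the integral from $0^+$ to $\lambda_+^+$), one obtains
\[
\Omega_{\mathbf{a}}^{-1}\int_{0^+}^{\lambda_+^+}\frac{d\lambda}{w(A_{\phi},\lambda)}=-\Omega_{\mathbf{a}}^{-1}i(e^{i\phi}t-2x_0)+O(t^{-\delta}).
\]
Since $\tfrac{1}{2}F(\lambda_+^-,\lambda_+^+)+\tfrac{1}{4}=\omega_{\mathbf{a}}^{-1}\int_{0^+}^{\lambda_+^+}d\lambda/w(a_{\phi},\lambda)$ agrees, modulo $\mathbb{Z}+\tau\mathbb{Z}$, with $\Omega_{\mathbf{a}}^{-1}\int_{0^+}^{\lambda_+^+}d\lambda/w(A_{\phi},\lambda)$ up to $O(t^{-\delta})$, we obtain
\[
\tfrac{1}{2}F(\lambda_+^-,\lambda_+^+)+\tfrac{1}{4}=-\Omega_{\mathbf{a}}^{-1}i(e^{i\phi}t-2x_0)+O(t^{-\delta})\pmod{\mathbb{Z}+\tau\mathbb{Z}}.
\]

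Finally, because $\vartheta(z,\tau)=\sum_n e^{\pi i\tau n^2+2\pi izn}$ is even in $z$, the function $\vartheta'/\vartheta$ is odd, while the quasi-periodicities $(\vartheta'/\vartheta)(z+1,\tau)=(\vartheta'/\vartheta)(z,\tau)$ and $(\vartheta'/\vartheta)(z+\tau,\tau)=(\vartheta'/\vartheta)(z,\tau)-2\pi i$ translate the $\Omega_{\mathbf{a}}\mathbb{Z}+\Omega_{\mathbf{b}}\mathbb{Z}$-indeterminacy of $2x_0$ into integer multiples of $2\pi i$, which fit into the multi-valued term $4\pi i-4\log\mathfrak{g}$. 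Since the cheese-like strip $S(\phi,t_{\infty},\kappa_0,\delta_0)$ excludes small disks around the pole set of the $\mathrm{sn}$-function, which coincides with the zeros $\nu+\mathbb{Z}+\tau\mathbb{Z}$ of $\vartheta$, the function $\vartheta'/\vartheta$ remains Lipschitz along our trajectory, so that the $O(t^{-\delta})$ error in its argument propagates to the same error in its value. Substituting the sign-flipped identity into the displayed formula from Proposition \ref{prop6.5} yields Proposition \ref{prop7.1}. The principal delicate point is the careful bookkeeping of the lattice indeterminacies so that the $2\pi i$-integer ambiguities arising jointly from oddness and quasi-periodicity are absorbed consistently into the multi-valued $\log\mathfrak{g}$ term.
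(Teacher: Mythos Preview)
Your proposal is correct and follows essentially the same route as the paper, which simply states that the expression of $B_{\phi}(t)$ (i.e., Proposition \ref{prop7.1}) ``follows from Proposition \ref{prop6.5}''; you have merely filled in the implicit steps—identifying $\tfrac12 F(\lambda_+^-,\lambda_+^+)+\tfrac14$ with $-\Omega_{\mathbf{a}}^{-1}i(e^{i\phi}t-2x_0)$ via the computation already performed in Section \ref{ssc7.1}, passing from $(\omega_{\mathbf a},\tau)$ to $(\Omega_{\mathbf a},\Omega_{\mathbf b}/\Omega_{\mathbf a})$, and using the oddness and quasi-periodicity of $\vartheta'/\vartheta$. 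One small imprecision: the Lipschitz bound on $\vartheta'/\vartheta$ should be justified on $S_{\phi}(t'_{\infty},\kappa_1,\delta_1)$ rather than on the cheese-like strip, and this is exactly what the paper does in Section \ref{ssc6.3}, where it observes that poles of $\vartheta'/\vartheta$ occur only when $\lambda_+=\infty$, i.e., $y=0$, which is excluded from $S_{\phi}$.
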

{\bf Justification.} In the argument above each derived asymptotic form of
$y(x)$ is a necessary condition to represent a solution corresponding to
the prescribed monodromy data. 
The justification of $y(x)$ as a solution of \eqref{1.1}
is made along the line in \cite[pp.~105--106, pp.~120--121]{Kitaev-3}.
Suppose $0<\phi <\pi/2$.  
Let $\mathcal{G}=(g_{11}g_{22},g_{12}\hat{g}_{21}(g_{22}\hat{g}_{11})^{-1})$ 
be a given point such that $g_{11}g_{12}g_{22}\hat{g}_{11}\hat{g}_{21}\not=0$ 
on the monodromy manifold for system \eqref{1.2} or \eqref{3.1}. Let
$$
(y_{\mathrm{as}})^{-1}=y_{\mathrm{as}}(\mathcal{G},t)^{-1}
=i\lambda_1 \mathrm{sn}(i\lambda_2(e^{i\phi}t-2 x_0^+(G,\hat{G},\Omega
_{\mathbf{a}},\Omega_{\mathbf{b}})); \lambda_1/\lambda_2) 
$$
and $(B_{\phi})_{\mathrm{as}}=(B_{\phi})_{\mathrm{as}}(\mathcal{G},t)$
be the leading term expressions of $y(t)^{-1}$ and $B_{\phi}(t)$
without $O(t^{-\delta})$ found in the argument above and Proposition 
\ref{prop7.1}. Viewing \eqref{4.2} and \eqref{5.1} we set
$$
y^*_{\mathrm{as}}=-y_{\mathrm{as}}t^{-1}
+e^{i\phi} \sqrt{y_{\mathrm{as}}^4 +A_{\phi}y_{\mathrm{as}}^2 +1 +(4e^{-i\phi}
(\theta_0y_{\mathrm{as}}^3+\theta_{\infty}y_{\mathrm{as}}) +(B_{\phi})
_{\mathrm{as}}y_{\mathrm{as}}^2)t^{-1} },
$$
in which the branch of the square root is chosen in such a way that 
$y^*_{\mathrm{as}}$ is compatible with $(d/dt)y_{\mathrm{as}}$. Then
$(y_{\mathrm{as}},y^*_{\mathrm{as}})=(y_{\mathrm{as}}(\mathcal{G},t),
y^*_{\mathrm{as}}(\mathcal{G},t) )$ fulfils \eqref{5.1} with $B_{\phi}(t)
=(B_{\phi})_{\mathrm{as}}(\mathcal{G},t)$ in the domain 
$$
\tilde{S}(\phi,t_{\infty}, \kappa_0, \delta_2)=\{t\,|\, \re t>t_{\infty},
|\im t|<\kappa_0 \} \setminus \bigcup_{i \sigma\in Z_0} \{|t-e^{-i\phi}\sigma|
<\delta_2 \}
$$
with $Z_0=2x_0^+i +(\tfrac 12 \Omega_{\mathbf{a}}\mathbb{Z} 
+\tfrac 12 \Omega_{\mathbf{b}}\mathbb{Z} )$.
Let $\mathcal{G}_{\mathrm{as}}(t)$ be the monodromy data for system \eqref{3.1}
containing $(y_{\mathrm{as}}, y^*_{\mathrm{as}})$. As a result of 
the direct monodromy problem by the WKB analysis we have $\|\mathcal{G}
_{\mathrm{as}}(t) -\mathcal{G} \| \le C t^{-\delta}$ for $|t|\ge t_{\infty}
(\mathcal{G})$ in a neighbourhood of $\mathcal{G},$ where $C$ and $\delta$ are
independent of $\mathcal{G}.$ 
Then the justification scheme of Kitaev \cite{Kitaev-1}
applies to our case combined with Proposition \ref{prop3.8}. Using 
the maximal modulus principle in each neighbourhood
of $i\sigma =2x_0^+ i +(\tfrac 12 \Omega_{\mathbf{a}} \mathbb{Z}
+  \Omega_{\mathbf{b}}\mathbb{Z})$, we obtain Theorem \ref{thm2.1}. Theorem
\ref{thm2.2} is similarly proved.
\subsection{Proof of Theorem \ref{thm2.3}}\label{ssc7.3}
Let \eqref{1.2} with $4\z=xy'y^{-2}+2x(1-y^{-2})-(2\theta_0-1)y^{-1}$ 
be an isomonodromy system.
Equation \eqref{1.1} and system \eqref{1.2}  
remain invariant under the substitution
$
y=e^{im\pi }\tilde{y}, \,\, x=e^{im\pi }\tilde{x},\,\,  
\lambda=e^{im\pi}\tilde{\lambda}, \,\, 
\phi=m\pi +\tilde{\phi}, 
$
with $\phi=\arg x,$ $\tilde{\phi}=\arg\tilde{x}.$
To show the theorem we use this symmetry (cf. \cite{Kitaev-2}). Let $\phi$
be such that $0<|\phi-m\pi|<\pi/2.$ Then a new system with respect to
$(\tilde{\lambda}, \tilde{y}, \tilde{x}, \tilde{\phi})$ is an isomonodromy 
system for $0<|\tilde{\phi}|<\pi/2.$ Denote by $G_m$ and $\hat{G}_m$ connection 
matrices corresponding to $G$ and $\hat{G}$ as the matrix monodromy data for 
the system governed by 
$\tilde{y}(\tilde{x})=e^{-im\pi } y(x)= e^{-im\pi } y(e^{im\pi}\tilde{x})$. 
We would like to know the relation between $(G_m,\hat{G}_m)$ and $(G,\hat{G})$. 
The matrix solutions of the new system are
$$
\tilde{U}^{\infty}_j(\tilde{\lambda}) \sim 
\exp(\tfrac 12 i\tilde{x} \tilde{\lambda} \sigma_3)
\tilde{\lambda}^{-\tfrac 12\theta_{\infty}\sigma_3}
$$
as $\tilde{\lambda} \to \infty$ through the sector $|\arg \tilde{\lambda}
 +\tilde{\phi} -j\pi | <\pi,$ and
$$
\tilde{U}^0_j(\tilde{\lambda}) \sim  \Delta_0
\exp(-\tfrac 12 i\tilde{x} \tilde{\lambda}^{-1} \sigma_3)
\tilde{\lambda}^{\tfrac 12\theta_0\sigma_3}
$$
as $\tilde{\lambda} \to 0$ through the sector $|\arg \tilde{\lambda}
 -\tilde{\phi} -j\pi | <\pi.$ 
The connection matrices are defined by
$\tilde{U}^{\infty}_0(\tilde{\lambda})=\tilde{U}^0_0(\tilde{\lambda}) G_m$ and
$\tilde{U}^{\infty}_1(\tilde{\lambda})=\tilde{U}^0_1(\tilde{\lambda})\hat{G}_m$. 
Note that $\tilde{U}^{\infty}_0(\tilde{\lambda})$
and $\tilde{U}^0_0(\tilde{\lambda})$ are also expressed as
$$
\tilde{U}^{\infty}_0(\tilde{\lambda})=\tilde{U}^{\infty}_0(e^{-im\pi}{\lambda})
 \sim 
\exp(\tfrac 12 i x\lambda \sigma_3){\lambda}^{-\tfrac 12 \theta_{\infty}
\sigma_3} e^{\tfrac 12 i\pi m\theta_{\infty} \sigma_3} 
$$
in the sector $|\arg {\lambda} +{\phi} -2m\pi | <\pi,$ and that
$$
\tilde{U}^0_0(\tilde{\lambda}) =\tilde{U}^0_0(e^{-i m\pi}{\lambda})
 \sim \Delta_0 \exp(-\tfrac 12 i x\lambda^{-1} \sigma_3)
{\lambda}^{\tfrac 12\theta_0\sigma_3}
e^{-\tfrac 12 i\pi m\theta_0 \sigma_3}
$$
in the sector $|\arg {\lambda} -{\phi}| <\pi.$ Then we have $\tilde{U}^0_0
(\tilde{\lambda})={U}^0_0(\lambda)e^{-\tfrac 12 i\pi m\theta_0 \sigma_3}$ and 
$\tilde{U}^{\infty}_0
(\tilde{\lambda})={U}^{\infty}_{2m}(\lambda)e^{\tfrac 12 i\pi m\theta_{\infty} 
\sigma_3}$. Similarly, $\tilde{U}^0_1(\tilde{\lambda})={U}^0_1(\lambda)
e^{-\tfrac 12 i\pi m\theta_0 \sigma_3}$ and $\tilde{U}^{\infty}_1
(\tilde{\lambda})={U}^{\infty}_{2m+1}(\lambda)e^{\tfrac 12 i\pi 
m\theta_{\infty} \sigma_3}$.   
Then, for $m\ge 1$, we have
\begin{align*}
&G_m= e^{\tfrac 12 i\pi m\theta_0\sigma_3} G S^{\infty}_0 S^{\infty}_1 \cdots
S^{\infty}_{2m-2} S^{\infty}_{2m-1} e^{\tfrac 12 i\pi m\theta_{\infty}\sigma_3}, 
\\
&\hat{G}_m= e^{\tfrac 12 i\pi m\theta_0\sigma_3} \hat{G} S^{\infty}_1 
S^{\infty}_2 \cdots
S^{\infty}_{2m-1} S^{\infty}_{2m} e^{\tfrac 12 i\pi m\theta_{\infty}\sigma_3}. 
\end{align*}
This combined with Proposition \ref{prop3.6} yields the expression of $G_m$
and $\hat{G}_m$ for $m\ge 1$ as in the theorem. The case $m\le 0$ is treated
similarly.
\par
Note that $F(u;A_{\phi})=\lambda_1\mathrm{sn}(\lambda_2u; \lambda_1/\lambda_2)$ 
solves $(F_u)^2=F^4-A_{\phi}F^2 +1.$ 
Then, using $F(e^{-im\pi}u;A_{\phi})=e^{-im\pi} F(u;A_{\phi}),$ 
we have, for $0<|\phi-m\pi|<\pi/2$,
\begin{align*}
i(e^{-im\pi} y(x))^{-1} &= i\tilde{y}(\tilde{x})^{-1}
= F(-2i(\tilde{x}-x_0^{(m)}
(\Omega^{\tilde{\phi}}_{\mathbf{a}},\Omega^{\tilde{\phi}}
_{\mathbf{b}},G_m,\hat{G}_m)) +O(x^{-\delta});A_{\phi})
\\
&= e^{-im\pi} F(-2i({x}- e^{im\pi}
 x_0^{(m)}(\Omega^{\tilde{\phi}}_{\mathbf{a}},\Omega^{\tilde{\phi}}_{\mathbf{b}},
G_m,\hat{G}_m))+O(x^{-\delta});A_{\phi}).
\end{align*}
Since $\Omega^{\phi}_{\mathbf{a},\, \mathbf{b}} =e^{i m\pi} \Omega^{\phi- \pi}
_{\mathbf{a},\, \mathbf{b}}$, we have
$$
i y(x)^{-1}=F(-2i({x}- x_0^{(m)}
(\Omega^{\phi}_{\mathbf{a}},\Omega^{\phi}_{\mathbf{b}},
G_m,\hat{G}_m))+O(x^{-\delta});A_{\phi}),
$$
which is a desired solution.
\section{Modulus $A_{\phi}$ and the Boutroux equations}\label{sc8}
For a given $\phi \in \mathbb{R}$ we 
seek the modulus $A=A_{\phi}\in \mathbb{C}$ such that for every cycle
$\mathbf{c} \subset \Pi_A$
$$
\im e^{i\phi} \int_{\mathbf{c}} \frac{w(A,\lambda)}{\lambda^2} d\lambda=0
$$
for $w(A,\lambda)^2=\lambda^4-A\lambda^2+1.$
We note the following.
\begin{lem}\label{lem8.0}
As long as $A\in \mathbb{C}\setminus \{c<-2\},$ the polynomial $\lambda^4-A
\lambda^2+1$ has zeros $\lambda_1=\lambda_1(A)$ and $\lambda_2=\lambda_2(A)$
continuous in $A$ and satisfying $\lambda_1\lambda_2=1$ and $\lambda_1,\lambda_2
\in \{\re \lambda \ge 0\}.$
\end{lem}
\begin{proof}
For $A$, say, close to $2$ there exist such zeros. If $\lambda_1$ or $\lambda_2
\in i\mathbb{R}$, then $A/2\pm \sqrt{A^2/4-1} =-r_0 <0$, implying
$A=-(r_0^2+1)/r_0 \le -2.$
\end{proof}
By Lemma \ref{lem8.0}, for each $a\in \mathbb{C}$, there exist zeros
$\lambda_1,\lambda_2 \in \{\re \lambda\ge 0\}.$ 
Define the elliptic curve $\Pi_A=\Pi_+\cup \Pi_-$ glued along
the cuts $[-\lambda_1,-\lambda_2]$ and $[\lambda_1,\lambda_2]$ as in Section
\ref{ssc2.2}. Let $\mathbf{a}$ and $\mathbf{b}$ be basic cycles on $\Pi_A$
such that $\tfrac 12 \mathbf{a}=(-\lambda_1,\lambda_1)^{\sim}$,
$\tfrac 12 \mathbf{b}=(\lambda_1,\lambda_2)^{\sim}$ on the upper sheet $\Pi_+$,
where $(\lambda_1,\lambda_2)^{\sim}$ is located to the left of the cut $[\lambda
_1,\lambda_2].$ The cycles thus defined may be supposed to 
coincide with $\mathbf{a}$ and $\mathbf{b}$ in Figure \ref{cycles1} 
if $A=A_{\phi}.$
For $|\phi|\le \pi/2$, let us consider the Boutroux equations
$$
(\mathrm{BE})_{\phi}: \quad\qquad \im e^{i\phi} I_{\mathbf{a}}(A)=0, \quad   
\im e^{i\phi} I_{\mathbf{b}}(A)=0, \phantom{------------}
$$
where
$$
I_{\mathbf{a}, \, \mathbf{b}} (A):=\int_{\mathbf{a},\,\mathbf{b}}
 \frac{w(A,\lambda)}{\lambda^2} d \lambda  
=\int_{\mathbf{a},\,\mathbf{b}} \frac{1}{\lambda^2} 
\sqrt{\lambda^4-A\lambda^2+1} \, d\lambda.
$$
It is easy to see that $w(A,\lambda)^2=\lambda^4-A\lambda^2+1$ has double 
roots if and only if 
\begin{equation*}
A= 2, \,\,\,\, \lambda_1,\lambda_2=1, \,\,\,\, -\lambda_{1},-\lambda_{2}=-1;
\quad
A=- 2, \,\,\,\, \lambda_1,-\lambda_{2} =\pm i, 
\,\,\,\, -\lambda_{1},\lambda_{2}= \mp i.
\end{equation*}
\begin{exa}\label{exa8.1}
We have $I_{\mathbf{a}}(2)=-8,$ $I_{\mathbf{b}}(2)=0,$ and
$I_{\mathbf{a}}(-2)=0,$ $I_{\mathbf{b}}(-2)= 8i.$ 
Indeed, say,
$$
I_{\mathbf{a}}( 2)= 2\int^{1}_{-1}\frac {1-\lambda^2}{\lambda^2}d\lambda=-8, 
$$
in which the residue of the integrand at $\lambda=0$ vanishes.
Hence $A_0=2$ (respectively, $A_{\pm \pi/2}=-2$) 
fulfils (BE)$_0$ (respectively, (BE)$_{\pm \pi/2}$). 
\end{exa}
In accordance with \cite[Section 7]{Kitaev-2} we begin with the following: 
\begin{prop}\label{prop8.1}
Suppose that $\im I_{\mathbf{a}}(A)=0.$ Then $A\in \mathbb{R}.$
\end{prop}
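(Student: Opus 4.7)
The plan is to convert the contour integral $I_{\mathbf{b}}(A)$ into an explicit expression amenable to a real--imaginary separation. Since $\mathbf{b}$ encloses the cut $[\lambda_{1},\lambda_{2}]$ with $\lambda_{1}\lambda_{2}=1$, first I would collapse the contour to twice the integral across the cut and then substitute $\lambda=e^{L}$. Because $\log\lambda_{1}=-\log\lambda_{2}=:-L_{0}$ and $A=\lambda_{2}^{2}+\lambda_{2}^{-2}=2\cosh(2L_{0})$, a direct computation gives
\[
I_{\mathbf{b}}(A)=2\int_{-L_{0}}^{L_{0}}\sqrt{\,2\cosh(2L)-A\,}\,dL,
\]
and using the identity $\cosh(2L)-\cosh(2L_{0})=-2\sinh(L_{0}+L)\sinh(L_{0}-L)$ followed by the rescaling $L=L_{0}s$ one arrives at
\[
I_{\mathbf{b}}(A)=4iL_{0}\int_{-1}^{1}\sqrt{\sinh(L_{0}(1+s))\sinh(L_{0}(1-s))}\,ds.
\]
In this form the modulus $A$ is completely encoded in the single complex parameter $L_{0}$.

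Next I would write $L_{0}=p+iq$ with $p,q\in\mathbb{R}$ and record that
\[
A=2\cosh(2L_{0})=2\cosh(2p)\cos(2q)+2i\sinh(2p)\sin(2q),
\]
so $A\in\mathbb{R}$ is equivalent to $\sinh(2p)\sin(2q)=0$, i.e.\ $p=0$ or $q\in(\pi/2)\mathbb{Z}$ (matching the degenerate values $A=\pm 2$). The standing geometric condition $[\lambda_{1},\lambda_{2}]\subset\{\re\lambda\ge 0\}$ fixes the relevant branch and keeps $L_{0}$ in a controlled region. I would then expand $\sinh(L_{0}(1\pm s))$ into its hyperbolic/trigonometric components in $(p,q)$, combine the prefactor $4iL_{0}$ with the integrand, and extract $\im I_{\mathbf{b}}(A)$ as a concrete real-analytic function of $(p,q)$. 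The goal is to exhibit this function as a nonvanishing factor times the product $\sinh(2p)\sin(2q)$, from which $\im I_{\mathbf{b}}(A)=0$ would immediately force $A\in\mathbb{R}$.

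The hard part will be the definite-sign estimate needed to conclude that the factor multiplying $\sinh(2p)\sin(2q)$ is nonzero on the admissible domain. The naive linearization near $L_{0}=0$ is inconclusive: to first order in $L_{0}^{2}$ the locus $\{\im I_{\mathbf{b}}=0\}$ is tangent to the vertical line $\{\re(A-2)=0\}$, so one genuinely must track the next-order correction and show that it bends the locus back onto $\mathbb{R}$. An alternative, potentially cleaner route is to use $dI_{\mathbf{b}}/dA=-\Omega_{\mathbf{b}}(A)/2$ together with the normalization $\im(\Omega_{\mathbf{b}}/\Omega_{\mathbf{a}})>0$ and a topological/monodromy argument, to show that the connected component of $\{\im I_{\mathbf{b}}(A)=0\}$ passing through $A=2$ cannot leave the real axis without violating the geometric configuration of the cuts $[\pm\lambda_{1},\pm\lambda_{2}]$.
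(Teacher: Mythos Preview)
Your proposal is a plan, not a proof: both routes you sketch leave the decisive step undone. In the first route you hope that $\im I_{\mathbf{b}}(A)$ factors as a nonvanishing function of $(p,q)$ times $\sinh(2p)\sin(2q)$, but you give no argument for this factorisation, and you yourself note that the linearisation near $L_0=0$ is inconclusive. There is no reason to expect such a clean algebraic factorisation from the formula $4iL_0\int_{-1}^{1}\sqrt{\sinh(L_0(1+s))\sinh(L_0(1-s))}\,ds$: the square root of a complex quantity does not separate real and imaginary parts in any tractable way, and the prefactor $iL_0$ mixes them further. The alternative route via $dI_{\mathbf{b}}/dA=-\Omega_{\mathbf{b}}/2$ and a topological argument is also only gestured at.

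The paper's proof bypasses this difficulty with a single idea you are missing: exploit the conjugation symmetry of the integrand and cycle to write
\[
0=J_{\mathbf{b}}(A)+\overline{J_{\mathbf{b}}(A)}=J_{\mathbf{b}}(A)-J_{\mathbf{b}}(\overline{A})=(A-\overline{A})\int_{\mathbf{b}}\frac{d\lambda}{v(A,\lambda)+v(\overline{A},\lambda)},
\]
where $J_{\mathbf{b}}=iI_{\mathbf{b}}$ and $v(A,\lambda)=\sqrt{-\lambda^4+A\lambda^2-1}$. This factors out $A-\overline{A}$ \emph{exactly}, not asymptotically, so the whole problem reduces to showing that the remaining integral $J$ is nonzero. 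The paper does this by deforming $\mathbf{b}$ onto a contour made of segments parallel to the axes and checking directly that the horizontal part contributes a nonzero purely imaginary term while the vertical parts are purely real; the case $\re\lambda_1=\re\lambda_2$ is handled separately by noting $\lambda_1\lambda_2=1$ forces $A=2\cos 2\theta_*\in\mathbb{R}$. This conjugation trick is what you should adopt: it replaces your hoped-for factorisation with an exact identity, and the residual nonvanishing is then an elementary real-variable computation rather than a delicate sign estimate.
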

\begin{proof}
First we treat the case where $0\le \re \lambda_1 <\re \lambda_2,$ or
where $0\le \re\lambda_1= \re\lambda_2$ and $|\im \lambda_1| <|\im \lambda_2|.$ 
Note that $\frac 12 I_{\mathbf{a}}(A)$ is the integral along the segment
joining $-\lambda_1$ to $ \lambda_1$ on $\Pi_+$.  
Let $\Pi_*=\Pi_*^+ \cup \Pi_*^-$ be the two-sheeted Riemann surface glued
along the cuts $[-\lambda_1,\lambda_1]$ and $[-\lambda_2,\lambda_2]=[-\lambda_2,
\infty]\cup [\infty,\lambda_2],$ and let 
$w_*(A,\lambda)=\sqrt{\lambda^4-A\lambda^2+1}$ be 
the function on $\Pi_*$ such that, around $\lambda=\varepsilon i \in
\Pi_*^+$ with small $\varepsilon >0$, $w_*(A,\lambda)$ coincides with $w(A,
\lambda).$ Then
$$
I_{\mathbf{a}}(A)=I_{\mathbf{a}_*}(A):=\int_{\mathbf{a}_*} \frac{w_*(A,\lambda)}
{\lambda^2} d\lambda,
$$
where $\mathbf{a}_*$ is a cycle on $\Pi^+_*$ surrounding the cut $[-\lambda_1,
\lambda_1]$ in the clockwise direction. Set 
$$
J_{\mathbf{a}_*}(A) =\int_{\mathbf{a}_*} \frac{v(A,\lambda)}{\lambda^2} d\lambda,
\quad v(A,\lambda)=\sqrt{-\lambda^4+A\lambda^2-1},
$$
and suppose that the cycle $\mathbf{a}_*$ surrounds $\pm\overline{\lambda_1}$
as well. Then the supposition $\im I_{\mathbf{a}}(A)=0$ means
$J_{\mathbf{a}_*}(A)= iI_{\mathbf{a}_*}(A) =iI_{\mathbf{a}}(A)\in i\mathbb{R}$, 
and 
\begin{align*}
0=& J_{\mathbf{a}_*}(A)+\overline{J_{\mathbf{a}_*}(A)} 
=J_{\mathbf{a}_*}(A)+J_{\overline{\mathbf{a}_*}}(\overline{A}) 
=J_{\mathbf{a}_*}(A)-J_{\mathbf{a}_*}(\overline{A}) 
\\
=& \int_{\mathbf{a}*} \frac 1{\lambda^2} (v(A,\lambda)-v(\overline{A},\lambda))
d\lambda
= (A-\overline{A}) \int_{\mathbf{a}_*} \frac{d\lambda}{v(A,\lambda)+v(\overline
{A},\lambda)}.
\end{align*}
In this proof, to simplify the description, we write 
$v(A,\lambda)=v_A(\lambda)$, $v(\overline{A},\lambda)
=v_{\overline{A}}(\lambda)$ and $v(A,\lambda)\pm v(\overline{A},\lambda)
=(v_A\pm v_{\overline{A}})(\lambda)$. 
The polynomial $v_{\overline{A}}(\lambda)^2=-w(\overline{A},\lambda)^2$ has the 
roots $\pm \overline{\lambda_1},$ $\pm \overline{\lambda_2}$. The algebraic 
functions $(v_A \pm v_{\overline{A}})(z)$ may be considered on $\Pi_*$ with
the additional cuts
$[-\overline{\lambda_1}, \overline{\lambda_1}]$ and
$[-\overline{\lambda_2}, \overline{\lambda_2}]=
[-\overline{\lambda_2},\infty]\cup [\infty, \overline{\lambda_2}]$, that is,
the two-sheeted Riemann surface glued along the cuts $[-\lambda_1, \lambda_1]$,
$[-\overline{\lambda_1}, \overline{\lambda_1}]$, 
$[-\lambda_2,\infty]\cup[\infty,\lambda_2]$ and
$[-\overline{\lambda_2},\infty]\cup [\infty, \overline{\lambda_2}]$.
The cycle $\mathbf{a}_*$ may be supposed to enclose both cuts $[-\lambda_1,
\lambda_1]$ and $[-\overline{\lambda_1},\overline{\lambda_1}].$ 
These related cuts, say as in Figure \ref{cycle3} (1), are modified 
into $[-\lambda_1,-\overline{\lambda_1}]
\cup [\lambda_1,\overline{\lambda_1}] \cup [-\re \lambda_1,\re \lambda_1]$ 
and then the cycle $\mathbf{a}_*$ as in Figure \ref{cycle3} (1)
may be deformed into a contour consisting of 
horizontal and vertical lines as in Figure \ref{cycle3} (2). 
To show $A\in \mathbb{R}$ it is sufficient to verify that, under the supposition
$A-\overline{A}\not=0,$
$$
J=\int_{\mathbf{a}_*} \frac{d\lambda}{(v_A + v_{\overline{A}})(\lambda)} \not=0.
$$
\par
Let us compute this integral along the contour $\mathbf{a}_*$ as in Figure
\ref{cycle3} (2) with vertices $\alpha \pm i\beta,$ $-\alpha \pm i{\beta}$,
in which $\alpha=\re\lambda_1,$ $\beta=|\im \lambda_1|.$


{\small
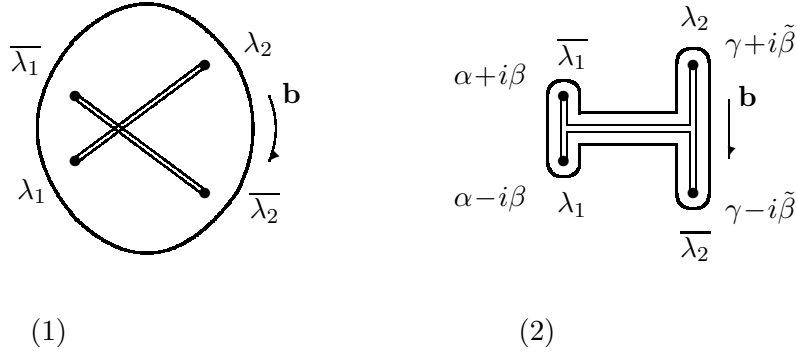
\begin{figure}[htb]
\begin{center}
\unitlength=0.85mm
\begin{picture}(60,65)(-30,-35)
\put(15,10){\circle*{1.5}}
\put(15,-10){\circle*{1.5}}
\put(-5,10){\circle*{1.5}}
\put(-5,-10){\circle*{1.5}}

 \put(9.5,22.5){\vector(3,-2){0}}

 \put(22,14){\makebox{$\overline{\lambda_{1}}$}}
 \put(22,-16){\makebox{${\lambda_{1}}$}}
 \put(-21,12){\makebox{$-{\lambda_{1}}$}}

 \put(-21,-18){\makebox{$-\overline{\lambda_{1}}$}}

 \put(-5,23.5){\makebox{$\mathbf{a}_*$}}

 \qbezier (15,-10.7) (5,-0.7) (-5,9.3)
 \qbezier (15,-9.3) (5,0.7)    (-5,10.7)
\qbezier (-5,-10.7) (5,-0.7) (15,9.3)
\qbezier (-5,-9.3) (5,0.7) (15,10.7)

 \qbezier (-4.0,20.5) (3.0,24.4) (9.5,22.5)

\thicklines
\qbezier (19.5,11.5) (6.5,26.3) (-6.5,14)
\qbezier  (20,-10.1) (6.8,-27.2) (-8.3,-12.1)

\qbezier (23.1,2.0) (23.2,-5.2) (20,-10.1)
\qbezier (23.1,2.0) (22.8,6.9) (19.5,11.5)

\qbezier (-12.1,-1.0) (-12.1,-7.0) (-8.3, -12.1)
    \qbezier (-6.5,14) (-12.5,7.5) (-12.1, -1.0)

\put(-12,-33){\makebox{(1) }}
\end{picture}
\qquad \qquad
\begin{picture}(70,65)(-35,-35)

\put(15,10){\circle*{1.5}}
\put(15,-10){\circle*{1.5}}
\put(-5,10){\circle*{1.5}}
\put(-5,-10){\circle*{1.5}}

 \put(20.5,-3.0){\vector(-1,-3){0}}

 \put(-8,16){\makebox{$-{\lambda_{1}}$}}
 \put(13,16){\makebox{$\overline{\lambda_{1}}$}}
 \put(-8,-20){\makebox{$-\overline{\lambda_{1}}$}}
 \put(13,-20){\makebox{${\lambda_{1}}$}}

 \put(-27,11){\makebox{$-\alpha \!+\!i\beta$}}
 \put(-27,-14){\makebox{$-\alpha\! -\!i\beta$}}
 \put(22,12){\makebox{$\alpha\! +\!i{\beta}$}}
\put(22,-14){\makebox{$\alpha\! -\!i{\beta}$}}

   \put(23.5,2.5){\makebox{$\mathbf{a}_*$}}

 \qbezier (15.5,10) (15.5,0) (15.5,-10)
 \qbezier (14.5,10) (14.5,5) (14.5, 0.5)

 \qbezier (14.5,-10) (14.5,-5) (14.5,-0.5)

 \qbezier (-5.5,10) (-5.5,0) (-5.5,-10)
 \qbezier (-4.5,10) (-4.5,5) (-4.5,0.5)

 \qbezier (-4.5,-10) (-4.5,-2) (-4.5,-0.5)
 \qbezier (-4.5,0.5) (0,0.5) (14.5,0.5)
 \qbezier (-4.5,-0.5) (0,-0.5) (14.5,-0.5)

   \qbezier (20.5,5.5) (20.5,0.5) (20.5,-3.0)

\thicklines

   \qbezier (17.5, 10) (17.5,0) (17.5,-10)
 \qbezier (12.5,10) (12.5,5) (12.5, 2.5)
 \qbezier (12.5,-10) (12.5,-6.2) (12.5, -2.5)

 \qbezier (-7.5,10) (-7.5,-2) (-7.5,-10)

 \qbezier (-2.5,10) (-2.5,2) (-2.5,2.5)
 \qbezier (-2.5,-10) (-2.5,-6.2) (-2.5,-2.5)
 \qbezier (-2.5,2.5) (0,2.5) (12.5,2.5)
 \qbezier  (-2.5,-2.5) (5,-2.5) (12.5,-2.5)

   \qbezier (17.5, 10) (17.5, 12.5) (15,12.5)
   \qbezier (12.5, 10) (12.5, 12.5) (15,12.5)

 \qbezier (-7.5,10) (-7.5,12.5) (-5,12.5)
 \qbezier (-2.5,10) (-2.5,12.5) (-5,12.5)

   \qbezier (17.5, -10) (17.5,-12.5) (15,-12.5)
   \qbezier (12.5, -10) (12.5, -12.5) (15,-12.5)

 \qbezier (-7.5,-10) (-7.5,-12.5) (-5,-12.5)
 \qbezier (-2.5,-10) (-2.5,-12.5) (-5,-12.5)

\put(-12,-33){\makebox{$(2)$ }}
\end{picture}

\end{center}

\caption{Modification of the cycle $\mathbf{a}_*$}
\label{cycle3}
\end{figure}
}

The integral $J$ is decomposed into three parts: $J= 2 J_0 +J_{\mathrm{rver}}
+J_{\mathrm{lver}}$ with the real line part
$$
J_0= \int^{\alpha}_{-\alpha} \frac{dt}{(v_A+v_{\overline{A}})(t)},
$$
the right vertical part
$J_{\mathrm{rver}}=J_{\mathrm{rver}}^+ + J_{\mathrm{rver}}^-,$ in which
\begin{align*}
J_{\mathrm{rver}}^+&=\int_{0}^{{\beta}}\frac{i dt}{(v_A+v_{\overline{A}})
(\alpha+it)}
+ \int^{0}_{{\beta}}\frac{i dt}{(v_A-v_{\overline{A}})(\alpha+it)},
\\
J_{\mathrm{rver}}^- &= \int_{0}^{-{\beta}}\frac{idt}{(v_A-v_{\overline{A}})
(\alpha+it)}
+ \int^{0}_{-{\beta}}\frac{idt}{(-v_A-v_{\overline{A}})(\alpha+it)},
\end{align*}
and the left vertical part
$J_{\mathrm{lver}}=J_{\mathrm{lver}}^+ + J_{\mathrm{lver}}^-,$ in which
\begin{align*}
J_{\mathrm{lver}}^+ &= \int^{-\beta}_{0}\frac{i dt}{(-v_A-v_{\overline{A}})
(-\alpha+it)}
+ \int^{0}_{-\beta}\frac{i dt}{(-v_A+v_{\overline{A}})(-\alpha+it)},
\\
J_{\mathrm{lver}}^- &= \int_{0}^{\beta}\frac{idt}{(-v_A+v_{\overline{A}})
(-\alpha+it)}
+ \int^{0}_{\beta}\frac{idt}{(v_A+v_{\overline{A}})(-\alpha+it)}.
\end{align*}
Then we have
$$
J_{\mathrm{rver}}=\frac {-2i}{A-\overline{A}} \Bigl( \int^{{\beta}}_{0}
\frac{v_{\overline{A}}(\alpha+it)}{(\alpha+it)^2}dt
+ \int^{{\beta}}_{0}
\frac{v_{{A}}(\alpha-it)}{(\alpha-it)^2}dt \Bigr) \in \mathbb{R}
$$
and
$$
J_{\mathrm{lver}}=\frac{-2i}{A-\overline{A}} \Bigl(\int^{\beta}_0
\frac{v_A(-\alpha+it)}{(-\alpha+it)^2}dt
+ \int^{\beta}_0 \frac{v_{\overline{A}}(-\alpha-it)}{(-\alpha-it)^2} dt \Bigr)
\in \mathbb{R},
$$
and hence $J_{\mathrm{rver}}+J_{\mathrm{lver}} \in \mathbb{R}.$
Furthermore, observing 
\begin{align*}
v_{A}(t)&= i \sqrt{ \sigma(t)+i \tau(t)}, \quad
v_{\overline{A}}(t)= i \sqrt{ \sigma(t)-i \tau(t)}, \quad
\\
\sigma(t)&= t^4-\re A\cdot t^2 +1,  \quad \tau(t)= -\im A\cdot t^2
\end{align*} 
and $v_A(0)=v_{\overline{A}}(0)=i$, we have
\begin{equation*}
J_0 = \frac{-i}{\sqrt 2} \int^{\alpha}_{-\alpha}\frac{dt}
{\sqrt{\sigma(t)+ \sqrt{\sigma(t)^2+\tau(t)^2} }},
\end{equation*}
which implies $J_0 \in i\mathbb{R}$ and $J_0\not=0$, provided that 
$\alpha \not=0.$
Thus in the case $\re \lambda_1 \not= 0$ it is shown that $A\in
\mathbb{R}.$ If $\re \lambda_1=0$, then from $\lambda_1\lambda_2
=1$ it follows that $A=\lambda_1^2+\lambda_2^2 \in \mathbb{R}.$
\par
In the case where $0\le \re\lambda_2<\re\lambda_1$, or where $0\le \re\lambda_2
=\re\lambda_1$ and $|\im \lambda_2|<|\im \lambda_1|$, the same argument as above
applies. The remaining case $\lambda_2=\lambda_1$ or $\lambda_2
=\overline{\lambda_1}$ occurs only when $A=\pm 2.$ Thus we have the proposition. 
\end{proof}
Let us examine $I_{\mathbf{b}}(A)$ for $A\in \mathbb{R}$. It is easy to see that,
$w(A,z)^2$ has real roots $-\lambda_2<-\lambda_1 <\lambda_1 <\lambda_2$ if 
$A^2> 4.$ Then $I_{\mathbf{b}}(A) =2\int_{[\lambda_1,\lambda_2]} \lambda^{-2}
w(A,\lambda)d\lambda \in i \mathbb{R}\setminus \{0\}.$ If $A=\pm 2,$ then
$\lambda_1
=\lambda_2=1$ or $\lambda_1=-\lambda_2=\pm i$, and 
$I_{\mathbf{b}}(2)=0$ and $I_{\mathbf{b}}(-2)= 8i$ as in Example \ref{exa8.1}.
\par
Suppose that $0\le A^2 < 4.$ The roots of $w(A,z)^2$ are $\alpha\pm i\beta,$
$-(\alpha\pm i\beta)$ with $\alpha,$ $\beta\ge 0.$ Then $\mathbf{b}$ is
a cycle enclosing the cut $[\alpha-i\beta, \alpha+i\beta]$. 
Since $\overline{I_{\mathbf{b}}(A)} =-I_{\mathbf{b}}(A),$ we have
$I_{\mathbf{b}}(A) \in i\mathbb{R}.$ 
Since $w(A,\alpha\pm i\beta)=0$, the integral
\begin{equation*}
I_{\mathbf{b}}(A) = 2i \int^{\beta}_{-\beta} \frac{w(A, \alpha+it)}{(\alpha+
it)^2} dt = 4i \int^{\beta}_0 \re \frac{w(A,\alpha+it)}{(\alpha+it)^2} dt
\end{equation*}
satisfies, for $-2 <A<2,$
$$
\frac{\partial}{\partial A} \Bigl(\frac 1i I_{\mathbf{b}}(A) \Bigr)
= 2\int^{\beta}_0 \re w(A, \alpha+it)^{-1} dt =\sqrt{2} \int^{\beta}_0
\frac{\sqrt{{g} +\sqrt{{g}^2+{h}^2}}}{\sqrt{{g}^2+{h}^2}} dt >0,
$$
where
\begin{align*}
g={g}(t)&=\re w(A,\alpha+it)^2 =t^4 -6\alpha^2 t^2 +\alpha^4 -A\alpha^2 
+At^2+1,
\\
h={h}(t)&=\im w(A,\alpha+it)^2 =-4\alpha t^3+4\alpha^3 t-2A\alpha t.
\end{align*}
This implies $I_{\mathbf{b}}(A) \in i \mathbb{R} \setminus \{0\}$ for $-2<A<2.$
\par
The fact above combined with Proposition \ref{prop8.1} and Example \ref{exa8.1}
implies the following.
\begin{prop}\label{prop8.2}
If $\phi=0$, then the Boutroux equations $(\mathrm{BE})_0$ admit a unique
solution $A_0= 2.$ 
\end{prop}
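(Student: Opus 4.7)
The approach is: reduce to $A_0\in\mathbb{R}$ by Proposition~8.1, then exhaust the real axis case by case, using the $\mathbf{b}$-equation where it is informative and the $\mathbf{a}$-equation (with a monotonicity argument) for what remains.

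Proposition~8.1 applied to $\im I_{\mathbf{b}}(A_0)=0$ immediately forces $A_0\in\mathbb{R}$. The discussion preceding the statement already handles two of the four real sub-cases: for $A>2$ the roots are real and $w$ is purely imaginary on the cut, so $I_{\mathbf{b}}(A)\in i\mathbb{R}\setminus\{0\}$; for $-2<A<2$ the derivative argument given there yields the same conclusion. Example~8.1 verifies $A=2$ directly, so the only remaining candidates are $A<-2$ and $A=-2$.

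For $A<-2$, parametrise $A=-2\cosh 2t$ with $t>0$: the four roots $\pm ie^{\pm t}$ now lie on the imaginary axis. Setting $\lambda=is$ on the cut $[ie^{-t},ie^t]$ one finds $\lambda^{-2}w\,d\lambda$ is real, so $I_{\mathbf{b}}(A)\in\mathbb{R}$ and the $\mathbf{b}$-equation is vacuous here. I would then turn to the $\mathbf{a}$-equation: on the arc $\{\lambda=is:|s|<e^{-t}\}$ traversed by the $\mathbf{a}$-cycle, $\lambda^4-A\lambda^2+1>0$, hence $w$ is real and $I_{\mathbf{a}}(A)\in i\mathbb{R}$. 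To show it is non-zero, apply the change of variables $\lambda=i\mu$, which sends $A\mapsto A':=-A>2$ and, with matched branches and cycles, gives $I_{\mathbf{a}}(A)=\mp i\,I_{\mathbf{a}}(A')$. On $A'>2$ the integral $I_{\mathbf{a}}(A')$ is real, and differentiating under the integral sign yields $\partial_{A'}I_{\mathbf{a}}(A')=-\tfrac12\,\Omega_{\mathbf{a}}(A')<0$, the $\mathbf{a}$-period being real positive on $(2,\infty)$. Combined with $I_{\mathbf{a}}(2)=-8$ from Example~8.1, strict monotonicity forces $I_{\mathbf{a}}(A')<-8<0$ for all $A'>2$; transporting back, $I_{\mathbf{a}}(A)\in i\mathbb{R}\setminus\{0\}$ for all $A<-2$. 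Finally, at the degenerate point $A=-2$, $w(-2,\lambda)=\lambda^2+1$; the integrand $1+\lambda^{-2}$ admits the elementary antiderivative $\lambda-\lambda^{-1}$ with vanishing residue at the origin, and a direct evaluation around the limit $\mathbf{a}$-cycle (from $i$ to $-i$ on $\Pi_+$, back on $\Pi_-$) yields $I_{\mathbf{a}}(-2)=-8i$; hence $\im I_{\mathbf{a}}(-2)\ne 0$ and $A=-2$ is also excluded. Only $A=2$ survives.

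The main obstacle is the case $A<-2$. The shorthand ``$A^2>4$ implies real roots'' used just before the statement is in fact valid only for $A>2$; for $A<-2$ the roots rotate to the imaginary axis and the $\mathbf{b}$-equation becomes vacuous. One must then invoke the $\mathbf{a}$-equation and establish the global (not merely local) non-vanishing of $I_{\mathbf{a}}(A)$ via the relation $\partial_A I_{\mathbf{a}}=-\tfrac12\Omega_{\mathbf{a}}$ transported to $A'>2$; this is the technical heart of the proof.
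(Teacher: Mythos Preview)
Your proof follows the same basic strategy as the paper's---reduce to $A\in\mathbb{R}$ via Proposition~8.1, confirm $A=2$ via Example~8.1, then exclude the remaining real values by analyzing $I_{\mathbf{b}}(A)$---but you go further in a way the paper does not. The paper's discussion preceding the proposition asserts that $w(A,\lambda)^2$ has real roots whenever $A^2>4$ and concludes $I_{\mathbf{b}}(A)\in i\mathbb{R}\setminus\{0\}$; as you correctly observe, this holds only for $A>2$, since for $A<-2$ the roots lie on the imaginary axis and one finds instead that $I_{\mathbf{b}}(A)$ is \emph{real}, rendering the $\mathbf{b}$-equation vacuous there. The paper's terse proof likewise does not explicitly dispose of $A=-2$. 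Your additional work---the substitution $\lambda\mapsto i\mu$ reducing $A<-2$ to $A'=-A>2$, the monotonicity of $I_{\mathbf{a}}$ on $(2,\infty)$ via $\partial_{A'}I_{\mathbf{a}}=-\tfrac12\Omega_{\mathbf{a}}$, and the direct evaluation $I_{\mathbf{a}}(-2)=\pm 8i$---closes these gaps cleanly. The only point deserving a word of care is the identification of cycles under $\lambda=i\mu$ (the image of the $\mathbf{a}$-cycle for $A$ must be, up to sign and homology, the standard $\mathbf{a}$-cycle for $A'$), but since you ultimately need only $I_{\mathbf{a}}(A)\ne0$ this is harmless.
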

\begin{cor}\label{cor8.3}
For every $A\in \mathbb{C}$, $(I_{\mathbf{a}}(A), I_{\mathbf{b}}(A))\not=(0,0).$
\end{cor}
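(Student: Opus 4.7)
The plan is to reduce the claim to Proposition~\ref{prop8.2} and Example~\ref{exa8.1} by a one-step argument by contradiction. Suppose that some $A_*\in\mathbb{C}$ satisfies $I_{\mathbf{a}}(A_*)=I_{\mathbf{b}}(A_*)=0$; then trivially $\im I_{\mathbf{a}}(A_*)=\im I_{\mathbf{b}}(A_*)=0$, so $A_*$ is a solution of the Boutroux equations $(\mathrm{BE})_0$. By Proposition~\ref{prop8.2}, the unique such solution is $A_*=2$. But Example~\ref{exa8.1} records the explicit evaluation $I_{\mathbf{a}}(2)=-8\ne 0$, contradicting the assumption $I_{\mathbf{a}}(A_*)=0$. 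Hence no such $A_*$ exists.

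The only point that requires a moment's care is the status of the two degenerate values $A=\pm 2$, where the elliptic curve $\Pi_A$ collapses and one or both of the cycles $\mathbf{a}$, $\mathbf{b}$ shrink. At $A=2$ the cycle $\mathbf{b}$ contracts to the point $\lambda=1$, so $I_{\mathbf{b}}(2)=0$ holds automatically; however $I_{\mathbf{a}}(2)=-8$ survives as a genuine improper integral (computed in Example~\ref{exa8.1} by noting that the residue of $(1-\lambda^2)/\lambda^2$ at $\lambda=0$ vanishes), and it is exactly this nonvanishing that drives the contradiction. At $A=-2$ no separate verification is needed, because Proposition~\ref{prop8.2} has already ruled out $-2$ as a solution of $(\mathrm{BE})_0$, so the hypothesis $I_{\mathbf{a}}(A_*)=I_{\mathbf{b}}(A_*)=0$ simply cannot occur there.

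I do not expect any substantive obstacle: the entire mathematical content is bundled into the uniqueness statement of Proposition~\ref{prop8.2} together with the single explicit value supplied by Example~\ref{exa8.1}, and the corollary is essentially a repackaging of these two facts.
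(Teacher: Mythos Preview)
Your proof is correct and is essentially the paper's own argument: both reduce to Proposition~\ref{prop8.2} (uniqueness of the solution of $(\mathrm{BE})_0$) together with the explicit value $I_{\mathbf{a}}(2)=-8$ from Example~\ref{exa8.1}. The paper's version additionally cites Proposition~\ref{prop8.1} to note $A\in\mathbb{R}$ once $I_{\mathbf{b}}(A)=0$, but this is not needed for the contradiction since Proposition~\ref{prop8.2} already absorbs that step; your route through $(\mathrm{BE})_0$ directly is slightly cleaner.
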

\begin{prop}\label{prop8.4}
Suppose that, for $A_{\phi}$ solving $(\mathrm{BE})_{\phi}$ with $0<|\phi|\le
\pi/2$, the elliptic curve $\Pi_{A_{\phi}}$ degenerates. Then $\phi=\pm \pi/2$ 
and $A_{\pm \pi/2}=-2.$
\end{prop}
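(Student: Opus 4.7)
My plan is to exploit that $\Pi_A$ degenerates only at $A = \pm 2$ and, in each of these cases, compute the degenerate limits of $I_{\mathbf{a}}(A)$ and $I_{\mathbf{b}}(A)$, then substitute into $(\mathrm{BE})_{\phi}$ to see which $\phi$ are compatible with the range $0 < |\phi| \le \pi/2$.

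The polynomial $\lambda^4 - A\lambda^2 + 1$, viewed as a quadratic in $\lambda^2$, has a multiple root iff its discriminant $A^2 - 4$ vanishes, i.e.\ $A = \pm 2$, with double roots at $\pm 1$ (when $A = 2$) or $\pm i$ (when $A = -2$). In either degenerate case the cut $[\lambda_1, \lambda_2]$ collapses to a point, so the cycle $\mathbf{b}$ encircling it becomes trivial and $I_{\mathbf{b}}(\pm 2) = 0$; Corollary \ref{cor8.3} then forces $I_{\mathbf{a}}(\pm 2) \ne 0$, so $(\mathrm{BE})_{\phi}$ reduces in the degenerate case to the single scalar equation $\im\bigl( e^{i\phi} I_{\mathbf{a}}(\pm 2) \bigr) = 0$. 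For $A = 2$, Example \ref{exa8.1} gives $I_{\mathbf{a}}(2) = -8$, turning the Boutroux equation into $\sin\phi = 0$, which has no solution in $0 < |\phi| \le \pi/2$. For $A = -2$, the branch fixed by $w(A,0) = 1$ yields $w_{+}(-2, \lambda) = 1 + \lambda^2$ on the upper sheet in the limit, and mimicking the contour computation of Example \ref{exa8.1} (the cycle $\mathbf{a}$ connects $-i$ and $i$ through both sheets, and the sign flip $w_{-} = -w_{+}$ doubles the contribution) gives
\[
I_{\mathbf{a}}(-2) \;=\; 2 \int_{-i}^{i} \frac{1 + \lambda^2}{\lambda^2}\, d\lambda \;=\; 2 \bigl[\lambda - \lambda^{-1}\bigr]_{-i}^{i} \;=\; 8i,
\]
the path being chosen to avoid $\lambda = 0$, which is harmless since $\mathrm{Res}_{\lambda=0}(1 + \lambda^{-2}) = 0$. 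Substituting into $(\mathrm{BE})_{\phi}$ yields $8 \cos\phi = 0$, and so $\phi = \pm\pi/2$ within the admissible range. Thus a degenerate $A_{\phi}$ can only arise for $\phi = \pm\pi/2$, with $A_{\pm\pi/2} = -2$.

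The main obstacle I anticipate is to rigorously identify the limiting form of $\mathbf{a}$ at $A = -2$, since Figure \ref{cycles1} depicts $\mathbf{a}$ only for $\phi$ close to $0$. I would handle this by continuing $\mathbf{a}$ along a path in the $A$-plane from a neighbourhood of $A = 2$ to a neighbourhood of $A = -2$ through non-degenerate values, using analyticity of $I_{\mathbf{a}}(A)$ along that path; Corollary \ref{cor8.3} rules out the limiting value $0$, and the residue computation above forces any nonzero limit to equal $\pm 8i$. This sign ambiguity is harmless because both $\pm 8i \in i\mathbb{R}$ yield the same equation $\cos\phi = 0$.
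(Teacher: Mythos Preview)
Your proof is correct and follows essentially the same approach as the paper: both arguments note that degeneracy forces $A_{\phi}=\pm 2$, then evaluate the surviving $\mathbf{a}$-integral and read off the constraint on $\phi$. The only cosmetic difference is that the paper handles $A=-2$ via the substitution $\lambda=e^{i\pi/2}\zeta$ to reduce to the already-computed $A=2$ case, whereas you evaluate $I_{\mathbf{a}}(-2)=8i$ directly; your appeal to Corollary~\ref{cor8.3} is a bit redundant once you have that explicit value, and your honest treatment of the sign ambiguity in the limiting cycle is sound since either sign still yields $\cos\phi=0$.
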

\begin{proof}
When $\Pi_{A_{\phi}}$ degenerates, $A_{\phi}= \pm 2.$
Suppose that $A_{\phi}= -2.$ Then the roots of $w(A_{\phi},\lambda)^2$ are 
$\lambda_1=-\lambda_2=\pm i$, $-\lambda_{1}=\lambda_{2}=\mp i$, and 
$$
\mathbb{R}\ni e^{i\phi} \int^{i}_{-i} \frac 1{\lambda^2} \sqrt{\lambda^4
-A_{\phi}\lambda^2 +1}\, d\lambda= e^{i(\phi- \pi/2)} \int^{1}_{-1} 
\frac{1}{\zeta^2}\sqrt{\zeta^4+2e^{\pi i} \zeta^2 +1}\, d\zeta \not=0,
$$
which implies $\phi=\pm \pi/2.$ Similarly, if $A_{\phi}= 2,$ then $\phi=0.$
\end{proof}
\begin{prop}\label{prop8.5}
If $\phi=\pm \pi/2,$ then the Boutroux equations $(\mathrm{BE})_{\pm \pi/2}$ 
admit a unique solution $A_{\pm \pi/2}=- 2.$
\end{prop}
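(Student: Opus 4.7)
The plan is to reduce the Boutroux equations $(\mathrm{BE})_{\pm\pi/2}$ to $(\mathrm{BE})_0$ by the change of variable $\lambda=i\mu$ and then invoke Proposition \ref{prop8.2}. Under this substitution
\begin{equation*}
 \lambda^4 - A\lambda^2 + 1 = \mu^4 + A\mu^2 + 1 = w(-A,\mu)^2,
\end{equation*}
so $\lambda\mapsto i\mu$ induces an isomorphism $\iota : \Pi_A \to \Pi_{-A}$ of elliptic curves. Combined with the identity $d\lambda/\lambda^2 = -i\,d\mu/\mu^2$, this gives for any cycle $\mathbf{c}$ on $\Pi_A$
\begin{equation*}
 I_{\mathbf{c}}(A) = -i \int_{\iota(\mathbf{c})} \frac{w(-A,\mu)}{\mu^2}\, d\mu.
\end{equation*}
Multiplying by $e^{\pm i\pi/2} = \pm i$, the requirement $\im e^{\pm i\pi/2} I_{\mathbf{c}}(A)=0$ translates to $\im \int_{\iota(\mathbf{c})} w(-A,\mu)\mu^{-2}\,d\mu = 0$, which is the $(\mathrm{BE})_0$-condition for the cycle $\iota(\mathbf{c})$ on $\Pi_{-A}$.

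Since $\iota$ is an isomorphism of Riemann surfaces, $\{\iota(\mathbf{a}),\iota(\mathbf{b})\}$ is a basis of $H_1(\Pi_{-A})$, and is related to the basic pair $\{\mathbf{a},\mathbf{b}\}$ on $\Pi_{-A}$ of Section \ref{ssc2.2} by a $GL_2(\mathbb{Z})$-change of basis. The joint vanishing of the imaginary parts of the two period integrals is invariant under such a change, so $A$ solves $(\mathrm{BE})_{\pm\pi/2}$ if and only if $-A$ solves $(\mathrm{BE})_0$. Proposition \ref{prop8.2} then forces $-A=2$, i.e.\ $A_{\pm\pi/2}=-2$, in agreement with the degenerate case isolated in Proposition \ref{prop8.4}.

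The main obstacle is the degenerate value $A=-2$ itself: the curve $\Pi_{-2}$ is singular, so the identity above and the cycle identification via $\iota$ must be read in the natural limiting sense, exactly as the computation $I_{\mathbf{a}}(2)=-8$ of Example \ref{exa8.1} is read. Concretely, one should check that $I_{\mathbf{a}}(-2)$ and $I_{\mathbf{b}}(-2)$, computed on the degenerate contours obtained as limits along any path $A\to -2$, both have vanishing real part, so that $A=-2$ is genuinely a solution of $(\mathrm{BE})_{\pm\pi/2}$ and not only a limiting one. This is a short direct computation paralleling Example \ref{exa8.1}, after which the reduction above closes the argument.
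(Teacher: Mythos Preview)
Your proof is correct and follows essentially the same route as the paper: the substitution $\lambda=e^{i\pi/2}\zeta$ (your $\lambda=i\mu$) converts $(\mathrm{BE})_{\pm\pi/2}$ for $A$ into $(\mathrm{BE})_0$ for $-A$, after which Proposition~\ref{prop8.2} gives $-A=2$. Your additional remarks on the $GL_2(\mathbb{Z})$ basis change and on the degenerate limit $A=-2$ are reasonable elaborations, but the paper simply phrases the condition as ``for every cycle $\mathbf{c}$'' and does not dwell on the degeneration, so these are refinements rather than a different method.
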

\begin{proof}
For $\phi=\pi/2$, $(\mathrm{BE})_{\pi/2}$ are equivalent to
$$
e^{\pi i/2} \int_{\mathbf{c}} \frac 1{\lambda^2} \sqrt{\lambda^4-A_{\pi/2} 
\lambda^2+1} \, d\lambda \in \mathbb{R}
$$
for every cycle $\mathbf{c}$ on $\Pi_{A_{\pi/2}}$, which is written as
$(\mathrm{BE})_0$ with $\phi=0$
$$
 \int_{\mathbf{c} e^{-\pi i/2}} \frac 1{\zeta^2} \sqrt{\zeta^4 
-e^{\pi i}A_{\pi/2} \zeta^2  +1}\, d\zeta \in\mathbb{R}  \quad (\lambda
=e^{\pi i/2}\zeta).
$$
Then by Proposition \ref{prop8.2}, $e^{\pi i}A_{\pi/2}= 2,$
i.e. $A_{\pi/2}=- 2$ is a unique solution of $(\mathrm{BE})_{\pi/2}.$
\end{proof}
\par
The quotient $h(A)=I_{\mathbf{b}}(A)/I_{\mathbf{a}}(A)$ 
\cite[Appendix I]{Novokshenov-2} is useful in examining $A_{\phi}$.
\begin{prop}\label{prop8.6}
Suppose that $A\in \mathbb{C}.$ 
\par
$(1)$ If $A$ solves $(\mathrm{BE})_{\phi}$ for some $\phi \in \mathbb{R}$ and
$I_{\mathbf{a}}(A) \not= 0,$ then $h(A) \in \mathbb{R}.$
\par
$(2)$ If $h(A)\in \mathbb{R} \setminus \{0\}$, then, for some $\phi \in \mathbb
{R}$, $A$ solves $(\mathrm{BE})_{\phi}$. 
\end{prop}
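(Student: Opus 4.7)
Both parts are essentially linear-algebraic observations about how the real factor $e^{i\phi}$ interacts with the ratio $h(A)$, so the plan is simply to unwind the definitions and keep track of which integrals vanish.

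For part (1), the plan is to rewrite the Boutroux equations $(\mathrm{BE})_{\phi}$ as the two conditions $e^{i\phi}I_{\mathbf{a}}(A)\in\mathbb{R}$ and $e^{i\phi}I_{\mathbf{b}}(A)\in\mathbb{R}$. Under the extra hypothesis $I_{\mathbf{a}}(A)\neq 0$, the first gives a nonzero real number, so one may divide and obtain
\[
h(A)=\frac{I_{\mathbf{b}}(A)}{I_{\mathbf{a}}(A)}
=\frac{e^{i\phi}I_{\mathbf{b}}(A)}{e^{i\phi}I_{\mathbf{a}}(A)}\in\mathbb{R},
\]
which is the assertion.

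For part (2), the plan is to reverse the above computation. Since $h(A)\in\mathbb{R}\setminus\{0\}$, both $I_{\mathbf{a}}(A)$ and $I_{\mathbf{b}}(A)$ are nonzero; in particular one may set $\phi=-\arg I_{\mathbf{a}}(A)$ (well-defined mod $\pi$), so that $e^{i\phi}I_{\mathbf{a}}(A)\in\mathbb{R}\setminus\{0\}$. Then
\[
e^{i\phi}I_{\mathbf{b}}(A)=h(A)\cdot e^{i\phi}I_{\mathbf{a}}(A)\in\mathbb{R},
\]
because $h(A)$ is real. Thus $\im e^{i\phi}I_{\mathbf{a}}(A)=\im e^{i\phi}I_{\mathbf{b}}(A)=0$, i.e.\ $A$ solves $(\mathrm{BE})_{\phi}$.

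There is no real obstacle here: the statement is a formal consequence of the definition $h=I_{\mathbf{b}}/I_{\mathbf{a}}$ together with the fact that ``$e^{i\phi}z\in\mathbb{R}$'' determines $\phi$ up to $\pi$ whenever $z\neq 0$. The only point that needs a brief remark is the nonvanishing of $I_{\mathbf{a}}(A)$ in part (2), which is guaranteed by $h(A)\in\mathbb{R}\setminus\{0\}$ (and, additionally, is consistent with Corollary~\ref{cor8.3}).
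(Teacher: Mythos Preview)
Your proof is correct and follows essentially the same approach as the paper: for part~(2) the paper also writes $h(A)=\rho\in\mathbb{R}$, decomposes $I_{\mathbf{a}}$ and $I_{\mathbf{b}}$ into real and imaginary parts, and picks $\phi$ via $v/u=V/U=-\tan\phi$ (which is the same as your choice $\phi=-\arg I_{\mathbf{a}}(A)$). The paper omits part~(1) entirely as immediate; your one-line ratio argument for it is exactly what is intended.
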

\begin{proof}
Suppose that $h(A)=\rho \in \mathbb{R}\setminus\{0\},$ and write 
$I_{\mathbf{a}}(A)=u+iv,$
$I_{\mathbf{b}}(A)=U+iV$ with $u$, $v$, $U,$ $V\in \mathbb{R}$. Then $U=\rho u,$
$V=\rho v,$ and hence $v/u=V/U= -\tan \phi$ for some $\phi \in [-\pi/2,\pi/2].$ 
This implies $\im e^{i\phi}I_{\mathbf{b}}(A)=\im e^{i\phi}I_{\mathbf{a}}(A)=0.$
\end{proof}
\begin{prop}\label{prop8.7}
The set $\{ A\in \mathbb{C} \,|\, \text{$A$ solves $(\mathrm{BE})_{\phi}$ for
some $\phi \in\mathbb{R}$} \}$ is bounded.
\end{prop}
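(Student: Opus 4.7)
The plan is to use the Legendre-type relation of Lemma \ref{lem6.7} to convert the Boutroux constraint into a constraint on the modular parameter $\tau(A)=\Omega_{\mathbf{b}}(A)/\Omega_{\mathbf{a}}(A)$, and then to exploit the fact that $\mathrm{Im}\,\tau(A)\to\infty$ as the elliptic curve $\Pi_A$ degenerates at infinity.

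First, suppose $A\in\mathbb{C}$ solves $(\mathrm{BE})_\phi$ for some $\phi\in\mathbb{R}$ with $I_{\mathbf{a}}(A)\neq 0$. Then Proposition \ref{prop8.6}(1) gives $h(A)=I_{\mathbf{b}}(A)/I_{\mathbf{a}}(A)\in\mathbb{R}$. Dividing the identity
\[
\Omega_{\mathbf{a}}(A)\,I_{\mathbf{b}}(A)-\Omega_{\mathbf{b}}(A)\,I_{\mathbf{a}}(A)=8\pi i
\]
of Lemma \ref{lem6.7} through by $\Omega_{\mathbf{a}}(A)\,I_{\mathbf{a}}(A)$ yields
\[
h(A)=\tau(A)+\frac{8\pi i}{\Omega_{\mathbf{a}}(A)\,I_{\mathbf{a}}(A)}.
\]
Taking imaginary parts and using $\mathrm{Im}\,\tau(A)>0$, I obtain
\[
0<\mathrm{Im}\,\tau(A)=\mathrm{Im}\!\left(\frac{-8\pi i}{\Omega_{\mathbf{a}}(A)\,I_{\mathbf{a}}(A)}\right)\le \frac{8\pi}{|\Omega_{\mathbf{a}}(A)\,I_{\mathbf{a}}(A)|}.
\]

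Next, I would carry out an asymptotic analysis as $|A|\to\infty$. Because the roots of $\lambda^4-A\lambda^2+1$ satisfy $\lambda_1\lambda_2=1$, they split as $\lambda_1\sim A^{-1/2}\to 0$ and $\lambda_2\sim A^{1/2}\to\infty$, and the rescaling $\lambda=\lambda_2 s$ converts each period/$J$-integral into an integral over a fixed degenerating curve in $s$. A direct computation then gives $\Omega_{\mathbf{a}}(A)=O(|A|^{-1/2})$ and $I_{\mathbf{a}}(A)=O(|A|^{1/2})$, both with non-zero leading coefficients, so that the product $\Omega_{\mathbf{a}}(A)\,I_{\mathbf{a}}(A)$ stays bounded away from $0$ and $\infty$. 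The ``logarithmic'' period gives instead $|\Omega_{\mathbf{b}}(A)|\sim c\,|A|^{-1/2}\log|A|$, whence $\mathrm{Im}\,\tau(A)\asymp\log|A|\to\infty$. Combined with the displayed inequality this forces $\log|A|\ll 1$, i.e. $|A|$ bounded.

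Finally, the degenerate case $I_{\mathbf{a}}(A)=0$ is handled as follows: Corollary \ref{cor8.3} and Lemma \ref{lem6.7} together give $I_{\mathbf{b}}(A)=8\pi i/\Omega_{\mathbf{a}}(A)\neq 0$, and then $(\mathrm{BE})_\phi$ reduces to the single scalar condition $\mathrm{Im}\,e^{i\phi}I_{\mathbf{b}}(A)=0$, which can be arranged by choice of $\phi$; but the asymptotic $I_{\mathbf{a}}(A)\sim c\,\sqrt{A}$ with $c\neq 0$ shows that the zero set of $I_{\mathbf{a}}$ is itself bounded. The main obstacle will be making the asymptotic expansions uniform in $\arg A$ as $A$ winds around infinity: the basis cycles $\mathbf{a},\mathbf{b}$ must be tracked consistently through the monodromy of the two branches of $\sqrt{A^2/4-1}$, and one must verify that the leading coefficients after rescaling do not vanish for any direction of approach.
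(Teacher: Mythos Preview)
Your argument is correct but takes a longer route than the paper's. The paper proceeds directly: for large $|A|$ the roots split as $\lambda_1\sim A^{-1/2}$, $\lambda_2\sim A^{1/2}$, and two elementary substitutions give
\[
I_{\mathbf{b}}(A)\sim i A^{1/2}\log A,\qquad I_{\mathbf{a}}(A)\sim \pi A^{1/2},
\]
so that $h(A)=I_{\mathbf{b}}(A)/I_{\mathbf{a}}(A)\sim \tfrac{i}{\pi}\log A$ has imaginary part $\sim \tfrac{1}{\pi}\log|A|\to\infty$, hence $h(A)\notin\mathbb{R}$, and Proposition~\ref{prop8.6} finishes. No periods $\Omega_{\mathbf{a}},\Omega_{\mathbf{b}}$, no $\tau$, and no Legendre relation are invoked.

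Your detour through Lemma~\ref{lem6.7} and the identity $h(A)=\tau(A)+8\pi i/(\Omega_{\mathbf{a}}I_{\mathbf{a}})$ is valid, but it trades the single asymptotic for $I_{\mathbf{b}}$ against \emph{two} asymptotics (for $\Omega_{\mathbf{a}}$ and $\Omega_{\mathbf{b}}$) plus the separate verification that $\mathrm{Im}\,\tau(A)\asymp\log|A|$ rather than merely $|\tau(A)|\asymp\log|A|$. That last step is the one place where your sketch is thin: knowing $|\Omega_{\mathbf{b}}|\sim c|A|^{-1/2}\log|A|$ and $|\Omega_{\mathbf{a}}|\sim c'|A|^{-1/2}$ gives only $|\tau|\to\infty$, and you would still need the leading coefficients (or the structural fact that the $\mathbf{b}$-cycle is the one that pinches) to conclude the \emph{imaginary} part diverges. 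The paper's direct quotient $I_{\mathbf{b}}/I_{\mathbf{a}}$ sidesteps this, since the factor $i\log A$ visibly contributes $\log|A|$ to the imaginary part. Your handling of the case $I_{\mathbf{a}}(A)=0$ and your remark about uniformity in $\arg A$ are both apt and apply equally to the paper's argument.
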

\begin{proof}
Note that $w(A,z)$ admits the roots $\lambda_{\pm 1} \sim \pm A^{-1/2},$ 
$\lambda_{\pm 2} \sim \pm A^{1/2},$ if $A$ is large. Then
\begin{equation*}
 \int^{\lambda_2}_{\lambda_1} \frac {w(A,\lambda)}{\lambda^2} d\lambda \sim
\int^{A^{1/2}}_{A^{-1/2}} \frac 1{\lambda^2} \sqrt{\lambda^4-A\lambda^2+1}\,
d\lambda  \sim  iA^{1/2} \int^{1}_{A^{-1}} \frac 1t \sqrt{1-t^2} \, dt
 \sim i A^{1/2} \log A,
\end{equation*}
and
$$
 \int^{\lambda_{-1}}_{\lambda_1} \frac {w(A,\lambda)}{\lambda^2} d\lambda \sim
\int^{-A^{-1/2}}_{A^{-1/2}} \frac 1{\lambda^2} \sqrt{\lambda^4-A\lambda^2+1}\,
d\lambda \sim  
- A^{1/2} \int^{1}_{-1} \frac 1{t^2} \sqrt{1-t^2} \, dt \sim \pi A^{1/2}.
$$
This implies $h(A) \not\in \mathbb{R}$ if $A$ is sufficiently large, which
completes the proof.
\end{proof}
\par
The following fact is used in discussing solutions of (BE)$_{\phi}$.
\par
Let $0<|\phi|<\pi/2,$ and write
$$
I_{\mathbf{a}}(A)=u(A)+i v(A), \quad I_{\mathbf{b}}(A)=U(A)+i V(A).
$$
Note that $A$ solves (BE)$_{\phi}$ if and only if
$$
\im e^{i\phi}I_{\mathbf{a}}(A)=u(A)\sin\phi +v(A)\cos \phi=0, \quad
\im e^{i\phi}I_{\mathbf{b}}(A)=U(A)\sin\phi +V(A)\cos \phi=0,
$$
that is,
\begin{equation}\label{8.1}
u(A)\tan \phi +v(A)=0, \quad  U(A)\tan \phi +V(A)=0.
\end{equation}
Then, by the Cauchy-Riemann equations the Jacobian for \eqref{8.1} with
$A=x+iy$ is written as
\begin{align}\label{8.2}
\det J(\phi,A) &= \det \begin{pmatrix} u_x \tan\phi +v_x & u_y \tan\phi +v_y \\
U_x \tan\phi +V_x & U_y \tan\phi +V_y  \end{pmatrix}
\\
\notag
&= (1+\tan^2\phi) (v_xV_y -v_y V_x)
\\
\notag
&= -\frac 14(1+\tan^2\phi) |\Omega_{\mathbf{a}}(A)|^2 \im \frac
{\Omega_{\mathbf{b}}(A)}{\Omega_{\mathbf{a}}(A)},
\end{align}
where $\Omega_{\mathbf{a}}(A)$ and $\Omega_{\mathbf{b}}(A)$ are periods of the
elliptic curve $w(A,z).$
For $0<|\phi|<\pi/2$ the derivatives of \eqref{8.1} with $t=\tan\phi$ 
are written as
$$
J(\phi, A)\begin{pmatrix}  x'(t) \\ y'(t) \end{pmatrix} +
\begin{pmatrix} u(A) \\ U(A)  \end{pmatrix} \equiv \mathbf{o}. 
$$
Then we have
\begin{equation}\label{8.3}
\text{$(x'(t),y'(t))\not=(0,0)$ and $ (d/d\phi)A=(x'(t)+i y'(t))\cos^{-2}\phi
\not=0 $} 
\end{equation}
for $0<|\phi|<\pi/2.$
\begin{prop}\label{prop8.8}
Suppose that, for some $\phi_0$ such that $0<|\phi_0|<\pi/2$, $A_{\phi_0}$
solves $(\mathrm{BE})_{\phi_0}$. Then there exists a trajectory $T_0:$ 
$A=\chi(\phi_0,\phi)$ for $0<|\phi| <\pi/2$ with the properties$:$
\par
$(1)$ $\chi(\phi_0,\phi_0)=A_{\phi_0};$ 
\par
$(2)$ for each $\phi,$ $A=\chi(\phi_0,\phi)$ solves $(\mathrm{BE})_{\phi};$
\par
$(3)$ $\chi(\phi_0,\phi)$ is smooth for $0<|\phi|<\pi/2.$
\end{prop}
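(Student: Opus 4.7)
The plan is to apply the implicit function theorem to the real system \eqref{8.1} and then globally continue the local branch in $\phi$, using the boundedness provided by Proposition \ref{prop8.7} and the non-degeneracy provided by Proposition \ref{prop8.4}.

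First, I would regard the Boutroux equations $(\mathrm{BE})_\phi$ as the real-analytic system
\[
F(\phi, A) := \bigl(u(A)\tan\phi + v(A),\; U(A)\tan\phi+V(A)\bigr) = 0
\]
in the variables $(\phi, A)=(\phi, x+iy)$. Since $0<|\phi_0|<\pi/2$, Proposition \ref{prop8.4} combined with the hypothesis that $A_{\phi_0}$ solves $(\mathrm{BE})_{\phi_0}$ forces $A_{\phi_0}\ne\pm 2$, so $\Pi_{A_{\phi_0}}$ is non-degenerate; in particular $\Omega_{\mathbf{a}}(A_{\phi_0})\ne 0$ and $\im\bigl(\Omega_{\mathbf{b}}(A_{\phi_0})/\Omega_{\mathbf{a}}(A_{\phi_0})\bigr)>0$. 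The Jacobian formula \eqref{8.2} then gives $\det J(\phi_0, A_{\phi_0})\ne 0$, so the implicit function theorem yields a smooth local branch $A=\chi(\phi_0,\phi)$ on a neighbourhood of $\phi_0$ with $\chi(\phi_0,\phi_0)=A_{\phi_0}$.

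Next, I would continue this branch. Let $I$ be the maximal open interval in the connected component of $\{0<|\phi|<\pi/2\}$ containing $\phi_0$ on which a smooth extension exists. At any interior $\phi^*\in I$, the same implicit function argument at $(\phi^*,\chi(\phi_0,\phi^*))$ extends $\chi$ smoothly past $\phi^*$, since $\phi^*\ne 0,\pm\pi/2$ forces $\chi(\phi_0,\phi^*)\ne\pm 2$ by Proposition \ref{prop8.4}. For any finite endpoint $\phi_*$ of $I$ still lying in the open component, pick $\phi_n\to\phi_*$; by Proposition \ref{prop8.7} the values $\chi(\phi_0,\phi_n)$ stay in a bounded set, so along a subsequence they converge to some $A_*$ that, by continuity of $F$, solves $(\mathrm{BE})_{\phi_*}$. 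Proposition \ref{prop8.4} then forces $A_*\ne\pm 2$, and applying the implicit function theorem at $(\phi_*,A_*)$ contradicts the maximality of $I$. Hence $I$ fills its entire component; the other component of $\{0<|\phi|<\pi/2\}$ is handled by the same argument applied to an auxiliary base point, or by exploiting the conjugation symmetry $F(-\phi,\overline A)=\overline{F(\phi,A)}$ built into \eqref{8.1} after appropriate relabelling of cycles.

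The main obstacle is precisely this continuation step: ensuring that, as $\phi$ runs through its component, $\chi(\phi_0,\phi)$ neither escapes to infinity nor approaches the degeneration locus $\{\pm 2\}$. The first is ruled out by the uniform bound in Proposition \ref{prop8.7}; the second by Proposition \ref{prop8.4}, which ties the degenerate moduli to the forbidden boundary values $\phi\in\{0,\pm\pi/2\}$ alone. Once both exclusions are in force, the implicit function theorem drives the rest of the argument, and smoothness of $\chi(\phi_0,\cdot)$ is immediate from the real-analyticity of $F$ in both variables.
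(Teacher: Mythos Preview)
Your approach is essentially the same as the paper's: apply the implicit function theorem to system \eqref{8.1} using the Jacobian formula \eqref{8.2}, then continue the local branch across the whole interval. The paper's proof is much terser, simply asserting that since $\det J(\phi,A)\in\mathbb{R}\setminus\{0\}$ for $0<|\phi|<\pi/2$, the local trajectory extends; you are more explicit in invoking Proposition~\ref{prop8.7} to prevent escape to infinity and Proposition~\ref{prop8.4} to prevent collision with $\pm 2$, and in distinguishing the two connected components of $\{0<|\phi|<\pi/2\}$. These are genuine details the paper glosses over, so your write-up is if anything more complete, but the underlying strategy is identical.
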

\begin{proof}
Since the Jacobian \eqref{8.2} satisfies $\det J(\phi_0, A_{\phi_0}) \in
\mathbb{R}\setminus \{0\},$ there exists a local trajectory 
$A=\chi_{\mathrm{loc}}(\phi_0,\phi)$ having the properties $(1)$, $(2)$ and 
$(3)$ above for $|\phi-\phi_0|<\delta$, where $\delta$ is sufficiently small. 
Since \eqref{8.2} does not vanish 
for $0<|\phi|<\pi/2,$ $\chi_{\mathrm{loc}}(\phi_0,
\phi)$ may be extended to the interval $0<|\phi|<\pi/2.$
\end{proof}
\begin{prop}\label{prop8.9}
The trajectory $T_0:$ $A=\chi(\phi_0,\phi)$ given above
may be extended to $|\phi|\le \pi/2$ in such a way that $\chi(\phi_0,\phi)$ is 
continuous in $\phi$ and that $\chi(\phi_0,0)=A_0= 2,$ 
$\chi(\phi_0,\pm \pi/2)=A_{\pm \pi/2}= - 2.$
\end{prop}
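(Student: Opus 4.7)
The plan is to combine Proposition \ref{prop8.7} (boundedness of the trajectory) with the uniqueness statements of Propositions \ref{prop8.2} and \ref{prop8.5}. For any sequence $\phi_n \to 0^+$, boundedness yields a subsequence along which $\chi(\phi_0,\phi_n) \to A_* \in \mathbb{C}$. Since the cycle integrals $I_{\mathbf{a}}(A)$ and $I_{\mathbf{b}}(A)$ extend continuously to the degenerate modulus $A = 2$ (verified separately below), passage to the limit in $\im e^{i\phi_n} I_{\mathbf{a},\mathbf{b}}(\chi(\phi_0,\phi_n)) = 0$ shows that $A_*$ satisfies $(\mathrm{BE})_0$, and Proposition \ref{prop8.2} forces $A_* = 2$. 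An identical argument with $\phi_n \to 0^-$, $\phi_n \to \pi/2^-$ and $\phi_n \to -\pi/2^+$ (the last two via Proposition \ref{prop8.5}) gives unique subsequential limits equal to $2$, $-2$, $-2$ respectively. Hence the one-sided limits exist, and defining $\chi(\phi_0,0) := 2$, $\chi(\phi_0,\pm\pi/2) := -2$ produces the required continuous extension.

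The technical point to check is continuity of $I_{\mathbf{a}}(A)$ and $I_{\mathbf{b}}(A)$ at $A = \pm 2$. Near $A = 2$ the cut $[\lambda_1,\lambda_2]$ shrinks to $\{1\}$; taking $\mathbf{b}$ as a small loop around this cut and observing that $\lambda^{-2}w(A,\lambda)$ is uniformly bounded on a fixed neighbourhood of $\lambda=1$ while the loop contracts yields $I_{\mathbf{b}}(A) \to 0 = I_{\mathbf{b}}(2)$. For $I_{\mathbf{a}}(A)$ one deforms $\mathbf{a}$ to a path on $\Pi_+$ running from the right cut to the left cut and applies dominated convergence as $\lambda_{\pm 1},\lambda_{\pm 2} \to \pm 1$, obtaining $I_{\mathbf{a}}(A) \to -8 = I_{\mathbf{a}}(2)$ by Example \ref{exa8.1}. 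The case $A=-2$ is handled identically after the substitution $\lambda = e^{i\pi/2}\zeta$ used in the proof of Proposition \ref{prop8.5}.

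The principal obstacle is to ensure that the cycles $\mathbf{a}$, $\mathbf{b}$ transported continuously along $T_0$ agree, as $\phi$ approaches $0$ or $\pm\pi/2$, with the cycles in Figure \ref{cycles1} that underlie $(\mathrm{BE})_0$ and $(\mathrm{BE})_{\pm\pi/2}$, so that the limiting identities are the correct Boutroux equations and not some integer combination thereof. I would settle this by noting that, on the whole interval $0<|\phi|<\pi/2$, the curve $\Pi_{\chi(\phi_0,\phi)}$ is non-degenerate (combine Proposition \ref{prop8.4} with the smoothness asserted in Proposition \ref{prop8.8}), so the homotopy class of the pair $(\mathbf{a},\mathbf{b})$ is preserved throughout $T_0$. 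Consequently the system one obtains in the limit is precisely $(\mathrm{BE})_0$ (respectively $(\mathrm{BE})_{\pm\pi/2}$), and the uniqueness conclusions of Propositions \ref{prop8.2} and \ref{prop8.5} apply verbatim, completing the extension.
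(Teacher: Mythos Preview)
Your argument is correct and follows essentially the same route as the paper: boundedness from Proposition~\ref{prop8.7}, extraction of a convergent subsequence, and uniqueness from Propositions~\ref{prop8.2} and~\ref{prop8.5} to pin down the limit. The paper phrases the compactness step as a proof by contradiction but the logic is identical; your additional remarks on the continuity of $I_{\mathbf{a}},I_{\mathbf{b}}$ at the degenerate moduli and on the preservation of the homotopy class of $(\mathbf{a},\mathbf{b})$ along $T_0$ are points the paper leaves implicit.
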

\begin{proof}
By Proposition \ref{prop8.7} the trajectory $T_0$ for $0<|\phi|<\pi/2$ is
bounded. To show that $\chi(\phi_0,\phi)\to A_0$ as $\phi \to 0$, suppose
to the contrary that there exists a sequence $\{\phi_{\nu}\}$ such that
$\phi_{\nu}\to 0$ and that $\chi(\phi_0,\phi_{\nu})$ does not converge
to $A_0.$ By the boundedness of $T_0$ there exists a subsequence 
$\{ \phi_{\nu(n)} \}$ such that
$\chi(\phi_0,\phi_{\nu(n)}) \to A'_0$ for some $A'_0\not=A_0.$ Then we have
$\im I_{\mathbf{a}}(A'_0)=\im I_{\mathbf{b}}(A'_0)=0,$ which contradicts
the uniqueness of a solution of $(\mathrm{BE})_0.$ Hence $\chi(\phi_0,\phi)$
is extended to $\phi=0$ and is continuous in a neighbourhood of $\phi=0.$
By Proposition \ref{prop8.5}, it is possible to extend $\chi(\phi_0,\phi)$ 
to $\phi=\pm\pi/2$ by the same argument. 
\end{proof}
\begin{lem}\label{lem8.10}
$h'(A)=4\pi i I_{\mathbf{a}}(A)^{-2}.$
\end{lem}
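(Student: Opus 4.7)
The plan is to compute $h'(A)$ directly by the quotient rule and then invoke a Legendre-type bilinear relation already established in Lemma \ref{lem6.7}.

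First I would differentiate under the integral sign. Since $w(A,\lambda) = \sqrt{\lambda^4 - A\lambda^2 + 1}$, we have $\partial_A w = -\lambda^2/(2w)$, so
\begin{equation*}
I_{\mathbf{a},\,\mathbf{b}}'(A) = \int_{\mathbf{a},\,\mathbf{b}} \frac{1}{\lambda^2} \cdot \frac{-\lambda^2}{2w(A,\lambda)}\, d\lambda = -\frac{1}{2} \Omega_{\mathbf{a},\,\mathbf{b}}(A),
\end{equation*}
where $\Omega_{\mathbf{a},\,\mathbf{b}}(A) = \int_{\mathbf{a},\,\mathbf{b}} d\lambda/w(A,\lambda)$ are the periods of the elliptic curve $\Pi_A$. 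Applying the quotient rule to $h(A) = I_{\mathbf{b}}(A)/I_{\mathbf{a}}(A)$,
\begin{equation*}
h'(A) = \frac{I_{\mathbf{b}}'(A) I_{\mathbf{a}}(A) - I_{\mathbf{b}}(A) I_{\mathbf{a}}'(A)}{I_{\mathbf{a}}(A)^2} = \frac{\Omega_{\mathbf{a}}(A)\, I_{\mathbf{b}}(A) - \Omega_{\mathbf{b}}(A)\, I_{\mathbf{a}}(A)}{2\, I_{\mathbf{a}}(A)^2}.
\end{equation*}

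Next I would apply the Legendre-type bilinear identity $\Omega_{\mathbf{a}}(A) I_{\mathbf{b}}(A) - \Omega_{\mathbf{b}}(A) I_{\mathbf{a}}(A) = 8\pi i$. This is exactly Lemma \ref{lem6.7} (stated there for $A = A_{\phi}$, but its proof via Legendre's relation is entirely algebraic in $A$, so it holds for every non-degenerate $A$). Substituting into the formula above yields
\begin{equation*}
h'(A) = \frac{8\pi i}{2\, I_{\mathbf{a}}(A)^2} = \frac{4\pi i}{I_{\mathbf{a}}(A)^2},
\end{equation*}
which is the claim.

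The only nontrivial point is verifying that the Legendre identity $\Omega_{\mathbf{a}} I_{\mathbf{b}} - \Omega_{\mathbf{b}} I_{\mathbf{a}} = 8\pi i$ is valid for general $A$ (not merely the Boutroux modulus); this is clear because the derivation sketched in the proof of Proposition \ref{prop6.6}, based on the antiderivative relation for $\int w/\lambda^2\, d\lambda$ together with the classical Legendre relation $\Omega_{\mathbf{b}} J_{\mathbf{a}} - \Omega_{\mathbf{a}} J_{\mathbf{b}} = (4A/3)\pi i$ for $J_{\mathbf{a},\mathbf{b}} = \int_{\mathbf{a},\mathbf{b}} w\, d\lambda$, is purely formal in $A$. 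So no genuine obstacle arises; the lemma is essentially a one-line consequence of differentiating under the integral and invoking Lemma \ref{lem6.7}.
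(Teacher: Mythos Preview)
Your proof is correct and follows essentially the same approach as the paper: differentiate $I_{\mathbf{a},\mathbf{b}}$ under the integral sign to obtain $-\Omega_{\mathbf{a},\mathbf{b}}/2$, then apply the quotient rule and invoke Lemma~\ref{lem6.7}. Your explicit remark that Lemma~\ref{lem6.7} holds for general $A$ (not just $A_\phi$) is a reasonable clarification that the paper leaves implicit.
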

\begin{proof}
From $I'_{\mathbf{a},\, \mathbf{b}}(A)=-\Omega_{\mathbf{a},\, \mathbf{b}}/2$
and Lemma \ref{lem6.7}, the conclusion follows.
\end{proof}
\begin{cor}\label{cor8.11}
If $I_{\mathbf{a}}(A)\not=0, \infty$, then $h(A)$ is conformal around $A$.
\end{cor}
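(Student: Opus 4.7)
The plan is to deduce this corollary directly from Lemma \ref{lem8.10}, together with the holomorphic dependence of the period integrals on the modulus $A$. Since conformality in a neighborhood is equivalent to holomorphy together with non-vanishing derivative at the point, it suffices to verify these two conditions.

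First I would note that the elliptic curve $\Pi_A$ defined by $w(A,\lambda)^2=\lambda^4-A\lambda^2+1$ degenerates precisely at $A=\pm 2$, and for $A\in\mathbb{C}\setminus\{\pm 2\}$ the cycles $\mathbf{a},\mathbf{b}$ can be chosen to depend locally holomorphically on $A$. Consequently the integrals $I_{\mathbf{a}}(A)$, $I_{\mathbf{b}}(A)$ and the periods $\Omega_{\mathbf{a}}(A)$, $\Omega_{\mathbf{b}}(A)$ are holomorphic functions of $A$ in this region. The hypothesis $I_{\mathbf{a}}(A)\neq\infty$ excludes the degeneration locus $A=\pm 2$ (for at those values the integrand $\lambda^{-2}w(A,\lambda)$ along $\mathbf{a}$ either produces a logarithmic singularity or the cycle contracts), so $I_{\mathbf{a}}$ and $I_{\mathbf{b}}$ are holomorphic at $A$. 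The further hypothesis $I_{\mathbf{a}}(A)\neq 0$ ensures that the quotient $h=I_{\mathbf{b}}/I_{\mathbf{a}}$ is holomorphic in some disc around $A$.

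Next I would invoke Lemma \ref{lem8.10}, which gives
\[
h'(A)=\frac{4\pi i}{I_{\mathbf{a}}(A)^{2}}.
\]
Under the hypothesis $I_{\mathbf{a}}(A)\neq 0,\infty$, the right-hand side is a nonzero finite complex number, so $h'(A)\neq 0$. By the inverse function theorem for holomorphic maps, $h$ is a conformal (biholomorphic) map from some neighborhood of $A$ onto a neighborhood of $h(A)$.

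There is essentially no obstacle here: the statement is a direct consequence of Lemma \ref{lem8.10} once one observes that holomorphy of $I_{\mathbf{a}}$ and $I_{\mathbf{b}}$ near $A$ is secured by the same hypothesis. The only minor point worth being explicit about is why $I_{\mathbf{a}}(A)\neq\infty$ suffices to place $A$ inside the regular locus of the period functions; this follows because the only singularities of $I_{\mathbf{a}}$ on $\mathbb{C}$ occur at the degeneration points $A=\pm 2$ (cf.\ Section \ref{ssc2.2}).
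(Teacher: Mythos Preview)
Your argument is correct and matches the paper's approach: the corollary is an immediate consequence of Lemma \ref{lem8.10}, since $h'(A)=4\pi i\,I_{\mathbf{a}}(A)^{-2}$ is finite and nonzero under the hypothesis. One small inaccuracy: you claim that $I_{\mathbf{a}}(A)\neq\infty$ excludes the degeneration points $A=\pm 2$, but Example \ref{exa8.1} gives $I_{\mathbf{a}}(2)=-8$, so $A=2$ is \emph{not} excluded (indeed the paper invokes this corollary precisely at $A_0=2$ immediately afterward); the integrals $I_{\mathbf{a}},I_{\mathbf{b}}$ extend holomorphically through $A=2$ and the argument goes through there as well.
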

\par
By Example \ref{exa8.1}, $h(A)$ is conformal at $A_0= 2$ and 
$h(A_0)=0.$ Then, by Lemma \ref{lem8.10},
$$
h(A)=h'(A_0)(A-A_0)+o(A-A_0)=\frac{\pi i}{16}(A-A_0)+o(A-A_0)
$$
around $A=A_0.$ 
By Proposition \ref{prop8.6}, for a
sufficiently small $\varepsilon >0,$ the inverse image of $(-\varepsilon,0) 
\cup (0,\varepsilon)$ under $h(A)$ is a trajectory $T_{0-} \cup T_{0+}$:
$A= \chi^{\pm}_0(\phi)$ solving (BE)$_{\phi}$, and is expressed as 
\begin{equation}\label{8.4}
\chi^{\pm}_0(\phi) = A_0 +\gamma_0(\phi) i + o(\gamma_0(\phi)), 
\end{equation}
near $\phi=0$, where $\gamma_0(\phi) \in\mathbb{R}$ is continuous in $\phi$
and $\gamma_0(0)=0.$
\par
The fact above implies that there exists a local trajectory close to $A_0$ 
solving (BE)$_{\phi}$ for $\phi$ near $0.$ From this with Proposition 
\ref{prop8.8}, a trajectory
for $|\phi|\le \pi/2$ as in Proposition \ref{prop8.9} may be obtained. 
Furthermore, if two trajectories $\chi_1(\phi)$ and $\chi_2(\phi)$ solving
(BE)$_{\phi}$ satisfy $\chi_1(\phi_0)=\chi_2(\phi_0)$ for some $\phi_0$ such
that $0<|\phi_0|<\pi/2,$ then \eqref{8.2} or the conformality of $h(A)$ at
$A=A_0$ implies $\chi_1(\phi)\equiv \chi_2(\phi).$ Thus we have the following.
\begin{prop}\label{prop8.12}
There exists a trajectory $A=A_{\phi}$ for $|\phi| \le \pi/2$ with the 
properties$:$
\par
$(1)$ for each $\phi$, $A_{\phi}$ is a unique solution of $(\mathrm{BE})_{\phi};$
\par
$(2)$ $A_{\phi}$ is smooth in $\phi$ for $0<|\phi|<\pi/2$ and continuous in
$\phi$ for $|\phi|\le \pi/2.$
\end{prop}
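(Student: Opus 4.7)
The plan is to construct the trajectory by starting from the unique point $A_0=2$ at $\phi=0$, extending it through $0<|\phi|<\pi/2$ via the implicit function theorem, and closing it up at the endpoints $\phi=\pm\pi/2$; uniqueness will follow by anchoring every solution back to $A_0$ using the local conformality of $h$.

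First I would use the local trajectory $\chi_0^{\pm}(\phi)$ described in \eqref{8.4}: by Lemma \ref{lem8.10} and Corollary \ref{cor8.11}, $h(A)$ is conformal at $A_0=2$ with $h(A_0)=0$, so by Proposition \ref{prop8.6} the preimage under $h$ of a small real interval around $0$ is a smooth arc solving $(\mathrm{BE})_{\phi}$ for $\phi$ in a neighbourhood of $0$. Pick a base point $\phi_0$ close to $0$ on this arc, apply Proposition \ref{prop8.8} to get a smooth extension $A=\chi(\phi_0,\phi)$ throughout $0<|\phi|<\pi/2$, and then invoke Proposition \ref{prop8.9} to extend continuously to the closed range $|\phi|\le\pi/2$ with $\chi(\phi_0,0)=A_0=2$ and $\chi(\phi_0,\pm\pi/2)=A_{\pm\pi/2}=-2$ (using Propositions \ref{prop8.2} and \ref{prop8.5}). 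This yields a candidate $A_\phi$ with the required regularity.

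For uniqueness, fix $\phi^*$ with $0<|\phi^*|<\pi/2$ and suppose $\tilde A$ solves $(\mathrm{BE})_{\phi^*}$. By Proposition \ref{prop8.4}, $\Pi_{\tilde A}$ is non-degenerate, so $\im\Omega_{\mathbf{b}}(\tilde A)/\Omega_{\mathbf{a}}(\tilde A)>0$ and the Jacobian in \eqref{8.2} is non-zero at $(\phi^*,\tilde A)$. Proposition \ref{prop8.8} then produces a smooth trajectory $\tilde\chi(\phi)$ passing through $(\phi^*,\tilde A)$, and Proposition \ref{prop8.9} extends it continuously to $\phi=0$. By Proposition \ref{prop8.2} the limit must be $A_0=2$, and by the conformality of $h$ at $A_0$ established above, the local preimage $\chi_0^{\pm}(\phi)$ is the unique solution of $(\mathrm{BE})_{\phi}$ in a neighbourhood of $(0,A_0)$. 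Hence $\tilde\chi$ agrees with $\chi(\phi_0,\cdot)$ near $\phi=0$; continuing via the non-vanishing Jacobian throughout $0<|\phi|<\pi/2$, the two solutions agree at every $\phi$, so $\tilde A=\chi(\phi_0,\phi^*)=A_{\phi^*}$. Uniqueness at the endpoints $\phi=0,\pm\pi/2$ is already Propositions \ref{prop8.2} and \ref{prop8.5}.

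The main obstacle is the global extension/connectivity step: one must rule out the possibility that a trajectory issuing from $(\phi^*,\tilde A)$ fails to reach $\phi=0$, either by escaping to infinity or by accumulating on a degenerate curve before $\phi=0$. Proposition \ref{prop8.7} bounds the trajectory, Proposition \ref{prop8.4} forbids degeneration except at $\phi=0,\pm\pi/2$, and the non-vanishing of \eqref{8.2} on $0<|\phi|<\pi/2$ prevents finite-time singularities, so the trajectory persists on the entire open interval with well-defined continuous limits at the endpoints, exactly as in the proof of Proposition \ref{prop8.9}. Once this is secured, both existence and uniqueness fit into the implicit-function/analytic-continuation framework sketched above, and the smoothness claim is automatic from \eqref{8.2}.
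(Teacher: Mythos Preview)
Your proposal is correct and follows essentially the same approach as the paper: construct the trajectory from the local arc \eqref{8.4} near $A_0$ via Propositions \ref{prop8.8} and \ref{prop8.9}, and establish uniqueness by extending any competing solution to a trajectory that must pass through $A_0$ at $\phi=0$ (where conformality of $h$ forces local coincidence), then propagate agreement using the non-vanishing Jacobian \eqref{8.2}. You have spelled out the uniqueness step and the global-extension obstacles (boundedness via Proposition \ref{prop8.7}, non-degeneration via Proposition \ref{prop8.4}) more explicitly than the paper, but the underlying argument is the same.
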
 
\par
For any cycle $\mathbf{c}$, it is easy to see that
\begin{align*}
&e^{i\phi}\int_{\mathbf{c}}\frac 1{\lambda^2}w(A_{\phi},\lambda)d\lambda
=  e^{i(\phi \mp \pi )}
\int_{e^{\mp \pi i}\mathbf{c}} \frac 1{\zeta^2} w(A_{\phi},\zeta) d\zeta,
\\
&e^{i\phi}\int_{\mathbf{c}}\frac 1{\lambda^2}w(A_{\phi},\lambda)d\lambda
=  e^{i(\phi \mp \pi/2 )}
\int_{e^{\mp \pi i/2}\mathbf{c}} \frac 1{\zeta^2} w(-A_{\phi},\zeta) d\zeta,
\end{align*}
which yields the following.
\begin{prop}\label{prop8.13}
The function $A_{\phi}$ with $|\phi|\le \pi/2$ may be extended to 
$\phi\in \mathbb{R}$ by setting
$A_{\phi\pm \pi}=A_{\phi}$, and solves the Boutroux equations $(\mathrm{BE}
)_{\phi}$.
Furthermore $A_{\phi \pm \pi/2}=-A_{\phi},$ $A_{-\phi}=\overline{A_{\phi}}.$ 
\end{prop}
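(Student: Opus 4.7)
The plan is to derive all three assertions from the two change-of-variable identities displayed immediately before Proposition \ref{prop8.13}, together with the uniqueness of $A_\phi$ for $|\phi|\le\pi/2$ established in Proposition \ref{prop8.12}.

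First I would verify the $\pi$-periodic extension. Given $\phi\in\mathbb{R}$, write $\phi=\tilde{\phi}+m\pi$ with $|\tilde{\phi}|\le\pi/2$, $m\in\mathbb{Z}$, and set $A_\phi:=A_{\tilde\phi}$. Applying the first displayed identity $m$ times with $\lambda=e^{-m\pi i}\zeta$ gives
$$
e^{i\phi}\int_{\mathbf{c}}\frac{w(A_{\tilde\phi},\lambda)}{\lambda^2}\,d\lambda
=e^{i\tilde{\phi}}\int_{e^{-m\pi i}\mathbf{c}}\frac{w(A_{\tilde\phi},\zeta)}{\zeta^2}\,d\zeta
$$
for every cycle $\mathbf{c}$ on $\Pi_{A_{\tilde\phi}}$. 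Since rotation by $e^{-m\pi i}=\pm 1$ carries the basis $\{\mathbf{a},\mathbf{b}\}$ to itself (up to sign), the Boutroux equations $(\mathrm{BE})_\phi$ at modulus $A_{\tilde\phi}$ are equivalent to $(\mathrm{BE})_{\tilde\phi}$ at the same modulus. Hence $A_\phi$ solves $(\mathrm{BE})_\phi$ and the definition makes $A_{\phi\pm\pi}=A_\phi$ automatic.

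Next, for $A_{\phi\pm\pi/2}=-A_\phi$, I would use the second identity. Because $w(A,e^{\mp\pi i/2}\zeta)^2=\zeta^4+A\zeta^2+1=w(-A,\zeta)^2$, with the branch convention of Section \ref{ssc2.2} one gets
$$
e^{i\phi}\int_{\mathbf{c}}\frac{w(A_{\phi},\lambda)}{\lambda^2}\,d\lambda
=e^{i(\phi\mp\pi/2)}\int_{e^{\mp\pi i/2}\mathbf{c}}\frac{w(-A_{\phi},\zeta)}{\zeta^2}\,d\zeta.
$$
The map $\mathbf{c}\mapsto e^{\mp\pi i/2}\mathbf{c}$ sends a homology basis of $\Pi_{A_\phi}$ to a homology basis of $\Pi_{-A_\phi}$, so taking imaginary parts shows that $-A_\phi$ satisfies $(\mathrm{BE})_{\phi\mp\pi/2}$. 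Uniqueness from Proposition \ref{prop8.12} then forces $A_{\phi\mp\pi/2}=-A_\phi$. For the conjugation identity $A_{-\phi}=\overline{A_\phi}$, I would complex-conjugate $(\mathrm{BE})_\phi$ directly: the substitution $\lambda=\overline{\zeta}$ together with $\overline{w(A_\phi,\overline{\zeta})}=w(\overline{A_\phi},\zeta)$ yields
$$
\overline{\,e^{i\phi}\int_{\mathbf{c}}\frac{w(A_{\phi},\lambda)}{\lambda^2}\,d\lambda\,}
=e^{-i\phi}\int_{\overline{\mathbf{c}}}\frac{w(\overline{A_{\phi}},\zeta)}{\zeta^2}\,d\zeta,
$$
and vanishing imaginary parts are preserved, so $\overline{A_\phi}$ solves $(\mathrm{BE})_{-\phi}$; uniqueness again finishes the argument.

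The main technical obstacle is the bookkeeping of branches and cycles: one must check that the maps $\mathbf{c}\mapsto e^{\mp\pi i/2}\mathbf{c}$ and $\mathbf{c}\mapsto\overline{\mathbf{c}}$ send the distinguished basis $\{\mathbf{a},\mathbf{b}\}$ of $\Pi_{A_\phi}$ to a basis of $\Pi_{\pm A_\phi}$, respectively $\Pi_{\overline{A_\phi}}$, compatibly with the branch of $w$ fixed in Section \ref{ssc2.2} (namely $\sqrt{1-\lambda_j^{-2}\lambda^2}\to 1$ as $\lambda\to 0$ on $\Pi_+$). This is verified by tracking how the roots $\pm\lambda_1,\pm\lambda_2$ of $w^2$ transform under each symmetry and confirming that the cut structure and $\Pi_+$-side designation are preserved or can be relabelled without changing the value of $\int_{\mathbf{a},\mathbf{b}}w/\lambda^2\,d\lambda$. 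Once this is settled, each of the three assertions follows by invoking the uniqueness of $A_\phi$ in the interval $|\phi|\le\pi/2$.
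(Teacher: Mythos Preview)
Your proposal is correct and follows essentially the same approach as the paper: the two change-of-variable identities displayed just before the proposition, combined with the uniqueness of $A_\phi$ from Proposition~\ref{prop8.12}, give $A_{\phi\pm\pi}=A_\phi$ and $A_{\phi\pm\pi/2}=-A_\phi$. Your explicit complex-conjugation argument for $A_{-\phi}=\overline{A_\phi}$ and your remarks on tracking the cycle basis under each symmetry simply spell out details that the paper leaves implicit.
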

Let us examine the properties of $A_{\phi}$ in more detail. Note that the 
trajectory $A=A_{\phi}=x+iy$ with $|\phi|<\pi /2$ satisfies  
$h(A_{\phi})\in\mathbb{R}$. 
Then by Lemma \ref{lem8.10} and \eqref{8.3},
$$
\frac{d}{dt} h(A_{\phi}) =(x'(t)+iy'(t))4\pi i I_{\mathbf{a}}(A_{\phi})^{-2}
\in \mathbb{R} \setminus \{0\}
$$ 
with $t=\tan\phi$ for $0<|\phi|<\pi/2.$ Setting $I_{\mathbf{a}}(A_{\phi})^{-1}
=P+i Q,$ we have
$$
\frac 1{4\pi}\im \frac d{dt} h(A_{\phi}) =x'(t)(P^2-Q^2) -2y'(t) PQ=0.
$$
If $x'(t_0)=0$ for some $t_0 =\tan(\phi_0)\not=0, \pm \infty$, then $PQ
= \re I_{\mathbf{a}}(A_{\phi})^{-1}\cdot \im I_{\mathbf{a}}(A_{\phi})^{-1} =0,$
and hence $I_{\mathbf{a}}(A_{\phi_0}) \in i\mathbb{R} \setminus \{0\}$ or
$\mathbb{R} \setminus \{0\}.$ This is impossible for $0<|\phi|<\pi/2,$ which
implies $x'(t)\not=0$ for $0<|\phi|<\pi/2.$ Since $A_{\pm \pi/2}= -2$,
we have $x'(t)<0$ for $0<\phi <\pi/2$ and $x'(t)>0$ for
$-\pi/2 <\phi <0.$ It is easy to see that $x'(0)=x'(\tan(\pm \pi/2))=0.$
If $y'(t_0)=0$ for some $t_0$ with $0<|\phi_0|<\pi/2$, then
$P^2-Q^2=0,$ i.e. $I_{\mathbf{a}}(A_{\phi_0})^{-1}=P(1 \pm i),$ 
implying $\phi_0=\pm \pi/4.$ Note that $PQ<0$, $|P|>|Q|$ for $-\pi/4 <\phi<0$ 
and that $PQ>0$, $|P|>|Q|$ for $0<\phi<\pi/4.$ It follows that $y'(t)<0$ for 
$0<|\phi|<\pi/4.$
\begin{prop}\label{prop8.14}
The trajectory $A_{\phi}=x(t)+iy(t)$ with $t=\tan\phi$ has the properties$:$
\par
$(1)$ $x'(t)>0$ for $-\pi/2<\phi<0,$ and $x'(t)<0$ for $0<\phi<\pi/2;$
\par
$(2)$ $x'(0)=x'(\tan(\pm \pi/2))=0;$
\par
$(3)$ $y'(t)<0$ for $0<|\phi|<\pi/4$ and $y'(\tan(\pm \pi/4))=0.$
\end{prop}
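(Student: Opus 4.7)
The plan is to exploit the fact that along the trajectory $A=A_\phi$ the quotient $h(A_\phi)=I_{\mathbf{b}}(A_\phi)/I_{\mathbf{a}}(A_\phi)$ is real-valued (Proposition \ref{prop8.6}(1)). Differentiating in $t=\tan\phi$ and invoking Lemma \ref{lem8.10} to write $h'(A)=4\pi i I_{\mathbf{a}}(A)^{-2}$ yields
$$
\frac{d}{dt}h(A_\phi)=(x'(t)+iy'(t))\,4\pi i\,I_{\mathbf{a}}(A_\phi)^{-2}\in\mathbb{R}\setminus\{0\},
$$
where nonvanishing for $0<|\phi|<\pi/2$ comes from \eqref{8.3}. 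Setting $I_{\mathbf{a}}(A_\phi)^{-1}=P+iQ$ and taking imaginary parts gives the key scalar identity
$$
x'(t)(P^2-Q^2)-2y'(t)PQ=0.
$$

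For part (1), if $x'(t_0)=0$ for some $t_0=\tan\phi_0$ with $0<|\phi_0|<\pi/2$, then since $(x'(t_0),y'(t_0))\neq(0,0)$ by \eqref{8.3}, the identity forces $PQ=0$, so $I_{\mathbf{a}}(A_{\phi_0})$ is either purely real or purely imaginary. This would give a solution of $(\mathrm{BE})_0$ or $(\mathrm{BE})_{\pm\pi/2}$ distinct from $A_0=2$ or $A_{\pm\pi/2}=-2$, contradicting Propositions \ref{prop8.2} and \ref{prop8.5}. Hence $x'(t)\neq0$ on each component. The sign is then pinned down by continuity and the boundary data $A_0=2$, $A_{\pm\pi/2}=-2$: $x$ must decrease along $(0,\pi/2)$, so $x'(t)<0$ there, and by the symmetry $A_{-\phi}=\overline{A_\phi}$ (whence $x(-t)=x(t)$) one gets $x'(t)>0$ on $(-\pi/2,0)$.

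For part (2), the evenness $x(-t)=x(t)$ combined with smoothness of $A_\phi$ at $\phi=0$ (through the conformality of $h$ at $A_0$ used in \eqref{8.4}) gives $x'(0)=0$. For the limit as $t\to\pm\infty$, I would substitute $\tilde\phi=\phi\mp\pi/2$, so that $A_{\phi}=-A_{\tilde\phi}$ by Proposition \ref{prop8.13} and $t=-\cot\tilde\phi=-1/\tilde t$; then $dx/dt=\tilde t^{2}\,dx/d\tilde t$, and since the $\tilde t$-derivative is bounded (indeed vanishing) at $\tilde t=0$ by the same evenness around $\phi=\pm\pi/2$, one concludes $x'(t)\to0$.

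For part (3), if $y'(t_0)=0$ with $0<|\phi_0|<\pi/2$, then $x'(t_0)\neq0$ by (1), so the identity forces $P^2=Q^2$, i.e.\ $I_{\mathbf{a}}(A_{\phi_0})^{-1}=P(1\pm i)$. Since $(\mathrm{BE})_\phi$ gives $\arg I_{\mathbf{a}}(A_\phi)\equiv-\phi\pmod\pi$, hence $\arg I_{\mathbf{a}}(A_\phi)^{-1}\equiv\phi\pmod\pi$, the condition $P^2=Q^2$ isolates $\phi_0\equiv\pm\pi/4\pmod\pi$. This yields $y'(\tan(\pm\pi/4))=0$ and $y'(t)\neq0$ on $0<|\phi|<\pi/4$. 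To fix the sign, the plan is to track the argument of $I_{\mathbf{a}}(A_\phi)^{-1}$ along the trajectory: for $0<\phi<\pi/4$ it lies strictly between $0$ and $\pi/4$, giving $0<Q<P$, while the expansion \eqref{8.4} combined with the antisymmetry $y(-t)=-y(t)$ pins the sign of $y(t)$ just past $t=0$; substituting back into the identity then produces $y'(t)<0$, and the same on $(-\pi/4,0)$ by symmetry.

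The main obstacle is the sign determination in (3): the vanishing statements are essentially algebraic consequences of the scalar identity together with uniqueness in $(\mathrm{BE})_\phi$, but narrowing $\arg I_{\mathbf{a}}(A_\phi)^{-1}$ to a specific quadrant requires combining the infinitesimal analysis near $\phi=0$ from \eqref{8.4}, the global symmetries of Proposition \ref{prop8.13}, and the continuity of the trajectory all at once.
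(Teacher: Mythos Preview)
Your approach is essentially the same as the paper's: both derive the scalar identity $x'(t)(P^2-Q^2)-2y'(t)PQ=0$ from Lemma \ref{lem8.10} and \eqref{8.3}, then analyse the cases $x'=0$ and $y'=0$ through the argument of $I_{\mathbf{a}}(A_\phi)^{-1}$, and fix signs via the endpoint values $A_0=2$, $A_{\pm\pi/2}=-2$ together with the relation $\arg I_{\mathbf{a}}(A_\phi)^{-1}\equiv\phi\pmod\pi$. The only minor difference is that for the impossibility of $I_{\mathbf{a}}(A_{\phi_0})\in\mathbb{R}\cup i\mathbb{R}$ when $0<|\phi_0|<\pi/2$, the paper argues directly from the Boutroux equation $u\sin\phi_0+v\cos\phi_0=0$ (purely real forces $\sin\phi_0=0$, purely imaginary forces $\cos\phi_0=0$), whereas you route through uniqueness in Propositions \ref{prop8.2} and \ref{prop8.5}; both work, though the direct argument is shorter and avoids the need to check that $A_{\phi_0}\neq\pm 2$ (which you are implicitly using via Proposition \ref{prop8.4}).
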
 
Thus we have
\begin{prop}\label{prop8.15}
For $\phi \in \mathbb{R}$ there exists a trajectory $A=A_{\phi}$ with the
properties$:$
\par
$(1)$ for each $\phi$, $A_{\phi}$ is a unique solution of $(\mathrm{BE})_{\phi};$
\par
$(2)$ $A_{\phi \pm \pi /2}=- A_{\phi},$ $A_{\phi+\pi}=A_{\phi},$
$A_{-\phi}=\overline{A_{\phi}};$
\par
$(3)$ $A_0= 2,$ $A_{\pm \pi/2}=- 2;$
\par
$(4)$ $A_{\phi}$ is continuous in $\phi\in \mathbb{R}$, and smooth in $\phi \in
\mathbb{R}\setminus \{ m\pi/2 \,|\, m\in \mathbb{Z} \}.$
\end{prop}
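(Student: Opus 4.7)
The plan is to assemble Proposition 8.15 as a direct consolidation of the preceding Propositions 8.12 and 8.13 together with the explicit computations supplied by Example 8.1 and Proposition 8.5, verifying each of (1)--(4) in turn.

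First I would dispose of (3) and part of (1) at the explicit values: by Example 8.1 combined with the uniqueness assertion Proposition 8.2, $A_0 = 2$ is the unique solution of $(\mathrm{BE})_0$; by Proposition 8.5, $A_{\pm\pi/2}=-2$ is the unique solution of $(\mathrm{BE})_{\pm \pi/2}$. Next, for the symmetries in (2), I would invoke Proposition 8.13, which derives $A_{\phi \pm \pi/2} = -A_\phi$, $A_{\phi + \pi} = A_\phi$, and $A_{-\phi} = \overline{A_\phi}$ by the substitutions $\lambda = e^{\mp i\pi/2}\zeta$, $\lambda = e^{\mp i\pi}\zeta$, and by complex conjugation of the Boutroux integrals (using $w(\overline{A},\overline{\lambda})=\overline{w(A,\lambda)}$ and the reality of the cycles after orientation reversal).

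Next I would handle existence, uniqueness, and the regularity claim (4) jointly. Proposition 8.12 supplies existence and uniqueness on $|\phi|\le \pi/2$, with $A_\phi$ smooth on $0<|\phi|<\pi/2$ and continuous on the closed interval. Using the periodicity $A_{\phi+\pi}=A_\phi$ from (2), I extend $A_\phi$ to all of $\mathbb{R}$: for $\phi$ with $|\phi - m\pi| \le \pi/2$ define $A_\phi := A_{\phi - m\pi}$, which by the same symmetry solves $(\mathrm{BE})_\phi$ (since the integrand transforms by a factor $e^{\pm i m\pi}$ under $\lambda \mapsto e^{\pm i m\pi/2}\lambda$, and the cycles transform compatibly). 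Uniqueness for general $\phi$ reduces to the $|\phi|\le\pi/2$ case by the same substitution. Smoothness on $\mathbb{R}\setminus\{m\pi/2 : m\in\mathbb{Z}\}$ transports from Proposition 8.12(2) via the extension, while continuity at each $\phi = m\pi/2$ is the compatibility of the one-sided limits at $\phi = 0$ and at $\phi = \pm\pi/2$ supplied by Proposition 8.12(2) together with $A_{\phi+\pi}=A_\phi$.

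The main obstacle, already overcome in the preceding propositions, is twofold: the nonvanishing of the Jacobian $\det J(\phi,A)$ in (8.2) for $0<|\phi|<\pi/2$ (which depends on $\im \Omega_{\mathbf{b}}/\Omega_{\mathbf{a}} > 0$ and underlies the smoothness and uniqueness on the open interval), and the continuity at the degenerate endpoints $\phi = m\pi/2$ where $\Pi_{A_\phi}$ degenerates and the Jacobian reasoning is unavailable. The latter is handled in Proposition 8.9 by a compactness argument against the uniqueness Propositions 8.2 and 8.5, so at the proof of Proposition 8.15 itself no new analysis is required; the only work is to check that the piecewise definition obtained via the symmetries of (2) glues consistently at each $\phi = m\pi/2$, which is immediate from $A_{m\pi/2}=\pm 2$ being independent of the side from which one approaches.
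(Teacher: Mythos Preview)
Your proposal is correct and matches the paper's approach: the paper introduces Proposition 8.15 with ``Thus we have'', treating it as an immediate consolidation of Propositions 8.12 and 8.13 (together with Example 8.1, Proposition 8.2, and Proposition 8.5 for the explicit values), exactly as you outline. No new analysis beyond these ingredients is carried out in the paper's proof.
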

The trajectory of $A_{\phi}$ is roughly drawn in Figure \ref{trajectory}.
{\small
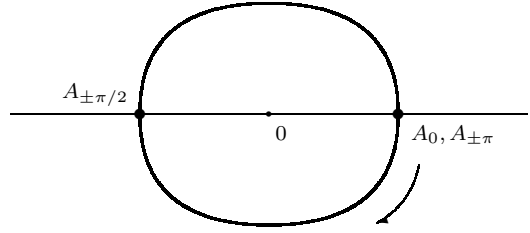
\begin{figure}[htb]
\begin{center}
\unitlength=0.85mm
\begin{picture}(80,50)(-40,-25)
\put(0,0){\circle*{1}}
\put(-20,0){\circle*{1.5}}
\put(20,0){\circle*{1.5}}

\put(-40,0){\line(1,0){80}}

\qbezier (23.2,-8) (22,-14) (16.7, -17)
\put(16.7,-17){\vector(-3,-1){0}}

\thicklines
\qbezier (20,0) (20,17.3) (0, 17.3)
\qbezier (20,0) (20,-17.3) (0, -17.3)
\qbezier (-20,0) (-20,17.3) (0, 17.3)
\qbezier (-20,0) (-20,-17.3) (0, -17.3)

{\tiny
\put(-32,2.5){\makebox{$A_{\pm \pi/2}$}}
\put(22,-4){\makebox{$A_0, A_{\pm \pi}$}}
\put(1,-4){\makebox{$0$}}
}

\end{picture}

\end{center}
\caption{Trajectory of $A_{\phi}$ for $|\phi|\le \pi$}
\label{trajectory}
\end{figure}
}
\par
By Proposition \ref{prop8.14}, when $|\phi|$ is sufficiently small, the location
of the turning points may be examined. Small variance
of $A_{\phi}$ around $\phi=0$ is given by $A_{\phi}=2+\delta_{\phi}$ with
$\delta_{\phi}$ having the properties:
(1) $\delta_{\phi}\to 0$ as $\phi\to 0;$ (2) $\re\delta_{\phi} \le 0$;
(3) $\im \delta_{\phi} \ge 0$ if $\phi\le 0$ and $\im \delta_{\phi}\le 0$ 
if $\phi \ge 0$.
Let the turning points $\lambda_1$, $\lambda_2$ with $\re \lambda_1 \le \re 
\lambda_2$ be such that $\lambda_1=\sqrt{1-\rho_1},$ $\lambda_2=\sqrt{1+\rho_2}$.
Note that $\lambda_1^2\lambda_2^2=1$ and $\lambda_1^2+\lambda_2^2=2
+\delta_{\phi}$, which yield $\rho_2-\rho_1=\delta_{\phi},$ $\rho_1\rho_2=
\delta_{\phi}.$ Since $\delta_{\phi}$ is small, $\rho_1 \sim \delta_{\phi}^{1/2}
-\tfrac 12 \delta_{\phi},$ $\rho_2 \sim \delta_{\phi}^{1/2} +\tfrac 12 \delta
_{\phi}.$ 
Thus we have the following.
\begin{prop}\label{prop8.16}
If $|\phi|$ is sufficiently small, the turning points $\lambda_1$  
and $\lambda_2$ are represented as  
$$
\lambda_1= 1 -  \varepsilon_{\phi} e^{i\theta_{\phi}}
 +O(\varepsilon_{\phi}^2), 
\qquad
\lambda_2= 1 +  \varepsilon_{\phi} e^{i\theta_{\phi}}
 +O(\varepsilon_{\phi}^2), 
$$
where $\varepsilon_{\phi}$ and $\theta_{\phi}$ fulfil 
\par
$(1)$ $\varepsilon_{\phi}>0$ and $\varepsilon_{\phi}\to 0$ as $\phi\to 0;$ 
and 
\par
$(2)$ $\theta_{\phi} \to \pi/4$ as $\phi\to 0$ with $\phi<0$, and 
$\theta_{\phi} \to -\pi/4$ as $\phi\to 0$ with $\phi>0$.  
\end{prop}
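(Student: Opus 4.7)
The plan is to translate the expansion given immediately before the statement into the $(\varepsilon_{\phi},\theta_{\phi})$-parametrisation of the two roots lying near $\lambda=1$. Writing $A_{\phi}=2+\delta_{\phi}$ and $\lambda_1^2=1-\rho_1,$ $\lambda_2^2=1+\rho_2$, the identities $\lambda_1^2+\lambda_2^2=A_{\phi}$ and $\lambda_1^2\lambda_2^2=1$ give $\rho_2-\rho_1=\delta_{\phi}$ and $\rho_1\rho_2=\delta_{\phi}$; solving to leading order yields $\rho_j=\delta_{\phi}^{1/2}+(-1)^j\delta_{\phi}/2+O(\delta_{\phi}^{3/2})$, with the branch of $\delta_{\phi}^{1/2}$ fixed by the labelling convention $\re\lambda_1\le\re\lambda_2$, equivalently $\re\delta_{\phi}^{1/2}\ge 0$.

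Applying $\sqrt{1\pm\rho}=1\pm\rho/2+O(\rho^2)$ I then obtain
\[
\lambda_1=1-\tfrac{1}{2}\delta_{\phi}^{1/2}+O(\delta_{\phi}),\qquad
\lambda_2=1+\tfrac{1}{2}\delta_{\phi}^{1/2}+O(\delta_{\phi}),
\]
so setting $\varepsilon_{\phi}=\tfrac{1}{2}|\delta_{\phi}|^{1/2}$ and $\theta_{\phi}=\tfrac{1}{2}\arg\delta_{\phi}$ (under the branch above) produces the claimed form with $\varepsilon_{\phi}>0$ and $\varepsilon_{\phi}\to 0$ as $\phi\to 0$; positivity of $\varepsilon_{\phi}$ for $\phi\ne 0$ uses the fact that, by the conformality of $h$ at $A_0=2$ (Corollary \ref{cor8.11}) together with the uniqueness in Proposition \ref{prop8.12}, $\delta_{\phi}\ne 0$ for $\phi\ne 0$ small.

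It remains to identify $\lim_{\phi\to 0^\pm}\theta_{\phi}$. By \eqref{8.4}, $\delta_{\phi}=i\gamma_0(\phi)+o(\gamma_0(\phi))$ with $\gamma_0\in\mathbb{R}$ continuous and $\gamma_0(0)=0$; combined with property (3) stated just before the proposition (namely $\im\delta_{\phi}\ge 0$ for $\phi\le 0$ and $\im\delta_{\phi}\le 0$ for $\phi\ge 0$), together with $\gamma_0(\phi)\ne 0$ for $\phi\ne 0$ (again from uniqueness), one has $\gamma_0(\phi)>0$ for $\phi<0$ and $\gamma_0(\phi)<0$ for $\phi>0$. Thus $\arg\delta_{\phi}\to\pi/2$ as $\phi\to 0^-$ and $\arg\delta_{\phi}\to-\pi/2$ as $\phi\to 0^+$, yielding $\theta_{\phi}\to\pi/4$ and $\theta_{\phi}\to-\pi/4$ respectively, both consistent with the constraint $\re\delta_{\phi}^{1/2}\ge 0$. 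No serious obstacle arises; the only place requiring care is the branch bookkeeping that ensures $\delta_{\phi}^{1/2}$ realises the ordering $\re\lambda_1\le\re\lambda_2$, so that the limiting arguments of $\theta_\phi$ come out as $\pm\pi/4$ rather than $\pm 3\pi/4$.
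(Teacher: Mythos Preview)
Your argument is correct and follows essentially the same route as the paper: write $A_{\phi}=2+\delta_{\phi}$, set $\lambda_1^2=1-\rho_1$, $\lambda_2^2=1+\rho_2$, use $\rho_2-\rho_1=\delta_{\phi}$, $\rho_1\rho_2=\delta_{\phi}$ to obtain $\rho_j=\delta_{\phi}^{1/2}+(-1)^j\delta_{\phi}/2+O(\delta_{\phi}^{3/2})$, then take square roots and read off $\theta_{\phi}=\tfrac12\arg\delta_{\phi}$ from the sign information on $\im\delta_{\phi}$ recorded just before the proposition. Your extra care about $\delta_{\phi}\ne 0$ for small $\phi\ne 0$ and about the branch of $\delta_{\phi}^{1/2}$ compatible with $\re\lambda_1\le\re\lambda_2$ simply makes explicit what the paper leaves implicit.
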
  
\vskip0.2cm
{\it Acknowledgement.} The author is grateful to the referee for valuable
comments and suggestions.

\small{

}

\vskip0.3cm
\small{
\par
Department of Mathematics, 
\endgraf
Keio University, 
\endgraf
3-14-1, Hiyoshi, Kohoku-ku, Yokohama 223-8522 Japan
\endgraf
{\tt shimomur@math.keio.ac.jp}
}
\end{document}